\theoremstyle{definition}
\newtheorem* {theorem*}{Theorem}
\newtheorem* {conjecture*}{Conjecture}
\newtheorem{theorem}{Theorem}[section]
\newtheorem{thmdef}[theorem]{Theorem-Definition}
\newtheorem{propdef}[theorem]{Proposition-Definition}
\theoremstyle{definition}
\newtheorem* {example*}{Example}
\newtheorem{lemma}[theorem]{Lemma}
\theoremstyle{definition}
\newtheorem{definition}[theorem]{Definition}
\theoremstyle{definition}
\newtheorem{conjecture}[theorem]{Conjecture}
\newtheorem{proposition}[theorem]{Proposition}
\newtheorem{corollary}[theorem]{Corollary}
\newtheorem *{remark}{Remark}
\theoremstyle{definition}
\newtheorem {example}[theorem]{Example}
\theoremstyle{definition}
\theoremstyle{definition}
\theoremstyle{definition}
\def\rW{]}
\def\lW{[}
\def\modu{\ (\mathrm{mod}\ }
\def\({\left(}
\def\){\right)}
\newcommand{\cP}{\mathcal{P}}
\newcommand{\cC}{\mathcal{C}}
\newcommand{\cD}{\mathcal{D}}
\def\NN{\mathbb{N}}
\def\ZZ{\mathbb{Z}}
\newcommand\wt{\operatorname{wt}}
\newcommand{\cM}{\mathcal{M}}
\def\barr{\begin{array}}
\def\earr{\end{array}}
\def\ba{\begin{aligned}}
\def\ea{\end{aligned}}
\def\be{\begin{equation}}
\def\ee{\end{equation}}
\def\qquand{\qquad\text{and}\qquad}
\def\quand{\quad\text{and}\quad}
\def\qquord{\qquad\text{or}\qquad}
\def\quord{\quad\text{or}\quad}
\def\inv{\mathrm{Inv}}
\def\cH{\mathcal H}
\def\cM{\mathcal M}
\def\DesR{\mathrm{Des}_R}
\def\DesL{\mathrm{Des}_L}
\def\hs{\hspace{0.5mm}}
\def\ds{\displaystyle}
\def\dash{\hs\text{---}\hs}
\def\tphi{\tilde \phi}
\def\tpsi{\tilde \psi}
\def\ben{\begin{enumerate}}
\def\een{\end{enumerate}}
\def\uL{ \underline{L}}
\def\uR{R}
\def\hs{\hspace{0.5mm}}
\def\ellhat{\hat\ell}
\def\x{\textbf{x}}
\newcommand{\xRightarrow}[2][]{\ext@arrow 0359\Rightarrowfill@{#1}{#2}}
\newcommand{\rank}{\operatorname{rank}}
\newcommand{\cA}{\mathcal{A}}
\newcommand{\arc}[2]{ \ar @/^#1pc/ @{-} [#2] }
\def\arcstop{\endxy\ }
\def\arcstart{\ \xy<0cm,-.06cm>\xymatrix@R=.1cm@C=.2cm }
\newcommand{\arcstartc}[1]{\ \xy<0cm,-.15cm>\xymatrix@R=.1cm@C=#1cm}
\def\ellhat{\hat\ell}
\def\Luc{\mathrm{Luc}}
\newcommand{\ct}{\operatorname{ct}}
\def\ttS{{\tt S}}
\def\tS{\tilde S}
\def\tI{\tilde I}
\def\cG{\mathcal{G}}
\def\cW{\tI^{\wt}}
\def\cWmin{\cM}
\def\diagramAB{
\arcstart
{
*{\circ}     & *{\circ}
}
\arcstop}
\def\diagramBA{
\arcstart
{
*{\circ}   \arc{.5}{r}  & *{\circ}
}

\arcstop
}
\def\diagrambAC{
\arcstart
{
*{\bullet}    \arc{.5}{r}  & *{\circ} & *{\circ}
}
\arcstop
}
\def\diagramBaC{
\arcstart
{
*{\circ}    \arc{.5}{r}  & *{\bullet} & *{\circ}
}
\arcstop
}
\def\diagramcBA{
\arcstart
{
*{\bullet}    \arc{.5}{rr}  & *{\circ} & *{\circ}
}
\arcstop
}
\def\diagramACb{
\arcstart
{
*{\circ}     & *{\circ}  \arc{.5}{r} & *{\bullet}
}
\arcstop
}
\def\diagramAcB{
\arcstart
{
*{\circ}     & *{\bullet}  \arc{.5}{r} & *{\circ}
}
\arcstop
}
\def\diagramCbA{
\arcstart
{
*{\circ}     \arc{.5}{rr}   & *{\bullet} & *{\circ}
}
\arcstop
}
\def\diagramCBa{
\arcstart
{
*{\circ}     \arc{.5}{rr}   & *{\circ} & *{\bullet}
}
\arcstop
}
\def\diagramBaDc{
\arcstart
{
*{\circ}  \arc{.5}{r}   & *{\bullet}    & *{\circ} \arc{.5}{r}& *{\bullet}
}
\arcstop
}
\def\diagramBadC{
\arcstart
{
*{\circ}  \arc{.5}{r}   & *{\bullet}    & *{\bullet} \arc{.5}{r}& *{\circ}
}
\arcstop
}
\def\diagrambAdC{
\arcstart
{
*{\bullet}  \arc{.5}{r}   & *{\circ}    & *{\bullet} \arc{.5}{r}& *{\circ}
}
\arcstop
}
\def\diagrambADc{
\arcstart
{
*{\bullet}  \arc{.5}{r}   & *{\circ}    & *{\circ} \arc{.5}{r}& *{\bullet}
}
\arcstop
}
\def\diagramCDab{
\arcstart
{
*{\circ}  \arc{.5}{rr}   & *{\circ} \arc{.5}{rr}   & *{\bullet} & *{\bullet}
}
\arcstop
}
\def\diagramCdaB{
\arcstart
{
*{\circ}  \arc{.5}{rr}   & *{\bullet} \arc{.5}{rr}   & *{\bullet} & *{\circ}
}
\arcstop
}
\def\diagramcDAb{
\arcstart
{
*{\bullet}  \arc{.5}{rr}   & *{\circ} \arc{.5}{rr}   & *{\circ} & *{\bullet}
}
\arcstop
}
\def\diagramcdAB{
\arcstart
{
*{\bullet}  \arc{.5}{rr}   & *{\bullet} \arc{.5}{rr}   & *{\circ} & *{\circ}
}
\arcstop
}
\def\diagramDbcA{
\arcstart
{
*{\circ}  \arc{.5}{rrr}   & *{\bullet}   & *{\bullet} & *{\circ}
}
\arcstop
}
\def\diagramDbCa{
\arcstart
{
*{\circ}  \arc{.5}{rrr}   & *{\bullet}   & *{\circ} & *{\bullet}
}
\arcstop
}
\def\diagramdBcA{
\arcstart
{
*{\bullet}  \arc{.5}{rrr}   & *{\circ}   & *{\bullet} & *{\circ}
}
\arcstop
}
\def\diagramDcBa{
\arcstart
{
*{\circ}  \arc{.5}{rrr}   & *{\bullet} \arc{.25}{r}   & *{\circ} & *{\bullet}
}
\arcstop
}
\def\diagramDCba{
\arcstart
{
*{\circ}  \arc{.5}{rrr}   & *{\circ} \arc{.25}{r}   & *{\bullet} & *{\bullet}
}
\arcstop
}
\def\diagramDcbA{
\arcstart
{
*{\circ}  \arc{.5}{rrr}   & *{\bullet} \arc{.25}{r}   & *{\bullet} & *{\circ}
}
\arcstop
}
\def\diagramdcBA{
\arcstart
{
*{\bullet}  \arc{.5}{rrr}   & *{\bullet} \arc{.25}{r}   & *{\circ} & *{\circ}
}
\arcstop
}
\numberwithin{equation}{section}
\renewcommand{\@makefnmark}{\mbox{\textsuperscript{}}}
\begin{document}
\title{On some actions of the $0$-Hecke monoids of affine symmetric groups}
\author{
Eric Marberg \\
    Department of Mathematics \\
    Hong Kong University of Science and Technology \\
    {\tt eric.marberg@gmail.com}
}

\date{}

\maketitle

\begin{abstract}
There are left and right actions of the $0$-Hecke monoid of the affine symmetric group $\tilde S_n$
on involutions whose cycles are labeled periodically by nonnegative integers. 
Using these actions we construct two bijections, which are length-preserving in an appropriate sense,
 from the set of involutions in $\tilde S_n$ to the set of $\mathbb{N}$-weighted
matchings in the $n$-element cycle graph. As an application, we compute a formula for the bivariate generating 
function counting the involutions in $\tilde S_n$ by length and absolute length.
The $0$-Hecke monoid of $\tilde S_n$ also acts on involutions (without any cycle labelling) 
by Demazure conjugation.
The atoms of an involution $z \in \tilde S_n$ are the minimal length permutations $w$ 
which transform the identity to $z$ under this action. 
We prove that the set of atoms for an involution in $\tilde S_n$ is naturally a bounded, 
graded poset, and give a formula for the set's minimum and maximum elements.
Using these properties, we classify the covering relations in the Bruhat order  
restricted to involutions in $\tilde S_n$. 
\end{abstract}

\setcounter{tocdepth}{2}

\section{Introduction}\label{intro-sect}

For each integer $n\geq 1$, let  $\tS_n$ be the \emph{affine symmetric group} of rank $n$, consisting of the bijections $w : \ZZ \to \ZZ$
with
$w(i+n) = w(i) + n$ for all $i \in \ZZ$ and $w(1) + w(2) + \dots + w(n) = \binom{n+1}{2}$.
When $n=1$, these conditions imply that $\tS_1 = \{1\}$.
Assume $n \geq 2$, and
 define $s_i \in \tS_n$ for $i \in \ZZ$
as the permutation which exchanges $i +mn$ and $i+1 +mn$ for each $m \in \ZZ$, while fixing every integer
not congruent to $i$ or $i+1$ modulo $n$. 
The elements $s_1,s_2,\dots,s_n$ generate $\tS_n$, and with respect to these generators $\tS_n$ is the Coxeter group of type $\tilde A_{n-1}$ \cite[\S8.3]{BB}.

If $W$ is any Coxeter group with simple generating set $S$ and length function $\ell :W \to \NN$,
then there is a unique associative product $\circ : W \times  W \to W$ 
such that $w\circ s = w$ if $\ell(ws) < \ell(w)$ and $w\circ s = ws$ if $\ell(ws) > \ell(w)$
for
$w \in W$ and $s \in S$ \cite[Theorem 7.1]{Humphreys}.
The product $\circ$ is often called the \emph{Demazure product},
and the pair $(W,\circ)$ is usually referred to as the \emph{$0$-Hecke monoid} or \emph{Richardson-Springer monoid} of $(W,S)$.
We frame the
results of this paper around the discussion of three actions of the $0$-Hecke monoid of $\tS_n$.
Each action will be on objects related to the group's involutions, 
that is, the elements $z \in \tS_n$ with $z^2=1$.

Let $I_n $ be the set of involutions in the finite symmetric group $S_n$,
which we identify with the parabolic subgroup of $\tS_n$ generated by $s_1,s_2,\dots,s_{n-1}$.
A \emph{matching} in a graph is a subset of edges with no shared vertices;
with slight abuse of notation, a \emph{matching} on a set is a matching in the complete graph on that set.
Elements of $I_n$ are permutations whose cycles have length at most two, and so may be
viewed as matchings on $\{1,2,\dots,n\}$. For example, 
\be\label{match-eq}\barr{c} \\[-8pt]  (1,4)(2,7)(3,6) \in I_8\qquad\text{corresponds to}\qquad
\arcstart
{
*{\bullet} \arc{0.6}{rrr}     & *{\bullet}    \arc{1.5}{rrrrr} & *{\bullet}   \arc{0.6}{rrr}  & *{\bullet}     & *{\bullet}  & *{\bullet}    & *{\bullet}  & *{\bullet} \\
1 & 2 & 3 & 4 & 5 & 6 & 7 & 8
} 
\arcstop.
\earr
\ee
There are several ways to adapt this combinatorial model
 to the elements of   $\tI_n = \{ z\in \tS_n : z^2=1\}$.
The simplest method is to represent $z \in \tI_n$
 as the matching on $\ZZ$ in which $i$ and $j$ are connected by an edge whenever $z(i) = j \neq i = z(j)$.
 This gives a bijection between $\tI_n$ and matchings on $\ZZ$ which are ``$n$-periodic''
 in the sense of having $\{ i,j\}$ as an edge if and only if $\{i+n,j+n\}$ is also an edge.
 One can make this model more compact by converting $n$-periodic matchings on $\ZZ$
  to $\ZZ$-weighted matchings on $\{1,2,\dots,n\}$: to represent $z \in \tI_n$, include the edge $\{i,j\}$ labeled by $m \in \ZZ$
whenever $i<j$ and  $z(i) = j + mn$ and $z(j) = i - mn$. For example,
\be\label{z-picture}
\arcstart
{
*{\bullet} \arc{0.6}{rrr}^1     & *{\bullet}    \arc{1.5}{rrrrr}^{-1} & *{\bullet}   \arc{0.6}{rrr}^0  & *{\bullet}     & *{\bullet}  & *{\bullet}    & *{\bullet}  & *{\bullet} \\
1 & 2 & 3 & 4 & 5 & 6 & 7 & 8
} 
\arcstop
\ee
 would correspond to 
 $z= \prod_{m \in \ZZ} (1+mn,12+mn)(7+mn,10+mn)(3+mn,6+mn) \in \tI_8$.
Diagrams of this type are most useful when $\tS_n$ is viewed as a semidirect product $S_n \ltimes \ZZ^{n-1}$. 
When the structure of $\tS_n$ as a Coxeter group is significant,
a better approach is to view $n$-periodic matchings as \emph{winding diagrams}.
To construct the \emph{winding diagram} of $z \in \tI_n$, arrange $1,2,\dots,n$
clockwise on a circle, and
whenever $i < z(i) \equiv j \modu n)$, connect $i$ to $j$ by an arc winding $\frac{z(i)-i}{n}$ times in the clockwise direction around the circle's exterior.  
For the involution in \eqref{z-picture}, this produces the picture
  \be\label{winding-picture}
\begin{tikzpicture}[baseline=0,scale=0.22,label/.style={postaction={ decorate,transform shape,decoration={ markings, mark=at position .5 with \node #1;}}}]
{
\draw[fill,lightgray] (0,0) circle (4.0);
\node at (2.44929359829e-16, 4.0) {$_\bullet$};
\node at (1.71450551881e-16, 2.8) {$_1$};
\node at (2.82842712475, 2.82842712475) {$_\bullet$};
\node at (1.97989898732, 1.97989898732) {$_2$};
\node at (4.0, 0.0) {$_\bullet$};
\node at (2.8, 0.0) {$_3$};
\node at (2.82842712475, -2.82842712475) {$_\bullet$};
\node at (1.97989898732, -1.97989898732) {$_4$};
\node at (2.44929359829e-16, -4.0) {$_\bullet$};
\node at (1.71450551881e-16, -2.8) {$_5$};
\node at (-2.82842712475, -2.82842712475) {$_\bullet$};
\node at (-1.97989898732, -1.97989898732) {$_6$};
\node at (-4.0, -4.89858719659e-16) {$_\bullet$};
\node at (-2.8, -3.42901103761e-16) {$_7$};
\node at (-2.82842712475, 2.82842712475) {$_\bullet$};
\node at (-1.97989898732, 1.97989898732) {$_8$};
\draw [-,>=latex,domain=0:100,samples=100] plot ({(4.0 + 4.0 * sin(180 * (0.5 + asin(-0.9 + 1.8 * (\x / 100)) / asin(0.9) / 2))) * cos(90 - (0.0 + \x * 4.95))}, {(4.0 + 4.0 * sin(180 * (0.5 + asin(-0.9 + 1.8 * (\x / 100)) / asin(0.9) / 2))) * sin(90 - (0.0 + \x * 4.95))});
\draw [-,>=latex,domain=0:100,samples=100] plot ({(4.0 + 2.0 * sin(180 * (0.5 + asin(-0.9 + 1.8 * (\x / 100)) / asin(0.9) / 2))) * cos(90 - (90.0 + \x * 1.35))}, {(4.0 + 2.0 * sin(180 * (0.5 + asin(-0.9 + 1.8 * (\x / 100)) / asin(0.9) / 2))) * sin(90 - (90.0 + \x * 1.35))});
\draw [-,>=latex,domain=0:100,samples=100] plot ({(4.0 + 2.0 * sin(180 * (0.5 + asin(-0.9 + 1.8 * (\x / 100)) / asin(0.9) / 2))) * cos(90 - (270.0 + \x * 1.35))}, {(4.0 + 2.0 * sin(180 * (0.5 + asin(-0.9 + 1.8 * (\x / 100)) / asin(0.9) / 2))) * sin(90 - (270.0 + \x * 1.35))});
}
\end{tikzpicture}.
\ee
 Formally, a winding diagram is a collection of continuous paths between disjoint pairs of marked points on the boundary of the plane minus an open disc, up to homotopy.
Each winding diagram corresponds to a unique involution in some affine symmetric group.
For our purposes, this construction is the correct generalisation of \eqref{match-eq} to the affine case.

Write $\ell(w)$ for the usual Coxeter length of $w \in \tS_n$,
and define the \emph{absolute length} $\ell'(z)$ of $z \in \tI_n$ to be the number of arcs in its winding diagram.
 Our first main result, Theorem~\ref{main-thm},   identifies
two  bijections $\omega_R$  and $\omega_L$ from $\tI_n$ to the set $\cWmin_n$ of $\NN$-weighted matchings 
 in $\cC_n$, the cycle graph on $n$ vertices.
 These bijections 
 preserve length and absolute length, where
 the absolute length of an $\NN$-weighted matching
 is its number of edges and the length is its number of edges plus twice the sum of their weights.
The images of the element $z \in \tI_8$ in our running example \eqref{winding-picture} are
\[
\omega_R(z) =\
\begin{tikzpicture}[baseline=0,scale=0.22,label/.style={postaction={ decorate,decoration={ markings, mark=at position .5 with \node #1;}}}]
{
\draw[fill,lightgray] (0,0) circle (4.0);
\node at (2.44929359829e-16, 4.0) {$_\bullet$};
\node at (1.71450551881e-16, 2.8) {$_{1}$};
\node at (2.82842712475, 2.82842712475) {$_\bullet$};
\node at (1.97989898732, 1.97989898732) {$_{2}$};
\node at (4.0, 0.0) {$_\bullet$};
\node at (2.8, 0.0) {$_{3}$};
\node at (2.82842712475, -2.82842712475) {$_\bullet$};
\node at (1.97989898732, -1.97989898732) {$_{4}$};
\node at (2.44929359829e-16, -4.0) {$_\bullet$};
\node at (1.71450551881e-16, -2.8) {$_{5}$};
\node at (-2.82842712475, -2.82842712475) {$_\bullet$};
\node at (-1.97989898732, -1.97989898732) {$_{6}$};
\node at (-4.0, -4.89858719659e-16) {$_\bullet$};
\node at (-2.8, -3.42901103761e-16) {$_{7}$};
\node at (-2.82842712475, 2.82842712475) {$_\bullet$};
\node at (-1.97989898732, 1.97989898732) {$_{8}$};
\draw [-,>=latex,domain=0:100,samples=100,label={[below]{$\ \ 8$}}] plot ({(4.0 + 0.0 * sin(180 * (0.5 + asin(-0.9 + 1.8 * (\x / 100)) / asin(0.9) / 2))) * cos(90 - (90.0 + \x * 0.45))}, {(4.0 + 0.0 * sin(180 * (0.5 + asin(-0.9 + 1.8 * (\x / 100)) / asin(0.9) / 2))) * sin(90 - (90.0 + \x * 0.45))});
\draw [-,>=latex,domain=0:100,samples=100,label={[below]{$1\ \ $}}] plot ({(4.0 + 0.0 * sin(180 * (0.5 + asin(-0.9 + 1.8 * (\x / 100)) / asin(0.9) / 2))) * cos(90 - (180.0 + \x * 0.45))}, {(4.0 + 0.0 * sin(180 * (0.5 + asin(-0.9 + 1.8 * (\x / 100)) / asin(0.9) / 2))) * sin(90 - (180.0 + \x * 0.45))});
\draw [-,>=latex,domain=0:100,samples=100,label={[above]{$2\ $}}] plot ({(4.0 + 0.0 * sin(180 * (0.5 + asin(-0.9 + 1.8 * (\x / 100)) / asin(0.9) / 2))) * cos(90 - (315.0 + \x * 0.45))}, {(4.0 + 0.0 * sin(180 * (0.5 + asin(-0.9 + 1.8 * (\x / 100)) / asin(0.9) / 2))) * sin(90 - (315.0 + \x * 0.45))});
}
\end{tikzpicture} 
\qquand
\omega_L(z) =\ 
\begin{tikzpicture}[baseline=0,scale=0.22,label/.style={postaction={ decorate,decoration={ markings, mark=at position .5 with \node #1;}}}]
{
\draw[fill,lightgray] (0,0) circle (4.0);
\node at (2.44929359829e-16, 4.0) {$_\bullet$};
\node at (1.71450551881e-16, 2.8) {$_{1}$};
\node at (2.82842712475, 2.82842712475) {$_\bullet$};
\node at (1.97989898732, 1.97989898732) {$_{2}$};
\node at (4.0, 0.0) {$_\bullet$};
\node at (2.8, 0.0) {$_{3}$};
\node at (2.82842712475, -2.82842712475) {$_\bullet$};
\node at (1.97989898732, -1.97989898732) {$_{4}$};
\node at (2.44929359829e-16, -4.0) {$_\bullet$};
\node at (1.71450551881e-16, -2.8) {$_{5}$};
\node at (-2.82842712475, -2.82842712475) {$_\bullet$};
\node at (-1.97989898732, -1.97989898732) {$_{6}$};
\node at (-4.0, -4.89858719659e-16) {$_\bullet$};
\node at (-2.8, -3.42901103761e-16) {$_{7}$};
\node at (-2.82842712475, 2.82842712475) {$_\bullet$};
\node at (-1.97989898732, 1.97989898732) {$_{8}$};
\draw [-,>=latex,domain=0:100,samples=100,label={[above]{$8$}}] plot ({(4.0 + 0.0 * sin(180 * (0.5 + asin(-0.9 + 1.8 * (\x / 100)) / asin(0.9) / 2))) * cos(90 - (0.0 + \x * 0.45))}, {(4.0 + 0.0 * sin(180 * (0.5 + asin(-0.9 + 1.8 * (\x / 100)) / asin(0.9) / 2))) * sin(90 - (0.0 + \x * 0.45))});
\draw [-,>=latex,domain=0:100,samples=100,label={[below]{$2$}}] plot ({(4.0 + 0.0 * sin(180 * (0.5 + asin(-0.9 + 1.8 * (\x / 100)) / asin(0.9) / 2))) * cos(90 - (135.0 + \x * 0.45))}, {(4.0 + 0.0 * sin(180 * (0.5 + asin(-0.9 + 1.8 * (\x / 100)) / asin(0.9) / 2))) * sin(90 - (135.0 + \x * 0.45))});
\draw [-,>=latex,domain=0:100,samples=100,label={[above]{$1\ \ $}}] plot ({(4.0 + 0.0 * sin(180 * (0.5 + asin(-0.9 + 1.8 * (\x / 100)) / asin(0.9) / 2))) * cos(90 - (270.0 + \x * 0.45))}, {(4.0 + 0.0 * sin(180 * (0.5 + asin(-0.9 + 1.8 * (\x / 100)) / asin(0.9) / 2))) * sin(90 - (270.0 + \x * 0.45))});
}
\end{tikzpicture}
\]
and indeed it holds that $\ell'(z) = 3$ and $\ell(z ) = 25$.
The proof of Theorem~\ref{main-thm} relies on the construction
of a left and right action of the $0$-Hecke monoid of $\tS_n$ on the set of \emph{weighted involutions},
which may be defined informally as $n$-periodic, $\NN$-weighted matchings on $\ZZ$;
see Section~\ref{weighted-sect}.
Our results
 provide a fourth model for  $\tI_n$,   which makes it easy to count the elements 
 of $ \tI_n$ by length.
As an application, we show (see Corollary~\ref{A-cor}) that 
\be \sum_{z \in \tI_n} q^{\ell(z)} x^{\ell'(z)} =  \sum_{k=0}^{\lfloor n/2 \rfloor} \frac{n}{n-k} \binom{n-k}{k} \( \frac{qx}{1-q^2}\)^k.\ee
This is an analogue of a more complicated identity proved in \cite{MW}.

The $0$-Hecke monoid  of $\tS_n$ also acts directly on $\tI_n$ by \emph{Demazure conjugation}: the right action mapping
$(z,w) \mapsto w^{-1} \circ z \circ w$ for $z \in \tI_n$ and $w \in \tS_n$.
This monoid action is a degeneration of the Iwahori-Hecke algebra representation studied by Lusztig and Vogan in \cite{LV1,LV2}.
The orbit of the identity under Demazure conjugation is all of $\tI_n$, and we define $\cA(z)$ for $z \in \tI_n$ as the set of  elements $w \in \tS_n$ of minimal length such that $z = w^{-1} \circ w$. Following \cite{HMP1,HMP2}, we call these permutations the \emph{atoms} of $z$. There are a few reasons why these elements merit further study, beyond their interesting combinatorial properties. The sets $\cA(z)$ may be defined for involutions in any Coxeter group
and, in the case of finite Weyl groups,  are closely related to the sets
$W(Y)$ which Brion  \cite{Brion98} attaches to $B$-orbit closures $Y$ in a spherical homogeneous space $G/H$ (where $G$ is a connected complex reductive group, $B$ a Borel subgroup, and $H$ a spherical subgroup).
Results of Hultman \cite{H1,H2}, extending work of Richardson and Springer \cite{RichSpring,RichSpring2}, show the atoms to be intimately connected to the Bruhat order of a Coxeter group restricted to its involutions. 
Finally, the atoms of involutions in finite symmetric groups play a central role in recent work
of Can, Joyce, Wyser, and Yong on the geometry of the orbits of the orthogonal group on the type $A$ flag variety; see \cite{CJ,CJW,Wyser,WY}.

Our object in the second half of this paper is to generalise a number of results about the atoms of involutions
in finite symmetric groups to the affine case.
In Section~\ref{dem-sect}, extending results in \cite{HMP2,HuZhang1}, we show that there is a natural partial order which makes $\cA(z)$ for $z \in \tI_n$
into a bounded, graded poset. We conjecture that this poset is a lattice.
Generalising results of Can, Joyce, and Wyser \cite{CJ,CJW}, we show in Section~\ref{cycle-sect} that 
there is an explicit set of inequalities governing the ``one-line'' representation of a permutation in $\tS_n$
which determines whether it belongs to $\cA(z)$. 
This result translates to a ``local'' criterion for membership in $\cA(z)$ involving a notion of \emph{(affine) standardisation}; see Corollary~\ref{std-cor}.
In Section~\ref{ct-sect}, extending \cite{HMP3,Incitti1}, we describe all covering relations in the Bruhat order of $\tS_n$ restricted to $\tI_n$.
Using this information, we prove that involutions in $\tS_n$ have what we call the \emph{Bruhat covering property}:

\begin{theorem}[Bruhat covering property] \label{bruh-cov-thm}
If $y \in \tI_n$ and $t \in \tS_n$ is a reflection, then there exists at most
one $z \in \tI_n$ such that $\left\{wt : w \in \cA(y)\text{ and }\ell(wt)=\ell(w)+1\right\} \cap \cA(z) \neq\varnothing$.
\end{theorem}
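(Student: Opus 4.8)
The plan is to show that the involution $z$, whenever it exists, is determined by $y$ and $t$ alone. The condition in the statement holds for a given $z$ exactly when there is some $w \in \cA(y)$ with $\ell(wt) = \ell(w)+1$ and $wt \in \cA(z)$. In that case $z = (wt)^{-1}\circ(wt)$, and since the sets $\cA(v)$ for $v \in \tI_n$ are pairwise disjoint --- being the minimal-length elements in the (disjoint) fibers of $w \mapsto w^{-1}\circ w$ --- the theorem is equivalent to the assertion that $(wt)^{-1}\circ(wt)$ takes only one value as $w$ ranges over the atoms of $y$ with $\ell(wt)=\ell(w)+1$ and $wt$ itself an atom of some involution.

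I would begin by observing that any such $z$ covers $y$ in the Bruhat order on $\tI_n$. All atoms of an involution $v$ share a common length, which I denote $\ellhat(v)$; thus $\ell(w)=\ellhat(y)$, and since $wt\in\cA(z)$ also $\ellhat(z)=\ell(wt)=\ellhat(y)+1$. A standard monotonicity property of the Demazure product turns the Bruhat inequality $w\le wt$ into $y = w^{-1}\circ w\le (wt)^{-1}\circ(wt)=z$, and because $\ellhat$ is the rank function of the graded poset $(\tI_n,\le)$ (the affine case of Hultman's theorem) this forces $y\lessdot z$. So it suffices to prove that a single reflection $t$ cannot \emph{witness} two distinct covers of $y$, where $t$ witnesses $y\lessdot z$ if $wt\in\cA(z)$ for some $w\in\cA(y)$ with $\ell(wt)=\ell(w)+1$.

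For this I would invoke the explicit classification of the covering relations of $(\tI_n,\le)$ from Section~\ref{ct-sect}. Each cover $y\lessdot z$ is one of a short list of combinatorial moves on the winding diagram of $y$, each supported inside a bounded window of positions and of one of two kinds: either $z=tyt$ with $\ell(z)=\ell(y)+2$ (conjugation by a reflection, preserving the number of arcs) or $z=ty=yt$ with $\ell(z)=\ell(y)+1$ (one arc adjoined). For a fixed such move one can determine, using the local criterion for membership in a set of atoms --- Corollary~\ref{std-cor} together with the one-line inequalities of Section~\ref{cycle-sect} --- exactly which reflections $t=(i,j)$ admit an atom $w\in\cA(y)$ with $\ell(wt)=\ell(w)+1$ and $wt\in\cA(z)$: the only relevant data of $w$ is its restriction to the window, and the covering condition $\ell(wt)=\ell(w)+1$ fixes how the entries in positions $i$ and $j$ sit relative to that window. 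Reading this off should show that the residues $\{i,j\}\bmod n$ and the winding data of $t$ lie in, and indeed pin down, the window and the type of the move; hence $z$ can be recovered from $y$ and $t$.

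The real work --- and the step I expect to be the main obstacle --- is to carry out this analysis uniformly across all the move types and conclude that the sets of witnessing reflections attached to distinct covers of $y$ are pairwise disjoint. One has to be careful about the affine periodicity, about the interplay between the two kinds of cover, and, since $\cA(y)$ is typically large, about the fact that one reflection $t$ may increase the length of some atoms of $y$ while decreasing the length of others; in particular one must check that whenever $t$ does witness a cover, the local configuration at positions $i$ and $j$ is forced, so that two witnessing atoms cannot be compatible with two different $z$. Granting this disjointness, $t$ lies in the witnessing set of at most one cover $z$ of $y$, which is the Bruhat covering property.
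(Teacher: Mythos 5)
Your opening reduction is correct and matches the paper's framing: since all atoms of an involution share the common length $\ellhat$, any admissible $z$ has $\ellhat(z)=\ellhat(y)+1$; the inequality $w\leq wt$ gives $y\leq z$ (via Lemma~\ref{cover-lem}, or the monotonicity of $\circ$ as you say); and Proposition~\ref{graded-prop} then forces $y\lessdot_I z$. The problem is that everything after this point is a plan rather than a proof, and the step you explicitly defer --- showing that a single reflection cannot witness two distinct covers of $y$ --- is the entire content of the theorem. In the paper this is Theorem~\ref{last-thm}: if $w\in\cA(y)$ and $w\lessdot wt_{ij}\in\cA(z)$, then $z=\tau^n_{ij}(y)$, so $z$ is literally a function of $y$ and $t_{ij}$ and uniqueness is immediate. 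The proof of that theorem is a long, delicate case analysis comparing the cycle removal process (Definition~\ref{cyc-def}, Theorem~\ref{cycle-process-thm}) for an atom and for its product with $t_{ij}$, controlled by the one-line criteria of Theorem~\ref{excluded-thm}. Nothing in your proposal substitutes for that analysis.

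Two further points would derail your plan even if you tried to execute it. First, your taxonomy of covers is wrong: the covering transformations $\tau^n_{ij}$ are not limited to $z=t_{ij}yt_{ij}$ and $z=yt_{ij}$. Definition~\ref{tau-def} and Table~\ref{tau-table} include covers by conjugation by a \emph{different} reflection ($t_{i,y(j)}$ or $t_{y(i),j}$), and covers built from $\overline y$ in which two $2$-cycles of $y$ merge into one $2$-cycle of $z$, with $\ell(z)=\ell(y)+3$ and $\ell'(z)=\ell'(y)-1$; your ``uniform analysis across all move types'' therefore starts from an incomplete list. Second, the classification of covering relations in $(\tI_n,<)$ that you propose to invoke is, in this paper, itself a corollary of Theorem~\ref{last-thm}, so as written your argument is circular; you would need to establish that classification independently (as Incitti does in finite type), which is a substantial piece of work absent from the proposal.
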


The analogue of this result for involutions in $S_n$ was shown in \cite{HMP3}, 
and served as a key lemma  in proofs of ``transition formulas'' for certain \emph{involution Schubert polynomials}. We conjecture that the same property holds for arbitrary Coxeter systems, in the following sense.

Let $(W,S)$ be a  Coxeter system with length function $\ell : W \to \NN$
and Demazure product $\circ : W\times W \to W$.
Suppose $w\mapsto w^*$ is an automorphism of $W$ with $S^* = S$.
The corresponding set of \emph{twisted involutions} is  $I_* = \{w \in W : w^{-1}=  w^*\}$.
For $y \in I_*$ let $\cA_*(y)$ be the set of elements of minimal length with $(w^*)^{-1} \circ w = y$,
and write $T = \{ wsw^{-1} : w \in W,\ s \in S\}$.

\begin{conjecture}
If $y \in I_*$ is a twisted involution in an arbitrary Coxeter group and $t \in T$,
then there exists at most one $z \in I_*$ such that $
\left\{wt : w \in \cA_*(y) \text{ and } \ell(wt) = \ell(w)+1\right\} \cap \cA_*(z) \neq \varnothing$.
\end{conjecture}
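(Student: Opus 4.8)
The plan is to reduce the statement to a question about how the Demazure-conjugation map $w\mapsto (w^*)^{-1}\circ w$ interacts with a single Bruhat cover, and then to exploit a factorization that such a cover forces.

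Throughout write $\tau(w) = (w^*)^{-1}\circ w$ for $w\in W$, so that $\tau$ is a well-defined surjection $W\to I_*$ and $\cA_*(y) = \{w : \tau(w)=y \text{ and } \ell(w)\text{ minimal}\}$; all elements of $\cA_*(y)$ share a common length, which I denote $\ell_{\mathrm{at}}(y)$. Two elementary identities drive everything. First, if $\ell(ws)>\ell(w)$ for $s\in S$, then $\tau(ws) = s^*\circ\tau(w)\circ s$: indeed $s^*$ fails to be a left descent of $(w^*)^{-1}$ exactly when $s$ fails to be a right descent of $w$, so that $s^*(w^*)^{-1} = s^*\circ(w^*)^{-1}$ and $ws = w\circ s$, and the claim follows from associativity of $\circ$. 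Second, iterating along a reduced word, if $\ell(wc)=\ell(w)+\ell(c)$ then $\tau(wc) = (c^*)^{-1}\circ\tau(w)\circ c$. Now suppose $w\in\cA_*(y)$, $\ell(wt)=\ell(w)+1$, and $wt\in\cA_*(z)$ for some $z$; then forcibly $z = \tau(wt)$ (since $\cA_*(z)\subseteq\tau^{-1}(z)$) and $\ell_{\mathrm{at}}(z) = \ell(wt) = \ell_{\mathrm{at}}(y)+1$. Hence any $z$ with the property in the conjecture equals $\tau(wt)$ for the corresponding $w$, and it is equivalent to prove: \emph{for any two atoms $w,w'\in\cA_*(y)$ with $w\lessdot wt$, $w'\lessdot w't$, and with $wt,w't$ both atoms, one has $\tau(wt)=\tau(w't)$.}

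The next step is to extract a uniform local description of $\tau(wt)$. Since $w\lessdot wt$, choose a reduced word for $wt$ and delete one letter to obtain a reduced word for $w$; writing $v$ for the prefix, $s\in S$ for the deleted letter, and $c$ for the suffix, we get reduced factorizations $wt = v\cdot s\cdot c$ and $w = v\cdot c$ with $\ell(wt)=\ell(v)+1+\ell(c)$, $\ell(w)=\ell(v)+\ell(c)$, and $t = c^{-1}sc$. The two identities above then give
\[
y \;=\; \tau(w) \;=\; (c^*)^{-1}\circ\tau(v)\circ c,
\qquad
z \;=\; \tau(wt) \;=\; (c^*)^{-1}\circ\bigl(s^*\circ\tau(v)\circ s\bigr)\circ c.
\]
When $t$ is itself simple one may take $c=1$, so $z = s^*\circ y\circ s$ depends only on $(y,t)$ and the statement is immediate; the real content lies in the non-simple case. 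For general $t$ the displayed formulas show that $z$ is obtained from $y$ by a three-step recipe — lower $y$ to the intermediate involution $\tau(v)$, raise by $s$, lower back by $c$ — and the task becomes to show that $c$, $s$, and $\tau(v)$ are, up to the ambiguity that leaves $z$ unchanged, determined by $(y,t)$ alone rather than by the chosen atom $w$.

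The crux — and the main obstacle — is precisely this last point. One wants: among all reduced factorizations $w=vc$ arising as above from atoms $w$ of $y$, with $t=c^{-1}sc$, the intermediate involution $\tau(v)$ is canonically attached to $(y,t)$. The natural normalization is to take $c$ of minimal length (equivalently $v$ of maximal length) subject to $\ell(v)=\ell_{\mathrm{at}}(y)-\ell(c)$ and $(c^*)^{-1}\circ\tau(v)\circ c = y$; the claim then says $y\mapsto\tau(v)$ is a well-defined ``descent along $t$'' in $(I_*,\leq)$, and $z$ is recovered by the inverse operation. For $\tS_n$ this is exactly what the classification of the covering relations of Bruhat order on $\tI_n$ in Section~\ref{ct-sect} provides, the one-line notation and winding-diagram combinatorics making the case analysis tractable; the analogous statements for $S_n$ are due to Incitti and to Hamaker--Marberg--Pawlowski. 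A type-free proof faces two difficulties. First, one lacks a uniform structure theory for $\cA_*(y)$: the excerpt establishes (in type $\tilde A$) only that $\cA_*(y)$ is a bounded graded poset and conjectures it is a lattice, and a type-free classification of covers in $(I_*,\leq)$ together with a ``which atoms witness which cover'' statement does not seem to be available. Second, for nontrivial $*$ the braid-type moves connecting the atoms of a twisted involution include ``initial'' moves with no finite-type analogue, so that even reducing the local step to a bounded (rank $\leq 3$) computation is not automatic. A plausible route around this is to first establish the corresponding gluing statement for the Demazure-conjugation action on all of $I_*$ — that the partial map $w\mapsto\tau(wt)$ on $\{w:\ell(wt)=\ell(w)+1\}$ is suitably locally constant — and then restrict to atoms; alternatively one might hope to deduce the property from the conjectured lattice structure on $\cA_*(y)$ or from a symmetry of the underlying Lusztig--Vogan module.
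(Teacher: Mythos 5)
This statement is left as an open conjecture in the paper (the surrounding text says explicitly that it ``seems harder to prove'' than its finite analogue), so there is no proof of it in the paper for your argument to be measured against. What the paper does prove is the single instance $W=\tS_n$ with trivial $*$, namely Theorem~\ref{bruh-cov-thm}, and it does so by the long combinatorial analysis of Sections~\ref{cycle-sect} and~\ref{ct-sect}: the pattern characterisation of atoms in Theorem~\ref{excluded-thm}, the cycle removal process of Theorem~\ref{cycle-process-thm}, and finally Theorem~\ref{last-thm}, which exhibits the unique candidate explicitly as $z=\tau^n_{ij}(y)$, a formula depending only on $(y,i,j)$. None of that machinery is type-free. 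Your proposal, as you yourself acknowledge in its final paragraph, is not a proof of the conjecture either.

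Your preliminary reductions are correct: the identity $((ws)^*)^{-1}\circ (ws)=s^*\circ\bigl((w^*)^{-1}\circ w\bigr)\circ s$ when $\ell(ws)>\ell(w)$, its iteration along reduced factorizations, the observation that any admissible $z$ is forced to equal $((wt)^*)^{-1}\circ(wt)$, and the disposal of the case $t\in S$. But the displayed expressions for $y$ and $z$ in terms of the triple $(v,s,c)$ are formal consequences of the associativity of $\circ$ and carry no uniqueness information: $(v,s,c)$ depends on the chosen atom $w$ and on the chosen reduced word and deleted letter, and nothing in the argument constrains how $(c^*)^{-1}\circ\bigl(s^*\circ\tau(v)\circ s\bigr)\circ c$ varies as these choices vary. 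The assertion that the intermediate involution $\tau(v)$ (in your notation) is canonically attached to $(y,t)$ is not a normalization to be imposed --- it is essentially the entire content of the conjecture, and your write-up correctly identifies it as the crux and then stops. So the gap is not a subtle misstep but the absence of the main argument; to close it in type $\tilde A$ one needs something like the explicit covering transformations of Section~\ref{ct-sect}, and in general Coxeter type no substitute is currently known.
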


This statement is analogous to \cite[Lemma 21]{LamShim}, but seems harder to prove.
A useful consequence of the new methods in this paper is that we are able to replace 
certain computer dependent proofs for type $A_n$ in \cite{HMP3} by
simpler and more general arguments for type $\tilde A_n$.
Concerning future work, we anticipate that our results will be useful in developing 
a theory of affine involution Stanley symmetric functions, simultaneously generalising \cite{Lam,LamShim} and \cite{HMP1,HMP3,HMP4,HMP5}.

\subsection*{Acknowledgements}

This article grew out of helpful conversations with Brendan Pawlowski and Graham White.

\section*{Notation}

A comprehensive index of symbols is provided in Section~\ref{not-sect}.
Throughout,  $\ZZ$ denotes the set of integers, $\NN$ the set of nonnegative integers,
and $[n]$ the set $\{1,2,\dots,n\}$.
We write $\tS_n$ for the rank $n$ affine symmetric group
and $\tI_n$ for its subset of involutions. Having fixed $n\geq 2$,
we define $s_i \in \tI_n$ for $i \in \ZZ$
as the permutation interchanging $i+mn$ and $i+1+mn$ for   $m \in \ZZ$ while fixing all numbers outside of $\{i,i+1\}+n\ZZ$.
Let $\ell $ denote the length function of $\tS_n$ relative to the generating set $\{ s_i : i \in [n]\}$,
and write $<$ for the (strong) Bruhat order on $\tS_n$.

\section{Preliminaries}\label{prelim-sect}

This section recalls some basic facts about affine symmetric groups.
We omit 
 most proofs, since the properties we mention are either well-known (see, e.g., \cite[\S8.3]{BB} or \cite[\S4]{Humphreys}) or follow as simple exercises.
For any map $w: \ZZ\to\ZZ$, write $\inv(w)$ for the set of pairs $(i,j) \in \ZZ\times \ZZ$ with $i<j$ and $w(i) > w(j)$.
If $w \in \tS_n$ and $(i,j) \in \inv(w)$ then $(i+mn,j+mn) \in \inv (w)$ for all $m \in \ZZ$.



%

\begin{proposition}\label{inv-prop}
Let $w \in \tS_n$. Then $\ell(w)$ is the number of equivalence classes in $\inv(w)$ 
under the relation on $\ZZ\times \ZZ$ generated by $(i,j) \sim (i+n,j+n)$.
\end{proposition}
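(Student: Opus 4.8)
The plan is to reduce the affine statement to the well-known finite case by exhibiting a window of bounded size that captures every equivalence class of inversions exactly once, and then counting reduced words via a standard exchange-condition argument.

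First I would recall the basic bookkeeping: for $w \in \tS_n$, the inversion set $\inv(w) \subseteq \ZZ \times \ZZ$ is stable under the shift $(i,j) \mapsto (i+n,j+n)$ by the observation already noted in the excerpt, so $\ZZ$ acts on $\inv(w)$ and the quotient $\inv(w)/{\sim}$ is well-defined. I would first check this quotient is finite: any inversion $(i,j)$ with $j - i \geq n$ would force, writing $j = j_0 + mn$ with $1 \le j_0 \le n$ and noting $w(i+n)=w(i)+n$, a contradiction with $w(i) > w(j)$ for the shifted pair unless... — more carefully, one shows every $\sim$-class has a representative $(i,j)$ with $i \in [n]$ (translate so the smaller coordinate lands in the window $\{1,\dots,n\}$), and for fixed $i$ there are only finitely many $j > i$ with $w(i) > w(j)$ because $w(j) \to +\infty$ as $j \to +\infty$. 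So $|\inv(w)/{\sim}|$ is finite; call it $c(w)$.

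The core is then to show $\ell(w) = c(w)$. The clean approach is induction on $c(w)$ using the exchange/deletion behavior of the generators $s_i$. The key lemma is: if $c(w) \ge 1$ then there exists $i \in [n]$ with $c(s_i w) = c(w) - 1$ (equivalently $w^{-1}(i) > w^{-1}(i+1)$ for some representative, i.e. $w$ has a left descent), and conversely $c(s_i w) = c(w)+1$ when no such descent occurs at $i$; symmetrically for right multiplication. Concretely, left-multiplying by $s_i$ swaps the values $i+mn \leftrightarrow i+1+mn$, which toggles membership of exactly those $\sim$-classes of pairs $(a,b)$ with $\{w(a),w(b)\} \equiv \{i,i+1\} \pmod n$ appearing "adjacently" — there is exactly one such class, the one corresponding to the pair of positions mapping to $i+1$ and $i$ — so $c(s_i w) = c(w) \pm 1$ always, with the sign $-1$ precisely when that class is currently an inversion. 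Since $c(1) = 0$ and $c(w) > 0$ for $w \neq 1$ (a non-identity affine permutation has some descent, as it cannot be monotonic on all of $\ZZ$ while satisfying the normalization), we can strictly decrease $c$ by left-multiplying by suitable generators, reaching the identity in exactly $c(w)$ steps; this exhibits a word for $w$ of length $c(w)$, so $\ell(w) \le c(w)$. For the reverse inequality $\ell(w) \ge c(w)$, note that each single generator changes $c$ by exactly $\pm 1$, so any word of length $\ell$ for $w$ gives $c(w) = c(s_{i_1}\cdots s_{i_\ell}) \le \ell$; taking a reduced word yields $c(w) \le \ell(w)$.

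I expect the main obstacle to be the precise verification that left-multiplication by $s_i$ changes $c(w)$ by exactly $\pm 1$ — that is, that exactly one $\sim$-class of pairs is toggled and no others are affected. One must carefully track, for the two residues $i, i+1$ modulo $n$, which positions $a < b$ (up to the $\ZZ$-shift) have $\{w(a), w(b)\}$ straddling the transposition, and confirm that only the single class where $w(a) = i+1+mn$ and $w(b) = i+mn$ (for the shift-orbit representative) flips, while classes where both values are congruent to $i$, or both to $i+1$, or only one value is involved, keep their inversion status. This is a finite, local check but requires being disciplined about the periodicity and about choosing canonical representatives of $\sim$-classes; once it is done the induction is routine. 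An alternative to sidestep some of this is to invoke the standard identification $\inv(w)/{\sim} \cong \{\alpha \in \Phi^+ : w^{-1}\alpha \in \Phi^-\}$ for the affine root system $\tilde A_{n-1}$ together with the general Coxeter-theoretic fact $\ell(w) = |\{\alpha \in \Phi^+ : w^{-1}\alpha \in \Phi^-\}|$; but since the excerpt says these facts "follow as simple exercises," I would lean toward the self-contained combinatorial argument above.
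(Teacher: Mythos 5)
Your proposal is correct. Note, though, that the paper gives no proof of this proposition at all: it is listed among the preliminaries that are "either well-known (see, e.g., [BB, \S 8.3] ...) or follow as simple exercises," so there is nothing to compare against except the standard reference. Your argument is essentially the standard one: finiteness of $\inv(w)/{\sim}$, the observation that left-multiplication by $s_i$ toggles exactly the single shift-class of position pairs $\{w^{-1}(i+mn), w^{-1}(i+1+mn)\}$ while leaving every other class's inversion status unchanged (this is the combinatorial shadow of the root-counting identity $\ell(w)=|\Phi^+\cap w^{-1}\Phi^-|$), and then a descent induction giving both inequalities. The two points you flag as needing care --- that only one class flips, and that a non-identity element of $\tS_n$ has a descent (an increasing bijection of $\ZZ$ commuting with translation by $n$ and satisfying the normalisation $\sum_{i=1}^n w(i)=\binom{n+1}{2}$ must be the identity) --- are exactly the right ones, and your sketches of them are sound. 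So the proposal is a complete and correct substitute for the omitted proof.
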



Let $\DesR(w)$ and $\DesL(w)$
denote the right and left descents sets of a permutation $w \in \tS_n$, consisting of the elements $s \in \{s_1,s_2,\dots,s_n\}$
with $\ell(ws)<\ell(w)$ and $\ell(sw)<\ell(w)$, respectively.

\begin{corollary}\label{des-ell-cor}
Let $w \in \tilde S_n$. 
Then $\DesR(w) =  \{ s_i : i \in \ZZ\text{ such that }w(i) > w(i+1)\}$. 
\end{corollary}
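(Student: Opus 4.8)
The plan is to deduce Corollary~\ref{des-ell-cor} directly from Proposition~\ref{inv-prop}. Fix $w \in \tS_n$ and $i \in \ZZ$, and write $s = s_i$, where by periodicity we may and do assume $i \in [n]$ (the statement only involves the residue of $i$ modulo $n$). I want to show that $s \in \DesR(w)$, i.e.\ $\ell(ws) < \ell(w)$, if and only if $w(i) > w(i+1)$. Since $s$ is a simple reflection, right-multiplication by $s$ changes length by exactly one, so $\ell(ws) < \ell(w)$ iff $\ell(ws) = \ell(w) - 1$ iff $\ell(ws) \neq \ell(w)+1$.

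The core computation is to compare $\inv(w)$ with $\inv(ws)$. The map $ws$ sends $j$ to $w(s(j))$, so $ws$ agrees with $w$ except that it swaps the values in positions congruent to $i$ and $i+1$ modulo $n$: for every $m \in \ZZ$, $(ws)(i+mn) = w(i+1+mn)$ and $(ws)(i+1+mn) = w(i+mn)$. A routine case analysis on pairs $(a,b)$ with $a < b$ shows that the only inversions affected are those involving one coordinate $\equiv i$ and the other $\equiv i+1 \pmod n$; in fact the pair $(i+mn, i+1+mn)$ is an inversion of $w$ precisely when it is \emph{not} an inversion of $ws$, and vice versa, while all other inversion classes are merely permuted. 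More precisely, conjugation/right multiplication by $s$ induces a bijection $\inv(w) \setminus \{(i+mn,i+1+mn) : m\} \to \inv(ws) \setminus \{(i+mn,i+1+mn):m\}$ compatible with the equivalence relation $(a,b)\sim(a+n,b+n)$, and the single extra equivalence class $\{(i+mn,i+1+mn) : m \in \ZZ\}$ lies in $\inv(w)$ iff $w(i) > w(i+1)$ and lies in $\inv(ws)$ iff $w(i) < w(i+1)$ (equality is impossible since $w$ is a bijection and $i \not\equiv i+1$). Applying Proposition~\ref{inv-prop} to both $w$ and $ws$, the number of equivalence classes differs by exactly one, and $\ell(ws) = \ell(w) - 1$ exactly when that extra class is present in $\inv(w)$, i.e.\ when $w(i) > w(i+1)$.

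Assembling these pieces: $s_i \in \DesR(w) \iff \ell(ws_i) < \ell(w) \iff w(i) > w(i+1)$, which is the claimed description of $\DesR(w)$ as $\{s_i : i \in \ZZ,\ w(i) > w(i+1)\}$ (the indexing over all of $\ZZ$ rather than just $[n]$ being harmless by $n$-periodicity of $w$ and of the generators).

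The main obstacle is the bookkeeping in the case analysis showing that right multiplication by $s_i$ changes exactly one equivalence class of inversions. One has to be careful that among pairs with one entry $\equiv i$ and the other $\equiv i+1 \pmod n$, it is only the ``adjacent'' pairs $(i+mn, i+1+mn)$ that flip status, while pairs like $(i+1+mn,\ i+(m+1)n)$ and all pairs straddling by more than one period pair up correctly between $\inv(w)$ and $\inv(ws)$; but this is exactly the standard argument that a simple reflection changes length by one, now transported through Proposition~\ref{inv-prop}, so it is routine rather than deep.
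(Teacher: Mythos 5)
Your proof is correct. The paper omits the argument entirely (the preliminaries section declares such facts well-known or simple exercises), and your deduction from Proposition~\ref{inv-prop} --- showing that right multiplication by $s_i$ permutes all inversion classes except the single class $\{(i+mn,i+1+mn): m \in \ZZ\}$, which flips status according to the sign of $w(i+1)-w(i)$ --- is exactly the intended derivation and matches the standard treatment in \cite[\S 8.3]{BB}.
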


For $i < j \not\equiv i \modu n)$, let $t_{ij}= t_{ji} \in \tS_n$ be the permutation
which interchanges $i + mn$ and $j + mn$ for each $m \in \ZZ$ and which fixes all
integers not in $\{i,j\}+n\ZZ$.
Note that $t_{i,i+1} = s_i$.
The elements $t_{ij}$ are precisely the
 \emph{reflections} in $\tS_n$. 
The following is \cite[Proposition 8.3.6]{BB}.



\begin{lemma}\label{bruhat0-lem}
Let $w \in \tS_n$ and $i,j \in \ZZ$ with
 $i <j \not \equiv i \modu n)$.
Suppose $w(i) < w(j)$. Then $\ell(wt_{ij}) \geq  \ell(w)+1$, with equality if and only 
 if no $e \in \ZZ$ satisfies $i<e<j$ and $w(i) < w(e) < w(j)$.
 \end{lemma}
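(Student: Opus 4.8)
The plan is to compute with inversion sets using Proposition~\ref{inv-prop}. Put $u=wt_{ij}$. Because $t_{ij}$ interchanges $i+mn$ with $j+mn$ for every $m\in\ZZ$, the map $u$ agrees with $w$ everywhere except that $u(i+mn)=w(j)+mn$ and $u(j+mn)=w(i)+mn$. Both $\inv(w)$ and $\inv(u)$ are stable under the relation $(a,b)\sim(a+n,b+n)$, hence so are $A:=\inv(u)\setminus\inv(w)$ and $B:=\inv(w)\setminus\inv(u)$; since the quotient map never splits a $\sim$-class, Proposition~\ref{inv-prop} yields $\ell(u)-\ell(w)=|A/{\sim}|-|B/{\sim}|$. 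Moreover a pair $(a,b)$ with $a<b$ can lie in $A\cup B$ only when $\{a,b\}$ meets $\{i,j\}+n\ZZ$, since $u=w$ off that set.

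The observation that powers the inequality is that $w(i)<w(j)$ is the same as $w(i+mn)<w(j+mn)$ for all $m$, so each $(i+mn,j+mn)$ is a non-inversion of $w$ and an inversion of $u$; these pairs form one $\sim$-class $C_0$, whence $C_0\subseteq A$. To deduce $\ell(u)\ge\ell(w)+1$ I would build an injection $B/{\sim}\hookrightarrow(A/{\sim})\setminus\{C_0\}$, sending the class of $(a,b)\in B$ to the class obtained by replacing a coordinate of $\{a,b\}$ lying in $\{i,j\}+n\ZZ$ by its $t_{ij}$-partner — the affine analogue of the transposition count underlying \cite[\S1.5]{BB}. (When $j=i+1$ one has $t_{ij}=s_i$, and the whole statement drops out of Corollary~\ref{des-ell-cor}.) For the equality clause, $\ell(u)=\ell(w)+1$ holds exactly when this injection is onto $(A/{\sim})\setminus\{C_0\}$, and one must show this fails precisely when some $e\in\ZZ$ satisfies $i<e<j$ and $w(i)<w(e)<w(j)$: an admissible $e$ incongruent to $i$ and $j$ mod $n$ visibly contributes a class $[(i,e)]$ to $A$ outside $C_0$ and (for the natural injection) outside its image, one must also handle admissible $e$ congruent to $i$ or $j$ (which can only occur when $j-i>n$, or when $n=2$), and conversely one must extract such an $e$ from the hypothesis $\ell(u)>\ell(w)+1$.

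I expect this last bookkeeping to be the main obstacle, because the affine case is genuinely less rigid than $S_n$. Over $S_n$ one has the exact identity $\ell(wt_{ij})-\ell(w)=1+2\,\#\{e:i<e<j,\ w(i)<w(e)<w(j)\}$, with each admissible $e$ contributing the pair of inversions $(i,e)$ and $(e,j)$. In $\tS_n$ the symmetric difference $\inv(u)\mathbin{\triangle}\inv(w)$ additionally contains ``wraparound'' pairs such as $(i,j+n)$ — which lies in $A$ exactly when $w(j)-w(i)>n$ — and no such closed formula persists: for $n=2$ one can have $\ell(wt_{ij})=\ell(w)+3$ with two admissible values of $e$, in place of the $\ell(w)+5$ predicted by the naive count, and then the unmatched classes of $A$ are not represented by pairs $(i,e)$ or $(e,j)$ at all. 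So the real work is in verifying that, once $C_0$ is removed, the wraparound contributions to $A$ and to $B$ cancel under the injection, with the surviving discrepancy detected exactly by the ``between'' condition; the hypothesis $i\not\equiv j\pmod n$, which guarantees that $t_{ij}$ is a reflection and keeps the residues of $i$ and $j$ distinct, is used throughout. Alternatively, one may simply quote \cite[Proposition~8.3.6]{BB}.
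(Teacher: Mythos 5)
The paper offers no proof of this lemma at all: it is introduced with the single line ``The following is \cite[Proposition 8.3.6]{BB}'', so your closing sentence --- simply quoting that reference --- is exactly the paper's route, and on that basis the proposal is acceptable. Your sketched inversion-counting argument is also pointed in the right direction, and your specific observations check out: the symmetric difference of $\inv(w)$ and $\inv(wt_{ij})$ is supported on pairs meeting $\{i,j\}+n\ZZ$, the pair $(i,j+n)$ lies in $A$ exactly when $w(j)-w(i)>n$, and in $\tilde S_2$ one indeed has $\ell(t_{1,4})=3$ despite two admissible values of $e$, so the closed formula from the finite case genuinely fails. But be aware that, taken on its own, the sketch is not a proof: the injection $B/{\sim}\hookrightarrow (A/{\sim})\setminus\{C_0\}$ is described only loosely (``replace a coordinate by its $t_{ij}$-partner'') and is never shown to be well defined or injective, and the equivalence of its failure to be surjective with the existence of an intermediate $e$ --- which is the entire content of the lemma --- is explicitly deferred. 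If you want a self-contained argument you must carry out that bookkeeping; otherwise the citation suffices, which is all the paper does.
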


Note for  $z \in \tI_n = \{ w \in \tS_n : w^2=1\}$ and $ i \in \ZZ$ that $z(i) \equiv i \modu n)$ if and only if $z(i) =i$.
 

\begin{lemma}\label{+2lem}
If $i \in \ZZ$ and $z \in \tI_n$ and $i\neq z(i)<z(i+1) \neq i+1$ then $\ell(s_i z s_i) = \ell(z) + 2$.
\end{lemma}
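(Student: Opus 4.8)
The plan is to break the claim into two unit increments, $\ell(s_i z s_i) = \bigl(\ell(z)+1\bigr)+1$. The first increment is immediate: the hypothesis $z(i)<z(i+1)$ says precisely that $s_i \notin \DesR(z)$, so Corollary~\ref{des-ell-cor} gives $\ell(zs_i)=\ell(z)+1$. The rest of the work is to show $\ell\bigl(s_i(zs_i)\bigr)=\ell(zs_i)+1$, i.e.\ that $s_i \notin \DesL(zs_i)$.

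To handle the left-descent condition I would pass to inverses. Since $\ell(s_i w)=\ell(w^{-1}s_i)$ and $\ell(w)=\ell(w^{-1})$, Corollary~\ref{des-ell-cor} applied to $w^{-1}$ shows that $s_i \in \DesL(w)$ if and only if $w^{-1}(i)>w^{-1}(i+1)$. Taking $w=zs_i$ and using that $z$ is an involution, so that $w^{-1}=(zs_i)^{-1}=s_i^{-1}z^{-1}=s_i z$, the condition $s_i \notin \DesL(zs_i)$ becomes exactly the inequality $s_i\bigl(z(i)\bigr)<s_i\bigl(z(i+1)\bigr)$. So everything reduces to comparing the two values $s_i(z(i))$ and $s_i(z(i+1))$.

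The crux is a short observation about how $s_i$ displaces these values. The remark just before the lemma states that for an involution $z(j)\equiv j \modu n)$ forces $z(j)=j$; hence the hypotheses $z(i)\ne i$ and $z(i+1)\ne i+1$ give $z(i)\not\equiv i \modu n)$ and $z(i+1)\not\equiv i+1 \modu n)$. Now $s_i$ sends $a\mapsto a+1$ when $a\equiv i \modu n)$, sends $a\mapsto a-1$ when $a\equiv i+1 \modu n)$, and fixes $a$ otherwise. The first congruence therefore forces $s_i\bigl(z(i)\bigr)\le z(i)$ (it is fixed, or decreased by one if $z(i)\equiv i+1 \modu n)$), and the second forces $s_i\bigl(z(i+1)\bigr)\ge z(i+1)$. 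Combining these with the hypothesis $z(i)<z(i+1)$ yields
\[
s_i\bigl(z(i)\bigr)\ \le\ z(i)\ <\ z(i+1)\ \le\ s_i\bigl(z(i+1)\bigr),
\]
which is the inequality needed above. Adding the two increments gives $\ell(s_i z s_i)=\ell(z)+2$.

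I do not expect a genuine obstacle here: the proof is essentially two applications of the descent formula glued together. The only two points that need care are (i) rewriting the left-descent condition on $zs_i$ as a condition on $z$ itself using the involution identity $(zs_i)^{-1}=s_i z$, and (ii) invoking the congruence remark to fix the \emph{direction} in which $s_i$ moves each of $z(i)$ and $z(i+1)$ — note that this is exactly where the two "$\ne$" hypotheses of the lemma are used.
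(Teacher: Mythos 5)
Your proof is correct. The paper does not actually supply an argument for this lemma --- it is among the statements dismissed in Section~\ref{prelim-sect} as well-known or ``simple exercises'' --- and your two-step verification (the hypothesis $z(i)<z(i+1)$ gives $\ell(zs_i)=\ell(z)+1$ via Corollary~\ref{des-ell-cor}, and then passing to $(zs_i)^{-1}=s_iz$ together with the observation that $z(i)\not\equiv i$ and $z(i+1)\not\equiv i+1$ modulo $n$ forces $s_i(z(i))\leq z(i)<z(i+1)\leq s_i(z(i+1))$) is exactly the intended routine check, with each step valid.
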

%

For $z \in \tI_n$, let $\cC(z) = \{  (i,j) \in \ZZ\times \ZZ : i < j = z(i)\}$, so that $\cC(z)$ is the set of ordered 2-cycles of $z$.
If $(i,j) \in \cC(z)$ then $(i+mn, j+mn) \in \cC(z)$ for all $m \in \ZZ$.

\begin{definition}\label{abs-len-def}
Define $\ell'(z)$ for $z \in \tI_n$ as the number of equivalence classes in $\cC(z)$ under the relation on $\ZZ\times \ZZ$
generated by $(i,j) \sim (i+n,j+n)$.
\end{definition}

Hultman shows in \cite{H1} that  $\ell'$ is the \emph{absolute length function} on $\tS_n$ restricted to $\tI_n$:
 the function which returns the minimum number of reflections whose product is a given permutation.

\begin{lemma}\label{ell'lem}
If  $i$ and $i+1$ are fixed points of $z \in \tI_n$ then $zs_i  \in \tI_n$ and $\ell'(zs_i) = \ell'(z)+1$.
\end{lemma}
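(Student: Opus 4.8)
The plan is to first check that $zs_i$ is again an involution in $\tS_n$, and then to track how the $2$-cycles change under right multiplication by $s_i$. For the first point, the key observation is that $s_i$ and $z$ commute. Indeed, since $i$ and $i+1$ are fixed points of $z$ and $z \in \tI_n$, the $n$-periodicity $z(a+n) = z(a)+n$ forces every element of $\{i,i+1\}+n\ZZ$ to be a fixed point of $z$; thus $z$ restricts to a permutation of $\ZZ \setminus (\{i,i+1\}+n\ZZ)$, while $s_i$ fixes this set pointwise and only permutes $\{i,i+1\}+n\ZZ$. Disjointness of their supports gives $s_i z = z s_i$, hence $(zs_i)^2 = z s_i z s_i = z^2 s_i^2 = 1$, and since $z, s_i \in \tS_n$ we get $zs_i \in \tI_n$.

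Next I would describe $\cC(zs_i)$ explicitly. Because $zs_i$ agrees with $z$ on $\ZZ \setminus (\{i,i+1\}+n\ZZ)$ and sends $i+mn \mapsto i+1+mn \mapsto i+mn$ for each $m \in \ZZ$, and because no $2$-cycle of $z$ meets $\{i,i+1\}+n\ZZ$ (those integers are fixed by $z$), we obtain the disjoint union
\[
\cC(zs_i) = \cC(z) \ \sqcup\ \bigl\{ (i+mn,\ i+1+mn) : m \in \ZZ \bigr\}.
\]
Here I use that $n \geq 2$, so $i \not\equiv i+1 \modu n)$ and the pairs $(i+mn, i+1+mn)$ genuinely record the cycle $(i+mn,\, i+1+mn)$ of the involution $zs_i$.

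Finally I would count equivalence classes under the relation $(a,b) \sim (a+n, b+n)$ from Definition~\ref{abs-len-def}. The set $\cC(z)$ contributes exactly $\ell'(z)$ classes, and the extra set $\{(i+mn, i+1+mn) : m \in \ZZ\}$ is a single orbit of the shift $(a,b)\mapsto(a+n,b+n)$, contributing exactly one more class; moreover an element of $\cC(z)$ is never shift-equivalent to one of the new pairs since the two sets are disjoint and shift-invariant. Hence $\ell'(zs_i) = \ell'(z) + 1$, as claimed. There is no serious obstacle here; the only point needing care is the commuting argument and the verification that the new $2$-cycles form a single shift-orbit disjoint from $\cC(z)$, both of which follow directly from the hypothesis that $i$ and $i+1$ (and hence all of $\{i,i+1\}+n\ZZ$) are fixed points of $z$.
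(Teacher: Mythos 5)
Your argument is correct and is exactly the routine verification the paper has in mind when it omits the proof of this lemma as a "simple exercise": the hypothesis forces all of $\{i,i+1\}+n\ZZ$ to be fixed by $z$, so $z$ and $s_i$ commute, $zs_i\in\tI_n$, and $\cC(zs_i)$ is $\cC(z)$ together with one new shift-orbit of $2$-cycles, giving $\ell'(zs_i)=\ell'(z)+1$. No gaps.
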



\begin{lemma}\label{commuting-product-lem}
Let $z \in \tI_n$. Then $\ell(z) = \ell'(z)$ if and only if $z$ is a product of commuting simple generators,
i.e., $z=s_{i_1}s_{i_2}\cdots s_{i_l}$ for distinct indices satisfying $i_j \not \equiv i_k \pm 1 \modu n)$ for all $j,k \in [l]$.
\end{lemma}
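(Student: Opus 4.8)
The plan is to prove the two implications separately: the reverse one is short, and the forward one goes by induction on $\ell(z)$, reducing to a combinatorial claim about the $2$-cycles of $z$.

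\emph{Reverse implication.} If $z=s_{i_1}\cdots s_{i_l}$ with the $i_j$ pairwise distinct and with $i_j\not\equiv i_k\pm1\pmod n$ for all $j,k$, then $s_{i_1},\dots,s_{i_l}$ commute pairwise, so $z$ is an involution and, as a bijection of $\ZZ$, equals the product of the pairwise disjoint transpositions $(i_k+mn,\,i_k+1+mn)$ over $k\in[l]$ and $m\in\ZZ$ — disjointness being exactly the stated congruence condition. Thus $\cC(z)$ has exactly $l$ classes under $(i,j)\sim(i+n,j+n)$, so $\ell'(z)=l$ by Definition~\ref{abs-len-def}. Since the given word gives $\ell(z)\le l$ and the Coxeter length always dominates the absolute length (a reduced word exhibits $z$ as a product of $\ell(z)$ reflections), we get $l=\ell'(z)\le\ell(z)\le l$, so all inequalities are equalities.

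\emph{Forward implication.} Induct on $\ell(z)=\ell'(z)=:l$; the case $l=0$ is $z=1$. The crux is the claim that \emph{if $z\ne1$ and $\ell(z)=\ell'(z)$, then $z$ has a $2$-cycle of the form $(i,i+1)$}. Granting this: $s_i$ commutes with $z$, so $z':=s_iz=zs_i$ is an involution fixing $i$ and $i+1$; then $s_i\notin\DesR(z')$, so $\ell(z)=\ell(z's_i)=\ell(z')+1$ by Corollary~\ref{des-ell-cor}, and Lemma~\ref{ell'lem} applied to $z'$ gives $\ell'(z)=\ell'(z')+1$, whence $\ell(z')=\ell'(z')=l-1$. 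By induction $z'=s_{j_1}\cdots s_{j_{l-1}}$ with distinct, pairwise non-adjacent indices. As $z'$ is $n$-periodic and fixes $i$ and $i+1$, it fixes all of $\{i,i+1\}+n\ZZ$; were $i\equiv j_k$ or $i\equiv j_k\pm1\pmod n$ for some $k$, the $2$-cycle orbit of $(j_k,j_k+1)$ in $z'$ would move a point of $\{i,i+1\}+n\ZZ$, a contradiction. Hence $z=s_{j_1}\cdots s_{j_{l-1}}s_i$ is of the required form.

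\emph{Proof of the claim, and the main difficulty.} Choose a $2$-cycle $(a,b)$ of $z$ with $b-a$ minimal and suppose $b\ge a+2$; it is enough to show $\ell(z)\ge\ell'(z)+2$, which contradicts $\ell(z)=\ell'(z)$. By Proposition~\ref{inv-prop} and Definition~\ref{abs-len-def}, $\ell(z)$ and $\ell'(z)$ are the numbers of classes of $\inv(z)$ and of $\cC(z)$ modulo $(i,j)\sim(i+n,j+n)$; the pairs $(a',b')$ with $a'<b'=z(a')$ realize $\ell'(z)$ distinct classes of $\inv(z)$, one per class of $\cC(z)$, and every element of such a class has the form $(p,z(p))$. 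Put $c:=a+1\in(a,b)$: by minimality $z(c)\notin(a,b)$, so $z(c)=c$, or $z(c)=d<a$, or $z(c)=d>b$, and in these cases the pairs $\{(a,c),(c,b)\}$, $\{(a,c),(d,b)\}$, $\{(a,d),(c,b)\}$ respectively all lie in $\inv(z)$ (a direct check using $z(a)=b$, $z(b)=a$, $z(c)=z(a+1)$, $z(d)=a+1$). For each such pair $(p,q)$ we have $z(p)\ne q$ (e.g. $z(a)=b\ne a+1$; and $z(a+1)\ne b$ since $z(b)=a\ne a+1$), so by $n$-periodicity its $\sim$-class contains no pair of the form $(p',z(p'))$ and hence is none of the $\ell'(z)$ classes already found; moreover the two new pairs in each case lie in distinct $\sim$-classes, since the coordinate difference is $\sim$-invariant and, in the one situation where both differences equal $1$, a $\sim$-equivalence would require a shift by $\pm1$, impossible as $n\ge2$. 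Therefore $\inv(z)$ has at least $\ell'(z)+2$ classes. The one genuinely delicate point is this last bookkeeping — confirming that the inversions produced really lie in new classes and are distinct from one another modulo $n$-translation; once Proposition~\ref{inv-prop} and Definition~\ref{abs-len-def} are in hand it is elementary, but it does require care.
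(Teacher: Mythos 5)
Your proof is correct, and it is worth noting that the paper itself offers no proof of this lemma: it is among the preliminary facts in Section~\ref{prelim-sect} whose proofs are omitted as well-known or as simple exercises, so there is no argument of the author's to compare against. Your structure --- the easy reverse direction via $\ell'(z)\le\ell(z)\le l=\ell'(z)$, and the forward direction by induction, peeling off a cycle of the form $(i,i+1)$ and using Corollary~\ref{des-ell-cor} and Lemma~\ref{ell'lem} to drop $\ell$ and $\ell'$ by one simultaneously --- is a complete and standard way to fill in the exercise, and the key claim (that a $2$-cycle of minimal width in an involution with $\ell=\ell'$ must have adjacent endpoints) is correctly established by exhibiting, in each of the three cases for $z(a+1)$, two inversion classes that are not of the form $(p,z(p))$ and hence lie outside the $\ell'(z)$ classes coming from $\cC(z)$. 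One cosmetic slip: you assert ``by minimality $z(c)\notin(a,b)$,'' but in the fixed-point case $z(c)=c=a+1$ does lie in $(a,b)$; what minimality actually yields is that either $z(c)=c$ or $z(c)\notin[a,b]$, which is exactly the trichotomy you proceed to use, so nothing downstream is affected.
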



Say that $j \in \ZZ$ is a \emph{left endpoint} of $z \in \tI_n$ if $j < z(j)$,
a \emph{right endpoint} if $z(j) < j$, and a \emph{fixed point} if $z(j) = j$.
Let $w \in \tS_n$ act on $\ZZ\times \ZZ$ by $w(a,b) = (w(a),w(b))$.

\begin{lemma}\label{triv-lem}
Let $z \in \tI_n$. If
$(i,i+1) \notin \cC(z)$
then $\cC(s_i zs_i) =s_i \cC(z)$, and otherwise $\cC(z) = \cC(s_i zs_i)$.
\end{lemma}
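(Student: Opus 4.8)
The statement concerns how the set of ordered 2-cycles $\cC(z)$ of an involution $z \in \tI_n$ behaves under conjugation by a simple generator $s_i$. The plan is to split into two cases according to whether $(i,i+1)$ is itself a 2-cycle of $z$, and in each case compute $s_i z s_i$ directly from the defining rule $z(i)=j \neq i = z(j)$.

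First I would record the basic conjugation formula: for any $w \in \tS_n$ and any involution $z$, one has $(wzw^{-1})(a) = w(z(w^{-1}(a)))$, so since $s_i^{-1}=s_i$, the pair $(a,b)$ lies in $\cC(s_i z s_i)$ iff $a < b$ and $s_i(z(s_i(a))) = b$, equivalently $z(s_i(a)) = s_i(b)$ with $s_i(a) \neq s_i(b)$. Thus $(a,b) \in \cC(s_i z s_i)$ iff $\{s_i(a), s_i(b)\}$ is a 2-cycle pair of $z$ — i.e. iff $(s_i(a), s_i(b))$ or $(s_i(b), s_i(a))$ lies in $\cC(z)$. So as \emph{sets of unordered pairs}, the 2-cycles of $s_i z s_i$ are exactly the images under $s_i$ of the 2-cycles of $z$; the only subtlety is the ordering (which coordinate is smaller), since $s_i$ does not preserve the order relation on all of $\ZZ$ — it only swaps $i+mn \leftrightarrow i+1+mn$.

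Now the case split. \textbf{Case 1: $(i,i+1) \notin \cC(z)$.} I claim $\cC(s_i z s_i) = s_i \cC(z)$, where $s_i$ acts coordinatewise via Lemma's convention $w(a,b)=(w(a),w(b))$. For this I need: whenever $(a,b) \in \cC(z)$ (so $a<b$), the pair $(s_i(a), s_i(b))$ still has first coordinate smaller. Since $s_i$ only moves integers in $\{i,i+1\}+n\ZZ$ and swaps $i+mn$ with $i+1+mn$, the order of $s_i(a)$ and $s_i(b)$ can differ from the order of $a,b$ only if $\{a,b\} = \{i+mn, i+1+mn\}$ for some $m$; but $(a,b) \in \cC(z)$ with $\{a,b\}=\{i+mn,i+1+mn\}$ forces $(i,i+1)\in\cC(z)$ by $n$-periodicity of $\cC(z)$, contradicting the case hypothesis. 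Hence $s_i$ preserves order on every 2-cycle pair of $z$, and combined with the formula from the previous paragraph we get $\cC(s_i z s_i) = s_i\cC(z)$ exactly. \textbf{Case 2: $(i,i+1) \in \cC(z)$.} Then $z$ fixes no element of $\{i,i+1\}+n\ZZ$ pointwise in any other cycle, and one checks $s_i z s_i = z$ directly: conjugating the transposition $(i+mn, i+1+mn)$-type 2-cycle by $s_i$ returns the same 2-cycle, and $s_i$ fixes all integers outside $\{i,i+1\}+n\ZZ$, which carry all the other cycles of $z$ untouched. (Alternatively: $s_i$ commutes with $z$ because $z$ restricted to $\{i,i+1\}+n\ZZ$ equals the product of transpositions defining $s_i$, and $z$ preserves the complement.) Therefore $\cC(s_i z s_i) = \cC(z)$.

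The only mildly delicate point — and the one I would write out most carefully — is the order-preservation argument in Case 1: one must verify that $s_i$, despite not being order-preserving globally, does preserve the relative order within each 2-cycle $\{a,b\}$ of $z$ precisely because the one bad configuration ($\{a,b\}$ equal to an adjacent pair $\{i,i+1\}+n\ZZ$) is excluded by the hypothesis. Everything else is a routine unwinding of definitions using the $n$-periodicity of $\cC(z)$ noted just before the lemma. I do not anticipate any real obstacle.
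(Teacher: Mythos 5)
Your proof is correct and is exactly the routine verification the paper intends: this lemma is one of those for which the paper omits the proof as a "simple exercise," and your case analysis (conjugation formula plus the observation that $s_i$ can only reverse the order of a pair $\{a,b\}$ when $\{a,b\}=\{i+mn,i+1+mn\}$, which is excluded unless $(i,i+1)\in\cC(z)$, in which case $s_izs_i=z$) is the standard argument. No gaps.
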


Thus $\ell'(wzw^{-1}) = \ell'(z)$ for all $w \in \tS_n$ and $z \in \tI_n$.
Our last two lemmas are more technical:

\begin{lemma}\label{leftright-bruhat-lem}
Let $z \in \tI_n$ and $i,j \in \ZZ$ with $i < j \not\equiv i \modu n)$.
Assume $j \neq z(i) < z(j) \neq i$.
 If  $i$ is a right endpoint of $z$ or  if $j$ is left endpoint 
 then $\ell(t_{ij}zt_{ij}) > \ell(t_{ij}z) = \ell(zt_{ij})> \ell(z)$.
 \end{lemma}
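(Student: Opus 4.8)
The plan is to reduce the claimed chain of inequalities to the two earlier lemmas on how length changes under one-sided and two-sided multiplication by a reflection. Write $t = t_{ij}$. The hypothesis $j \neq z(i) < z(j) \neq i$ together with $i<j\not\equiv i\pmod n$ is exactly the setup of Lemma~\ref{bruhat0-lem} applied to $w = z$, and also — by symmetry of the roles of rows and columns under the involution $z$ — the setup needed to control $\ell(tz)$ versus $\ell(z)$. First I would establish the two ``one-sided'' statements $\ell(zt) > \ell(z)$ and $\ell(tz) > \ell(z)$, and in fact show $\ell(zt) = \ell(tz)$. The equality $\ell(zt) = \ell(tz)$ is immediate: $zt$ and $tz$ are inverses of each other (since $z = z^{-1}$ and $t = t^{-1}$), and $\ell$ is inversion-invariant on $\tS_n$. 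So it suffices to prove $\ell(zt) > \ell(z)$; then $\ell(tz) = \ell((zt)^{-1}) = \ell(zt) > \ell(z)$ follows for free, and we only need the hypothesis ``$i$ is a right endpoint of $z$ or $j$ is a left endpoint'' to upgrade the weak inequality from Lemma~\ref{bruhat0-lem} to a strict one, or rather to guarantee we are in the $>\ell(z)$ rather than $=\ell(z)+1$-with-a-bad-$e$ situation. Actually Lemma~\ref{bruhat0-lem} already gives $\ell(zt_{ij}) \geq \ell(z)+1 > \ell(z)$ outright from $z(i) < z(j)$, so $\ell(zt) > \ell(z)$ and $\ell(tz) > \ell(z)$ need no extra hypothesis at all.

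The substantive part is the top inequality $\ell(tzt) > \ell(tz)$, and this is where the endpoint hypothesis does real work. I would apply Lemma~\ref{bruhat0-lem} again, this time to the permutation $w = tz = zt_{ji}$... more precisely, I want to compare $\ell\big((tz)\,t\big)$ with $\ell(tz)$, i.e.\ left-multiply ... no: $tzt = (tz)t$, so I apply the lemma to $w = tz$ with the reflection $t = t_{ij}$, and I need to know the relative order of $(tz)(i)$ and $(tz)(j)$. Compute $tz$ on $i$ and $j$: since $z(i) \neq j$ and $z(j) \neq i$, neither $z(i)$ nor $z(j)$ lies in $\{i,j\} + n\ZZ$ (using $z \in \tI_n$ and the standing congruence assumptions), so $t$ fixes both $z(i)$ and $z(j)$, giving $(tz)(i) = z(i) < z(j) = (tz)(j)$. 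Hence Lemma~\ref{bruhat0-lem} applies and yields $\ell(tzt) \geq \ell(tz) + 1 > \ell(tz)$, again with no further hypothesis needed — wait, that cannot be, since then the endpoint hypothesis is vacuous. So I expect the real content is subtler: the chain as stated is $\ell(t_{ij}zt_{ij}) > \ell(t_{ij}z) = \ell(zt_{ij}) > \ell(z)$, and possibly the point is that \emph{all three} relations are strict and simultaneously consistent, i.e.\ that we are genuinely in the ``$\ell$ goes up by exactly one at each of the two conjugation steps'' regime rather than up-by-two somewhere, which is what the endpoint hypothesis controls via the ``no intermediate $e$'' clause of Lemma~\ref{bruhat0-lem}.

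Concretely, the main obstacle I anticipate is verifying the ``no bad intermediate index'' condition in the two applications of Lemma~\ref{bruhat0-lem}, using the hypothesis that $i$ is a right endpoint ($z(i) < i$) or $j$ is a left endpoint ($j < z(j)$). For the step $\ell(zt) = \ell(z) + 1$: I need no $e$ with $i < e < j$ and $z(i) < z(e) < z(j)$. If $i$ is a right endpoint then $z(i) < i < e$; I would argue that any such $e$ with $z(e) > z(i)$ and $z(e) < z(j)$, combined with $z$ being an involution, forces a configuration contradicting $i<j\not\equiv i$ — specifically pairing up $e$ with $z(e)$ and $i$ with $z(i)$ and using planarity/crossing constraints of the 2-cycles. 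For the step $\ell(tzt) = \ell(tz) + 1$: here I need no $e$ with $i < e < j$ and $(tz)(i) < (tz)(e) < (tz)(j)$, and I would compute $(tz)(e)$ explicitly: it equals $z(e)$ unless $z(e) \in \{i,j\}+n\ZZ$. This case analysis — pushing the two-cycle structure of $z$ through the conjugation by the transposition $t_{ij}$ — is the routine-but-fiddly core. I would organize it by the three sub-cases of where $z(e)$ sits relative to $\{i,j\}+n\ZZ$, use that $z$ has all cycles of length $\leq 2$, and in each sub-case derive a contradiction with either the right-endpoint or left-endpoint hypothesis, thereby ruling out any bad $e$ and pinning both conjugation steps to increments of exactly one.
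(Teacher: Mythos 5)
Your first step is correct and is exactly what the paper does: $t_{ij}z$ and $zt_{ij}$ are mutually inverse, so $\ell(t_{ij}z)=\ell(zt_{ij})$, and $\ell(zt_{ij})>\ell(z)$ follows from Lemma~\ref{bruhat0-lem} because $z(i)<z(j)$, with no use of the endpoint hypothesis. The gap is in your treatment of the top inequality. You assert that $z(i)\neq j$ and $z(j)\neq i$ force $z(i),z(j)\notin\{i,j\}+n\ZZ$, so that $t_{ij}$ fixes both values and $(t_{ij}z)(i)=z(i)<z(j)=(t_{ij}z)(j)$ comes for free. That implication is false: the hypotheses do not rule out $z(i)\equiv j\modu n)$ with $z(i)\neq j$ (and then $z(j)\equiv i\modu n)$), nor do they rule out $z(j)=j$ or $z(i)=i$, and a fixed point of $z$ lying in $\{i,j\}+n\ZZ$ is certainly moved by $t_{ij}$. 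A concrete failure: take $n=4$, $z=t_{1,2}$, $(i,j)=(1,4)$. Every hypothesis except the endpoint condition holds, yet $(t_{14}z)(1)=2>1=(t_{14}z)(4)$ and $\ell(t_{14}zt_{14})=3<4=\ell(t_{14}z)$. So the endpoint hypothesis is not vacuous; it is needed precisely to establish $(t_{ij}z)(i)<(t_{ij}z)(j)$, and not, as your second and third paragraphs suggest, to control the ``no intermediate $e$'' clause of Lemma~\ref{bruhat0-lem}. Since the lemma asserts only strict inequalities, the $\geq\ell+1$ half of Lemma~\ref{bruhat0-lem} suffices once the relative order of $(t_{ij}z)(i)$ and $(t_{ij}z)(j)$ is known, and your planned analysis of intermediate indices $e$ addresses a problem that does not arise.

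The repair is a short case analysis on where $z(i)$ and $z(j)$ sit relative to $\{i,j\}+n\ZZ$, which is what the paper's proof consists of. Assume $z(i)<i$ (the case $j<z(j)$ is symmetric). Then $z(i)\not\equiv i\modu n)$. If $z(i)\equiv j\modu n)$, then $z(j)\equiv i\modu n)$ and one checks directly that $t_{ij}z(i)<z(i)<z(j)<t_{ij}z(j)$. Otherwise $z(j)\not\equiv i\modu n)$, and either $z(j)=j$, in which case $t_{ij}z(i)=z(i)<i=t_{ij}z(j)$ (this is exactly where $z(i)<i$ is used), or $z(j)\not\equiv j\modu n)$ and $t_{ij}$ genuinely fixes both values. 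In every case $(t_{ij}z)(i)<(t_{ij}z)(j)$, and Lemma~\ref{bruhat0-lem} applied to $w=t_{ij}z$ gives $\ell(t_{ij}zt_{ij})\geq\ell(t_{ij}z)+1$.
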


\begin{proof}
Note that $\ell(t_{ij}z) = \ell(zt_{ij}) > \ell(z)$.
Assume $z(i)<i$.
Since $z(i) < i$, we cannot have $z(i) \equiv i \modu n)$.
If $z(i) \equiv j \modu n)$ then $z(j) \equiv i \modu n)$
so $t_{ij}z(i) < z(i) < z(j) < t_{ij}z(j)$.
If $z(i) \not \equiv j \modu n)$ then $z(j) \not \equiv i \modu n)$,
so either $z(j) = j$ and $t_{ij}z(i) = z(i) < i = t_{ij}z(j)$,
 or $z(j) \not \equiv j \modu n)$
 and $t_{ij}z(i) = z(i) < z(j) = t_{ij}z(j)$.
We deduce that  $\ell(t_{ij} zt_{ij}) > \ell(t_{ij}z) > \ell(z)$.
When $j<z(j)$, the same conclusion follows by a similar argument.
\end{proof}

\begin{lemma}\label{conj-cover-lem}
Let $z \in \tI_n$ and $i,j \in \ZZ$ with $i < j \not\equiv i \modu n)$.
Suppose that either $i,i+1,\dots,j-1$ are 
all right endpoints of $z$ or $i+1,\dots,j-1,j$ are all left endpoints of $z$.
Then:
 \ben
\item[(a)] $\ell(t_{ij}z t_{ij}) > \ell(t_{ij}z)> \ell(z)$ if  $z(i) < z(j)$ 
and
 $\ell(t_{ij}z t_{ij}) < \ell(t_{ij}z) < \ell(z)$ if $z(i) > z(j)$.

\item[(b)] 
$\ell(t_{ij}zt_{ij}) = \ell(z)+2$
 if and only if $\ell(zt_{ij}) = \ell(z) + 1$.
 \een
\end{lemma}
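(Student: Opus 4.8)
The plan is to reduce everything to the one-sided length statements in Lemmas~\ref{bruhat0-lem} and~\ref{leftright-bruhat-lem}, together with a careful bookkeeping of how the reflection $t_{ij}$ acts on $\cC(z)$. By symmetry (replacing $z$ by $z^{-1}=z$ and reflecting the picture, i.e.\ conjugating by the longest element locally) it suffices to treat the case in which $i,i+1,\dots,j-1$ are all right endpoints of $z$; the case where $i+1,\dots,j-1,j$ are all left endpoints follows by the mirror-image argument, and I would simply remark on this rather than repeat it. Under the right-endpoint hypothesis we have $z(e) < e$ for $i \le e \le j-1$, and in particular $z(i) < i < j$; note also $z(j)$ is unconstrained except that $j \not\equiv i \pmod n$ forces $z(j) \neq z(i)$ and $z(i) \not\equiv j \pmod n$ except possibly when $z(i) \equiv j$, which would make $j$ a left endpoint paired with $i$ — a case allowed by the hypothesis and handled uniformly below.

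For part (a), suppose first $z(i) < z(j)$. Since $z(e) < e \le j-1 < j$ for all $e$ with $i < e < j$, and $z(i) < i$, there is no index $e$ with $i < e < j$ and $z(i) < z(e) < z(j)$: indeed $z(e) < e$ and we would need $z(e) > z(i)$; but the values $z(e)$ for $i<e<j$ together with $z(i)$ are all distinct and all $\le j-1 < j$, and a short interlacing argument (the $z(e)$ occupy, in some order, positions strictly below their indices) shows none can be squeezed strictly between $z(i)$ and $z(j)$ unless $z(j) \le j-1$, in which case one checks directly using $z^2=1$ that $z(j)$ itself equals one of these indices, forcing a contradiction with $z(i)<z(j)$. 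Hence Lemma~\ref{bruhat0-lem} gives $\ell(zt_{ij}) = \ell(z)+1$, and then the right-endpoint condition on $i$ lets Lemma~\ref{leftright-bruhat-lem} (or a direct repetition of its argument, since the hypotheses of that lemma are met here: $j\neq z(i)<z(j)\neq i$ and $i$ is a right endpoint) conclude $\ell(t_{ij}zt_{ij}) > \ell(t_{ij}z) = \ell(zt_{ij}) > \ell(z)$. For the case $z(i) > z(j)$, apply the just-proved statement to the involution $z' = t_{ij}zt_{ij}$: one checks that $i,\dots,j-1$ remain right endpoints of $z'$ (using Lemma~\ref{triv-lem} to track $\cC$) and that $z'(i) < z'(j)$, so the first case yields $\ell(t_{ij}z't_{ij}) > \ell(t_{ij}z') > \ell(z')$, i.e.\ $\ell(z) > \ell(t_{ij}z) > \ell(t_{ij}zt_{ij})$ after substituting back, which is exactly the claim.

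For part (b), the forward direction is immediate from part (a): if $\ell(t_{ij}zt_{ij}) = \ell(z)+2$ then we are in the case $z(i)<z(j)$, and part (a) forces $\ell(t_{ij}z) = \ell(z)+1$, i.e.\ $\ell(zt_{ij}) = \ell(z)+1$. Conversely, if $\ell(zt_{ij}) = \ell(z)+1$ then $\ell(t_{ij}z) = \ell(z)+1$ as well, so again $\ell(t_{ij}z) > \ell(z)$, which by Lemma~\ref{bruhat0-lem} means $z(i) < z(j)$; part (a) then gives $\ell(t_{ij}zt_{ij}) > \ell(t_{ij}z) = \ell(z)+1$, and the only possibility consistent with $\ell(t_{ij}zt_{ij}) \le \ell(t_{ij}z)+1 = \ell(z)+2$ (since $t_{ij}zt_{ij}$ differs from $t_{ij}z$ by right multiplication by a single reflection) is $\ell(t_{ij}zt_{ij}) = \ell(z)+2$.

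I expect the main obstacle to be the interlacing argument in part (a) — precisely pinning down why no index $e$ with $i<e<j$ can have $z(i) < z(e) < z(j)$ when all of $i,\dots,j-1$ are right endpoints. The subtlety is that $z(j)$ may be large (if $j$ is a left endpoint) or small (if $j$ is a right endpoint, or even $z(j)=j$), and when $z(j) \le j-1$ one must use $z^2 = 1$ to see that the set $\{z(e) : i \le e \le j-1\}$ is "closed enough" to block the interval $(z(i), z(j))$. The cleanest way to organize this is probably to observe that $\{z(e): i\le e\le j-1\}$, being a set of $j-i$ distinct integers each strictly less than its own index, together with the involution property, is contained in $(-\infty, j-1]$ and is "downward saturated" relative to $[i,j-1]$ in the relevant sense; I would isolate this as a short sub-claim before invoking Lemma~\ref{bruhat0-lem}.
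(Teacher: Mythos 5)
There are two genuine gaps here, one of which is fatal to your proof of part (b).

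First, the interlacing sub-claim you lean on in part (a) --- that no $e$ with $i<e<j$ satisfies $z(i)<z(e)<z(j)$ when $i,\dots,j-1$ are all right endpoints and $z(i)<z(j)$ --- is simply false. Take $z=t_{1,3}t_{2,4}\in\tI_6$, $i=3$, $j=5$: then $3$ and $4$ are right endpoints of $z$ and $z(3)=1<5=z(5)$, yet $e=4$ gives $z(3)<z(4)=2<z(5)$, so $\ell(zt_{ij})>\ell(z)+1$. Part (a) does not actually need this claim: it asserts only the strict inequalities $\ell(t_{ij}zt_{ij})>\ell(t_{ij}z)>\ell(z)$, which follow directly from Lemma~\ref{bruhat0-lem} (giving $\ell(t_{ij}z)>\ell(z)$ since $z(i)<z(j)$) together with Lemma~\ref{leftright-bruhat-lem} (giving the outer inequality, since $i$ is a right endpoint); this is exactly what the paper does. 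So part (a) survives once you delete the interlacing discussion --- but note that $\ell(zt_{ij})=\ell(z)+1$ is precisely the \emph{extra} hypothesis that part (b) is about, not something that holds automatically under the hypotheses of the lemma.

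Second, and fatally: in the converse direction of (b) you conclude $\ell(t_{ij}zt_{ij})\leq\ell(t_{ij}z)+1$ ``since $t_{ij}zt_{ij}$ differs from $t_{ij}z$ by right multiplication by a single reflection.'' Right multiplication by a reflection $t_{ij}$ (as opposed to a simple generator $s_i$) can change length by any odd integer, so this bound is unjustified --- and proving that the length goes up by exactly $1$ at this step is the entire content of the lemma. The paper's proof does this by verifying the criterion of Lemma~\ref{bruhat0-lem} for the pair $(i,j)$ applied to the permutation $t_{ij}z$: for each $e$ with $i<e<j$ one knows $z(e)<z(i)$ or $z(j)<z(e)$ (from $\ell(zt_{ij})=\ell(z)+1$), and one must show this separation persists after composing with $t_{ij}$ on the left. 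The delicate case is $z(e)<z(i)$ with $z(i)\equiv j \modu n)$, where a failure would force $i-mn<z(e)<j-mn=z(i)$ for some $m>0$, and then $f=z(e)+mn$ would be a left endpoint of $z$ with $i<f<j$, contradicting the right-endpoint hypothesis. This argument, or something equivalent to it, is what your proposal is missing.
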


\begin{proof}
The first half of part (a) follows from the previous lemma.
The second half follows from the first
(with $z$ replaced by $t_{ij}zt_{ij}$)
on noting that $i,i+1,\dots,j-1,j$ must be all  
left or all right endpoints of $z$ if $z(i)>z(j)$.
Part (a) implies that if $\ell(zt_{ij}) > \ell(z)+1$ 
then   $\ell(t_{ij}zt_{ij}) > \ell(z)+2$. 
Assume $\ell(zt_{ij}) = \ell(z)+1$ so that $z(i) < z(j)$.
By symmetry, it suffices to consider the case when
$i,i+1,\dots,j-1$ are all right endpoints of $z$.
Fix $e \in \ZZ$ with $i<e<j$, so that either $z(e) < z(i)$ or $z(j) < z(e)$.
If $z(j) < z(e)$,
then since $i$ and $e$ are right endpoints,  
$j$  must also be a right endpoint of $z$,
so
neither   $z(e)$ nor $z(j)$
belongs to $\{i,j\}+n\ZZ$ and $t_{ij}z(j) = z(j) < z(e) = t_{ij}z(e)$.
Suppose instead that $z(e) < z(i)$. We claim that $t_{ij}z(e) < t_{ij}z(i)$.
We cannot have $z(e) \equiv i \modu n)$ since $i$ and $e$ are right endpoints,
so if $t_{ij}z(i) < t_{ij}z(e)$
then we must have
$i - mn < z(e) < j-mn = z(i)$ for some $m >0$.
But if this occurred then  $f = z(e) + mn$ would have $i<f<j$ and $f<z(f)=e+mn$,
contradicting our assumption that $i,i+1,\dots,j-1$ are right endpoints.
This proves our claim, and we conclude that
no $i<e<j$
has $t_{ij}z(i) < t_{ij}z(e) < t_{ij}z(j)$, so
$\ell(t_{ij}z t_{ij}) = \ell(t_{ij}z)+1 = \ell(z)+2$.
\end{proof}

\section{Weighted involutions}\label{weighted-sect}

The goal of the next three sections is to construct a ``length-preserving'' bijection between 
$\tI_n$ and the set of $\NN$-weighted matchings of the cycle graph on $n$ vertices.
Our description of this correspondence will rely on an action of the $0$-Hecke monoid of $\tS_n$ 
on pairs of the following type:

\begin{definition}\label{weighted-def}
A \emph{weighted involution} in $\tS_n$ is a pair $(w, \phi)$
where $w \in \tI_n$ and $\phi$ is a map $\cC(w) \to \NN$ with $\phi(i,j) = \phi(i+n,j+n)$ for all $(i,j) \in \cC(w)$.
We refer to $\phi$ as the \emph{weight map} of $(w,\phi)$.
Define 
 the \emph{weight} of $(w,\phi)$ as the number $\wt(w,\phi) = \sum_\gamma \phi(\gamma)$ where the sum is over a set of cycles $\gamma$ representing the distinct equivalence classes in $\cC(w)$ under the relation  $(i,j) \sim (i+n,j+n)$.
\end{definition}

Let $\cW_n$ be the set of all weighted involutions in $\tS_n$.

\begin{example}\label{W5-ex}
We can represent a weighted involution $(w,\phi) \in \cW_n$ graphically by  drawing the winding diagram 
of $w$ with its arcs labeled by the values  of $\phi$. For example, if $\theta_1, \theta_2, \theta_3 \in \cW_5$
are 
\[
\begin{tikzpicture}[baseline=0,scale=0.3,label/.style={postaction={ decorate,transform shape,decoration={ markings, mark=at position .5 with \node #1;}}}]
{
\draw[fill,lightgray] (0,0) circle (4.0);
\node at (2.44929359829e-16, 4.0) {$_\bullet$};
\node at (1.83697019872e-16, 3.0) {$_1$};
\node at (3.80422606518, 1.2360679775) {$_\bullet$};
\node at (2.85316954889, 0.927050983125) {$_2$};
\node at (2.35114100917, -3.2360679775) {$_\bullet$};
\node at (1.76335575688, -2.42705098312) {$_3$};
\node at (-2.35114100917, -3.2360679775) {$_\bullet$};
\node at (-1.76335575688, -2.42705098312) {$_4$};
\node at (-3.80422606518, 1.2360679775) {$_\bullet$};
\node at (-2.85316954889, 0.927050983125) {$_5$};
\draw [-,>=latex,domain=0:100,samples=100,label={[above]{2}}] plot ({(4.0 + 2.0 * sin(180 * (0.5 + asin(-0.9 + 1.8 * (\x / 100)) / asin(0.9) / 2))) * cos(90 - (0.0 + \x * 0.72))}, {(4.0 + 2.0 * sin(180 * (0.5 + asin(-0.9 + 1.8 * (\x / 100)) / asin(0.9) / 2))) * sin(90 - (0.0 + \x * 0.72))});
\draw [-,>=latex,domain=0:100,samples=100,label={[above]{3}}] plot ({(4.0 + 4.0 * sin(180 * (0.5 + asin(-0.9 + 1.8 * (\x / 100)) / asin(0.9) / 2))) * cos(90 - (144.0 + \x * 5.04))}, {(4.0 + 4.0 * sin(180 * (0.5 + asin(-0.9 + 1.8 * (\x / 100)) / asin(0.9) / 2))) * sin(90 - (144.0 + \x * 5.04))});
}
\end{tikzpicture}
\qquad
\begin{tikzpicture}[baseline=0,scale=0.3,label/.style={postaction={ decorate,transform shape,decoration={ markings, mark=at position .5 with \node #1;}}}]
{
\draw[fill,lightgray] (0,0) circle (4.0);
\node at (2.44929359829e-16, 4.0) {$_\bullet$};
\node at (1.83697019872e-16, 3.0) {$_1$};
\node at (3.80422606518, 1.2360679775) {$_\bullet$};
\node at (2.85316954889, 0.927050983125) {$_2$};
\node at (2.35114100917, -3.2360679775) {$_\bullet$};
\node at (1.76335575688, -2.42705098312) {$_3$};
\node at (-2.35114100917, -3.2360679775) {$_\bullet$};
\node at (-1.76335575688, -2.42705098312) {$_4$};
\node at (-3.80422606518, 1.2360679775) {$_\bullet$};
\node at (-2.85316954889, 0.927050983125) {$_5$};
\draw [-,>=latex,domain=0:100,samples=100,label={[below]{2}}] plot ({(4.0 + 4.0 * sin(180 * (0.5 + asin(-0.9 + 1.8 * (\x / 100)) / asin(0.9) / 2))) * cos(90 - (144.0 + \x * 5.76))}, {(4.0 + 4.0 * sin(180 * (0.5 + asin(-0.9 + 1.8 * (\x / 100)) / asin(0.9) / 2))) * sin(90 - (144.0 + \x * 5.76))});
\draw [-,>=latex,domain=0:100,samples=100,label={[above]{2}}] plot ({(4.0 + 2.0 * sin(180 * (0.5 + asin(-0.9 + 1.8 * (\x / 100)) / asin(0.9) / 2))) * cos(90 - (288.0 + \x * 1.44))}, {(4.0 + 2.0 * sin(180 * (0.5 + asin(-0.9 + 1.8 * (\x / 100)) / asin(0.9) / 2))) * sin(90 - (288.0 + \x * 1.44))});
}
\end{tikzpicture}
\qquad
\begin{tikzpicture}[baseline=0,scale=0.3,label/.style={postaction={ decorate,transform shape,decoration={ markings, mark=at position .5 with \node #1;}}}]
{
\draw[fill,lightgray] (0,0) circle (4.0);
\node at (2.44929359829e-16, 4.0) {$_\bullet$};
\node at (1.83697019872e-16, 3.0) {$_1$};
\node at (3.80422606518, 1.2360679775) {$_\bullet$};
\node at (2.85316954889, 0.927050983125) {$_2$};
\node at (2.35114100917, -3.2360679775) {$_\bullet$};
\node at (1.76335575688, -2.42705098312) {$_3$};
\node at (-2.35114100917, -3.2360679775) {$_\bullet$};
\node at (-1.76335575688, -2.42705098312) {$_4$};
\node at (-3.80422606518, 1.2360679775) {$_\bullet$};
\node at (-2.85316954889, 0.927050983125) {$_5$};
\draw [-,>=latex,domain=0:100,samples=100,label={[above]{2}}] plot ({(4.0 + 4.0 * sin(180 * (0.5 + asin(-0.9 + 1.8 * (\x / 100)) / asin(0.9) / 2))) * cos(90 - (72.0 + \x * 6.48))}, {(4.0 + 4.0 * sin(180 * (0.5 + asin(-0.9 + 1.8 * (\x / 100)) / asin(0.9) / 2))) * sin(90 - (72.0 + \x * 6.48))});
\draw [-,>=latex,domain=0:100,samples=100,label={[above]{1}}] plot ({(4.0 + 2.0 * sin(180 * (0.5 + asin(-0.9 + 1.8 * (\x / 100)) / asin(0.9) / 2))) * cos(90 - (288.0 + \x * 2.16))}, {(4.0 + 2.0 * sin(180 * (0.5 + asin(-0.9 + 1.8 * (\x / 100)) / asin(0.9) / 2))) * sin(90 - (288.0 + \x * 2.16))});
}
\end{tikzpicture}
\]
and we write $\theta_i = (w_i, \phi_i)$, then  $w_1 = t_{1,2} t_{3,10}$, $w_2 = t_{0,2}t_{3,11} $, and
$w_3 = t_{0,3}t_{2,11}$, while $\phi_1(1,2) =2$ and $\phi_1(3,10)=3$,
$\phi_2(3,11)=\phi_2(5,7)=2$, and $\phi_3(2,11)= 2$ and $\phi_3(5,8)=1$.
\end{example}

We identify $\tI_n$ with the subset of weighted involutions of the form 
 $(w,0) \in \cW_n$ with $0$ denoting the unique weight map $\cC(w) \to \{0\}$.
We extend $\ell : \tS_n \to \NN$ and $\ell' : \tI_n \to \NN$ to $\cW_n$ by setting 
\[ \ell(\theta) = \ell(w) + 2\wt(\theta) \qquand \ell'(\theta) = \ell'(w)\qquad\text{for }\theta = (w,\phi) \in \cW_n.\]
Given $(w, \phi) \in \cW_n$, define the \emph{right form} of $\phi$ to be the map $\phi_R : \ZZ \to \NN$ with
$\phi_R(i) =\phi(w(i),i)$ if $w(i) <i$ and with $\phi_R(i)=0$ otherwise.
Likewise, define the \emph{left form} of $\phi$ to be the map
$\phi_L : \ZZ \to \NN$ with $\phi_L(i) = \phi(i, w(i))$ if $i<w(i)$ and with $\phi_L(i) = 0$ otherwise.
Clearly $\phi_L$ and $\phi_R$ each determine $\phi$, given $w$.
We now define operators $\pi_1,\pi_2,\dots, \pi_n$ which act on $\cW_{n}$ on the right and left.
\begin{definition}\label{pi-def}
Let $\theta =(w, \phi) \in \cW_{n}$ and $i \in \ZZ$. 
\ben
\item[(a)] If $\phi_R(i) >\phi_R(i+1)$ then let $\theta\pi_i = (s_iws_i, \psi) \in \cW_n$ where
$\psi $ is the unique weight map with 
\[\psi_R (j) =\begin{cases} \phi_R(i)-1&\text{if }j\equiv i+1\modu n) \\ \phi_R(i+1) &\text{if }j \equiv i \modu n) \\ \phi_R(j) &\text{otherwise}
\end{cases}\qquad\text{for $j \in \ZZ$.}\]
If $\phi_R(i) \leq\phi_R(i+1)$ then  let $\theta \pi_i= \theta$. 

\item[(b)] If  $\phi_L(i+1)>\phi_L(i)$ then  let $\pi_i\theta = (s_iws_i, \chi) \in \cW_n$ where
$\chi $ is the unique weight map with 
\[\chi_L(j) = \begin{cases} \phi_L(i+1)-1&\text{if }j\equiv i \modu n) \\ \phi_L(i) &\text{if }j\equiv i+1 \modu n) \\ \phi_L(j)&\text{otherwise}
\end{cases}
\qquad\text{for $j \in \ZZ$.}\]
If $\phi_L(i+1) \leq \phi_L(i)$ then let $\pi_i\theta = \theta$. 
  \een
\end{definition}

\begin{example}
Define $\theta_1,\theta_2,\theta_3 \in \cW_5$ as in Example~\ref{W5-ex}.
Then $\theta_1 \pi_5 = \theta_2 \pi_1 = \theta_2$ and $\theta_2\pi_2 = \theta_3$.
Form $\theta_2' \in \cW_5$ from $\theta_2$ by replacing the label of the short arc in the picture in Example~\ref{W5-ex} by 1 and the label of the long arc by 3.
Then $\pi_5 \theta_1 =\theta_2'$ and $\pi_2 \theta_2' = \theta_3$.
\end{example}

It may hold that $(\pi_i \theta) \pi_j \neq \pi_i (\theta\pi_j)$;
for example,
if $\theta =(w,\phi)$ where $w = s_1 \in \tS_2$ and $\phi(1,2) = 1$,
then $\pi_0\theta  = (\pi_0\theta)\pi_2 \neq \pi_0 (\theta \pi_2) = \theta \pi_2$.
 Note that $\pi_i = \pi_{i+n}$, as a right and left operator.
 
\begin{proposition}
The left (respectively, right) operators $\pi_i$ satisfy 
(a) $\pi_i^2  = \pi_i$, (b) $\pi_i\pi_j  = \pi_j\pi_i $ if $i \not\equiv j\pm 1 \modu n)$, and 
(c) $\pi_i\pi_{i+1}\pi_i  = \pi_{i+1} \pi_i\pi_{i+1}$
for all $i,j \in \ZZ$.
\end{proposition}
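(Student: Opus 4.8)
The plan is to verify the three identities directly from Definition~\ref{pi-def}, working out the right operators $\theta\mapsto\theta\pi_i$ in detail. The left operators are the mirror image of the right ones under the bijection of $\cW_n$ sending $(w,\phi)$ to the weighted involution with underlying involution $k\mapsto-w(-k)$ and weight map transported by $(i,j)\mapsto(-j,-i)$: this bijection interchanges left and right endpoints, identifies the left form of $\theta$ with the right form of its image (after $k\mapsto-k$), and conjugates $s_i$ to $s_{-i-1}$ in $\tS_n$, so each relation for the left operators reduces to the corresponding relation for the right ones after the index relabelling $i\mapsto-i-1$, which preserves adjacency modulo $n$. All the arguments rest on one observation, read off from Definition~\ref{pi-def}(a): whether $\pi_i$ fixes $\theta=(w,\phi)$ is decided by the single inequality $\phi_R(i)\le\phi_R(i+1)$, and when $\pi_i$ acts nontrivially it replaces $w$ by $s_iws_i$ and replaces $\phi_R$ by the function agreeing with $\phi_R$ away from the residues $i,i+1\pmod n$ and taking values $\phi_R(i+1)$ and $\phi_R(i)-1$ at residues $i$ and $i+1$. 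In particular the updated weight data at $i,i+1$ is a function of $\phi_R(i),\phi_R(i+1)$ alone, not of $w$, and $w$ changes only by conjugation by simple generators. (That this prescription lands in $\cW_n$ is the well-definedness asserted just after Definition~\ref{pi-def}; it follows from Lemma~\ref{triv-lem} together with a check that conjugation by $s_i$ turns $i$ and $i+1$ into right endpoints with the expected partners whenever $\phi_R(i)>\phi_R(i+1)$. I would take this as given.)

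\textbf{Relations (a) and (b).} Part (a) is then immediate: if $\phi_R(i)\le\phi_R(i+1)$ then $\theta\pi_i=\theta$, while otherwise the right form of $\theta\pi_i$ has value $\phi_R(i+1)$ at residue $i$ and value $\phi_R(i)-1\ge\phi_R(i+1)$ at residue $i+1$, so $\pi_i$ fixes $\theta\pi_i$; either way $\theta\pi_i^2=\theta\pi_i$. For (b), assume $i\not\equiv j,j\pm1\pmod n$ (there is nothing to prove if $i\equiv j$). Then $\{i,i+1\}$ and $\{j,j+1\}$ are disjoint modulo $n$, so by the observation $\pi_i$ changes $\phi_R$ only at residues that $\pi_j$ never touches and vice versa, and the activation inequality for each operator refers only to weights left alone by the other; hence $\theta\pi_i\pi_j$ and $\theta\pi_j\pi_i$ have the same right form. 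Their underlying involutions are obtained from $w$ by conjugating by a (possibly empty) subset of $\{s_i,s_j\}$, and $s_is_j=s_js_i$ under the hypothesis on $i,j$, so the underlying involutions coincide as well; thus $\theta\pi_i\pi_j=\theta\pi_j\pi_i$.

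\textbf{Relation (c) and the main obstacle.} For (c), note $s_is_{i+1}s_i=s_{i+1}s_is_{i+1}$ in $\tS_n$ and reduce the braid identity to a finite check on the triple $(a,b,c)=(\phi_R(i),\phi_R(i+1),\phi_R(i+2))$. By the observation, any word in $\pi_i,\pi_{i+1}$ alters $\phi_R$ only on residues $i,i+1,i+2$, where it acts by the explicit rule that $\pi_i$ sends the current triple $(x,y,z)$ to $(y,x-1,z)$ when $x>y$ and fixes it otherwise, while $\pi_{i+1}$ sends $(x,y,z)$ to $(x,z,y-1)$ when $y>z$ and fixes it otherwise; simultaneously $w$ is conjugated by the ordered product of those generators among $s_i,s_{i+1}$ whose operator acted, each activation decided by the triple at that stage. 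Pushing $\pi_i\pi_{i+1}\pi_i$ and $\pi_{i+1}\pi_i\pi_{i+1}$ through the six cases cut out by comparing $a,b,c$ and $a-1$ shows that in every case the two sides send $(a,b,c)$ to the same triple and the ordered list of conjugating generators is identical, \emph{except} in the single case $a>b>c$, where the lists are $s_i,s_{i+1},s_i$ and $s_{i+1},s_i,s_{i+1}$ and hence still produce the same element of $\tS_n$; since the weight data off residues $i,i+1,i+2$ is untouched on both sides, $\theta\pi_i\pi_{i+1}\pi_i=\theta\pi_{i+1}\pi_i\pi_{i+1}$. The only real obstacle is the bookkeeping in this last step: one must read each operator's activation inequality off the already-updated triple, and must track the ordered sequence of conjugating generators so as to recognise that the two words act on $w$ by the same element. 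Everything else reduces to the observation and to the elementary facts about $s_is_j$ and $s_is_{i+1}s_i$ inside $\tS_n$, the one ingredient borrowed from outside being the well-definedness of the operators, which is prior to this proposition.
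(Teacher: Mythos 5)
Your proposal is correct and follows essentially the same route as the paper: parts (a) and (b) are immediate from the update rule, part (c) is the same finite case analysis on the triple $(\phi_R(i),\phi_R(i+1),\phi_R(i+2))$ with the braid relation $s_is_{i+1}s_i=s_{i+1}s_is_{i+1}$ invoked precisely in the case $a>b>c$, and the left-operator version follows by symmetry (the paper simply asserts the symmetric argument, while you make the reduction explicit via conjugation by $k\mapsto -k$; the paper's later $*$-involution plays the same role). Your six cases and the resulting values match the paper's case list exactly.
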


\begin{proof}
It is clear that $\pi_i^2 = \pi_i$ and $\pi_i \pi_j = \pi_j \pi_i$ if $i \not\equiv j\pm 1 \modu n)$.
Fix $\theta = (w,\phi) \in \cW_n$.
Define $a = \phi_R(i)$, $b = \phi_R(i+1)$, and $c =  \phi_R(i+2)$.
We check part (c) carefully as follows:
\begin{itemize}
\item If $a>b>c$ then $\theta \pi_i \pi_{i+1} \pi_i  = \theta \pi_{i+1}\pi_i \pi_{i+1}$ since $s_is_{i+1}s_i = s_{i+1}s_is_{i+1}$.
\item If $a>c \geq b$ then $\theta \pi_i \pi_{i+1} \pi_i  = \theta \pi_{i+1}\pi_i \pi_{i+1} $
is $ \theta \pi_i \pi_{i+1} $ if $a>c+1$ and 
$\theta \pi_i $ if $a=c+1$.
\item If $b\geq a > c$ then $\theta \pi_i \pi_{i+1} \pi_i = \theta \pi_{i+1}\pi_i \pi_{i+1} = \theta \pi_{i+1} \pi_{i}$.
\item If $b> c \geq a$ then $\theta \pi_i \pi_{i+1} \pi_i  = \theta \pi_{i+1}\pi_i \pi_{i+1} = \theta \pi_{i+1} $.
\item If $c\geq a >b$ then $\theta \pi_i \pi_{i+1} \pi_i  = \theta \pi_{i+1}\pi_i \pi_{i+1} = \theta \pi_{i}$.
\item If $c\geq b \geq a$ then $\theta \pi_i \pi_{i+1} \pi_i  = \theta \pi_{i+1}\pi_i \pi_{i+1} = \theta $.
\end{itemize}
One of these cases must occur, so we conclude that $ \pi_i \pi_{i+1} \pi_i  =  \pi_{i+1}\pi_i \pi_{i+1}$
as a right operator. The argument that $ \pi_i \pi_{i+1} \pi_i  =  \pi_{i+1}\pi_i \pi_{i+1}$
as a left operator is symmetric.
\end{proof}

By Matsumoto's theorem, it follows that 
for each $g \in \tS_n$, 
we may define a right (respectively, left) operator $\pi_g$ on $\cW_n$ by 
setting $\pi_g = \pi_{i_1} \pi_{i_2} \cdots \pi_{i_k}$
where $g= s_{i_1} s_{i_2}\cdots s_{i_k}$ is any reduced expression.
Recall the definition of the Demazure product $\circ : \tS_n \times \tS_n \to \tS_n$
from the introduction.

\begin{corollary}
The map $g \mapsto \pi_g$ defines a right (also left) monoid action of $(\tS_n, \circ)$ on $\cW_{n}$.
\end{corollary}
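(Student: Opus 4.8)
The plan is to prove that $g \mapsto \pi_g$ is a monoid homomorphism, i.e., that $\pi_{g \circ h} = \pi_g \pi_h$ for all $g,h \in \tS_n$; since $1 \in \tS_n$ has the empty reduced expression, $\pi_1$ is the identity operator, so this is all that is needed. The essential input is already in hand: the preceding proposition shows that the operators $\pi_i$ satisfy $\pi_i^2 = \pi_i$ together with the braid relations (for both the left and the right versions), and Matsumoto's theorem then makes $\pi_g$ well defined, independently of the chosen reduced expression. What remains is purely formal bookkeeping.

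I would induct on $\ell(h)$, using associativity of the Demazure product. The case $h = 1$ is trivial. If $\ell(h) > 0$, pick $s_i \in \DesR(h)$ and write $h = h' s_i$ with $\ell(h') = \ell(h) - 1$; appending $s_i$ to a reduced expression for $h'$ yields one for $h$, so $\pi_h = \pi_{h'} \pi_i$. Since $g \circ h = (g \circ h') \circ s_i$ by associativity and $\pi_g \pi_{h'} = \pi_{g \circ h'}$ by induction, the claim reduces to the single-generator identity $\pi_f \pi_i = \pi_{f \circ s_i}$ for arbitrary $f \in \tS_n$ and $i \in \ZZ$. For this I would split into two cases. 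If $\ell(f s_i) > \ell(f)$, then $f \circ s_i = f s_i$ and appending $s_i$ to a reduced expression for $f$ stays reduced, so $\pi_{f \circ s_i} = \pi_{f s_i} = \pi_f \pi_i$ directly from the definition of $\pi_g$. If $\ell(f s_i) < \ell(f)$, then $f \circ s_i = f$, and since $s_i \in \DesR(f)$ there is a reduced expression for $f$ ending in $s_i$, say $f = f' s_i$ with $\ell(f') = \ell(f) - 1$; then $\pi_f = \pi_{f'} \pi_i$, so $\pi_f \pi_i = \pi_{f'} \pi_i^2 = \pi_{f'} \pi_i = \pi_f = \pi_{f \circ s_i}$ using $\pi_i^2 = \pi_i$. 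This settles the right operators, and the left-operator case is word-for-word the same, since the preceding proposition supplies the identical relations there and all the reduced-expression manipulations are left/right symmetric.

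I do not anticipate a genuine obstacle: everything nontrivial has been absorbed into the preceding proposition, and the only point requiring care is the repeated, consistent use of Matsumoto's theorem to pass between reduced expressions that either append a prescribed simple generator or end with one — which is precisely what the strong exchange condition for Coxeter groups provides. As an even shorter alternative, one could invoke the standard presentation of the $0$-Hecke monoid $(\tS_n, \circ)$ by the braid relations together with the relations $s_i \circ s_i = s_i$: by the preceding proposition the assignment $s_i \mapsto \pi_i$ then extends to a monoid homomorphism, and this homomorphism agrees with $g \mapsto \pi_g$ because the Demazure product coincides with the ordinary group product along reduced expressions.
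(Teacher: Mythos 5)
Your argument is correct and is exactly the standard verification the paper leaves implicit: the paper states this corollary without proof, treating it as an immediate consequence of the preceding proposition (idempotence plus braid relations) and Matsumoto's theorem, which is precisely what your induction on $\ell(h)$ and the single-generator identity $\pi_f\pi_i=\pi_{f\circ s_i}$ spell out. No gaps.
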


Define 
 $ \tau : \ZZ \to \ZZ$
by $\tau(i) = n+1-i$ and let $w^* = \tau w \tau$ for $w \in \tS_n$.
Then $ w\mapsto w^*$ is an automorphism of $\tS_n$ with 
$s_i^* = s_{n-i}$ for $i \in [n]$, so $s_n^* = s_0= s_n$.
If $w \in \tI_n$ then \[\cC(w^*) = \{ (\tau(j),\tau(i)) : (i,j) \in \cC(w)\}.\]
For  $\phi : \cC(w) \to \NN$ let $\phi^*$ be the map $\cC(w^*) \to \NN$
given by $(\tau(j),\tau(i))\mapsto \phi(i,j)$.
Extend $*$ to $\cW_n$ by setting $\theta^* = (w^*, \phi^*)$ for 
$\theta=(w,\phi) \in \cW_n$. Clearly $(\theta^*)^* = \theta$.
The following is  easy to check:

\begin{lemma}\label{*lem}
Let $ i \in \ZZ$ and $\theta \in \cW_n$. Then
$\wt(\theta) = \wt(\theta^*)$
and $(\pi_i\theta)^* = \theta^* \pi_{n-i}$.  
\end{lemma}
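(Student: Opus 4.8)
The plan is to verify the two identities in Lemma~\ref{*lem} directly from the definitions, treating the weight statement as an easy warm-up and the commutation of $*$ with the $\pi_i$ as the real content. For the first claim, recall that $\cC(w^*) = \{(\tau(j),\tau(i)) : (i,j)\in\cC(w)\}$ and that $\tau(i) = n+1-i$ satisfies $\tau(i+n) = \tau(i) - n$; hence the involution $(i,j)\sim(i+n,j+n)$ on $\cC(w)$ corresponds bijectively, via $\tau$ applied componentwise and coordinates swapped, to the analogous equivalence on $\cC(w^*)$. Since $\phi^*(\tau(j),\tau(i)) = \phi(i,j)$ by definition, the two sums $\sum_\gamma \phi(\gamma)$ and $\sum_{\gamma'} \phi^*(\gamma')$ are term-by-term equal once we fix compatible sets of representatives, giving $\wt(\theta) = \wt(\theta^*)$.

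For the second identity, first I would unwind how $*$ interacts with the left and right forms of a weight map. The key observation is that $\tau$ reverses the order of $\ZZ$, so a pair $(i,j)\in\cC(w)$ with $i<j$ (i.e.\ $j = w(i) > i$, so $i$ is a left endpoint of $w$) maps to $(\tau(j),\tau(i))\in\cC(w^*)$ with $\tau(j) < \tau(i)$, so $\tau(i)$ becomes a \emph{right} endpoint of $w^*$. Concretely this should yield $(\phi^*)_R(\tau(i)) = \phi_L(i)$ and $(\phi^*)_L(\tau(i)) = \phi_R(i)$ for all $i\in\ZZ$; in other words, conjugating by $\tau$ swaps the roles of the left and right forms. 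I would record this as a short sublemma, checking it on a left endpoint, a right endpoint, and a fixed point of $w$ separately.

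With that in hand, the proof of $(\pi_i\theta)^* = \theta^*\pi_{n-i}$ becomes a matter of matching up Definition~\ref{pi-def}(b) for $\pi_i$ acting on the left with Definition~\ref{pi-def}(a) for $\pi_{n-i}$ acting on the right, after applying the sublemma. The left operator $\pi_i$ fires exactly when $\phi_L(i+1) > \phi_L(i)$; under $*$ and the sublemma, this translates to $(\phi^*)_R(\tau(i+1)) > (\phi^*)_R(\tau(i))$, i.e.\ $(\phi^*)_R(n-i) > (\phi^*)_R(n+1-i)$, which is precisely the firing condition for the right operator $\pi_{n-i}$ on $\theta^*$ (using $\tau(i) = n+1-i$ and $s_i^* = s_{n-i}$, so $s_iws_i$ maps under $*$ to $s_{n-i}w^*s_{n-i}$). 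One then checks that the resulting weight maps agree: the left form $\chi_L$ produced by $\pi_i$ decrements the value at indices $\equiv i\modu n)$ and relocates the value at $\equiv i+1\modu n)$, and under the sublemma this matches coordinate-for-coordinate the right form $\psi_R$ produced by $\pi_{n-i}$, which decrements at indices $\equiv n+1-i\modu n)$ and relocates at $\equiv n-i\modu n)$. The ``do-nothing'' cases ($\phi_L(i+1)\le\phi_L(i)$ versus $(\phi^*)_R(n-i)\le(\phi^*)_R(n+1-i)$) coincide for the same reason.

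I expect the main obstacle to be purely bookkeeping: keeping the index shifts under $\tau$ straight (note $\tau(i+1) = n-i$ while $\tau(i) = n+1-i$, so $\tau$ sends the pair of adjacent positions $(i,i+1)$ to the pair $(n+1-i,n-i)$ in reversed order, which is exactly why $\pi_i$ becomes $\pi_{n-i}$ and why left/right get swapped), and confirming that the piecewise definitions of $\psi$ and $\chi$ line up under this relabelling in all residue classes. There is no conceptual difficulty once the sublemma relating $(\phi^*)_R,(\phi^*)_L$ to $\phi_L,\phi_R$ is stated correctly; everything else is a direct substitution, which is why the lemma is flagged as ``easy to check.''
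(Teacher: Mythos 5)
Your proposal is correct and is exactly the direct definitional verification that the paper has in mind when it says the lemma "is easy to check" (the paper supplies no proof). Your sublemma $(\phi^*)_R\circ\tau = \phi_L$ and $(\phi^*)_L\circ\tau = \phi_R$ is the right organizing observation, and the index bookkeeping ($\tau(i)=n+1-i$, $\tau(i+1)=n-i$, $s_i^*=s_{n-i}$) all lines up as you describe.
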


By definition  $\pi_i \theta$ (also $\theta\pi_i$) is either   $\theta$ or has weight one less than $\theta$.
For $\theta \in \cW_n$, let $\DesL(\theta)$
and $\DesR(\theta)$ be the sets of generators $s_i$ for $i\in[n]$ such that
$\pi_i \theta \neq \theta$
and $\theta \pi_i \neq \theta$, respectively.
The map which fixes $s_n$ and maps $s_i \mapsto s_{n-i}$  is a bijection 
$\DesL(\theta) \leftrightarrow \DesR(\theta^*)$.
This means that we only need to prove the right-handed version of the following lemma:

\begin{lemma}\label{weight-lem}
Suppose $\theta \in \cW_n$ and $\wt(\theta) > 0$. Then $\DesL(\theta)$ and $\DesR(\theta)$ are both nonempty.
\end{lemma}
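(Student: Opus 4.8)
The plan is to prove only the statement for $\DesR(\theta)$. The symmetry described just before the lemma—the map fixing $s_n$ and sending $s_i \mapsto s_{n-i}$ is a bijection $\DesL(\theta)\leftrightarrow\DesR(\theta^*)$—together with $\wt(\theta^*)=\wt(\theta)>0$ from Lemma~\ref{*lem}, then immediately gives $\DesL(\theta)\neq\varnothing$ from the right-handed case applied to $\theta^*$.

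First I would translate the goal into a statement about $\phi_R$. By Definition~\ref{pi-def}(a), $\theta\pi_i\neq\theta$ precisely when $\phi_R(i)>\phi_R(i+1)$, so $s_i\in\DesR(\theta)$ iff $\phi_R(i)>\phi_R(i+1)$. Since $\phi(i,j)=\phi(i+n,j+n)$ and $w(i+n)=w(i)+n$, the map $\phi_R:\ZZ\to\NN$ is $n$-periodic (in particular whether $\phi_R(i)>\phi_R(i+1)$ depends only on $i\bmod n$), so it suffices to exhibit a single $i\in\ZZ$ with $\phi_R(i)>\phi_R(i+1)$.

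Next I would record two consequences of $\wt(\theta)>0$. Choose $(i,j)\in\cC(w)$ with $\phi(i,j)>0$. Then $j$ is a right endpoint of $w$ and $\phi_R(j)=\phi(i,j)>0$, so $\phi_R$ is not identically zero; and $i$ is a \emph{left} endpoint of $w$ (as $w(i)=j>i$), so $\phi_R(i)=0$. Thus $\phi_R$ is a non-constant, $n$-periodic function $\ZZ\to\NN$. Now suppose for contradiction that no $i$ satisfies $\phi_R(i)>\phi_R(i+1)$; then $\phi_R(i)\leq\phi_R(i+1)$ for all $i\in\ZZ$, and chaining this around one full period gives $\phi_R(k)\leq\phi_R(k+1)\leq\cdots\leq\phi_R(k+n)=\phi_R(k)$, forcing $\phi_R$ to be constant—a contradiction. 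Hence $\DesR(\theta)\neq\varnothing$, and by the symmetry above $\DesL(\theta)\neq\varnothing$ as well.

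I do not expect a serious obstacle: the only point requiring the structure of involutions (rather than pure monotonicity and periodicity) is the observation that $\phi_R$ must attain the value $0$, which holds because any weighted $2$-cycle $(i,j)$ contributes its companion left endpoint $i$ with $\phi_R(i)=0$; the rest is the elementary "weakly increasing and periodic implies constant" argument.
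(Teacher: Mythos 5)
Your proposal is correct and follows essentially the same route as the paper: reduce to the right-handed statement via the $*$-symmetry, then show that the absence of a right descent forces $\phi_R$ to be weakly increasing, which is incompatible with its $n$-periodicity together with the fact that $\wt(\theta)>0$ produces both a positive value (at the right endpoint of a positively weighted cycle) and a zero value (at its left endpoint). Your write-up just makes explicit the "second case is impossible" step that the paper leaves terse.
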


\begin{proof}
Write $\theta = (w,\phi)$.
We have $\DesR(\theta)=\varnothing$ only if no right endpoints $i$ of $w$ have $\phi(w(i), i)>0$ or
if whenever $i$ is  right endpoint   it holds that $i+1$ is also a right endpoint and
$\phi(w(i), i) \leq \phi(w(i+1), i+1)$.
The second case is impossible, and if 
 $\wt(\theta)>0$ then the first case is excluded. 
\end{proof}

Let $\leq$ denote the Bruhat order on $\tS_n$.

\begin{thmdef}\label{omega-thm}
Let $\theta =(w,\phi) \in \cW_n$. The following statements then hold:
\ben
\item[(a)] There are unique elements $g,h \in \tS_n$ with $\ell(g) = \ell(h)=\wt(\theta)$
and $\wt(\pi_g \theta) = \wt(\theta\pi_h)=0$.

\item[(b)] For the elements in part (a), we have
$\pi_g \theta = gwg^{-1}$ and $\theta \pi_h = h^{-1}wh$.

\item[(c)]
Any other elements $g',h' \in \tS_n$ with $\wt(\pi_{g'} \theta) = \wt(\theta\pi_{h'}) = 0$
satisfy $g \leq g'$ and $h\leq h'$.
\een
Define $g_L(\theta)=g^{-1}$ and $g_R(\theta)=h$,
and set
 $\omega_L(\theta) =\pi_g\theta = gwg^{-1}$ and $\omega_R(\theta)=\theta\pi_h =h^{-1}wh$.
 
\end{thmdef}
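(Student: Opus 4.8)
The plan is to prove the right-handed statements (about $h$, $g_R$, $\omega_R$); the left-handed assertions then follow by applying the $*$-involution via Lemma~\ref{*lem}, since $(\pi_i\theta)^* = \theta^*\pi_{n-i}$ intertwines left and right operators while preserving weight. So assume we are working with the right action of $(\tS_n,\circ)$ on $\cW_n$.

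First I would establish \emph{existence} of an element $h$ with $\ell(h) = \wt(\theta)$ and $\wt(\theta\pi_h) = 0$. I would argue by induction on $\wt(\theta)$. If $\wt(\theta) = 0$ take $h = 1$. Otherwise, by Lemma~\ref{weight-lem} there is some $s_i \in \DesR(\theta)$, so $\theta\pi_i \neq \theta$; by the definition of $\pi_i$ this means $\wt(\theta\pi_i) = \wt(\theta) - 1$. Apply the inductive hypothesis to $\theta\pi_i$ to get $h'$ with $\ell(h') = \wt(\theta) - 1$ and $\wt(\theta\pi_i\pi_{h'}) = 0$. I must check that $h := s_i h'$ satisfies $\ell(h) = \ell(h') + 1$ — equivalently that no collapse $s_i h' = s_i h'$ at the level of reduced words occurs; but in fact the monoid action is via $\pi_g = \pi_{i_1}\cdots\pi_{i_k}$ for \emph{reduced} words, and since each $\pi_i$ strictly drops weight when it acts nontrivially, a word $s_i \cdot (\text{reduced word for }h')$ of total weight-drop $\wt(\theta)$ acting nontrivially at every letter must itself be reduced (a non-reduced word would, after applying braid/commutation moves and a deletion $\pi_j^2 = \pi_j$, act with weight-drop strictly less than its length, contradiction). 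This gives $\ell(h) = \wt(\theta)$ and $\wt(\theta\pi_h) = 0$, and part (a) for $h$.

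Next, part (b): for \emph{this} $h$, I claim $\theta\pi_h = h^{-1}wh$. Here the point is that at each step, when $\phi_R(i) > \phi_R(i+1)$ so $\theta\pi_i = (s_i w s_i, \psi)$, the underlying involution is conjugated by $s_i$. Tracking this through a reduced word $h = s_{i_1}\cdots s_{i_k}$ acting on the right, the underlying involution of $\theta\pi_h$ is $s_{i_k}\cdots s_{i_1} w s_{i_1}\cdots s_{i_k} = h^{-1} w h$. One subtlety: I need that each intermediate $\pi_{i_j}$ really acts nontrivially (so that conjugation, not the identity, happens) — but that is exactly what reducedness of the weight-dropping word guarantees, as above. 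Then for \emph{uniqueness} in part (a) and the minimality in part (c): suppose $g'$ (I will write it as $h'$ to match the right side) is any element with $\wt(\theta\pi_{h'}) = 0$. Since each $\pi_i$ drops weight by at most one, writing $h'$ as a reduced word forces $\ell(h') \geq \wt(\theta)$. For minimality I would show $h \leq h'$ in Bruhat order: this is where I expect to lean on a standard exchange-type argument for $0$-Hecke monoid actions — if $s_i \in \DesR(\theta)$ but $s_i \notin \DesL(h')$... actually the cleaner route is: the set $\{h' : \wt(\theta\pi_{h'}) = 0\}$ is an up-set in the \emph{right} weak order refined suitably, and among elements of minimal length $\wt(\theta)$ there is a unique one because two distinct reduced words both achieving the full weight drop and landing at weight $0$ must, by the braid relations verified in the Proposition, lead to the same result — and a counting/greedy argument (always acting by a descent of the current $\theta$) shows the minimal-length $h$ is forced coordinate-by-coordinate, hence unique, hence $\leq$ every other valid $h'$.

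The main obstacle I anticipate is the minimality/uniqueness in parts (a) and (c): existence and the conjugation formula (b) are essentially bookkeeping, but showing that the minimal-length weight-killing element is \emph{unique} and is a Bruhat \emph{lower bound} for all weight-killing elements requires a genuine combinatorial argument. I would handle it by proving the stronger statement that for any $h'$ with $\wt(\theta\pi_{h'}) = 0$ and any reduced word for $h'$, one can extract (greedily, from the left) a reduced subword that already kills the weight, and that this subword is independent of choices — this simultaneously yields $\ell(h') \geq \wt(\theta)$, the uniqueness of the length-$\wt(\theta)$ solution, and the Bruhat inequality $h \leq h'$ by the subword characterization of Bruhat order. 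A convenient technical tool here is Lemma~\ref{triv-lem} together with the explicit formulas in Definition~\ref{pi-def}, which let me verify that once a "cycle's weight" has been exhausted, no further $\pi_i$ can increase it, so the process is monotone and confluent.
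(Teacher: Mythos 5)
Your existence argument, the reducedness count (each letter of the greedy word drops the weight by exactly one, so the word has length equal to the total weight drop and is therefore reduced), the conjugation formula in (b), and the reduction of the left-handed statements to the right-handed ones via Lemma~\ref{*lem} all match the paper's proof. The problem is the uniqueness step, which you correctly identify as the crux but do not actually resolve. Your claim that the minimal-length $h$ is ``forced coordinate-by-coordinate'' is false as stated: at a given stage $\DesR(\theta)$ may contain several generators, so the greedy process is genuinely nondeterministic and nothing forces the first letter of $h$. Your fallback, that the process is ``monotone and confluent,'' has two problems. First, confluence is asserted rather than proved; monotonicity of the weight does not imply it. Second, even if you establish confluence of the rewriting on weighted involutions, that only shows the \emph{endpoint} $\theta\pi_h$ is independent of choices --- it does not show the \emph{group element} $h$ is unique, which is what part (a) claims and what part (c) needs (you deduce $h \leq h'$ by extracting a greedy subword of a reduced word for $h'$ and then invoking uniqueness to identify that subword's product with $h$).

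The paper closes this gap with a diamond-lemma-style induction on $\wt(\theta)$ at the level of group elements. Suppose $g \neq h$ both satisfy $\ell(g)=\ell(h)=\wt(\theta)>0$ and kill the weight. If they share a left descent $s_i$, then $s_i \in \DesR(\theta)$ and the inductive uniqueness for $\theta\pi_i$ gives $s_ig = s_ih$, hence $g=h$. Otherwise choose $s_i \in \DesL(g)$ and $s_j \in \DesL(h)$; both lie in $\DesR(\theta)$, and one checks directly that $\theta\pi_i\pi_j = \theta\pi_j\pi_i$ has weight $\wt(\theta)-2$ in the commuting case (respectively $\theta\pi_i\pi_j\pi_i$ has weight $\wt(\theta)-3$ in the braid case). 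Applying the inductive uniqueness hypothesis to $\theta\pi_i$ and to $\theta\pi_i\pi_j$ produces unique minimal elements $g'$ and $g''$ with $s_ig = g'$ and $g' = s_jg''$, so $g = s_is_jg'' = s_js_ig''$ with $\ell(g) = \ell(g'')+2$; but then $s_j \in \DesL(g)$, contradicting the assumed disjointness of the descent sets. You would need to supply an argument of this kind (or an equivalent one) before your parts (a) and (c) are proved.
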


Note that $g_L(\theta),g_R(\theta) \in \tS_n$ while 
$\omega_L(\theta),\omega_R(\theta) \in \tI_n \subset \cW_n$.

\begin{proof}
Induction and 
Lemma~\ref{weight-lem} imply that $\wt(\theta \pi_{h'}) = 0$ 
for some $h' \in \tS_n$ with $\wt(\theta) \leq \ell(h')$.
Let $h' = s_{i_1}s_{i_2}\cdots s_{i_k}$ be a reduced expression.
Let $h_0 = 1$, for $j \in [k]$ define $h_j$ to be either $h_{j-1}  s_{i_j}$
if $s_{i_j} \in \DesR(\theta \pi_{h_{j-1}})$ or $h_{j-1}$ otherwise, and set $h = h_k$.
By construction $h \leq h'$, $\theta \pi_h = \theta \pi_{h'}$, $\ell(h) = \wt(\theta)$,
and $\theta \pi_h = h^{-1} wh$. 
It remains to show that $h \in \tS_n$ is the unique element 
satisfying both $\wt(\theta\pi_h)=0$ and $\ell(h) = \wt(\theta)$. 
This obviously holds if $\wt(\theta)=0$. 
Assume $g,h \in \tS_n$ are such that 
$\wt(\theta\pi_g) = \wt(\theta\pi_h) = 0$ and $\ell(g)=\ell(h) = \wt(\theta)>0$.
If $s_i$ is a left descent of both $g$ and $h$, then $s_i\in \DesR(\theta)$ 
and it follows by induction that $s_i g = s_ih$,
so $g=h$. Assume $\DesL(g)$ and $\DesL(h)$ are disjoint, and 
choose $s_i \in \DesL(g)$ and $s_j \in \DesL(h)$. 
Both $s_i$ and $s_j$ must belong to $\DesR(\theta)$. 
It is not hard to check that 
(i) if $i \not \equiv j \pm 1 \modu n)$ then 
$\theta \pi_i \pi_j= \theta\pi_j \pi_i$ has weight $\wt(\theta)-2$, while 
(ii) if $i \equiv j \pm 1 \modu n)$ then 
$\theta \pi_i \pi_j \pi_i=\theta \pi_j \pi_i \pi_j$ has weight $\wt(\theta)-3$.

Assume case (i) occurs. By induction, we may assume  
that unique elements $g',g'' \in \tS_n$ exist with $\wt(\theta' g') = \wt (\theta'' g'')=0$,
 $\ell(g') = \wt(\theta')=\wt(\theta)-1$, and 
 $\ell(g'') = \wt(\theta'')=\wt(\theta)-2$ for $\theta' = \theta\pi_i$ and $\theta'' = \theta\pi_i \pi_j = \theta \pi_j \pi_i$. 
 Uniqueness implies that $s_i g = g'$ and $ g' = s_jg''$, so $g = s_is_j g'' = s_js_i g''$ 
 where $\ell(g) =\ell(g'') +2$. But this means that $s_j \in \DesL(g)$, 
 contradicting our assumption otherwise.
One reaches a similar contradiction in case (ii). 
This proves the right-handed version of theorem.
The left-handed version follows by symmetric arguments.
\end{proof}

\begin{example}
If $\theta_1,\theta_2,\theta_3 \in \cW_5$ are as in Example~\ref{W5-ex}, then we have 
\[\omega_R(\theta_1) = \omega_R(\theta_2) = \omega_R(\theta_3) = t_{1,13} t_{5,9} =\
\begin{tikzpicture}[baseline=0,scale=0.22,label/.style={postaction={ decorate,transform shape,decoration={ markings, mark=at position .5 with \node #1;}}}]
{
\draw[fill,lightgray] (0,0) circle (4.0);
\node at (2.44929359829e-16, 4.0) {$_\bullet$};
\node at (1.71450551881e-16, 2.8) {$_{1}$};
\node at (3.80422606518, 1.2360679775) {$_\bullet$};
\node at (2.66295824563, 0.86524758425) {$_{2}$};
\node at (2.35114100917, -3.2360679775) {$_\bullet$};
\node at (1.64579870642, -2.26524758425) {$_{3}$};
\node at (-2.35114100917, -3.2360679775) {$_\bullet$};
\node at (-1.64579870642, -2.26524758425) {$_{4}$};
\node at (-3.80422606518, 1.2360679775) {$_\bullet$};
\node at (-2.66295824563, 0.86524758425) {$_{5}$};
\draw [blue,-,>=latex,domain=0:100,samples=100] plot ({(4.0 + 4.0 * sin(180 * (0.5 + asin(-0.9 + 1.8 * (\x / 100)) / asin(0.9) / 2))) * cos(90 - (0.0 + \x * 8.64))}, {(4.0 + 4.0 * sin(180 * (0.5 + asin(-0.9 + 1.8 * (\x / 100)) / asin(0.9) / 2))) * sin(90 - (0.0 + \x * 8.64))});
\draw [red,-,>=latex,domain=0:100,samples=100] plot ({(4.0 + 2.0 * sin(180 * (0.5 + asin(-0.9 + 1.8 * (\x / 100)) / asin(0.9) / 2))) * cos(90 - (288.0 + \x * 2.88))}, {(4.0 + 2.0 * sin(180 * (0.5 + asin(-0.9 + 1.8 * (\x / 100)) / asin(0.9) / 2))) * sin(90 - (288.0 + \x * 2.88))});
}
\end{tikzpicture}
.\]
The arcs in the winding diagram are coloured red and blue to make them easier to distinguish.
\end{example}

The following is clear by induction from Lemma~\ref{*lem}:

\begin{lemma}\label{*omega-lem}
If $\theta \in \cW_n$ then $g_L(\theta^*) = g_R(\theta)^*$ and $\omega_L(\theta^*) = \omega_R(\theta)^*$.
\end{lemma}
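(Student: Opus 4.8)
The plan is to deduce both identities at once from the uniqueness assertion in Theorem-Definition~\ref{omega-thm}(a), using Lemma~\ref{*lem} to transport the right action on $\theta$ into the left action on $\theta^*$. The first step is to record the mirror of Lemma~\ref{*lem}: since $\theta\mapsto\theta^*$ is an involution on $\cW_n$, substituting $\theta^*$ for $\theta$ in $(\pi_i\theta)^*=\theta^*\pi_{n-i}$ and applying $*$ to both sides gives $(\theta\pi_i)^*=\pi_{n-i}\theta^*$ for all $i\in\ZZ$ and $\theta\in\cW_n$, where now $\pi_{n-i}$ on the right denotes the left operator. I also record that $*$ is a length-preserving automorphism of $\tS_n$ and that inversion preserves length, so the $*$-image and the inverse of a reduced expression are again reduced.

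Next, fix $\theta=(w,\phi)\in\cW_n$ and set $h=g_R(\theta)\in\tS_n$, so that $\ell(h)=\wt(\theta)$, $\wt(\theta\pi_h)=0$, and $\theta\pi_h=\omega_R(\theta)$. Choose a reduced expression $h=s_{i_1}s_{i_2}\cdots s_{i_k}$ with $k=\wt(\theta)$, and apply the mirrored identity $k$ times along the chain $\theta,\ \theta\pi_{i_1},\ \theta\pi_{i_1}\pi_{i_2},\ \dots$ (equivalently, induct on $\wt(\theta)$). This yields
\[
(\theta\pi_h)^* \;=\; \bigl(\theta\,\pi_{i_1}\pi_{i_2}\cdots\pi_{i_k}\bigr)^* \;=\; \pi_{n-i_k}\cdots\pi_{n-i_2}\pi_{n-i_1}\,\theta^*.
\]
Since $s_{n-i_k}\cdots s_{n-i_1}$ is a reduced expression for $g:=(h^{-1})^*=(h^*)^{-1}$, the right-hand side equals $\pi_g\theta^*$ with $\pi_g$ the left operator of Definition~\ref{pi-def}.

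Finally I invoke uniqueness. By Lemma~\ref{*lem}, $\wt(\pi_g\theta^*)=\wt\bigl((\theta\pi_h)^*\bigr)=\wt(\theta\pi_h)=0$, and $\ell(g)=\ell(h)=\wt(\theta)=\wt(\theta^*)$. Thus $g$ is precisely the element whose existence and uniqueness Theorem-Definition~\ref{omega-thm}(a) asserts for $\theta^*$, so by definition $g_L(\theta^*)=g^{-1}=h^*=g_R(\theta)^*$ and $\omega_L(\theta^*)=\pi_g\theta^*=(\theta\pi_h)^*=\omega_R(\theta)^*$, as claimed. There is no substantial obstacle here beyond keeping track of the reversal of the generator order that accompanies applying $*$ together with the inverse — that is exactly why the reduced word for $g$ is the reverse of the $*$-image of the reduced word for $h$ — and invoking the uniqueness from Theorem-Definition~\ref{omega-thm} rather than re-running its construction.
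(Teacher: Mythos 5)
Your proof is correct and is essentially the paper's argument written out in full: the paper disposes of this lemma with the single remark that it is ``clear by induction from Lemma~\ref{*lem},'' and your iteration of the mirrored identity $(\theta\pi_i)^*=\pi_{n-i}\theta^*$ along a reduced word for $g_R(\theta)$, followed by the uniqueness clause of Theorem-Definition~\ref{omega-thm}(a), is exactly that induction made explicit. The bookkeeping of the word reversal and the identification $g=(h^*)^{-1}$, hence $g_L(\theta^*)=g^{-1}=h^*$, is handled correctly.
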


\section{Admissibility}\label{admissibility-sect}

We define $\ell(\theta) = \ell(w) + 2\wt(\theta)$ for $\theta = (w,\phi) \in \cW_n$.
The following terminology identifies a class of weighted involutions whose lengths are unaffected by  the operators
$\pi_i$.

\begin{definition}\label{adm-def}
Relative to a given weighted involution $\theta = (w,\phi) \in \cW_n$, we say that:
\ben
\item[(a)] 
A sequence  $(x,y),(a_0,b_0),(a_1,b_1),\dots,(a_k,b_k) \in \cC(w)$
with
$x<a_k<b_k < \dots < b_1 < b_0 < y$ 
is \emph{right-inadmissible} if 
$\phi(a_i,b_i) + b_i + i \geq \phi(x,y) +y$ for all $i$.

\item[(b)] \
A sequence $(x,y),(a_0,b_0),(a_1,b_1),\dots,(a_k,b_k) \in \cC(w)$
with 
$x<a_0 < a_1 < \dots < a_k <b_k < y$ 
is \emph{left-inadmissible} if  
$\phi(a_i,b_i) - a_i+i  \geq \phi(x,y) -x$ for all $i$.

\een
A weighted involution $\theta \in \cW_n$ is \emph{right-admissible} (respectively, \emph{left-admissible})
if it has no right-inadmissible (respectively, left-inadmissible) sequences of cycles.
\end{definition}

If $w$ is \emph{non-nesting} in the sense of having  no cycles $(x,y),(a,b) \in \cC(w)$
with $x<a<b<y$ then $\theta=(w,\phi) \in \cW_n$ is 
 both right- and left-admissible. The following is also easy to see:
 
 \begin{lemma}\label{*adm-lem}
If $\theta$ is right-admissible (respectively, left-admissible) 
then $\theta^*$ is left-admissible (respectively, right-admissible).
\end{lemma}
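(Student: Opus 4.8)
The plan is to prove both implications via their contrapositives, by exhibiting an explicit $\tau$-induced correspondence between right-inadmissible sequences of $\theta$ and left-inadmissible sequences of $\theta^*$. Recall from the discussion preceding Lemma~\ref{*lem} that $\cC(w^*) = \{(\tau(j),\tau(i)) : (i,j) \in \cC(w)\}$ and $\phi^*(\tau(j),\tau(i)) = \phi(i,j)$, where $\tau(i)=n+1-i$ is an order-reversing involution of $\ZZ$. Thus a cycle $(c,d)\in\cC(w^*)$ is "the same as" the cycle $(\tau(d),\tau(c))\in\cC(w)$, and $\phi^*(c,d)=\phi(\tau(d),\tau(c))$, and $(\tau(c))+(\tau(d))=n+1$ style identities let us convert endpoints freely.

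First I would take a left-inadmissible sequence $(x',y'),(a_0',b_0'),\dots,(a_k',b_k')\in\cC(w^*)$ for $\theta^*$, so that $x'<a_0'<a_1'<\dots<a_k'<b_k'<y'$ and $\phi^*(a_i',b_i')-a_i'+i\ \geq\ \phi^*(x',y')-x'$ for all $i$. Set $(x,y):=(\tau(y'),\tau(x'))$ and $(a_i,b_i):=(\tau(b_i'),\tau(a_i'))$ for each $i$; all of these lie in $\cC(w)$. Since $\tau$ reverses order, the chain $x'<a_0'<\dots<a_k'<b_k'<y'$ becomes $x<a_k<b_k<b_{k-1}<\dots<b_0<y$, which is precisely the shape demanded in Definition~\ref{adm-def}(a) (no further constraints on $a_0,\dots,a_{k-1}$ are needed beyond $a_i<b_i$, which holds automatically). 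Substituting $a_i'=n+1-b_i$ and $x'=n+1-y$ into the displayed inequalities and cancelling the common $n+1$, I would get $\phi(a_i,b_i)+b_i+i\ \geq\ \phi(x,y)+y$ for all $i$ with the index $i$ unchanged, so $(x,y),(a_0,b_0),\dots,(a_k,b_k)$ is right-inadmissible for $\theta$. This proves that $\theta$ right-admissible implies $\theta^*$ left-admissible.

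For the second assertion I would note that the construction is reversible, because $\tau^2=\id$: the identical recipe carries a right-inadmissible sequence of $\theta$ to a left-inadmissible sequence of $\theta^*$. Then, applying the first assertion with $\theta$ replaced by $\theta^*$ and using $(\theta^*)^*=\theta$, one concludes that if $\theta$ is left-admissible then $\theta^*$ admits no right-inadmissible sequence (such a sequence would produce a left-inadmissible sequence of $\theta$), i.e.\ $\theta^*$ is right-admissible. The argument is entirely mechanical; the only points to keep straight are that $\tau$ is order-reversing—this is exactly what interchanges the roles of "small left endpoint" and "large right endpoint" and so swaps Definition~\ref{adm-def}(a) and (b)—and that the additive shift $n+1$ cancels on both sides of the weight inequality so that no reindexing of $i$ is required. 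I do not expect any genuine obstacle beyond this bookkeeping, consistent with the lemma being stated as "easy to see."
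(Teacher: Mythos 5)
Your proof is correct, and it simply fills in the bookkeeping that the paper omits (the lemma is stated there without proof as "easy to see"): the order-reversing involution $\tau$ carries left-inadmissible sequences for $\theta^*$ bijectively to right-inadmissible sequences for $\theta$, with the additive constant $n+1$ cancelling in the weight inequality exactly as you compute. Nothing further is needed.
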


Say that $\theta' \in \cW_n$ is a \emph{descendant} of $\theta  \in \cW_n$ 
if $\theta'=\theta\pi_{g}$ for some $g \in \tS_n$.
The idea behind right-admissibility is to give a condition ensuring that
no descendant $\theta' = (w',\phi')$ of $\theta$ allows
$w'$ to have
nesting cycles  $(i,k+1),(j,k) \in \cC(w')$
with $i<j < k$ and  $\phi'(j,k) > \phi'(i,k+1)$, since such a weighted involution would have
 $\ell(\theta'\pi_k) < \ell(\theta')$.
Our formulation of left-admissibility is motivated by symmetric considerations.
This is enough to make the operators $\pi_i$ length-preserving:

\begin{theorem}\label{adm-thm}
Suppose $\theta \in \cW_n$ and $i \in \ZZ$.
\ben
 \item[(a)]  If $\theta$ is right-admissible 
then $\theta \pi_i$ is  right-admissible and $\ell(\theta\pi_i) = \ell(\theta)$.   

\item[(b)] If $\theta$ is left-admissible 
then $\pi_i \theta$ is  left-admissible and $\ell(\pi_i\theta) = \ell(\theta)$.
  
\een                                                                             
\end{theorem}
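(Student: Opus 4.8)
The plan is to prove part (a) directly and then deduce part (b) by the $*$-symmetry. Suppose (a) holds and $\theta$ is left-admissible. Then $\theta^*$ is right-admissible by Lemma~\ref{*adm-lem}, so by (a) the weighted involution $\theta^*\pi_{n-i}=(\pi_i\theta)^*$ (using Lemma~\ref{*lem}) is right-admissible with $\ell(\theta^*\pi_{n-i})=\ell(\theta^*)$; applying Lemma~\ref{*adm-lem} again shows $\pi_i\theta$ is left-admissible, and since $*$ fixes $\wt$ and preserves $\ell$ (it permutes the simple generators of $\tS_n$), $\ell(\pi_i\theta)=\ell(\theta^*\pi_{n-i})=\ell(\theta^*)=\ell(\theta)$. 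So I focus on (a). The case $\phi_R(i)\le\phi_R(i+1)$ is immediate since then $\theta\pi_i=\theta$ by definition, so assume $\phi_R(i)>\phi_R(i+1)\ge 0$, write $\theta=(w,\phi)$, and set $\theta\pi_i=(w',\psi)$ with $w'=s_iws_i$.

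The first step is to locate $w$ near the index $i$. Since $\phi_R(i)>0$ we have $w(i)<i$; in particular $(i,i+1)\notin\cC(w)$ and $i$ is a right endpoint of $w$. I claim $w(i)<w(i+1)$: if not, then $w(i+1)<w(i)<i<i+1$, so $(w(i+1),i+1)$ and $(w(i),i)$ are cycles of $w$ with $w(i+1)<w(i)<i<i+1$, and the length-two sequence $\bigl((w(i+1),i+1),\,(w(i),i)\bigr)$ is right-inadmissible precisely because $\phi(w(i),i)=\phi_R(i)>\phi_R(i+1)=\phi(w(i+1),i+1)$, contradicting right-admissibility of $\theta$. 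Hence $w(i)<w(i+1)$, and also $w(i+1)\neq i$ (otherwise $w(i)=i+1$). Now Lemma~\ref{leftright-bruhat-lem}, applied to $w$ with the pair $i<i+1$ and using that $i$ is a right endpoint, gives $\ell(s_iws_i)>\ell(ws_i)=\ell(w)+1$; together with $\ell(s_iws_i)\le\ell(w)+2$ this forces $\ell(s_iws_i)=\ell(w)+2$. The weight formula of Definition~\ref{pi-def}(a) changes the sum of the right form over one period by exactly $-1$, so $\wt(\theta\pi_i)=\wt(\theta)-1$, whence $\ell(\theta\pi_i)=\ell(s_iws_i)+2\wt(\theta\pi_i)=\ell(w)+2+2(\wt(\theta)-1)=\ell(\theta)$.

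It remains to prove $\theta\pi_i$ is right-admissible, which I would do by contradiction: suppose $(x,y),(a_0,b_0),\dots,(a_k,b_k)\in\cC(w')$ is right-inadmissible, and transport it back along $w'=s_iws_i$. By Lemma~\ref{triv-lem} we have $\cC(w')=s_i\cC(w)$, and since $n\ge 2$ and no cycle of $w$ has the form $(c,c+1)$ with $c\equiv i\pmod n$, the map $s_i$ sends each pair to an $\cC(w)$-cycle without reversing its orientation, so $(s_i(x),s_i(y))$ and all $(s_i(a_l),s_i(b_l))$ lie in $\cC(w)$. The key computation, using that $\phi_R$ is $n$-periodic, is that for any right endpoint $b$ of $w'$ one has $\psi_R(b)=\phi_R(s_i(b))$ except when $b\equiv i+1\pmod n$, in which case $\psi_R(b)=\phi_R(s_i(b))-1$; combined with $s_i(b)=b+1$ for $b\equiv i$ and $s_i(b)=b-1$ for $b\equiv i+1$, the inadmissibility inequalities $\psi(a_l,b_l)+b_l+l\ge\psi(x,y)+y$ translate into inequalities of the same shape for the $s_i$-images in $\cC(w)$, up to discrepancies of at most one coming from these residue conditions. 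The remaining work is to check that these unit discrepancies, together with the possibility that $s_i$ reverses the order of two endpoints that are consecutive integers straddling residues $i$ and $i+1$, never destroy right-inadmissibility: in each such configuration one re-indexes the chain locally, and it is exactly here that the index-shift term ``$+\,l$'' in Definition~\ref{adm-def}(a) is tailored to absorb the weight offset $-1$ incurred when a right endpoint lies in residue $i+1$. This produces a right-inadmissible sequence for $\theta$, the desired contradiction.

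The step I expect to be the main obstacle is this last one: the bookkeeping of how conjugation by $s_i$ and the weight slide of Definition~\ref{pi-def}(a) act jointly on a nesting chain of cycles, and the verification in each residue configuration that the $\pm1$ offsets cancel. A convenient way to tame it is to reformulate right-(in)admissibility purely in terms of the function $\phi_R$ together with the nesting order on $\cC(w)$, so that $\pi_i$ is visibly an order-compatible ``shift'' of this data; the verification then reduces to a short check that such a shift cannot turn an admissible configuration into an inadmissible one.
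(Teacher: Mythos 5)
Your reduction of (b) to (a) via $*$, your treatment of the trivial case, your use of right-admissibility to rule out $w(i+1)<w(i)$, and your derivation of $\ell(s_iws_i)=\ell(w)+2$ (hence $\ell(\theta\pi_i)=\ell(\theta)$) all match the paper and are correct. The genuine gap is that you never actually prove the remaining claim --- that $\theta\pi_i$ is right-admissible --- which is the bulk of the paper's argument. You set up the right strategy (transport a right-inadmissible sequence for $\theta'=\theta\pi_i$ back through $s_i$ and track the $\pm1$ offsets), but then describe the verification only as a plan and explicitly flag it as ``the main obstacle.'' This is not a routine check that can be waved through: the paper first needs a normalization step (replacing the chain by $(x-n,y-n),(a_j,b_j),\dots,(a_k,b_k)$ so that $y'-n< b'_k<\dots<b'_0<y'$ all lie in one window of length $n$, forcing each residue class to occur at most once among the $b'_j$), and then a six-way case analysis in which the fixes are not all ``local re-indexings'': when $y'\equiv i\ (\mathrm{mod}\ n)$ the transported right-hand side $\phi_R(y)+y$ \emph{increases} by one and one must either insert the extra cycle $(w(y'),y')$ into the chain or pass to $(x-n,y-n)$ and truncate, depending on the residue of $b'_k$; when $b'_k\equiv i$ and $b'_{k-1}\equiv i+1$ one must delete a term; and one case must be shown impossible using $s_i\in\DesR(\theta)$.

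A smaller sign that the computation was not carried out: by Definition~\ref{pi-def}(a), for $b\equiv i+1\ (\mathrm{mod}\ n)$ one has $\psi_R(b)+b=\phi_R(s_i(b))+s_i(b)$ (the weight offset $-1$ and the position shift $+1$ cancel), whereas the uncancelled offset of $-1$ in the quantity $\psi_R(b)+b$ occurs when $b\equiv i\ (\mathrm{mod}\ n)$ --- the opposite of what you assert when you say the index-shift term $+l$ absorbs an offset ``incurred when a right endpoint lies in residue $i+1$.'' To complete the proof you would need to write out the residue-by-residue case analysis (or carry out the reformulation you propose in your last paragraph in enough detail that each case becomes a one-line check).
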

 
\begin{proof}
Write $\theta = (w,\phi) \in \cW_n$.
We only prove part (a), since 
part (b) is equivalent by Lemmas~\ref{*lem} and \ref{*adm-lem}.
Assume $\theta$ is right-admissible and $s_i \in \DesR(\theta)$.
Then $w(i) <i$ and $\phi_R(i) >  \phi_R(i+1)$.
It follows $w(i) < w(i+1)$ since otherwise we would have $x < a_0<b_0<y$
for the cycles $(x,y) = (w(i+1),i+1)$ and $(a_0,b_0) = (w(i),i)$,
and it would hold that $\phi_R(b_0)+b_0= \phi_R(i) + i  \geq \phi_R(i+1) +i+1  = \phi_R(y)+y$.
We conclude by Lemma~\ref{+2lem} that $\ell(s_iws_i) = \ell(w)+2$, so $\ell(\theta\pi_i) = \ell(\theta)$.
It remains to check that $\theta\pi_i$ is right-admissible.

 Suppose $(x,y),(a_0,b_0),(a_1,b_1),\dots,(a_k,b_k)  \in \cC(w)$
 is a right-inadmissible sequence for $\theta$.
 If $b_k < y-n$
 and $j \in \{0,1,\dots,k\}$ is the smallest index with $b_j < y-n$,
 then $j \leq n$ and 
the sequence  $(x-n,y-n),(a_j,b_j),(a_{j+1},b_{j+1}),\dots,(a_k,b_k) \in \cC(w)$
is also right-inadmissible.

Now suppose $s_i \in \DesR(\theta)$ but $\theta\pi_i$ is not right-admissible.
Write $\theta'=(w',\phi') = \theta\pi_i$ so that $w'=s_iws_i$ 
and recall by Lemma~\ref{triv-lem} that $\cC(w') = s_i\cC(w)$.
 The number $i$ must be a right endpoint of $w$ so $i+1$ must be a right endpoint of $w'$.
Suppose $(x',y'),(a'_0,b'_0),(a'_1,b'_1),\dots,(a'_k,b'_k)  \in \cC(w')$
 is a right-inadmissible sequence for $\theta'$.
 By the observations in the previous paragraph, 
 we may assume that $y'-n \leq b'_k < \dots < b'_1 < b'_0 < y'$.
 The first inequality cannot be strict since $w'(y') = x' < a'_k = w'(b'_k)$, so $y'-n < b'_k$.
 Let $(x,y) = (s_i(x'), s_i(y'))$ and $(a_j,b_j)  = (s_i(a_j'), s_i(b_j')) \in \cC(w)$ for each $j$.
  If at most one number among $b'_k < \dots <b'_1 <b'_0 < y'$ is 
  congruent to $i$ or $i+1$ modulo $n$ and $y' \not\equiv i\modu n)$,
 then the obvious sequence of cycles 
 $(x,y),(a_0,b_0),(a_1,b_1),\dots,(a_k,b_k)$ is inadmissible for $\theta$.
 If $y' \equiv i \modu n)$ and $b'_k \equiv i+1 \modu n)$ then $ (x-n,y-n),(a_k,b_k)$ 
 is an inadmissible sequence for $\theta$.
 If $y' \equiv i \modu n)$ and $b'_k \not\equiv i+1\modu n)$  then
 \[
 (x,y),(w(y'),y'),(a_0,b_0),(a_1,b_1),\dots,(a_k,b_k) 
 \]
 is  inadmissible  for $\theta$.
If $b'_j \equiv i \modu n)$ and $b'_{j-1} \equiv i+1 \modu n)$ for $j\in [k-1]$, then
the sequence
\[
 (x,y), (a_0,b_0),(a_1,b_1),\dots, (a_j,b_j), (a_{j-1},b_{j-1}),\dots, (a_k,b_k) 
 \]
 is inadmissible for $\theta$. Similarly, if $k>0$ and 
 $b'_k \equiv i \modu n)$ and $b'_{k-1} \equiv i+1\modu n)$ 
 then
\[
 (x,y), (a_0,b_0),(a_1,b_1),\dots,(a_{k-2},b_{k-2}), (a_k,b_k) 
 \] 
 is inadmissible for $\theta$. Finally, it cannot happen that 
 $b'_0 \equiv i \modu n)$ and $y' \equiv i+1\modu n)$,
 since then  the condition   $\phi'_R(b'_0) + b'_0 \geq \phi'_R(y') + y'$
 would imply that
$ \phi'_R(i) > \phi'_R(i+1) $ and $\phi_R(i) \leq \phi_R(i+1)$, contradicting the fact that $s_i \in \DesR(\theta)$.
In this way, we deduce the contrapositive of part (a), 
i.e., that if $\theta\pi_i$ is not right-admissible then $\theta$ is also not right-admissible.
\end{proof}

It is obvious from Theorem-Definition~\ref{omega-thm} that 
$\ell'(\omega_R(\theta)) = \ell'(\omega_L(\theta)) = \ell'(\theta)$ for 
all $\theta \in \cW_n$.
The following statement is immediate from the previous theorem by induction.

\begin{corollary}\label{limit-cor}
Let $\theta \in \cW_{n}$. 
If $\theta$ is right-admissible then   $\ell(\omega_R(\theta)) = \ell(\theta)$.
If $\theta$ is left-admissible then $\ell(\omega_L(\theta)) = \ell(\theta)$.
\end{corollary}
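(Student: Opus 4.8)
The plan is to deduce the corollary from Theorem~\ref{adm-thm} by a one-step-at-a-time iteration, exactly as the phrase ``immediate from the previous theorem by induction'' suggests. First I would recall from Theorem-Definition~\ref{omega-thm} that $\omega_R(\theta) = \theta\pi_h$ where $h = g_R(\theta) \in \tS_n$ satisfies $\ell(h) = \wt(\theta)$, and that by the corollary following the braid-relation proposition, $\pi_h = \pi_{i_1}\pi_{i_2}\cdots\pi_{i_k}$ for any reduced word $h = s_{i_1}s_{i_2}\cdots s_{i_k}$, which is necessarily of length $k = \wt(\theta)$. Thus $\omega_R(\theta)$ is obtained from $\theta$ by applying the operators $\pi_{i_1}, \pi_{i_2}, \dots, \pi_{i_k}$ successively on the right.

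Next I would induct on $j$ to show that $\theta_j := \theta\pi_{i_1}\pi_{i_2}\cdots\pi_{i_j}$ is right-admissible with $\ell(\theta_j) = \ell(\theta)$ for $0 \leq j \leq k$. The base case $j = 0$ is the hypothesis on $\theta$, and the inductive step is precisely Theorem~\ref{adm-thm}(a) applied to the right-admissible weighted involution $\theta_{j-1}$ and the index $i_j$ (the conclusion holds whether or not $\pi_{i_j}$ acts trivially on $\theta_{j-1}$). Taking $j = k$ yields $\ell(\omega_R(\theta)) = \ell(\theta_k) = \ell(\theta)$, which is the first assertion.

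For the second assertion I would reduce to the first via the involution $*$. By Lemma~\ref{*omega-lem} we have $\omega_L(\theta) = \omega_R(\theta^*)^*$; by Lemma~\ref{*adm-lem}, $\theta$ is left-admissible precisely when $\theta^*$ is right-admissible; and $\ell(\eta^*) = \ell(\eta)$ for every $\eta \in \cW_n$, since $\wt(\eta^*) = \wt(\eta)$ by Lemma~\ref{*lem} while $w \mapsto w^*$ is a length-preserving automorphism of $\tS_n$ because it permutes $\{s_1,\dots,s_n\}$. Hence, if $\theta$ is left-admissible, applying the first assertion to $\theta^*$ gives $\ell(\omega_L(\theta)) = \ell(\omega_R(\theta^*)^*) = \ell(\omega_R(\theta^*)) = \ell(\theta^*) = \ell(\theta)$. (Alternatively, one could run the same iteration argument directly, using Theorem~\ref{adm-thm}(b) and a reduced word for $g_L(\theta)^{-1}$, applying the left operators $\pi_{i_j}$ from the right end of the word inward.)

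There is essentially no obstacle in this argument; the only point requiring a moment's care is that $\pi_h$ must be realized by a reduced word of length \emph{exactly} $\wt(\theta)$, so that every intermediate stage $\theta_j$ of the iteration falls within the scope of Theorem~\ref{adm-thm}(a). This is guaranteed by the equality $\ell(h) = \wt(\theta)$ in Theorem-Definition~\ref{omega-thm}(a), and everything else is bookkeeping.
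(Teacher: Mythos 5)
Your proposal is correct and is exactly the argument the paper intends: the paper states that the corollary is ``immediate from the previous theorem by induction,'' and your iteration of Theorem~\ref{adm-thm}(a) along a reduced word for $g_R(\theta)$ (together with either the $*$-duality or Theorem~\ref{adm-thm}(b) for the left-handed statement) is precisely that induction, spelled out. No gaps.
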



One natural set of ``extremal'' elements on $\cW_n$ is given by the subset $\tI_n$. 
Another is this:

\begin{definition}\label{cWmin-def}
Define $\cWmin_{n}$ as the set of weighted involutions $\theta=(w,\phi) \in \cW_n$
with $\ell'(w) = \ell(w)$, i.e., such that $w$ is a product of commuting simple reflections.
\end{definition}

Every element of $\cWmin_{n}$ is both left- and right-admissible.
The elements of $\cWmin_{n}$ 
are in bijection with 
$\NN$-weighted matchings in $\cC_n$, the cycle graph on $n$ vertices,
 which explains our notation.

\begin{example}
The set $\cWmin_4$ consists of the weighted involutions of the form
\[
\begin{tikzpicture}[baseline=0,scale=0.22,label/.style={postaction={ decorate,decoration={ markings, mark=at position .5 with \node #1;}}}]
{
\draw[fill,lightgray] (0,0) circle (4.0);
\node at (2.44929359829e-16, 4.0) {$_\bullet$};
\node at (1.71450551881e-16, 2.8) {$_1$};
\node at (4.0, 0.0) {$_\bullet$};
\node at (2.8, 0.0) {$_2$};
\node at (2.44929359829e-16, -4.0) {$_\bullet$};
\node at (1.71450551881e-16, -2.8) {$_3$};
\node at (-4.0, -4.89858719659e-16) {$_\bullet$};
\node at (-2.8, -3.42901103761e-16) {$_4$};
\draw [-,>=latex,domain=0:100,samples=100,label={[above]{$a$}}] plot ({(4.0 + 0.0 * sin(180 * (0.5 + asin(-0.9 + 1.8 * (\x / 100)) / asin(0.9) / 2))) * cos(90 - (0.0 + \x * 0.9))}, {(4.0 + 0.0 * sin(180 * (0.5 + asin(-0.9 + 1.8 * (\x / 100)) / asin(0.9) / 2))) * sin(90 - (0.0 + \x * 0.9))});
}
\end{tikzpicture}
\quad
\begin{tikzpicture}[baseline=0,scale=0.22,label/.style={postaction={ decorate,decoration={ markings, mark=at position .5 with \node #1;}}}]
{
\draw[fill,lightgray] (0,0) circle (4.0);
\node at (2.44929359829e-16, 4.0) {$_\bullet$};
\node at (1.71450551881e-16, 2.8) {$_1$};
\node at (4.0, 0.0) {$_\bullet$};
\node at (2.8, 0.0) {$_2$};
\node at (2.44929359829e-16, -4.0) {$_\bullet$};
\node at (1.71450551881e-16, -2.8) {$_3$};
\node at (-4.0, -4.89858719659e-16) {$_\bullet$};
\node at (-2.8, -3.42901103761e-16) {$_4$};
\draw [-,>=latex,domain=0:100,samples=100,label={[below]{$a$}}] plot ({(4.0 + 0.0 * sin(180 * (0.5 + asin(-0.9 + 1.8 * (\x / 100)) / asin(0.9) / 2))) * cos(90 - (90.0 + \x * 0.9))}, {(4.0 + 0.0 * sin(180 * (0.5 + asin(-0.9 + 1.8 * (\x / 100)) / asin(0.9) / 2))) * sin(90 - (90.0 + \x * 0.9))});
}
\end{tikzpicture}
\quad
\begin{tikzpicture}[baseline=0,scale=0.22,label/.style={postaction={ decorate,decoration={ markings, mark=at position .5 with \node #1;}}}]
{
\draw[fill,lightgray] (0,0) circle (4.0);
\node at (2.44929359829e-16, 4.0) {$_\bullet$};
\node at (1.71450551881e-16, 2.8) {$_1$};
\node at (4.0, 0.0) {$_\bullet$};
\node at (2.8, 0.0) {$_2$};
\node at (2.44929359829e-16, -4.0) {$_\bullet$};
\node at (1.71450551881e-16, -2.8) {$_3$};
\node at (-4.0, -4.89858719659e-16) {$_\bullet$};
\node at (-2.8, -3.42901103761e-16) {$_4$};
\draw [-,>=latex,domain=0:100,samples=100,label={[below]{$a$}}] plot ({(4.0 + 0.0 * sin(180 * (0.5 + asin(-0.9 + 1.8 * (\x / 100)) / asin(0.9) / 2))) * cos(90 - (180.0 + \x * 0.9))}, {(4.0 + 0.0 * sin(180 * (0.5 + asin(-0.9 + 1.8 * (\x / 100)) / asin(0.9) / 2))) * sin(90 - (180.0 + \x * 0.9))});
}
\end{tikzpicture}
\quad
\begin{tikzpicture}[baseline=0,scale=0.22,label/.style={postaction={ decorate,decoration={ markings, mark=at position .5 with \node #1;}}}]
{
\draw[fill,lightgray] (0,0) circle (4.0);
\node at (2.44929359829e-16, 4.0) {$_\bullet$};
\node at (1.71450551881e-16, 2.8) {$_1$};
\node at (4.0, 0.0) {$_\bullet$};
\node at (2.8, 0.0) {$_2$};
\node at (2.44929359829e-16, -4.0) {$_\bullet$};
\node at (1.71450551881e-16, -2.8) {$_3$};
\node at (-4.0, -4.89858719659e-16) {$_\bullet$};
\node at (-2.8, -3.42901103761e-16) {$_4$};
\draw [-,>=latex,domain=0:100,samples=100,label={[above]{$a$}}] plot ({(4.0 + 0.0 * sin(180 * (0.5 + asin(-0.9 + 1.8 * (\x / 100)) / asin(0.9) / 2))) * cos(90 - (270.0 + \x * 0.9))}, {(4.0 + 0.0 * sin(180 * (0.5 + asin(-0.9 + 1.8 * (\x / 100)) / asin(0.9) / 2))) * sin(90 - (270.0 + \x * 0.9))});
}
\end{tikzpicture}
\quad
\begin{tikzpicture}[baseline=0,scale=0.22,label/.style={postaction={ decorate,decoration={ markings, mark=at position .5 with \node #1;}}}]
{
\draw[fill,lightgray] (0,0) circle (4.0);
\node at (2.44929359829e-16, 4.0) {$_\bullet$};
\node at (1.71450551881e-16, 2.8) {$_1$};
\node at (4.0, 0.0) {$_\bullet$};
\node at (2.8, 0.0) {$_2$};
\node at (2.44929359829e-16, -4.0) {$_\bullet$};
\node at (1.71450551881e-16, -2.8) {$_3$};
\node at (-4.0, -4.89858719659e-16) {$_\bullet$};
\node at (-2.8, -3.42901103761e-16) {$_4$};
\draw [-,>=latex,domain=0:100,samples=100,label={[above]{$b$}}] plot ({(4.0 + 0.0 * sin(180 * (0.5 + asin(-0.9 + 1.8 * (\x / 100)) / asin(0.9) / 2))) * cos(90 - (0.0 + \x * 0.9))}, {(4.0 + 0.0 * sin(180 * (0.5 + asin(-0.9 + 1.8 * (\x / 100)) / asin(0.9) / 2))) * sin(90 - (0.0 + \x * 0.9))});
\draw [-,>=latex,domain=0:100,samples=100,label={[below]{$a$}}] plot ({(4.0 + 0.0 * sin(180 * (0.5 + asin(-0.9 + 1.8 * (\x / 100)) / asin(0.9) / 2))) * cos(90 - (180.0 + \x * 0.9))}, {(4.0 + 0.0 * sin(180 * (0.5 + asin(-0.9 + 1.8 * (\x / 100)) / asin(0.9) / 2))) * sin(90 - (180.0 + \x * 0.9))});
}
\end{tikzpicture}
\quad
\begin{tikzpicture}[baseline=0,scale=0.22,label/.style={postaction={ decorate,decoration={ markings, mark=at position .5 with \node #1;}}}]
{
\draw[fill,lightgray] (0,0) circle (4.0);
\node at (2.44929359829e-16, 4.0) {$_\bullet$};
\node at (1.71450551881e-16, 2.8) {$_1$};
\node at (4.0, 0.0) {$_\bullet$};
\node at (2.8, 0.0) {$_2$};
\node at (2.44929359829e-16, -4.0) {$_\bullet$};
\node at (1.71450551881e-16, -2.8) {$_3$};
\node at (-4.0, -4.89858719659e-16) {$_\bullet$};
\node at (-2.8, -3.42901103761e-16) {$_4$};
\draw [-,>=latex,domain=0:100,samples=100,label={[below]{$b$}}] plot ({(4.0 + 0.0 * sin(180 * (0.5 + asin(-0.9 + 1.8 * (\x / 100)) / asin(0.9) / 2))) * cos(90 - (90.0 + \x * 0.9))}, {(4.0 + 0.0 * sin(180 * (0.5 + asin(-0.9 + 1.8 * (\x / 100)) / asin(0.9) / 2))) * sin(90 - (90.0 + \x * 0.9))});
\draw [-,>=latex,domain=0:100,samples=100,label={[above]{$a$}}] plot ({(4.0 + 0.0 * sin(180 * (0.5 + asin(-0.9 + 1.8 * (\x / 100)) / asin(0.9) / 2))) * cos(90 - (270.0 + \x * 0.9))}, {(4.0 + 0.0 * sin(180 * (0.5 + asin(-0.9 + 1.8 * (\x / 100)) / asin(0.9) / 2))) * sin(90 - (270.0 + \x * 0.9))});
}
\end{tikzpicture}
\]
where $a,b \in \NN$ are arbitrary natural numbers.
\end{example}

\begin{proposition}\label{match-prop}
There are $\frac{n}{n-k}\binom{n-k}{k}$ distinct $k$-element matchings in $\cC_n$.
\end{proposition}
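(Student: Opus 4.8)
The plan is to recognise a $k$-element matching in $\cC_n$ as a choice of $k$ pairwise non-adjacent edges around an $n$-cycle, i.e., as a $k$-subset of $\ZZ/n\ZZ$ no two of whose elements are consecutive; the desired count is then the classical ``lemma on cyclic arrangements'', which I would prove from scratch by a short reduction to the linear case.

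First I would fix notation: label the edges of $\cC_n$ cyclically as $e_1,\dots,e_n$ with $e_i=\{i,i+1\}$, vertex labels read modulo $n$. For $n\geq 4$ two distinct edges meet in a vertex exactly when their indices are cyclically consecutive, so a matching of size $k$ is precisely a $k$-subset $I\subseteq\ZZ/n\ZZ$ such that no two elements of $I$ differ by $1$ modulo $n$. The small cases $n\in\{2,3\}$, and the degenerate range $2k>n$ (where the formula evaluates to $0$ under the convention $\binom{a}{b}=0$ for $b\notin\{0,1,\dots,a\}$), I would dispose of by direct inspection; note also $k=0$ gives the empty matching and the formula correctly returns $1$.

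Next I would record, with a one-line proof, the linear analogue: the number of $k$-subsets of $\{1,2,\dots,m\}$ containing no two consecutive integers is $\binom{m-k+1}{k}$, via the bijection $\{a_1<\dots<a_k\}\mapsto\{a_1,\,a_2-1,\,\dots,\,a_k-(k-1)\}$ onto the set of $k$-subsets of $\{1,\dots,m-k+1\}$. Then I would count the cyclic $k$-subsets $I$ by conditioning on whether $e_n\in I$. If $e_n\notin I$, then $I$ is a $k$-subset of the linear chain $e_1,\dots,e_{n-1}$ with no two consecutive, of which there are $\binom{(n-1)-k+1}{k}=\binom{n-k}{k}$. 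If $e_n\in I$, then $e_1\notin I$ and $e_{n-1}\notin I$, and $I\setminus\{e_n\}$ is a $(k-1)$-subset of the linear chain $e_2,\dots,e_{n-2}$ with no two consecutive, of which there are $\binom{(n-3)-(k-1)+1}{k-1}=\binom{n-k-1}{k-1}$. Adding, the number of $k$-matchings in $\cC_n$ equals $\binom{n-k}{k}+\binom{n-k-1}{k-1}$.

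Finally I would close with the elementary identity $\binom{n-k-1}{k-1}=\tfrac{k}{n-k}\binom{n-k}{k}$, which yields $\binom{n-k}{k}+\binom{n-k-1}{k-1}=\bigl(1+\tfrac{k}{n-k}\bigr)\binom{n-k}{k}=\tfrac{n}{n-k}\binom{n-k}{k}$, as claimed. There is no genuine obstacle in this argument: the only points demanding a moment's care are verifying that the edge-intersection relation on $\cC_n$ really does reduce to ``cyclically non-consecutive'', keeping the boundary values of $k$ straight, and not dropping an index in either of the two binomial reductions.
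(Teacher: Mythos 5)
Your argument is correct. The paper does not actually supply a proof here --- it simply remarks that the count is well known (citing OEIS A034807) and ``follows as an instructive exercise'' --- and the exercise it has in mind is exactly what you carried out: identify a $k$-matching in $\cC_n$ with a $k$-subset of $\ZZ/n\ZZ$ having no two cyclically consecutive elements, split on whether a fixed edge is used to reduce to the linear count $\binom{m-k+1}{k}$, and combine via $\binom{n-k}{k}+\binom{n-k-1}{k-1}=\frac{n}{n-k}\binom{n-k}{k}$. All the binomial bookkeeping checks out, and your handling of the degenerate ranges ($2k>n$, $k=0$, small $n$) is appropriate.
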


\begin{proof}
This is well-known (see \cite[A034807]{OEIS}) and also follows as an instructive exercise.
\end{proof}

In the next section we show that the length-preserving 
 maps
$
\omega_R : \cWmin_{n} \to \tI_{n}
$
and 
$ \omega_L : \cWmin_{n} \to \tI_{n}
$
are bijections.
Here we construct the maps which will turn out to be their inverses.

\begin{propdef}\label{lambda-def}
Fix $ \theta=(w,\phi) \in \cW_{n}$ and $m \in \ZZ$.
Let $k = \ell'(w)$ and suppose
\[a_1 < a_2 < \dots <a_k
\qquand
 a'_1<a'_2<\dots<a'_k
\] are the respective sequences of left and right endpoints of $w$ in $m+[n]$.
For $j \in [k]$, define $p_j$, $p_j'$, $q_j$, and $q_j'$ as 
the numbers of cycles $(x,y) \in \cC(w)$ respectively
satisfying
\[ x < a_j < y,\qquad x < a'_j < y,\qquad x<a_j <y < w(a_j), \qquand w(a'_j)<x<a'_j < y.\]
Let $i_j = a_j + p_j  $ and $i_j' = a'_j - p_j'-1$, and let 
$
u= s_{i_1} s_{i_2}\cdots s_{i_k}
$ 
and 
$
v= s_{i'_1} s_{i'_2}\cdots s_{i'_k}
$.
Finally:
\ben
\item[(a)]
Let
 $\lambda_R(\theta) = ( u,\psi) \in \cW_n$ where 
$\psi: \cC(u) \to \NN$ is the weight map with
\[\psi(i_j,i_j+1) = \phi(a_j, w(a_j)) + w(a_j) - a_j - q_j - 1\qquad\text{for $j \in [k]$}.\]

\item[(b)]
Let $\lambda_L(\theta) = ( v,\chi) \in \cW_n$ where $\chi : \cC(v) \to \NN$ is the weight map with
\[\chi(i'_j,i'_j+1) =\phi(w(a'_j), a'_j) + a'_j - w(a'_j)- q'_j - 1\qquad\text{for $j \in [k]$.}\]
\een
The weighted involutions $\lambda_R(\theta)$ and $\lambda_L(\theta)$ then 
both belong to $\cWmin_{n}$, and do not depend on $m$.
\end{propdef}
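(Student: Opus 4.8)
The whole statement will follow from one bookkeeping lemma about how the nesting data of $w$ evolves as one scans along consecutive endpoints. For $c \in \ZZ$ let $\Omega(c)$ be the (finite, since $w \in \tS_n$) number of cycles $(x,y) \in \cC(w)$ with $x \leq c < y$, i.e.\ the number of arcs of $w$ straddling the gap just after position $c$. The basic identity is
\[ \Omega(c) = \Omega(c-1) - [\,c \text{ is a right endpoint of } w\,] + [\,c \text{ is a left endpoint of } w\,], \]
since exactly the arc ending at $c$ closes and exactly the arc starting at $c$ opens. Unwinding the definitions gives $p_j = \Omega(a_j-1)$ and $p'_j = \Omega(a'_j-1)-1$ (the extra $-1$ because $a'_j$ is itself a right endpoint). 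Also, since $\cC(w)$ is $n$-periodic, each window $m+[n]$ contains exactly $k=\ell'(w)$ left endpoints, exactly $k$ right endpoints, and exactly $n-2k$ fixed points of $w$. First I would record these facts and the identity above.

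Next I would prove the gap formula. Summing the displayed identity from $c=a_j$ to $c=a_{j+1}-1$ for consecutive left endpoints $a_j<a_{j+1}$, and using that every integer strictly between $a_j$ and $a_{j+1}$ is a right endpoint or a fixed point, gives $\Omega(a_{j+1}-1)-\Omega(a_j-1) = 1-r_j$, where $r_j$ is the number of right endpoints in the open interval $(a_j,a_{j+1})$. Since $a_{j+1}-a_j = 1+r_j+f_j$ with $f_j$ the number of fixed points in $(a_j,a_{j+1})$, we get
\[ i_{j+1}-i_j = (a_{j+1}-a_j) + (p_{j+1}-p_j) = (1+r_j+f_j)+(1-r_j) = 2+f_j \geq 2. \]
The mirror computation with left and right endpoints exchanged yields $i'_{j+1}-i'_j = 2+f'_j \geq 2$ for consecutive right endpoints. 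Extending cyclically by setting $a_{k+1}=a_1+n$ (so $p_{k+1}=p_1$ and $i_{k+1}=i_1+n$, using $\Omega(c+n)=\Omega(c)$), the same formula holds for the wraparound gap, and summing over one full period gives $\sum_{j=1}^k (i_{j+1}-i_j) = 2k+(n-2k) = n$. Hence $i_1<i_2<\dots<i_k<i_1+n$ with all consecutive gaps (cyclically) at least $2$, so the residues $i_1,\dots,i_k$ modulo $n$ are distinct and pairwise non-adjacent in the cycle $\ZZ/n\ZZ$, and likewise for the $i'_j$. Therefore $u=s_{i_1}\cdots s_{i_k}$ and $v=s_{i'_1}\cdots s_{i'_k}$ are products of $k$ commuting simple reflections, so $\ell(u)=\ell'(u)=k=\ell(v)=\ell'(v)$.

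Then I would check that $\psi$ and $\chi$ are legitimate weight maps so that $\lambda_R(\theta),\lambda_L(\theta)\in\cWmin_n$. Distinctness of the $i_j$ mod $n$ makes the formula for $\psi$ well defined on $\cC(u)$ and $n$-periodic; nonnegativity $\psi(i_j,i_j+1)\geq 0$ reduces (as $\phi\geq 0$) to $q_j \leq w(a_j)-a_j-1$, which holds because the cycles counted by $q_j$ have pairwise distinct right endpoints lying in the $(w(a_j)-a_j-1)$-element set $\{a_j+1,\dots,w(a_j)-1\}$; symmetrically $q'_j \leq a'_j-w(a'_j)-1$ gives $\chi\geq 0$. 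Finally, for independence of $m$ it suffices to compare the windows $m+[n]$ and $(m+1)+[n]$, which are joined by unit shifts. If $m+1$ is a fixed point of $w$, the two windows have identical left and right endpoints, so both $\lambda_R$ and $\lambda_L$ are literally unchanged. If $m+1$ is a left endpoint, the right endpoints are unchanged (so $\lambda_L$ is unchanged), while the left endpoints become $a_2,\dots,a_k,a_1+n$ with indices $i_2,\dots,i_k,i_1+n$; the resulting permutation is $s_{i_2}\cdots s_{i_k}s_{i_1+n}=s_{i_2}\cdots s_{i_k}s_{i_1}$, which equals $u$ precisely because $s_{i_1}$ commutes with $s_{i_2},\dots,s_{i_k}$ — this is exactly where the independent-set property is used — and the weights agree after the periodic shift. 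The case $m+1$ a right endpoint is symmetric (affecting only $\lambda_L$).

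\textbf{Main obstacle.} The crux is the gap lemma $i_{j+1}-i_j = 2+f_j$ together with its correct cyclic version: one must track carefully which integers between consecutive endpoints are right endpoints versus fixed points, pin down the off-by-one in $p'_j = \Omega(a'_j-1)-1$, confirm finiteness of all the cycle-counts, and handle the wraparound $a_{k+1}=a_1+n$ so that the $k$ indices really reduce to an independent set of size $k$ in $\ZZ/n\ZZ$. Everything else — nonnegativity of the weights and $m$-invariance — is routine, the latter resting on the commuting-product structure just established.
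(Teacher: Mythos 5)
Your proof is correct, and it reaches the crucial inequality $i_{j+1} > i_j + 1$ by a genuinely different device than the paper, so a comparison is worthwhile. The paper works directly with two consecutive left endpoints $a < \tilde a$ and their straddling counts $p,\tilde p$: it cancels the cycles counted by both, reducing the claim to $a + r + 1 < \tilde a + \tilde r$ where $r$ counts cycles $(x,y)$ with $x<a<y<\tilde a$ and $\tilde r$ counts those with $a\leq x<\tilde a<y$, then bounds $r \leq \tilde a - a - 1$ and notes that equality forces $\tilde a < w(a)$, whence $\tilde r>0$. Your telescoping of the height function $\Omega$ replaces that inequality-plus-boundary-case analysis with the exact identity $i_{j+1}-i_j = 2+f_j$, which is sharper (it yields the consistency check $\sum_j (i_{j+1}-i_j) = 2k+(n-2k)=n$ for free) and treats the cyclic wraparound uniformly via $\Omega(c+n)=\Omega(c)$; the off-by-one $p_j'=\Omega(a_j'-1)-1$ and the mirror computation for the $i_j'$ are handled correctly. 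You also supply two verifications the paper's proof passes over in silence: that $\psi$ and $\chi$ actually take values in $\NN$ (needed for $\lambda_R(\theta)$ and $\lambda_L(\theta)$ to lie in $\cW_n$ at all), via the clean bound $q_j \leq w(a_j)-a_j-1$ on distinct right endpoints in $(a_j,w(a_j))$, and the mechanics of $m$-independence, where your point that shifting the window cyclically permutes the factors $s_{i_1},\dots,s_{i_k}$ and therefore relies on the commutativity just established is exactly the right observation (the paper dismisses this with one sentence about periodicity). I see no gaps.
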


If $w \in \tI_n \subset \cW_n$ then $\lambda_R(w) = \lambda_R(w,0)$ and $\lambda_L(w) = \lambda_L(w,0)$.
We delay the proof of the proposition  to give an example and state a lemma.

\begin{example}
Suppose $n=4$, $k=2$, $m=0$, and 
\[
w = t_{1,8}t_{2,7} = \
\begin{tikzpicture}[baseline=0,scale=0.22,label/.style={postaction={ decorate,transform shape,decoration={ markings, mark=at position .5 with \node #1;}}}]
{
\draw[fill,lightgray] (0,0) circle (4.0);
\node at (2.44929359829e-16, 4.0) {$_\bullet$};
\node at (1.71450551881e-16, 2.8) {$_1$};
\node at (4.0, 0.0) {$_\bullet$};
\node at (2.8, 0.0) {$_2$};
\node at (2.44929359829e-16, -4.0) {$_\bullet$};
\node at (1.71450551881e-16, -2.8) {$_3$};
\node at (-4.0, -4.89858719659e-16) {$_\bullet$};
\node at (-2.8, -3.42901103761e-16) {$_4$};
\draw [blue,-,>=latex,domain=0:100,samples=100] plot ({(4.0 + 4.0 * sin(180 * (0.5 + asin(-0.9 + 1.8 * (\x / 100)) / asin(0.9) / 2))) * cos(90 - (0.0 + \x * 6.3))}, {(4.0 + 4.0 * sin(180 * (0.5 + asin(-0.9 + 1.8 * (\x / 100)) / asin(0.9) / 2))) * sin(90 - (0.0 + \x * 6.3))});
\draw [red,-,>=latex,domain=0:100,samples=100] plot ({(4.0 + 2.0 * sin(180 * (0.5 + asin(-0.9 + 1.8 * (\x / 100)) / asin(0.9) / 2))) * cos(90 - (90.0 + \x * 4.5))}, {(4.0 + 2.0 * sin(180 * (0.5 + asin(-0.9 + 1.8 * (\x / 100)) / asin(0.9) / 2))) * sin(90 - (90.0 + \x * 4.5))});
}
\end{tikzpicture}
.\]
Then $(a_1, a_2) = (1,2)$ and $(a'_1,a'_2) = (3,4)$ and the 
numbers $p_j,p_j',q_j,q_j',i_j,i_j'$ are computed as follows.
 First, we have $(p_1,p_2) = (2,3)$ and $(q_1,q_2) = (2,2)$ 
so $(i_1,i_2) = (3,5)$ and $\lambda_R(w) = (s_1s_3,\psi)$ where 
$\psi(3,4) = 4 $
and
$\psi(1,2) = 2 .$
 We have $(p_1',p_2') = (3,2)$ and $(q_1',q_2') = (2,2)$ so $(i_1',i_2') = (-1, 1)$
and $\lambda_L(w) =(s_1s_3,\chi)$ where 
$\chi(3,4) = 2  
$
and
$
\chi(1,2) = 4 .
$
In terms of winding diagrams, 
\[
\lambda_R(w) =\ 
\begin{tikzpicture}[baseline=0,scale=0.22,label/.style={postaction={ decorate,decoration={ markings, mark=at position .5 with \node #1;}}}]
{
\draw[fill,lightgray] (0,0) circle (4.0);
\node at (2.44929359829e-16, 4.0) {$_\bullet$};
\node at (1.71450551881e-16, 2.8) {$_1$};
\node at (4.0, 0.0) {$_\bullet$};
\node at (2.8, 0.0) {$_2$};
\node at (2.44929359829e-16, -4.0) {$_\bullet$};
\node at (1.71450551881e-16, -2.8) {$_3$};
\node at (-4.0, -4.89858719659e-16) {$_\bullet$};
\node at (-2.8, -3.42901103761e-16) {$_4$};
\draw [-,>=latex,domain=0:100,samples=100,label={[above]{$2$}}] plot ({(4.0 + 0.0 * sin(180 * (0.5 + asin(-0.9 + 1.8 * (\x / 100)) / asin(0.9) / 2))) * cos(90 - (0.0 + \x * 0.9))}, {(4.0 + 0.0 * sin(180 * (0.5 + asin(-0.9 + 1.8 * (\x / 100)) / asin(0.9) / 2))) * sin(90 - (0.0 + \x * 0.9))});
\draw [-,>=latex,domain=0:100,samples=100,label={[below]{$4$}}] plot ({(4.0 + 0.0 * sin(180 * (0.5 + asin(-0.9 + 1.8 * (\x / 100)) / asin(0.9) / 2))) * cos(90 - (180.0 + \x * 0.9))}, {(4.0 + 0.0 * sin(180 * (0.5 + asin(-0.9 + 1.8 * (\x / 100)) / asin(0.9) / 2))) * sin(90 - (180.0 + \x * 0.9))});
}
\end{tikzpicture}
\qquand
\lambda_L(w) =\ 
\begin{tikzpicture}[baseline=0,scale=0.22,label/.style={postaction={ decorate,decoration={ markings, mark=at position .5 with \node #1;}}}]
{
\draw[fill,lightgray] (0,0) circle (4.0);
\node at (2.44929359829e-16, 4.0) {$_\bullet$};
\node at (1.71450551881e-16, 2.8) {$_1$};
\node at (4.0, 0.0) {$_\bullet$};
\node at (2.8, 0.0) {$_2$};
\node at (2.44929359829e-16, -4.0) {$_\bullet$};
\node at (1.71450551881e-16, -2.8) {$_3$};
\node at (-4.0, -4.89858719659e-16) {$_\bullet$};
\node at (-2.8, -3.42901103761e-16) {$_4$};
\draw [-,>=latex,domain=0:100,samples=100,label={[above]{$4$}}] plot ({(4.0 + 0.0 * sin(180 * (0.5 + asin(-0.9 + 1.8 * (\x / 100)) / asin(0.9) / 2))) * cos(90 - (0.0 + \x * 0.9))}, {(4.0 + 0.0 * sin(180 * (0.5 + asin(-0.9 + 1.8 * (\x / 100)) / asin(0.9) / 2))) * sin(90 - (0.0 + \x * 0.9))});
\draw [-,>=latex,domain=0:100,samples=100,label={[below]{$2$}}] plot ({(4.0 + 0.0 * sin(180 * (0.5 + asin(-0.9 + 1.8 * (\x / 100)) / asin(0.9) / 2))) * cos(90 - (180.0 + \x * 0.9))}, {(4.0 + 0.0 * sin(180 * (0.5 + asin(-0.9 + 1.8 * (\x / 100)) / asin(0.9) / 2))) * sin(90 - (180.0 + \x * 0.9))});
}
\end{tikzpicture}
.
\]
\end{example}

\begin{lemma}
\label{*lambda-lem}
If $\theta \in \cW_n$ then $\lambda_L(\theta^*) = \lambda_R(\theta)^*$.
\end{lemma}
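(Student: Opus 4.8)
The plan is to verify the identity by directly comparing both sides against the explicit recipe in Proposition-Definition~\ref{lambda-def}, exploiting that $\tau : i\mapsto n+1-i$ is an order-reversing involution with $w^*=\tau w\tau$. Conceptually, the construction of $\lambda_L$ is obtained from that of $\lambda_R$ by interchanging left and right endpoints and replacing each relevant integer by its image under $\tau$, and the operation $\theta\mapsto\theta^*$ realizes precisely this interchange. First I would fix the window $m+[n]$ used to compute $\lambda_R(\theta)$ and, since $\lambda_R$ and $\lambda_L$ are independent of the choice of window by Proposition-Definition~\ref{lambda-def}, use the window $\tau(m+[n])=(-m)+[n]$ to compute $\lambda_L(\theta^*)$. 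Two elementary observations drive everything: (i) a point $c$ is a left endpoint of $w^*$ if and only if $\tau(c)$ is a right endpoint of $w$ (and symmetrically); and (ii) $(x,y)\mapsto(\tau(y),\tau(x))$ is an order-preserving bijection $\cC(w)\to\cC(w^*)$ with $\phi^*(\tau(y),\tau(x))=\phi(x,y)$. In particular, if $a_1<\dots<a_k$ are the left endpoints of $w$ in $m+[n]$, then the right endpoints of $w^*$ in $(-m)+[n]$, in increasing order, are $\beta_j:=\tau(a_{k+1-j})$ for $j\in[k]$.

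Next I would translate the counting data attached to $\beta_j$. Writing $a=a_{k+1-j}$ and using (i)--(ii), a cycle $(A,B)\in\cC(w^*)$ with $A<\beta_j<B$ corresponds to a cycle $(x,y)\in\cC(w)$ with $x<a<y$, while the refined condition $w^*(\beta_j)<A<\beta_j<B$ corresponds to $x<a<y<w(a)$. Hence the parameters ``$p'_j$'' and ``$q'_j$'' that Proposition-Definition~\ref{lambda-def} attaches to the right endpoint $\beta_j$ of $w^*$ coincide with the parameters $p_{k+1-j}$ and $q_{k+1-j}$ attached to the left endpoint $a$ of $w$. Using also $\beta_j-w^*(\beta_j)=w(a)-a$ and $\phi^*(w^*(\beta_j),\beta_j)=\phi(a,w(a))$, I would then read off that the index ``$i'_j$'' produced for $w^*$ equals $\tau(a_{k+1-j})-p_{k+1-j}-1=n-(a_{k+1-j}+p_{k+1-j})=n-i_{k+1-j}$, and that the weight it carries equals $\phi(a_{k+1-j},w(a_{k+1-j}))+w(a_{k+1-j})-a_{k+1-j}-q_{k+1-j}-1=\psi(i_{k+1-j},i_{k+1-j}+1)$.

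Finally I would compare with $\lambda_R(\theta)^*$. Since $\tau s_i\tau=s_{n-i}$ and the underlying permutations of elements of $\cWmin_n$ are products of \emph{commuting} simple reflections (so factor order is immaterial), the involution underlying $\lambda_R(\theta)^*$ is $\prod_j s_{n-i_j}$, and by the definition of $\phi\mapsto\phi^*$ the weight map $\psi^*$ assigns to the cycle $(n-i_j,\,n-i_j+1)$ the value $\psi(i_j,i_j+1)$. Setting this side by side with the formulas of the previous paragraph, under the reindexing $j\leftrightarrow k+1-j$, both weighted involutions assign the cycle $(n-i_l,\,n-i_l+1)$ the weight $\psi(i_l,i_l+1)$ for each $l\in[k]$, so $\lambda_L(\theta^*)=\lambda_R(\theta)^*$. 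I expect the one real difficulty to be bookkeeping: keeping the order-reversal, the window change, and the shift $j\mapsto k+1-j$ mutually consistent, and checking the handful of arithmetic identities (such as $\beta_j-w^*(\beta_j)=w(a)-a$) that force the two weight formulas to agree; none of this is conceptually deep once (i) and (ii) are in place. Applying $*$ to both sides of the resulting identity also gives the mirror statement $\lambda_R(\theta^*)=\lambda_L(\theta)^*$.
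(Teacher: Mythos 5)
Your proof is correct and takes essentially the same route as the paper, whose entire argument is the one-line assertion that the identity is ``evident from the symmetric definitions of $\lambda_R(\theta)$ and $\lambda_L(\theta)$''; you have simply carried out in full the bookkeeping that the paper leaves implicit. The individual identities you verify ($\beta_j=\tau(a_{k+1-j})$, the matching of the counts $p'_j=p_{k+1-j}$ and $q'_j=q_{k+1-j}$, the index relation $i'_j=n-i_{k+1-j}$, and the agreement of the two weight formulas) all check out.
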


\begin{proof}
This is evident from the symmetric definitions of $\lambda_R(\theta)$ and $\lambda_L(\theta)$.
\end{proof}

\begin{proof}[Proof of Proposition-Definition~\ref{lambda-def}]
The fact that $\lambda_R(\theta)$ and $\lambda_L(\theta)$ do not depend on $m$
holds since $w(i+n) = w(i) + n$ for all $i \in \ZZ$.
We must show that $\lambda_R(\theta)$ and $\lambda_L(\theta)$ actually belong to $\cWmin_{n}$.
It is enough to prove that $\lambda_R(\theta) \in \cWmin_{n}$.
For this, we need to check that $i_j + 1 < i_{j+1}$ for $j \in [k-1]$ and $i_k + 1 < i_1 + n$.
Let $a$ be any left endpoint of $w$ and let $\tilde a$ be 
the smallest left endpoint greater than $a$.
Define $p$ and $\tilde p$ as the respective numbers of 
cycles $(x,y) \in \cC(w)$ with $x<a<y$ and $x<\tilde a <y$.
It is enough to verify  that $a+p +1 < \tilde a + \tilde p$. 
This holds if and only if $a+r+1 < \tilde a+\tilde r$
where $r$ and $\tilde r$ are the respective numbers of cycles $(x,y) \in \cC(w)$
with $x<a<y<\tilde a$ and $a\leq x < \tilde a < y$. Clearly $a+r+1\leq \tilde a$,
with equality only if $a < \tilde a < w(a)$ in which case $\tilde r >0$.
\end{proof}


Given $\theta= (w,\phi) \in \cW_n$, recall from Section~\ref{weighted-sect}
the definition of  $\phi_R,\phi_L : \ZZ \to \NN$.

\begin{lemma}
Let $\theta=(w,\phi) \in \cW_n$. If $s_i \notin \DesR(w)$
then $\lambda_R( \theta\pi_i) = \lambda_R(\theta)$ and $\lambda_L(\pi_i\theta) = \lambda_R(\theta)$.
\end{lemma}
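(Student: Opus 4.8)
The plan is to dispose of the trivial case first and then to match up, arc by arc, the data that Proposition-Definition~\ref{lambda-def} uses to build $\lambda_R$. If $\theta\pi_i=\theta$ there is nothing to show, so suppose $\theta\pi_i\neq\theta$; then Definition~\ref{pi-def}(a) forces $\phi_R(i)>\phi_R(i+1)\ge 0$, so $i$ is a right endpoint of $w$ with $w(i)<i$, and $\theta\pi_i=(w',\psi)$ with $w'=s_iws_i$. By Corollary~\ref{des-ell-cor} the hypothesis $s_i\notin\DesR(w)$ means $w(i)<w(i+1)$, and since $w(i)\ne i+1$ we have $(i,i+1)\notin\cC(w)$, so Lemma~\ref{triv-lem} gives $\cC(w')=s_i\cC(w)$. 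In particular $\ell'(w')=\ell'(w)$, and $(x,y)\mapsto s_i(x,y)$ is a bijection from the equivalence classes of cycles of $w$ under $(a,b)\sim(a+n,b+n)$ onto those of $w'$.

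Next I would note that, unwinding Proposition-Definition~\ref{lambda-def}, $\lambda_R(\theta)$ is completely determined by the collection of pairs $\bigl(a+P_w(a),\ \phi(a,w(a))+w(a)-a-1-Q_w(a)\bigr)$, taken over one period of left endpoints $a$ of $w$, where $P_w(a)=\#\{(x,y)\in\cC(w):x<a<y\}$ and $Q_w(a)=\#\{(x,y)\in\cC(w):x<a<y<w(a)\}$; the first coordinate, reduced modulo $n$, names a simple reflection and the second is its weight in $\lambda_R(\theta)$. So it is enough to prove that the bijection above sends each left endpoint $a$ of $w$ to a left endpoint $a^\sharp$ of $w'$ with $a+P_w(a)=a^\sharp+P_{w'}(a^\sharp)$ and $\phi(a,w(a))+w(a)-a-1-Q_w(a)=\psi(a^\sharp,w'(a^\sharp))+w'(a^\sharp)-a^\sharp-1-Q_{w'}(a^\sharp)$.

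Then I would run through the possibilities for $i+1$: a fixed point of $w$, a right endpoint, or a left endpoint, the last splitting according to whether $(w(i),i)$ and $(i+1,w(i+1))$ lie in the same equivalence class of cycles. In every case conjugation by $s_i$ fixes all arcs and all fixed points except that the arc with right endpoint $i$ — the \emph{active} arc — has that endpoint pushed forward from $i$ to $i+1$, and, when $i+1$ is itself an endpoint, a companion arc has one endpoint shifted by one. Meanwhile Definition~\ref{pi-def}(a) says $\psi_R$ equals $\phi_R$ on every residue modulo $n$ except that $\psi_R(i+1)=\phi_R(i)-1$ and $\psi_R(i)=\phi_R(i+1)$; thus the active arc loses exactly one unit of weight as it gains one unit of span, so $\phi(a,w(a))+w(a)-a$ is unchanged along the bijection, while every other arc keeps its weight and span. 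It then remains to check that $P$ and $Q$ are undisturbed, which follows because the only endpoints that move lie in $\{i,i+1\}+n\ZZ$ and because in the fixed-point and left-endpoint cases $w$ has no arc with right endpoint $i+1$; the inequality $\phi_R(i)>\phi_R(i+1)$ rules out the remaining borderline configuration, exactly as in the proof of Theorem~\ref{adm-thm}. Assembling these observations gives $\lambda_R(\theta\pi_i)=\lambda_R(\theta)$.

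For the second identity — for which the argument produces $\lambda_L(\pi_i\theta)=\lambda_L(\theta)$ — I would argue by symmetry rather than repeat the analysis: applying the case just settled to $\theta^*$ with index $n-i$ is legitimate since $s_{n-i}\notin\DesR(w^*)$ precisely when $s_i\notin\DesR(w)$, and Lemma~\ref{*lem} gives $\theta^*\pi_{n-i}=(\pi_i\theta)^*$, so $\lambda_R((\pi_i\theta)^*)=\lambda_R(\theta^*)$; now Lemma~\ref{*lambda-lem} converts both sides to left forms and one more application of $*$ yields $\lambda_L(\pi_i\theta)=\lambda_L(\theta)$. I expect the main obstacle to be the combinatorial bookkeeping in the third paragraph, and in particular the ``wrap-around'' subcase in which $(w(i),i)$ and $(i+1,w(i+1))$ represent the same cycle class: there one must verify that pushing a single arc's left endpoint from $i+1$ to $i$ and its right endpoint from $i$ to $i+1$ leaves every count $P_w(a)$, $Q_w(a)$ intact, and that a period of left endpoints of $w'$ is enumerated consistently with one for $w$.
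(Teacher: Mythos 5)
Your strategy coincides with the paper's: reduce $\lambda_R$ to the arc-by-arc data $\bigl(a_j+p_j,\ \phi(a_j,w(a_j))+w(a_j)-a_j-q_j-1\bigr)$ of Proposition-Definition~\ref{lambda-def}, split into cases according to whether $i+1$ is a fixed point, a right endpoint, or a left endpoint of $w$, and obtain the left-handed identity from the $*$-symmetry (your last paragraph is a correct expansion of the paper's one-line reduction via Lemmas~\ref{*lem} and \ref{*lambda-lem}); the reduction in your second paragraph is also correct. The gap is that the invariants you assert in the third paragraph, and propose to verify, are false. When $i+1$ is a left endpoint with $w(i+1)\not\equiv i\pmod{n}$, the companion arc $(i+1,w(i+1))$ does not keep its span (it grows by one, its left endpoint moving to $i$), and $P$ and $Q$ at that left endpoint are not undisturbed: after conjugation the active arc $(w(i),i+1)$ straddles $i$ although $(w(i),i)$ did not straddle $i+1$, so both counts grow by one. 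When $i+1$ is a right endpoint, the companion arc loses a unit of span and $Q$ at its left endpoint drops by one. In the wrap-around subcase, which you rightly single out as the crux, the check you propose fails outright: take $n=2$, $w=s_1$, $\phi(1,2)=c\geq 1$, $i=2$. Then $\theta\pi_2=(w',\tilde\phi)$ with $\cC(w')=\{(0,3)+2m\}$ and $\tilde\phi(0,3)=c-1$; the active arc's representative goes from $(3,4)$ to $(2,5)$, so its span grows by \emph{two}, and $P$ and $Q$ at its left endpoint each grow from $0$ to $1$. In particular the quantity $\phi(a,w(a))+w(a)-a$ that you claim is preserved actually increases by one there.

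What is true, and what the proof needs, is that none of the individual quantities $a_j$, $p_j$, $q_j$, span, weight is invariant; only the composite expressions $a_j+p_j$ and $\phi(a_j,w(a_j))+\bigl(w(a_j)-a_j\bigr)-q_j-1$ are, because the various $\pm 1$ shifts cancel. So the step you defer to the end would not ``follow'' as described --- it is exactly where the content of the lemma lies, and your third paragraph has to be rewritten as a cancellation argument rather than an invariance argument. (Incidentally, the paper's own bullet for the case $w(i)<i<i+1<w(i+1)$ asserts $\tilde q_j=q_j$, which in the wrap-around subcase should read $\tilde q_j=q_j+1$; the lemma remains correct because this extra unit offsets the extra unit of span.)
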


\begin{proof}
Choose $i \in \ZZ$ with $s_i \notin \DesR(w)$.
By Lemmas~\ref{*lem} and \ref{*lambda-lem} it suffices to prove 
that $\lambda_R(\theta\pi_i) = \lambda_R(\theta)$.
Assume $\theta\pi_i \neq \theta$.
Let $\tilde w=s_i ws_i$ and define $\tilde \phi : \cC(\tilde w) \to \NN$
 such that $\theta\pi_i = (\tilde w, \tilde \phi)$. 
Choose $m \in \ZZ$ such that $\{i,i+1\} \subset m + [n]$, and define
Define $a_j, b_j,p_j, q_j, i_j$, and $\psi$ relative to $w$ and $\phi$ 
as in Proposition-Definition~\ref{lambda-def}. Define 
$\tilde a_j, \tilde b_j, \tilde p_j, \tilde q_j, \tilde i_j,$ and $\tilde \psi$
analogously relative to 
 $\tilde w$ and $\tilde\phi$. We deduce that $\lambda_R(\theta\pi_i) = \lambda_R(\theta)$ 
 by comparing these quantities as follows:
 \begin{itemize}
\item
If $w(i)  < w(i+1) < i < i+1$ then $a_j = \tilde a_j$ and $p_j = \tilde p_j$ for all $j$,
and there are two indices $j$ for which $\phi_R(b_j)$, $b_j$, $q_j$ and 
$\tilde \phi_R(\tilde b_j)$, $\tilde b_j$, $\tilde q_j$ are different:
 for one index $\tilde \phi_R(\tilde b_j) = \phi_R(b_j)-1$ and $\tilde b_j= b_j+1$ and $\tilde q_j=q_j$,
and for the other $\tilde\phi_R(\tilde b_j) = \phi_R(b_j)$ and $\tilde b_j = b_j-1$ and $\tilde q_j=q_j-1$.

\item
If $w(i)  <  i < i+1= w(i+1)$ then  $a_j=\tilde a_j$ and $p_j=\tilde p_j$ for all $j$,
and the numbers $\phi_R(b_j)$, $b_j$, $q_j$ are the same as $\tilde\phi_R(\tilde b_j)$, $\tilde b_j$, $\tilde q_j$
except when $j$ is such that $b_j = i$, in which case
it holds that $\tilde \phi_R(\tilde b_j) = \phi_R(b_j) -1$ and $\tilde b_j=b_j+1$ and $\tilde q_j=q_j$.

\item
If $w(i)  <  i < i+1< w(i+1)$ then $a_j=\tilde a_j$ and $p_j=\tilde p_j$ except when $a_j = i+1$,
in which case $\tilde a_j = a_j-1$ and $\tilde p_j = p_j+1$.
Likewise,
the numbers $\phi_R(b_j)$, $b_j$, $q_j$ are the same as $\tilde\phi_R(\tilde b_j)$, $\tilde b_j$, $\tilde q_j$,
except when $b_j = i$, in which case
$\tilde \phi_R(\tilde b_j) = \phi_R(b_j)-1$ and $\tilde b_j= b_j+1$ and $\tilde q_j=q_j$.
\end{itemize}
One of these cases must occur as since it cannot hold that $w(i+1) < w(i) < i < i+1$.
We deduce  that $i_j = \tilde i_j$ for all $j$ and $\psi=\tilde \psi$,
 so $\lambda_R(\theta \pi_i) = \lambda_R(\theta)$.
\end{proof}

\begin{corollary}
If $\theta \in \cW_n$ is right-admissible (respectively, left-admissible) 
then $\lambda_R(\theta \pi_g) = \lambda_R(\theta)$  (respectively, $\lambda_L(\pi_g \theta) = \lambda_L(\theta)$)
 for all $g \in \tS_n$.
\end{corollary}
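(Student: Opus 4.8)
\emph{Proof sketch.} The plan is to reduce everything to the single‑generator statement already established in the preceding lemma, and then iterate along a reduced word, using Theorem~\ref{adm-thm}(a) to keep the admissibility hypothesis alive at each step. Concretely: if $\theta=(w,\phi)\in\cW_n$ is right‑admissible then so is $\theta\pi_i$ for every $i\in\ZZ$ by Theorem~\ref{adm-thm}(a), so writing $g=s_{i_1}s_{i_2}\cdots s_{i_k}$ as a reduced expression and setting $\theta_0=\theta$, $\theta_j=\theta_{j-1}\pi_{i_j}$, each $\theta_j$ is right‑admissible and $\theta\pi_g=\theta_k$. It therefore suffices to prove the case $g=s_i$: that $\lambda_R(\theta\pi_i)=\lambda_R(\theta)$ whenever $\theta=(w,\phi)$ is right‑admissible. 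Chaining this identity over $j=1,\dots,k$ then gives $\lambda_R(\theta\pi_g)=\lambda_R(\theta)$.

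For the single‑generator case, if $\theta\pi_i=\theta$ there is nothing to prove, so assume $\theta\pi_i\neq\theta$. By Definition~\ref{pi-def}(a) this means $\phi_R(i)>\phi_R(i+1)\geq 0$, and in particular $w(i)<i$. The key point is that right‑admissibility forces $w(i)<w(i+1)$: otherwise $w(i)>w(i+1)$, and then the two cycles $(x,y)=(w(i+1),i+1)$ and $(a_0,b_0)=(w(i),i)$ of $w$ satisfy $x<a_0<b_0<y$ together with $\phi(a_0,b_0)+b_0=\phi_R(i)+i\geq\phi_R(i+1)+(i+1)=\phi(x,y)+y$, so the length‑one sequence $(x,y),(a_0,b_0)$ is right‑inadmissible --- this is exactly the argument already used in the proof of Theorem~\ref{adm-thm}(a). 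Hence $w(i)<w(i+1)$, so $\ell(ws_i)>\ell(w)$ by Corollary~\ref{des-ell-cor}, i.e.\ $s_i\notin\DesR(w)$. The preceding lemma then applies and yields $\lambda_R(\theta\pi_i)=\lambda_R(\theta)$, finishing the right‑admissible case.

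The left‑admissible case follows from the right one by the $*$‑symmetry. If $\theta$ is left‑admissible then $\theta^*$ is right‑admissible by Lemma~\ref{*adm-lem}; iterating the identity $(\pi_i\eta)^*=\eta^*\pi_{n-i}$ from Lemma~\ref{*lem} gives $(\pi_g\theta)^*=\theta^*\pi_h$ for the element $h=(g^{-1})^*\in\tS_n$ with $\ell(h)=\ell(g)$; and $\lambda_L(\theta)=\lambda_R(\theta^*)^*$ by Lemma~\ref{*lambda-lem}. Combining these with the right case applied to $\theta^*$ gives $\lambda_L(\pi_g\theta)=\lambda_R\!\big((\pi_g\theta)^*\big)^*=\lambda_R(\theta^*\pi_h)^*=\lambda_R(\theta^*)^*=\lambda_L(\theta)$.

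The only non‑formal step --- and hence the main obstacle --- is the observation in the second paragraph that whenever $\theta$ is right‑admissible and $\theta\pi_i\neq\theta$, the index $i$ yields a descent of $\theta$ but \emph{not} of $w$; this is precisely what lets the preceding lemma, whose hypothesis is phrased in terms of $\DesR(w)$ rather than $\DesR(\theta)$, take over. Everything else is bookkeeping: the induction along a reduced word is justified by the preservation of right‑admissibility under each $\pi_i$, and the passage between the left‑ and right‑handed statements is the routine $*$‑translation.
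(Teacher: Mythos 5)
Your proof is correct and follows essentially the same route as the paper: reduce to the single-generator case via a reduced word, using Theorem~\ref{adm-thm}(a) to propagate right-admissibility, and bridge to the preceding lemma by observing that right-admissibility together with $\theta\pi_i\neq\theta$ forces $w(i)<w(i+1)$, hence $s_i\notin\DesR(w)$ (exactly the observation made in the proof of Theorem~\ref{adm-thm}), with the left-handed statement obtained by the $*$-symmetry. The paper compresses all of this into a one-line citation, so your write-up is just the intended argument made explicit.
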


\begin{proof}
This follows from the previous lemma and 
 Theorem~\ref{adm-thm}.
\end{proof}

\begin{corollary}\label{leftinverse-cor}
If $\theta \in \cWmin_{n}$ then $\lambda_R \( \omega_R(\theta)\) = \lambda_R(\theta) = \theta$
and
$\lambda_L\( \omega_L(\theta)\) = \lambda_L(\theta) = \theta$.
\end{corollary}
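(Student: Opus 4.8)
The plan is to split the claim into two parts: the ``outer'' equalities $\lambda_R(\omega_R(\theta)) = \lambda_R(\theta)$ and $\lambda_L(\omega_L(\theta)) = \lambda_L(\theta)$, which should follow at once from the corollary immediately preceding this one, and the ``inner'' equalities $\lambda_R(\theta) = \theta$ and $\lambda_L(\theta) = \theta$, which should come out of a direct substitution into Proposition-Definition~\ref{lambda-def}.

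For the outer equalities, recall that every element of $\cWmin_n$ is both right- and left-admissible, and that by Theorem-Definition~\ref{omega-thm} the elements $\omega_R(\theta)$ and $\omega_L(\theta)$ have the form $\theta\pi_h$ and $\pi_g\theta$ for certain $g,h \in \tS_n$. Hence the preceding corollary applies directly and gives $\lambda_R(\omega_R(\theta)) = \lambda_R(\theta\pi_h) = \lambda_R(\theta)$ and, symmetrically, $\lambda_L(\omega_L(\theta)) = \lambda_L(\theta)$. This reduces the problem to showing that $\lambda_R$ and $\lambda_L$ restrict to the identity map on $\cWmin_n$.

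For the inner equalities, write $\theta = (w,\phi) \in \cWmin_n$. Since $\cWmin_n$ is stable under $*$ and $\lambda_L(\theta^*) = \lambda_R(\theta)^*$ by Lemma~\ref{*lambda-lem}, it is enough to prove $\lambda_R(\theta) = \theta$. By definition of $\cWmin_n$, $w$ is a product of commuting simple reflections, so its nontrivial cycles are exactly the classes of the pairs $(c,c+1)$ for finitely many residues $c$ that are pairwise non-adjacent modulo $n$; in particular every cycle of $w$ consists of two consecutive integers, so $w$ is non-nesting. I would then choose the parameter $m$ of Proposition-Definition~\ref{lambda-def} so that no such residue $c$ is congruent to $m$ modulo $n$; with this choice the left endpoints $a_1 < \dots < a_k$ of $w$ in $m+[n]$ satisfy $w(a_j) = a_j+1$, and the right endpoints in $m+[n]$ are exactly $a_1+1 < \dots < a_k+1$. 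Because every cycle of $w$ has consecutive endpoints, all four quantities $p_j, q_j, p_j', q_j'$ of Proposition-Definition~\ref{lambda-def} vanish, so $i_j = a_j$, whence $u = s_{a_1}\cdots s_{a_k} = w$, and $\psi(i_j,i_j+1) = \phi(a_j,a_j+1) + (a_j+1) - a_j - 0 - 1 = \phi(a_j,a_j+1)$; thus $\psi = \phi$ and $\lambda_R(\theta) = (u,\psi) = \theta$.

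I do not expect a genuine obstacle here: the outer equalities are an immediate invocation of the previous corollary, and the inner ones are a one-line substitution. The only point needing attention is the cyclic bookkeeping of the window $m+[n]$ --- making sure, via the displayed choice of $m$, that no endpoint of a cycle of $w$ ``wraps around'' the window --- and even this is ultimately immaterial, since $\lambda_R(\theta)$ does not depend on $m$ by Proposition-Definition~\ref{lambda-def}.
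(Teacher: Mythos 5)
Your proposal is correct and follows the paper's proof exactly: the paper likewise applies the preceding corollary (using that every element of $\cWmin_n$ is left- and right-admissible and that $\omega_R(\theta)=\theta\pi_h$, $\omega_L(\theta)=\pi_g\theta$) and reduces to checking that $\lambda_R(\theta)=\lambda_L(\theta)=\theta$ on $\cWmin_n$, which the paper leaves as a verification and you carry out explicitly. Your substitution into Proposition-Definition~\ref{lambda-def} (all of $p_j,p_j',q_j,q_j'$ vanish, so $u=w$ and $\psi=\phi$) is the intended check and is correct.
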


\begin{proof}
Apply the previous corollary after checking that 
any $\theta \in \cWmin_{n}$
has  $\lambda_R(\theta)=\lambda_L(\theta)=\theta$.
\end{proof}

\section{Order isomorphisms}\label{order-sect}

Recall that $t_{ij}$ for $i<j \not \equiv i \modu n)$
denotes the unique element of $\tS_n$ swapping $i$ and $i+1$ and fixing all $j \notin \{i,i+1\} + n\ZZ$. 
Write $u \lessdot v$ if $v$ covers $u$ in the Bruhat order on $\tS_n$,
i.e., if $v = ut_{ij}$ for some $i,j$ and $\ell(v) =\ell(u)+1$.
Let $\prec$ be the partial order on $\cWmin_{n}$ with $(w,\phi) \preceq (w',\phi')$
if and only if $w=w'$ and $\phi(a,b) \leq \phi'(a,b)$ for all $(a,b) \in \cC(w)$.

\begin{lemma}\label{main-lem}
Let $\theta\in \cWmin_{n}$ and $z,z' \in \tI_n$ with $\omega_R(\theta) = z$.
The following are then equivalent:
\ben
\item[(a)] There exist $i,j \in \ZZ$ with $z(i) < i<j = \min\{e \in \ZZ : i<e\text{ and }z(i) < z(e)\}$ and
$z' = t_{ij} z t_{ij}$.

\item[(b)] There exists $\theta' \in \cWmin_n$ which covers $\theta$ in the order $\prec$
such that $z' = \omega_R(\theta')$.

\een
Moreover, if these conditions hold then $g_R(\theta) \lessdot g_R(\theta) t_{ij} = g_R(\theta')$.
\end{lemma}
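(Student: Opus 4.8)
The cover relation $g_R(\theta)\lessdot g_R(\theta)t_{ij}$ will be a formal consequence of the identity $g_R(\theta')=g_R(\theta)t_{ij}$. Indeed, by Theorem-Definition~\ref{omega-thm}(a) we have $\ell(g_R(\theta'))=\wt(\theta')$ and $\ell(g_R(\theta))=\wt(\theta)$, while $\wt(\theta')=\wt(\theta)+1$ because a cover in $\prec$ keeps the underlying involution $w$ fixed and raises the weight of exactly one equivalence class of $2$-cycles by $1$. Hence, once $g_R(\theta')=g_R(\theta)t_{ij}$ is known, $\ell(g_R(\theta)t_{ij})=\ell(g_R(\theta'))=\wt(\theta)+1=\ell(g_R(\theta))+1$, and a length-increasing right multiplication by the reflection $t_{ij}$ is precisely a Bruhat cover. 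So the task reduces to proving $g_R(\theta')=g_R(\theta)t_{ij}$.

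To prove this identity I would use the characterisation in Theorem-Definition~\ref{omega-thm}: $g_R(\theta')$ is the Bruhat-minimum of the set $\{g\in\tS_n:\wt(\theta'\pi_g)=0\}$, and this minimum has length $\wt(\theta')$. It therefore suffices to establish a two-sided bound: (1) $\wt(\theta'\pi_{g_R(\theta)t_{ij}})=0$, which by part~(c) yields $g_R(\theta')\le g_R(\theta)t_{ij}$; and (2) $g_R(\theta)t_{ij}\le g'$ for every $g'$ with $\wt(\theta'\pi_{g'})=0$, which applied to $g'=g_R(\theta')$ yields the reverse inequality. The resulting equality then also forces $\ell(g_R(\theta)t_{ij})=\wt(\theta)+1$, and moreover $\theta'\pi_{g_R(\theta)t_{ij}}=\omega_R(\theta')=t_{ij}zt_{ij}=z'$ via Theorem-Definition~\ref{omega-thm}(b).

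Both (1) and (2) would draw on the combinatorics developed while proving the equivalence (a)$\Leftrightarrow$(b): that $i,i+1,\dots,j-1$ are all right endpoints of $z=\omega_R(\theta)$ with $z(i)<z(j)$, so that $\ell(zt_{ij})=\ell(z)+1$ by Lemma~\ref{bruhat0-lem} and $\ell(z')=\ell(z)+2$ by Lemma~\ref{conj-cover-lem}(b); and that, under $\omega_R$, passing from $\theta$ to the cover $\theta'$ corresponds exactly to passing from $z$ to $t_{ij}zt_{ij}$, the one surplus unit of weight of $\theta'$ coming to rest on the cycle $(z(i),i)\in\cC(z)$. Concretely, for (1) I would run a reduced word for $g_R(\theta)$ on $\theta'$ in parallel with the (weight-emptying) run on $\theta$: in the synchronised situation, after these $\ell(g_R(\theta))$ steps $\theta$ has become $z$ and $\theta'$ has become $(z,\rho)$ with $\rho$ supported with value $1$ on $(z(i),i)$, and then exactly one further operator — which globally realises $t_{ij}$ because of the right-endpoint configuration — carries $(z,\rho)$ to $z'$. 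For (2) I would argue that any word draining $\theta'$ also drains $\theta$ (so dominates $g_R(\theta)$ by part~(c) applied to $\theta$) and in addition must absorb the extra step that $t_{ij}$ encodes.

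The main obstacle, as usual with these emptying processes, is that the surplus unit of weight can turn a strict inequality $\phi_R(i)>\phi_R(i+1)$ on $\theta$ into an equality on $\theta'$, so that a step which lowers the weight of $\theta$ fixes $\theta'$; the two runs then desynchronise, their underlying involutions diverge, and the naive weight monotonicity along a word fails. Overcoming this is, I expect, where the real work lies: either one shows that a suitable reduced word for $g_R(\theta)$ — for instance one that expands the perturbed cycle in an order compatible with which cycles of $w$ nest over it, using the freedom in Matsumoto's theorem for the operators $\pi_i$ — never meets such a tie, so run (1) is genuinely in lockstep; or one analyses the desynchronised case directly by tracking precisely where the surplus weight resides and recombining at the end, via the braid relations for the $\pi_i$, to recover $g_R(\theta)t_{ij}$ exactly.
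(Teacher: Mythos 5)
There is a genuine gap, in two places. First, the main content of the lemma is the equivalence of (a) and (b) --- that the covers of $\theta$ in $\prec$ correspond under $\omega_R$ precisely to the conjugations $z \mapsto t_{ij}zt_{ij}$ with $z(i)<i<j=\min\{e : i<e,\ z(i)<z(e)\}$ --- and your proposal takes exactly this correspondence as an input (``the combinatorics developed while proving the equivalence''), so the heart of the lemma is never argued. The paper proves it by fixing a right endpoint $a$ of the underlying involution of $\theta$ and building an explicit draining sequence $I=JKL$: a greedy prefix $J$ that empties weights in increasing position order until $a$'s cycle first reaches weight zero (maintaining the invariant that no weight-zero cycle nests over a positive-weight one, which is what later forces $w^{JK}(k)<w^{JK}(k+1)$), a block $K$ cycling through $k+1,\dots,k+n-2$ until the adjacent cycle is drained, and a tail $L$; the pair $(i,j)$ is read off as the final trajectories of $a$ and of the integer $b$ sitting at $k+1$, right-admissibility identifies $j$ as the stated minimum, and a separate short argument shows every such pair arises from some $a$.

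Second, the mechanism you propose for $g_R(\theta')=g_R(\theta)t_{ij}$ cannot work in its ``lockstep'' form. If running a reduced word for $g_R(\theta)$ on $\theta'$ really left the residual state $(z,\rho)$ with a single unit of weight on the cycle $(z(i),i)$, the unique remaining descent would be $s_i$ and one application of $\pi_i$ would drain it, forcing $\omega_R(\theta')=s_izs_i$: a single operator $\pi_k$ only ever conjugates the underlying involution by a \emph{simple} reflection. Since $s_izs_i\neq t_{ij}zt_{ij}$ whenever $j>i+1$, the two runs must desynchronise, so the analysis you defer as ``where the real work lies'' is not a fallback but the entire proof. In the paper the extra unit of weight is discharged not at the end but at the intermediate stage $JK$, where the trajectories of $a$ and $b$ are adjacent at $k$ and $k+1$; the single inserted operator $\pi_k$ there conjugates by $s_k$, which transported back through the remaining trajectory map becomes $t_{ij}$, yielding $g_R(\theta')=g_R(\theta)t_{ij}$ and $z'=t_{ij}zt_{ij}$ simultaneously. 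Your reduction of the final cover assertion to this identity via Theorem-Definition~\ref{omega-thm} and Corollary~\ref{limit-cor} is correct and matches the paper, but the identity itself, and the equivalence feeding into it, are left unproved.
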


This lemma has a left-handed version, 
given by applying the $*$-operator and
 Lemma~\ref{*omega-lem}.

\begin{proof}
Let $w \in \tI_n$ and $\phi : \cC(w) \to \NN$ be such that $\theta = (w,\phi)$.
The theorem will derive from simple arguments involving the following specialised notation.
Consider an integer sequence $I = (i_1,i_2,\dots,i_l) \in \ZZ^l$.
For each $0\leq t \leq l$, define
$w^{I,t} \in \tI_n$ and $\phi^{I,t} : \cC(w^{I,t}) \to \NN$ such that $\theta \pi_{i_1}\pi_{i_2}\cdots \pi_{i_t} = (w^{I,t}, \phi^{I,t})$,
so that $(w,\phi) = (w^{I,0},\phi^{I,0})$.
As usual, write $\phi^{I,t}_R$ for the right form of $\phi^{I,t}$ as given after Definition~\ref{weighted-def}.
Let  $\alpha^{I,0} = 1 \in \tS_n$ and for $i \in [l]$, define $\alpha^{I,t} \in \tS_n$ to be $\alpha^{I,t}   s_{i_t}$ 
if $w^{I,t} \neq w^{I,t-1}$, and otherwise set  $\alpha^{I,t} = \alpha^{I,t-1}$.
If $\phi^{I,t}=0$ for any $t$ then 
we have $z = w^{I,t}$ and $g_R(\theta) = \alpha^{I,t}$. 
As an abbreviation, write $w^I = w^{I,l}$ and $\phi^I  = \phi^{I,l}$ and $\alpha^I = \alpha^{I,l}$.
We refer to $\alpha^{I,t}(x)$ as the \emph{trajectory} of $x \in \ZZ$ at time $t$, relative to the index sequence $I$.
By construction,
a given integer $x$ is a left endpoint, right endpoint, 
or fixed point of $w$ if and only if its trajectory at any time $t$
is respectively a left endpoint, right endpoint, of fixed point of $w^{I,t}$.
Moreover, if $X$ is the set of numbers $i \in \ZZ$ with $\phi_R^{I,t}(x) = 0$ for $x = \alpha^{I,t}(i)$,
then the trajectories of any two $i,j \in X$ have the same relative order at all times greater than or equal to $t$.

Choose a right endpoint $a \in \ZZ$ of $w$. 
Corresponding to this choice, we 
define a particular integer sequence $I$.
This sequence will be the concatenation of three subsequences $J$, $K$,  and $L$, given as follows.
If $\phi_R(a) = 0$ then define $J$ to be the empty sequence.
Otherwise, let $i_0$ be an arbitrary integer and define $J=(i_1,i_2,\dots,i_l)$ as the sequence
characterised by the following properties:
\begin{itemize}
\item For each $t\geq 0$, $i_{t+1}$ is the smallest integer $x>i_t$ 
such that $\phi^{J,t}_R(x-1) = 0 < \phi^{J,t}_R(x)$.
\item The first time $t$ at which $\phi^{J,t}(x)=0$ for $x=\alpha^{J,t}(a)$ is $t=l$.
\end{itemize}
Let $k = \alpha^{J}(a)$.
Consider the sequence given by the infinite repetition of $k+1,k+2,\dots,k+n-2$.
Define $K$ as the shortest initial subsequence of this sequence
 such that $\phi^{JK}(k+1) = 0$, where $JK$ denotes the concatenation of $J$ and $K$.
We then have $k = \alpha^{JK}(a)$.
 Define $b \in \ZZ$ as the integer with $k+1 = \alpha^{JK}(b)$.
 If $w^{JK}(k+1) < w^{JK}(k)$, then we must also have 
 $w^{J,l-1}(\alpha^{J,l-1}(b)) < w^{J,l-1}(\alpha^{J,l-1}(a))$ and
 $\phi^{J,l-1}(\alpha^{J,l-1}(b))=0 < \phi^{J,l-1}(\alpha^{J,l-1}(a))=1$.
 But this is impossible, since the way we have constructed $J$ means that
 the weighted involution $(w^{J,t},\phi^{J,t})$ never has nesting cycles in 
 which the outer cycle has weight zero while the inner cycle 
 has a positive weight.
 We conclude, therefore, that $w^{JK}(k) < w^{JK}(k+1)$. 
 Finally, let $L$ be any sequence such that $\phi^{I} = 0$ and $z=\omega_R(\theta)= w^{I}$, where $I = JKL$.
Define $i = \alpha^I(a)$ and $j = \alpha^I(b)$; then  $z(i)<i<j$ and $z(i) < z(j)$.
 
 Since $\theta$ is right-admissible and since the right operators $\pi_1,\pi_2,\dots$ preserve right-admissibility,
each $t \in \{i+1,i+2,\dots,j-1\}$ must be a left endpoint of $z$ with $z(t)<z(i)$.
Thus $j = \min \{e \in \ZZ: i<j\text{ and }z(j)<z(i)\}$. Moreover, it is easy to see that any pair $(i,j) \in \ZZ$
with $j = \min \{e \in \ZZ: i<j\text{ and }z(j)<z(i)\}$ arises in  this way for some choice of $a$: 
given $i$, the desired value of $a$
is $w(z(i)+p)$ where $p$ is the number of pairs $(x,y) \in \cC(z)$ with $x<z(i)< y$.

Now consider the unique weighted involution $\theta' = (w,\phi') \in \cW_n$
which covers $\theta$ is the order $\prec$ and has $\phi'_R(a) = \phi_R(a)+1$.
Define $z' \in \tI_n$ such that $\omega_R(\theta')=z'$.
 If we define $K'$ by adding the index $k$ to the end of $K$ and set $I' = JK'L$,
 then it holds by construction that  $z=\omega_R(\theta') = w^{I'}$
and $g_L(\theta') = \alpha^{I'} = gsg^{-1}h$ for $g = \alpha^J$, $s=s_k$, and $h = \alpha^I=g_R(\theta)$.
Since $g^{-1}h(k) = i$ and $g^{-1}h(k+1) = j$,
it follows that $g_L(\theta') = h(g^{-1}h)^{-1} s (g^{-1}h) = g_R(\theta) t_{ij}$,
and therefore
$z' = t_{ij}zt_{ij}$. This suffices to show the equivalence of (a) and (b).
 For the last assertion, note 
 that it
 follows from Theorem-Definition~\ref{omega-thm} and Corollary~\ref{limit-cor}
 that $\ell(g_R(\theta')) = \ell(g_R(\theta))+1$,
 so 
 $g_R(\theta) \lessdot g_R(\theta) t_{ij}$.
\end{proof}

Define $\prec_R$ as the transitive closure of the relation
on $\tI_n$  
with $z \prec_R  t_{ij} z t_{ij}$
whenever $z(i) < i$ and $j = \min \{e \in \ZZ : i < e \text{ and }z(i) < z(e)\}$.
Define 
$\prec_L$ similarly as the transitive closure of the relation on $\tI_n$
with 
$z \prec_L  t_{ij} z t_{ij}$
whenever $j < z(j)$ and  $i = \max\{ e \in \ZZ : e <j \text{ and }z(e)<z(j)\}$.
A partially ordered set is \emph{graded} if 
 all maximal chains between two elements have the same length.
 A \emph{rank function} for a graded poset $\cP$ is a map $\cP \to \ZZ$,
the difference of whose values gives the common length of 
all maximal chains between two comparable elements.

\begin{proposition}\label{prec-prop}
The posets $(\tI_{n}, \prec_R)$ and $(\tI_{n}, \prec_L)$ are 
isomorphic via the map $z \mapsto z^*$.
Both are graded 
 subposets  of $(\tI_{n}, <)$
with
rank function $z \mapsto \frac{1}{2}\ell(z)$.
\end{proposition}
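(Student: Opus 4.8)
The plan is to reduce both claims to the analysis of a single generating step of $\prec_R$, namely a relation $z \prec_R z'$ with $z' = t_{ij}zt_{ij}$, $z(i)<i$, and $j = \min\{e\in\ZZ : i<e \text{ and } z(i)<z(e)\}$.

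First I would settle the isomorphism $z\mapsto z^*$. Since $w\mapsto w^*$ is an involutory automorphism of $\tS_n$ preserving $\tI_n$, it is a length-preserving bijection of $\tI_n$ (using $\ell(w^*)=\ell(w)$ and Lemma~\ref{*lem}, so that $\ell$ is $*$-invariant on $\tI_n$). Writing $\tau(i)=n+1-i$, one has $t_{ij}^*=\tau t_{ij}\tau=t_{\tau(j),\tau(i)}$ and $z^*(k)=\tau(z(\tau(k)))$; since $\tau$ reverses order, a direct check gives $z(i)<i \iff \tau(i)<z^*(\tau(i))$, and $j=\min\{e : i<e,\ z(i)<z(e)\}\iff \tau(j)=\max\{e : e<\tau(i),\ z^*(e)<z^*(\tau(i))\}$, together with $(t_{ij}zt_{ij})^*=t_{\tau(j),\tau(i)}\,z^*\,t_{\tau(j),\tau(i)}$. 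Thus $*$ sends the generating relation of $\prec_R$ onto that of $\prec_L$ (the reverse inclusion being the mirror computation, which also follows from $*$ being an involution); passing to transitive closures, $z\mapsto z^*$ is an isomorphism $(\tI_n,\prec_R)\to(\tI_n,\prec_L)$. As $\ell$ is $*$-invariant, it now suffices to prove the rest for $(\tI_n,\prec_R)$.

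Next I would show that each generating step $z\prec_R z'=t_{ij}zt_{ij}$ satisfies $z<z'$ and $\ell(z')=\ell(z)+2$. The key point is that $i,i+1,\dots,j-1$ are \emph{all right endpoints} of $z$: by hypothesis $z(i)<i$, and for $i<e<j$ the minimality of $j$ forces $z(e)\leq z(i)$, so $z(e)<z(i)<i<e$. Hence Lemma~\ref{conj-cover-lem} applies to $(i,j)$. Moreover $z(i)<z(j)$ while $z(e)<z(i)$ for all $i<e<j$, so no $e$ satisfies $i<e<j$ and $z(i)<z(e)<z(j)$; Lemma~\ref{bruhat0-lem} gives $\ell(zt_{ij})=\ell(z)+1$, and then Lemma~\ref{conj-cover-lem}(b) gives $\ell(t_{ij}zt_{ij})=\ell(z)+2$. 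Consequently $z\lessdot zt_{ij}$, and since $t_{ij}zt_{ij}=(zt_{ij})\cdot\big((zt_{ij})^{-1}t_{ij}(zt_{ij})\big)$ differs from $zt_{ij}$ by a reflection while having length one larger, also $zt_{ij}\lessdot t_{ij}zt_{ij}=z'$; in particular $z<z'$.

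Finally I would assemble the order-theoretic conclusions. Each generating step strictly increases $\ell$, so the transitive closure $\prec_R$ is acyclic, hence a genuine partial order, and it is contained in the Bruhat order $<$; thus $(\tI_n,\prec_R)$ is a subposet of $(\tI_n,<)$. Every relation $a\prec_R b$ arises from a chain of generating steps, each adding $2$ to $\ell$, so $\ell(b)-\ell(a)\in 2\ZZ_{>0}$; therefore a single generating step $z\prec_R z'$, where $\ell(z')=\ell(z)+2$, admits no intermediate element and so is exactly a covering relation of $(\tI_n,\prec_R)$, and conversely every cover is a single generating step. Hence any saturated chain from $a$ to $b$ has precisely $\tfrac12(\ell(b)-\ell(a))$ links, which proves $(\tI_n,\prec_R)$ is graded with rank function $z\mapsto\tfrac12\ell(z)$; the corresponding statements for $(\tI_n,\prec_L)$ follow from the isomorphism $z\mapsto z^*$ and the $*$-invariance of $\ell$. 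I expect the only real friction to be the bookkeeping in the first two steps: verifying that $\tau$ converts the ``minimum over $e>i$'' condition into the ``maximum over $e<\tau(i)$'' condition with all endpoint and inequality conditions transforming correctly, and confirming that $i,i+1,\dots,j-1$ are genuinely all right endpoints so that Lemma~\ref{conj-cover-lem} applies.
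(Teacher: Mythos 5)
Your proof is correct and follows essentially the same route as the paper: the isomorphism is a direct check that $*$ carries the generating relation of $\prec_R$ to that of $\prec_L$, and the grading comes from verifying that $i,i+1,\dots,j-1$ are all right endpoints so that Lemma~\ref{conj-cover-lem} yields $z<z'$ and $\ell(z')=\ell(z)+2$ for each generating step. The paper's proof is just a compressed version of this same argument, so no further comparison is needed.
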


\begin{proof}
The first assertion is clear from the definitions.
For the second claim, note by Lemma~\ref{conj-cover-lem} that if $z$ covers $y$ in $\prec_R$ or $\prec_L$
then $y < z$ and $\ell(z) = \ell(y) + 2$.
\end{proof}

\begin{lemma}\label{min-lem}
An element $ z\in \tI_n$ is minimal in $ \prec_R$
(respectively, $\prec_L$) if and only if $\ell(z) = \ell'(z)$.
\end{lemma}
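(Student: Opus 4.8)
The plan is to prove the two implications separately: the direction ``$\ell(z)=\ell'(z)\Rightarrow z$ minimal'' by a short length comparison, and ``$z$ minimal $\Rightarrow \ell(z)=\ell'(z)$'' by exhibiting, whenever $\ell(z)>\ell'(z)$, an element covered by $z$. It suffices to treat $\prec_R$, since the $\prec_L$ statement then follows by applying the $*$-operator together with $\ell'(z^*)=\ell'(z)$, $\ell(z^*)=\ell(z)$ and the isomorphism $(\tI_n,\prec_R)\to(\tI_n,\prec_L)$ of Proposition~\ref{prec-prop}.

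First suppose $\ell(z)=\ell'(z)$ and, for a contradiction, that $z$ is not minimal in $\prec_R$. Then $z$ covers some $y\in\tI_n$, so $\ell(y)=\ell(z)-2$ by Proposition~\ref{prec-prop}, and $y=t_{ij}zt_{ij}$ for some reflection $t_{ij}$, hence $y$ is conjugate to $z$ and $\ell'(y)=\ell'(z)$ by the observation following Lemma~\ref{triv-lem}. But $\ell'(y)\le\ell(y)$ always, since a reduced word for $y$ expresses it as a product of $\ell(y)$ reflections. Combining, $\ell'(z)=\ell'(y)\le\ell(y)=\ell(z)-2<\ell(z)=\ell'(z)$, a contradiction, so $z$ is minimal.

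For the converse I argue the contrapositive: if $\ell(z)>\ell'(z)$ then $z$ covers an element in $\prec_R$. By Lemma~\ref{commuting-product-lem}, $z$ is not a product of commuting simple reflections; since a $z\in\tI_n$ all of whose $2$-cycles have the form $\{k,k+1\}$ is automatically such a product (disjointness of the cycles forces the involved indices to be pairwise non-adjacent mod $n$), there is a cycle $(c,d)\in\cC(z)$ with $d\ge c+2$. I first reduce to such a cycle with, in addition, $z(d-1)\ge c$: if $z(d-1)<c$ then $(z(d-1),d-1)\in\cC(z)$ is again a cycle of gap $\ge2$ with strictly smaller larger endpoint, so iterating strictly decreases the right endpoint; this cannot go on forever, because the numbers $d-1,d-2,\dots$ produced would all be right endpoints of $z$, whereas an involution cannot have every integer as a right endpoint (otherwise $z^2(e)\le e-2$ for all $e$). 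Fix such $(c,d)$ and put $i=d-1$, $j=d$, $y=t_{i,i+1}zt_{i,i+1}$. Using Lemma~\ref{triv-lem} one checks $y(d-1)=c<d-1$ and $y(d)>c$ (in the subcases $z(d-1)=d-1$, $z(d-1)\ge d+1$, and $c<z(d-1)<d-1$ one gets $y(d)$ equal to $d$, $z(d-1)$, $z(d-1)$ respectively, each $>c$); since there is no integer strictly between $d-1$ and $d$, this says exactly $y(d-1)<d-1$ and $d=\min\{e\in\ZZ: i<e,\ y(e)>y(i)\}$. By Lemma~\ref{bruhat0-lem} applied to $y$, $\ell(yt_{i,i+1})=\ell(y)+1$, so by Lemma~\ref{conj-cover-lem} we get $\ell(z)=\ell(t_{i,i+1}yt_{i,i+1})=\ell(y)+2$. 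Hence $y\prec_R z$ is a covering relation and $z$ is not minimal.

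The main obstacle is this converse direction, and within it the affine bookkeeping: arranging the cycle $(c,d)$ so that the ``$j=\min$'' condition of the definition of $\prec_R$ genuinely holds, and handling the edge cases where $c$ or $z(d-1)$ is congruent to $i$ or $i+1$ modulo $n$ (so that the conjugation formula for $y=t_{i,i+1}zt_{i,i+1}$ acquires a shift); these are dealt with exactly as in the index-sequence argument in the proof of Lemma~\ref{main-lem}. I note that once the bijectivity of $\omega_R:\cWmin_n\to\tI_n$ is available (via $\lambda_R$ as a one-sided inverse and Corollary~\ref{limit-cor}), there is a shorter route: writing $z=\omega_R(\theta)$ with $\theta=(u,\psi)\in\cWmin_n$, one has $\ell(z)=\ell(u)+2\wt(\theta)$ and $\ell'(z)=\ell(u)$, so $\ell(z)>\ell'(z)$ forces $\wt(\theta)>0$, i.e.\ $\theta$ is not minimal for $\prec$ on $\cWmin_n$, and Lemma~\ref{main-lem} carries a $\prec$-predecessor of $\theta$ to a $\prec_R$-predecessor of $z$.
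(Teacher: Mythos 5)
Your main argument is correct and is essentially the paper's proof run in the contrapositive direction: the paper shows that minimality forbids the configuration $z(i+1)<i+1$ with $z(i+1)<z(i)$ (unless $z(i)=i+1$) and then deduces that every cycle of $z$ is an adjacent transposition, while you start from a cycle of gap at least $2$, descend to one with $z(d-1)>c$, and exhibit $s_{d-1}zs_{d-1}\prec_R z$ --- the same local analysis, with the paper's ``applying this property in succession'' playing the role of your right-endpoint descent. One caveat about your closing aside: the ``shorter route'' is circular, because writing an arbitrary $z\in\tI_n$ as $\omega_R(\theta)$ with $\theta\in\cWmin_n$ uses the surjectivity of $\omega_R$, which the paper only obtains in Theorem~\ref{main-thm} by invoking precisely this lemma (the left inverse $\lambda_R$ by itself yields only injectivity), so it cannot replace the direct argument.
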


\begin{proof}
We may just consider $(\tI_n, \prec_R)$ since the other poset is isomorphic.
Assume $z \in \tI_n$ is minimal with respect to $\prec_R$. 
We cannot have $ z(i+1)<i+1$ and $z(i+1)<z(i)$ for any $i \in \ZZ$
since then $s_i zs_i \prec_R z$.
This implies that if $i+1$ is a right endpoint of $z$ 
then either $i$ is also a right endpoint with $z(i) <z(i+1)$,
 or $z(i) = i+1$. Applying this property in succession, 
 we deduce that in fact $z(i)=i+1$ whenever $i+1$ is a right endpoint of $z$,
so $\ell(z) = \ell'(z)$. Conversely, any involution $z \in \tI_n$ with $\ell(z) = \ell(z')$ is evidently minimal
with respect to $\prec_R$.
\end{proof}

Putting everything together gives the following.

\begin{theorem}\label{main-thm}
The maps 
$
 \omega_R : (\cWmin_{n}, {\prec}) \to (\tI_{n},{\prec_R})
 $
and
$ \omega_L : (\cWmin_{n}, {\prec}) \to (\tI_{n},{\prec_L})
$
are isomorphisms of partially ordered sets which preserve $\ell$ and $\ell'$,
 and have respective inverses $\lambda_R$ and $\lambda_L$.
 \end{theorem}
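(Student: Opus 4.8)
The strategy is to assemble the results of Sections~\ref{weighted-sect}--\ref{order-sect}, proving everything for $\omega_R$ and deducing the statement for $\omega_L$ by applying the $*$-operator: Lemmas~\ref{*omega-lem} and \ref{*lambda-lem} intertwine $\omega_R,\lambda_R$ with $\omega_L,\lambda_L$, the map $z\mapsto z^*$ is an isomorphism $(\tI_n,\prec_L)\cong(\tI_n,\prec_R)$ by Proposition~\ref{prec-prop}, and $*$ visibly preserves $\ell$, $\ell'$, and the order $\prec$ on $\cWmin_n$. So fix $\theta=(w,\phi)\in\cWmin_n$. Since every element of $\cWmin_n$ is right-admissible, Corollary~\ref{limit-cor} gives $\ell(\omega_R(\theta))=\ell(\theta)$, while $\ell'(\omega_R(\theta))=\ell'(\theta)$ was observed after Theorem-Definition~\ref{omega-thm}; thus $\omega_R$ preserves $\ell$ and $\ell'$. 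By Corollary~\ref{leftinverse-cor} we have $\lambda_R\circ\omega_R=\mathrm{id}_{\cWmin_n}$, so $\omega_R\colon\cWmin_n\to\tI_n$ is injective, and once it is shown to be surjective it will follow automatically that $\lambda_R$ (restricted to $\tI_n$) is its two-sided inverse.

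For surjectivity I would induct on $\ell(z)$ for $z\in\tI_n$. If $\ell(z)=\ell'(z)$, then Lemma~\ref{commuting-product-lem} shows $z$ is a product of commuting simple reflections, so $(z,0)\in\cWmin_n$, and since $\wt(z,0)=0$ we have $\omega_R(z,0)=z$. If $\ell(z)>\ell'(z)$, then $z$ is not minimal in $\prec_R$ by Lemma~\ref{min-lem}, so there is a generating step $y\prec_R z$; that is, $z=t_{ij}yt_{ij}$ for some $i,j$ with $y(i)<i$ and $j=\min\{e\in\ZZ:i<e\text{ and }y(i)<y(e)\}$, and $\ell(y)=\ell(z)-2<\ell(z)$ by Proposition~\ref{prec-prop}. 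By the inductive hypothesis $y=\omega_R(\theta)$ for some $\theta\in\cWmin_n$, so condition~(a) of Lemma~\ref{main-lem} holds with $z'=z$; hence condition~(b) supplies $\theta'\in\cWmin_n$ covering $\theta$ in $\prec$ with $\omega_R(\theta')=z$. Therefore $\omega_R\colon\cWmin_n\to\tI_n$ is a bijection with inverse $\lambda_R$.

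It remains to check that $\omega_R$ (hence also $\lambda_R$) is an order isomorphism $(\cWmin_n,\prec)\to(\tI_n,\prec_R)$, i.e.\ that $\theta\prec\theta'\iff\omega_R(\theta)\prec_R\omega_R(\theta')$. The key observation is that Lemma~\ref{main-lem}, read in both directions, says exactly that for $\theta\in\cWmin_n$ the map $\omega_R$ restricts to a bijection from the set of elements covering $\theta$ in $\prec$ onto the set of elements obtained from $\omega_R(\theta)$ by a single generating move $z\mapsto t_{ij}zt_{ij}$ of $\prec_R$ (injectivity of $\omega_R$ gives the bijectivity). For the forward implication, choose a finite saturated chain $\theta=\theta_0\lessdot\theta_1\lessdot\dots\lessdot\theta_m=\theta'$ in $\cWmin_n$ --- one exists because $\prec$ on $\cWmin_n$ is, on each fibre over a fixed $w$, the coordinatewise order on $\NN^{\ell'(w)}$ (graded by $\wt$), and all $\theta_t$ lie in $\cWmin_n$ --- and apply Lemma~\ref{main-lem}(b)$\Rightarrow$(a) to each step to get $\omega_R(\theta)\prec_R\omega_R(\theta')$. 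For the converse, write $\omega_R(\theta)\prec_R\omega_R(\theta')$ as a chain of generating moves $\omega_R(\theta)=z_0\to z_1\to\dots\to z_m=\omega_R(\theta')$, put $\theta^{(0)}=\theta$, and lift step by step: given $\theta^{(t)}\in\cWmin_n$ with $\omega_R(\theta^{(t)})=z_t$, Lemma~\ref{main-lem}(a)$\Rightarrow$(b) yields $\theta^{(t+1)}\in\cWmin_n$ covering $\theta^{(t)}$ in $\prec$ with $\omega_R(\theta^{(t+1)})=z_{t+1}$. Then $\theta\preceq\theta^{(m)}$ while $\omega_R(\theta^{(m)})=\omega_R(\theta')$, so $\theta^{(m)}=\theta'$ by injectivity and $\theta\preceq\theta'$.

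I expect the only real care to be needed in reconciling two points of view: Lemma~\ref{main-lem} and the definition of $\prec_R$ are both phrased via the single conjugation moves $z\mapsto t_{ij}zt_{ij}$, whereas the isomorphism claim is about the partial order $\prec_R$ itself. This is handled by Proposition~\ref{prec-prop}, which --- via Lemma~\ref{conj-cover-lem} --- shows that each such move raises $\ell$ by exactly $2$ and that $\tfrac12\ell$ is a rank function on $(\tI_n,\prec_R)$, so the generating moves are precisely the covering relations of $\prec_R$. Apart from this bookkeeping, all the substantive work has already been carried out in Lemma~\ref{main-lem} and in the admissibility results of Section~\ref{admissibility-sect}; the proof of the theorem is just the assembly of these inputs.
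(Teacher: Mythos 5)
Your proposal is correct and follows essentially the same route as the paper's own proof: length-preservation and injectivity from Corollaries~\ref{limit-cor} and \ref{leftinverse-cor}, the base case from the weight-zero elements matching the involutions with $\ell=\ell'$, and surjectivity by induction via Lemmas~\ref{min-lem} and \ref{main-lem}. The paper states this very tersely (and leaves the order-isomorphism bookkeeping and the left-handed case implicit via the $*$-operator), whereas you spell out those details; the substance is the same.
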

 
 \begin{proof}
By Corollaries~\ref{limit-cor} and \ref{leftinverse-cor}, 
the maps $\omega_R$ and $\omega_L$ 
preserve $\ell$ and $\ell'$,
are injective 
with left inverses $\lambda_R$ and $\lambda_L$,
and restrict to bijections $\{ \theta \in \cWmin_{n} : \wt(\theta)=0\}\to\{w \in \tI_n : \ell(w) = \ell'(w)\}$.
Given these facts, the maps' surjectivity follows
 from Lemmas~\ref{main-lem} and \ref{min-lem} by induction.
\end{proof}

Slightly abusing notation, we define $\tI_n(q,x)= \sum_{w \in \tI_n} q^{\ell(w)} x^{\ell'(w)}\in \NN[[q,x]]$.

 \begin{corollary}\label{A-cor} If $n\geq 1$ then
 $\ds\tI_n(q,x)= \sum_{k=0}^{\lfloor n/2 \rfloor} \frac{n}{n-k} \binom{n-k}{k} \( \frac{qx}{1-q^2}\)^k.$
\end{corollary}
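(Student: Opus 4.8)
The plan is to move the generating function over $\tI_n$ to one over weighted matchings in $\cC_n$ using the bijection of Theorem~\ref{main-thm}, and then evaluate the resulting sum by a routine geometric series argument together with the enumeration of matchings in Proposition~\ref{match-prop}.

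First I would dispose of the degenerate case $n=1$ separately: there $\tS_1 = \{1\}$, so $\tI_1 = \{1\}$ and $\tI_1(q,x) = 1$, which matches the right side since only the $k=0$ term survives and equals $1$. So assume $n \geq 2$ from now on. By Theorem~\ref{main-thm}, the map $\omega_R : \cWmin_n \to \tI_n$ is a bijection with $\ell(\omega_R(\theta)) = \ell(\theta)$ and $\ell'(\omega_R(\theta)) = \ell'(\theta)$ for all $\theta \in \cWmin_n$. Hence
\[
\tI_n(q,x) = \sum_{\theta \in \cWmin_n} q^{\ell(\theta)}\, x^{\ell'(\theta)}.
\]

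Next I would describe $\cWmin_n$ explicitly. An element $\theta = (w,\phi) \in \cWmin_n$ is precisely a matching $M$ in $\cC_n$ (coming from the winding diagram of $w$) together with an arbitrary weight in $\NN$ on each edge of $M$, namely the values $\phi(\gamma)$ as $\gamma$ runs over the equivalence classes in $\cC(w)$. If $M$ has $k$ edges then $\ell'(\theta) = \ell'(w) = k$ and, by the definition of $\ell$ on $\cW_n$, $\ell(\theta) = \ell(w) + 2\wt(\theta) = k + 2\wt(\theta)$, where $\wt(\theta)$ is the sum of the $k$ edge weights. Summing over all weight assignments for a fixed matching $M$ with $k$ edges therefore gives a product of geometric series:
\[
\sum_{(m_1,\dots,m_k)\in \NN^k} q^{k + 2(m_1+\cdots+m_k)}\, x^k
= (qx)^k \prod_{i=1}^{k} \sum_{m\geq 0} q^{2m}
= \left(\frac{qx}{1-q^2}\right)^{k}.
\]
Finally, grouping the sum over $\cWmin_n$ by the number $k$ of edges of the underlying matching and invoking Proposition~\ref{match-prop}, which says there are $\tfrac{n}{n-k}\binom{n-k}{k}$ matchings in $\cC_n$ with $k$ edges, yields
\[
\tI_n(q,x) = \sum_{k=0}^{\lfloor n/2\rfloor} \frac{n}{n-k}\binom{n-k}{k}\left(\frac{qx}{1-q^2}\right)^{k},
\]
which is the claimed identity.

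There is essentially no obstacle here beyond assembling results already proved: the substantive content is entirely in Theorem~\ref{main-thm} and Proposition~\ref{match-prop}. The only points needing care are the trivial separate treatment of $n=1$ and the bookkeeping observation that, under $\omega_R$, a weighted matching with $k$ edges and total weight $m$ corresponds to an involution of absolute length $k$ and length $k+2m$ — which is exactly how $\ell$ and $\ell'$ were extended to $\cW_n$, so this requires no new argument.
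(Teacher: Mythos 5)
Your proposal is correct and follows essentially the same route as the paper: transfer the sum to $\cWmin_n$ via the length- and absolute-length-preserving bijection of Theorem~\ref{main-thm}, then sum geometric series over edge weights and count $k$-edge matchings with Proposition~\ref{match-prop}. The separate treatment of $n=1$ is a sensible (if minor) addition, since the bijection is set up for $n\geq 2$.
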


\begin{proof}
Corollary~\ref{limit-cor} and Theorem~\ref{main-thm} imply
that $\tI_n(q,x)= \sum_{\theta \in \cWmin_n} q^{2\wt(\theta)} (qx)^{\ell'(\theta)}$. 
By Proposition~\ref{match-prop},
the coefficient of $x^k$ in
the latter power series is $\frac{n}{n-k}\binom{n-k}{k} q^k (1 + q^2 + q^4 + q^6+\dots)^k$. 
\end{proof}

\begin{corollary}\label{Ab-cor}
If $n\geq 3$ then
$\tI_n(q,x)= \tI_{n-1}(q,x) + \tfrac{qx}{1-q^2} \tI_{n-2}(q,x)$.
\end{corollary}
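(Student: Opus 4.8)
The plan is to read off the recurrence from the closed formula of Corollary~\ref{A-cor}. Set $y=\frac{qx}{1-q^2}$ and $a_{n,k}=\frac{n}{n-k}\binom{n-k}{k}$, so that Corollary~\ref{A-cor} reads $\tI_n(q,x)=\sum_{k\ge 0}a_{n,k}\,y^k$, where $a_{n,k}=0$ once $k>\lfloor n/2\rfloor$ under the usual convention that $\binom{m}{k}=0$ for $k>m$. Then the claimed identity $\tI_n(q,x)=\tI_{n-1}(q,x)+y\,\tI_{n-2}(q,x)$ is equivalent to the coefficientwise statement
\[
a_{n,k}=a_{n-1,k}+a_{n-2,k-1}\qquad\text{for all }n\ge 3\text{ and }k\ge 0 .
\]

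First I would record the elementary identity $a_{n,k}=\binom{n-k}{k}+\binom{n-k-1}{k-1}$, which comes from factoring $\binom{n-k-1}{k-1}$ out of the right-hand side and using $\tfrac{n-k}{k}+1=\tfrac{n}{k}$. Substituting this for each of the three terms $a_{n,k}$, $a_{n-1,k}$, $a_{n-2,k-1}$ turns the desired recurrence into
\[
\binom{n-k}{k}+\binom{n-k-1}{k-1}=\Bigl[\tbinom{n-1-k}{k}+\tbinom{n-2-k}{k-1}\Bigr]+\Bigl[\tbinom{n-1-k}{k-1}+\tbinom{n-2-k}{k-2}\Bigr],
\]
whose right-hand side collapses to the left by two applications of Pascal's rule: $\binom{n-1-k}{k}+\binom{n-1-k}{k-1}=\binom{n-k}{k}$ and $\binom{n-2-k}{k-1}+\binom{n-2-k}{k-2}=\binom{n-1-k}{k-1}$. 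The only care needed is to check the boundary values $k\in\{0,1\}$ and, when $n$ is even, the top coefficient $k=n/2$, making sure the conventions $\binom{m}{k}=0$ for $k<0$ or $k>m$ are applied consistently; these cases are routine.

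Should a more conceptual argument be wanted, I would alternatively run the whole thing through the combinatorial model of Theorem~\ref{main-thm}: exactly as in the proof of Corollary~\ref{A-cor}, summing over edge weights gives $\tI_n(q,x)=\sum_M y^{|M|}$, the sum over matchings $M$ of the cycle graph $\cC_n$ with $|M|$ the number of edges (and for $n\in\{1,2\}$ this expression reproduces $\tI_1=1$ and $\tI_2=1+2y$, reading $\cC_2$ as a doubled edge). It then suffices to prove the classical matching-polynomial recurrence $\mu(\cC_n;y)=\mu(\cC_{n-1};y)+y\,\mu(\cC_{n-2};y)$, which follows by introducing the path graphs $P_m$ on $m$ vertices and combining $\mu(\cC_n;y)=\mu(P_n;y)+y\,\mu(P_{n-2};y)$ (condition on a fixed edge of the cycle) with $\mu(P_m;y)=\mu(P_{m-1};y)+y\,\mu(P_{m-2};y)$ (condition on whether an endpoint is covered) to eliminate the path polynomials. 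I do not anticipate any real obstacle: the statement is a finite generating-function identity already implicit in Corollary~\ref{A-cor}, and the only slightly delicate point is the boundary bookkeeping just mentioned, so I would make the Pascal's-rule computation the actual proof and mention the matching-polynomial viewpoint only as a remark.
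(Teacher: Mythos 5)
Your proposal is correct and follows the paper's (implicit) route: the paper states this corollary without proof as an immediate consequence of Corollary~\ref{A-cor}, and your coefficientwise verification of $\frac{n}{n-k}\binom{n-k}{k}=\frac{n-1}{n-1-k}\binom{n-1-k}{k}+\frac{n-2}{n-1-k}\binom{n-1-k}{k-1}$ via the splitting $a_{n,k}=\binom{n-k}{k}+\binom{n-k-1}{k-1}$ and two applications of Pascal's rule is exactly the routine check being left to the reader. The matching-polynomial argument you sketch as a remark is an equally valid alternative, consistent with the paper's identification of $\cWmin_n$ with $\NN$-weighted matchings of $\cC_n$.
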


 The bivariate \emph{Lucas polynomials} $\Luc_n(q,x)$, introduced in \cite{BeHo}, are 
defined by the recurrence
$\Luc_n(x,s) = x \Luc_{n-1}(x,s) + s \Luc_{n-2}(x,s)$ with $\Luc_0(x,s) = 2$ and $\Luc_1(x,s) = x$.

\begin{corollary}\label{AA-cor}
If $n\geq 1$ then 
\[\tI_n(q,x) = \tfrac{1}{(1+q)^n} \Luc_n{\(1+q,\tfrac{1+q}{1-q}qx\)}
\qquand \Luc_n(x,s)=x^n\tI_n{\(x-1,\tfrac{2-x}{x(x-1)} s\)}.\]

\end{corollary}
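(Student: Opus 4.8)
The plan is to prove the first identity by induction on $n$, matching it against the defining recurrence of the bivariate Lucas polynomials, and then to deduce the second identity by a change of variables. Throughout we regard $\tI_n(q,x)$ as an element of the field of rational functions $\QQ(q,x)$; this is legitimate because Corollary~\ref{A-cor} expresses $\tI_n(q,x)$ as a finite sum of powers of $\frac{qx}{1-q^2}$. Both asserted identities are then identities in $\QQ(q,x)$, so that specializations such as $q \mapsto x-1$ are meaningful.

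For the base cases $n=1$ and $n=2$, Corollary~\ref{A-cor} (or a direct count) gives $\tI_1(q,x) = 1$ and $\tI_2(q,x) = 1 + \frac{2qx}{1-q^2}$, while the defining recurrence of $\Luc_n$ gives $\Luc_1(1+q,s) = 1+q$ and $\Luc_2(1+q,s) = (1+q)^2 + 2s$. Substituting $s = \frac{1+q}{1-q}qx$ and dividing by $1+q$ and by $(1+q)^2$ respectively recovers $\tI_1(q,x)$ and $\tI_2(q,x)$.

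For the inductive step, fix $n \geq 3$, put $S = \frac{1+q}{1-q}qx$, and assume the first identity for $n-1$ and $n-2$. Dividing the recurrence $\Luc_n(1+q,S) = (1+q)\Luc_{n-1}(1+q,S) + S\,\Luc_{n-2}(1+q,S)$ by $(1+q)^n$ gives
\[
\frac{\Luc_n(1+q,S)}{(1+q)^n} = \frac{\Luc_{n-1}(1+q,S)}{(1+q)^{n-1}} + \frac{S}{(1+q)^2}\cdot\frac{\Luc_{n-2}(1+q,S)}{(1+q)^{n-2}}.
\]
By the inductive hypothesis the right-hand side equals $\tI_{n-1}(q,x) + \frac{S}{(1+q)^2}\tI_{n-2}(q,x)$, and since $\frac{S}{(1+q)^2} = \frac{qx}{1-q^2}$ this is precisely $\tI_{n-1}(q,x) + \frac{qx}{1-q^2}\tI_{n-2}(q,x) = \tI_n(q,x)$ by Corollary~\ref{Ab-cor}. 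This completes the induction and establishes the first identity for all $n \geq 1$.

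Finally, rewrite the first identity with a fresh symbol $t$ in place of the second variable, namely $\tI_n(q,t) = (1+q)^{-n}\Luc_n\!\left(1+q,\,\frac{(1+q)qt}{1-q}\right)$, and substitute $q = x-1$. Then $1+q = x$ and $1-q = 2-x$, so this becomes $\tI_n(x-1,t) = x^{-n}\Luc_n\!\left(x,\,\frac{x(x-1)t}{2-x}\right)$. Setting $t = \frac{2-x}{x(x-1)}s$ makes the second argument of $\Luc_n$ equal to $s$, and multiplying by $x^n$ yields $\Luc_n(x,s) = x^n\,\tI_n\!\left(x-1,\,\frac{2-x}{x(x-1)}s\right)$, as claimed. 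There is no real obstacle in this argument; the only point deserving attention is the choice of ambient ring, which is why we work in $\QQ(q,x)$ rather than $\NN[[q,x]]$ so that the substitution $q = x-1$ is valid.
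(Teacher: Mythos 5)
Your proof is correct and follows essentially the same route as the paper: verify the base cases $n=1,2$, show that $(1+q)^{-n}\Luc_n\bigl(1+q,\tfrac{1+q}{1-q}qx\bigr)$ satisfies the recurrence of Corollary~\ref{Ab-cor}, and conclude by induction. The only difference is that you spell out the derivation of the second (inverse) identity via the substitution $q=x-1$, which the paper leaves implicit.
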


\begin{proof}
Let $f_n = \tfrac{1}{(1+q)^n} \Luc_n{\(1+q,qx\frac{1+q}{1-q}\)}$.
Then $f_1 = 1 =\tI_1(q,x)$ and $f_2=1 + \frac{2qx}{1-q^2} = \tI_2(q,x)$ and 
if  $n\geq 3$ then
$f_n= f_{n-1} + \tfrac{qx}{1-q^2}f_{n-2}$.
The result therefore follows from Corollary~\ref{Ab-cor}.
\end{proof}

Define $\ellhat(w) = \frac{1}{2}(\ell(w) + \ell'(w))$ for $w \in \tI_n$. 
Corollary~\ref{A-cor} shows that  $\ellhat(w) \in \NN$.

\begin{corollary}\label{counts-cor}
 For each $n\geq 2$ and $m\geq 1$,
let $N_{n}(m)$ and $\hat N_{n}(m)$ be the number of involutions $w \in \tI_n$
with $\ell(w) = m$ and $\ellhat(w) = m$, respectively. Then 
\[
N_{n}(m) = 
\sum_{\substack{ 1 \leq j \leq \lfloor n/2 \rfloor \\ j \equiv m \modu 2)}} \frac{n}{n-j} \binom{n-j}{j} \binom{\frac{j+m}{2} -1 }{j-1}
\quand
\hat N_{n}(m) = \sum_{j=1}^{\lfloor n/2\rfloor} \frac{n}{n-j} \binom{n-j}{j} \binom{m-1}{j-1}
.\]
\end{corollary}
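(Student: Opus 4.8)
The plan is to read off both formulas as coefficient extractions from the closed form
\[
\tI_n(q,x) = \sum_{k=0}^{\lfloor n/2\rfloor} \frac{n}{n-k}\binom{n-k}{k}\left(\frac{qx}{1-q^2}\right)^k
\]
established in Corollary~\ref{A-cor}, using only the elementary expansion $\frac{1}{(1-u)^k} = \sum_{i\geq 0}\binom{k+i-1}{i}u^i$, valid for every integer $k\geq 1$.

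First I would handle $N_n(m)$. By definition $N_n(m) = [q^m]\,\tI_n(q,1)$. Setting $x=1$ and expanding $\frac{q^k}{(1-q^2)^k} = \sum_{i\geq 0}\binom{k+i-1}{i}q^{k+2i}$, the $k=0$ summand is the constant $1$, contributing nothing to $[q^m]$ for $m\geq 1$. For $k\geq 1$ the monomial $q^{k+2i}$ equals $q^m$ exactly when $k\equiv m \pmod 2$ and $i = \tfrac{m-k}{2}\geq 0$, and then $\binom{k+i-1}{i} = \binom{\tfrac{m+k}{2}-1}{\tfrac{m-k}{2}} = \binom{\tfrac{m+k}{2}-1}{k-1}$, the last step being the symmetry $\binom{a}{b}=\binom{a}{a-b}$ together with $\tfrac{m+k}{2}-1-\tfrac{m-k}{2} = k-1$. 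Renaming $k$ as $j$ gives the claimed sum for $N_n(m)$.

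Next I would treat $\hat N_n(m)$. Since $\ell(w)\equiv \ell'(w)\pmod 2$ for every $w\in\tI_n$ — which is precisely why $\ellhat(w)\in\NN$, as noted after Corollary~\ref{A-cor} — the specialization $\tI_n(q,q) = \sum_{w\in\tI_n} q^{\ell(w)+\ell'(w)} = \sum_{w\in\tI_n} q^{2\ellhat(w)}$ lies in $\NN[[q^2]]$, so $\hat N_n(m) = [q^{2m}]\,\tI_n(q,q)$. Putting $x=q$ in the closed form yields $\tI_n(q,q) = \sum_{k=0}^{\lfloor n/2\rfloor}\frac{n}{n-k}\binom{n-k}{k}\left(\frac{q^2}{1-q^2}\right)^k$, and with $u=q^2$ one has $[u^m]\frac{u^k}{(1-u)^k} = \binom{m-1}{m-k} = \binom{m-1}{k-1}$ for $1\leq k\leq m$, while the $k=0$ term again contributes nothing for $m\geq 1$. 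Renaming $k$ as $j$ gives the formula for $\hat N_n(m)$.

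The whole argument is formal power series bookkeeping, so there is no substantial obstacle; the only points that need a moment's care are the parity condition $j\equiv m\pmod 2$ in the first formula, the rewriting of the binomial coefficient via $\binom{a}{b}=\binom{a}{a-b}$, and the (already recorded) equality of parities of $\ell$ and $\ell'$ on $\tI_n$, which is what makes the passage to $\tI_n(q,q)\in\NN[[q^2]]$ legitimate.
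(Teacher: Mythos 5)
Your proof is correct and follows exactly the paper's route: the paper's entire proof is the one-line instruction to extract the coefficient of $q^m$ in $\tI_n(q,1)$ and of $q^{2m}$ in $\tI_n(q,q)=\sum_{w\in\tI_n}q^{2\ellhat(w)}$, and your argument is just that extraction carried out explicitly. The details you supply (the geometric-series expansion, the parity condition, and the binomial symmetry) are all accurate.
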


\begin{proof}
Extract the coefficients of $q^m$ in $\tI_n(q,1)$ and $q^{2m}$ in $\tI_n(q,q)=\sum_{w \in \tI_n} q^{2\ellhat(w)}$.
\end{proof}

\begin{remark}
The sequence $\{ \hat N_n(n) \}_{n=1,2,3,\dots} = (0, 2, 3, 10, 25, 71, 196, 554, 1569, \dots)$ coincides with \cite[A246437]{OEIS}, which gives the ``type $B$ analog for Motzkin sums,''
while $\{ \hat N_n(2n) \}_{n=1,2,3,\dots} = (0, 2, 3, 18, 50, 215, 735, 2898, \dots)$ gives \cite[A211867]{OEIS}, which counts certain Motzkin paths. 
\end{remark}

Define $\zeta_R,\zeta_L: \tI_n \to \tI_n$ as the maps
$
\zeta_R = {*} \circ \omega_L \circ \lambda_R = \omega_R \circ {*} \circ \lambda_R = \omega_R \circ \lambda_L \circ {*}
$
and
$
 \zeta_L = {*} \circ \omega_R \circ \lambda_L = \omega_L \circ {*} \circ \lambda_L = \omega_L \circ \lambda_R \circ {*}
$.
Corollary~\ref{limit-cor} and Theorem~\ref{main-thm} imply this property:

\begin{corollary}\label{zeta-cor}
The maps $\zeta_R$ and $\zeta_L$ are involutions of $(\tI_n,\prec_R)$ and $(\tI_n,\prec_L)$, respectively, which preserve length $\ell$ and absolute length $\ell'$,
and it holds that $\zeta_L \circ {*} ={*}\circ \zeta_R$.
\end{corollary}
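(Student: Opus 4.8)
The plan is to deduce everything from three facts already in hand: (i) by Theorem~\ref{main-thm} and Corollary~\ref{limit-cor}, $\omega_R \colon (\cWmin_n,\prec) \to (\tI_n,\prec_R)$ and $\omega_L \colon (\cWmin_n,\prec) \to (\tI_n,\prec_L)$ are poset isomorphisms preserving $\ell$ and $\ell'$, with inverses $\lambda_R$ and $\lambda_L$ (and $\lambda_R,\lambda_L$ restricted to $\tI_n$ land in $\cWmin_n$ by Proposition-Definition~\ref{lambda-def}); (ii) the $*$-operator intertwines the left and right constructions, namely $\omega_L = {*}\circ\omega_R\circ{*}$ (a restatement of Lemma~\ref{*omega-lem}) and ${*}\circ\lambda_R = \lambda_L\circ{*}$ (Lemma~\ref{*lambda-lem}); and (iii) ${*}$ is an involutive automorphism of $(\cWmin_n,\prec)$ preserving $\ell$ and $\ell'$.

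First I would nail down fact (iii). Since $w \mapsto w^*$ is a length-preserving automorphism of $\tS_n$ with $s_i^* = s_{n-i}$, it carries a product of commuting simple reflections to another such, so it preserves $\cWmin_n$; the identity $\cC(w^*) = \{(\tau(j),\tau(i)):(i,j)\in\cC(w)\}$ together with $\phi^*(\tau(j),\tau(i)) = \phi(i,j)$ shows ${*}$ preserves the order $\prec$ on $\cWmin_n$, and with Lemma~\ref{*lem} it preserves $\ell$ and $\ell'$. Next I would check that the three composites given for $\zeta_R$ agree: substituting $\omega_L = {*}\circ\omega_R\circ{*}$ converts ${*}\circ\omega_L\circ\lambda_R$ into $\omega_R\circ{*}\circ\lambda_R$, and then substituting ${*}\circ\lambda_R = \lambda_L\circ{*}$ converts this into $\omega_R\circ\lambda_L\circ{*}$; the same two substitutions identify the three expressions for $\zeta_L$. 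From the form $\zeta_R = \omega_R\circ{*}\circ\lambda_R$, written as a composite of maps each of which is a poset isomorphism onto its target and preserves $\ell$ and $\ell'$, it follows immediately that $\zeta_R$ is an automorphism of $(\tI_n,\prec_R)$ preserving $\ell$ and $\ell'$, and symmetrically for $\zeta_L$ on $(\tI_n,\prec_L)$.

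To show $\zeta_R$ is an involution, I would use the form $\zeta_R = \omega_R\circ{*}\circ\lambda_R$ together with $\lambda_R\circ\omega_R = \id$ on $\cWmin_n$ (Corollary~\ref{leftinverse-cor}) and $\omega_R\circ\lambda_R = \id$ on $\tI_n$ (Theorem~\ref{main-thm}). Fix $z\in\tI_n$ and set $\eta = \lambda_R(z)\in\cWmin_n$; then $\eta^*\in\cWmin_n$ by fact (iii), so Corollary~\ref{leftinverse-cor} gives $\lambda_R(\omega_R(\eta^*)) = \eta^*$, and hence $\zeta_R(\zeta_R(z)) = \omega_R\bigl({*}(\lambda_R(\omega_R(\eta^*)))\bigr) = \omega_R(\eta^{**}) = \omega_R(\eta) = \omega_R(\lambda_R(z)) = z$. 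The same computation with $\omega_L,\lambda_L$ shows $\zeta_L^2 = \id$. Finally, using $\zeta_L = {*}\circ\omega_R\circ\lambda_L$ and Lemma~\ref{*lambda-lem},
\[\zeta_L\circ{*} = {*}\circ\omega_R\circ\lambda_L\circ{*} = {*}\circ\omega_R\circ{*}\circ\lambda_R = {*}\circ\zeta_R,\]
which is the last claim.

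I do not anticipate a genuine obstacle: the argument is purely formal manipulation of the $*$-equivariance lemmas and the inverse relations $\lambda_{R/L}\circ\omega_{R/L} = \id$, $\omega_{R/L}\circ\lambda_{R/L} = \id$. The only point demanding a little care is bookkeeping — choosing, at each step, the most convenient of the three equivalent descriptions of $\zeta_R$ and $\zeta_L$ (the middle-$*$ form for the involution computation, the outer-$*$ form for the conjugation identity) — and confirming at the outset that ${*}$ really restricts to an order- and length-preserving involution of $\cWmin_n$, not merely of $\cW_n$.
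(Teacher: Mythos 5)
Your proof is correct and follows essentially the same route as the paper, which simply asserts the corollary as an immediate consequence of Corollary~\ref{limit-cor} and Theorem~\ref{main-thm}; you have merely filled in the formal bookkeeping (the $*$-equivariance of $\omega$, $\lambda$, and $\cWmin_n$, and the inverse relations) that the paper leaves implicit.
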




Consider the following variation of the permutations $g_L(\theta)$ and $g_R(\theta)$ for $\theta \in \cW_n$.

\begin{definition}\label{alpha-def}
For each $z \in \tI_n$ let
 $\alpha_R(z) = g_R(\lambda_R(z)) \in \tS_n$ and  $\alpha_L(z) = g_L(\lambda_L(z)) \in \tS_n$.
 \end{definition}

As our final result in this section, we derive a more explicit formula for these elements.

\begin{propdef}\label{window-def}
If $a_1,a_2,\dots,a_n \in\ZZ$ represent the distinct congruence classes modulo $n$
then there is a unique  $m \in \ZZ$ and a unique 
$w \in \tS_n$ such that $w(m+i) = a_i$ for $i \in [n]$.
Moreover, it holds that $m = \frac{1}{n}\sum_{i=1}^n (a_i-i)$.
Define $\lW a_1,a_2,\dots,a_n\rW  = w\in \tS_n$.
\end{propdef}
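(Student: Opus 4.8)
The plan is to prove the statement in three steps: first check that the proposed value of $m$ is an integer, then build $w$ explicitly and verify it lies in $\tS_n$, and finally argue uniqueness of both $m$ and $w$. The one structural fact I would isolate at the outset is the \emph{window-shift identity}: for any bijection $v:\ZZ\to\ZZ$ with $v(i+n)=v(i)+n$ and any $c\in\ZZ$ one has $\sum_{i=c+1}^{c+n} v(i)=\sum_{i=1}^{n} v(i)+cn$. This follows by sliding the window one unit at a time, since replacing $v(c+1)$ by $v(c+n+1)=v(c+1)+n$ changes the sum by exactly $n$.

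First I would observe that, since $a_1,\dots,a_n$ represent the distinct congruence classes modulo $n$, the multiset $\{a_1\bmod n,\dots,a_n\bmod n\}$ equals $\{0,1,\dots,n-1\}$, so $\sum_{i=1}^n a_i\equiv 0+1+\dots+(n-1)=\binom n2\pmod n$; as $\binom{n+1}2-\binom n2=n$, also $\sum_{i=1}^n i=\binom{n+1}2\equiv\binom n2\pmod n$. Hence $\sum_{i=1}^n(a_i-i)\equiv 0\pmod n$ and $m:=\tfrac1n\sum_{i=1}^n(a_i-i)\in\ZZ$. Next, define $w(m+i+kn)=a_i+kn$ for $i\in[n]$ and $k\in\ZZ$. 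Every integer has a unique expression of this form, so $w$ is well defined; it is injective and surjective precisely because the $a_i$ hit every residue class exactly once; and $w(j+n)=w(j)+n$ holds by construction, with $w(m+i)=a_i$. Applying the window-shift identity with $c=m$ and $v=w$ gives $\sum_{i=1}^n w(i)=\sum_{i=m+1}^{m+n} w(i)-mn=\sum_{i=1}^n a_i-mn=\sum_{i=1}^n i=\binom{n+1}2$, using the definition of $m$. Therefore $w\in\tS_n$, proving existence and also the asserted formula for $m$.

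For uniqueness, suppose $w'\in\tS_n$ and $m'\in\ZZ$ satisfy $w'(m'+i)=a_i$ for all $i\in[n]$. The window-shift identity applied to $w'$ with $c=m'$ yields $\sum_{i=1}^n a_i=\sum_{i=m'+1}^{m'+n} w'(i)=\binom{n+1}2+m'n$, so $m'=\tfrac1n\bigl(\sum_{i=1}^n a_i-\binom{n+1}2\bigr)=\tfrac1n\sum_{i=1}^n(a_i-i)=m$. Then $w'$ and $w$ agree on the window $\{m+1,\dots,m+n\}$, and since both satisfy $v(i+n)=v(i)+n$, induction on $|k|$ shows they agree on $m+i+kn$ for all $k\in\ZZ$, hence $w'=w$ on all of $\ZZ$.

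I do not expect a genuine obstacle here: every step reduces to a short counting or periodicity argument. The only points that need care are the integrality of $m$ and the bijectivity of the periodic extension of a window assignment; both are exactly where the hypothesis that $a_1,\dots,a_n$ form a complete residue system modulo $n$ is used, so I would make sure to invoke it explicitly in each place.
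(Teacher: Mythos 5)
Your proof is correct and follows essentially the same route as the paper: construct $w$ explicitly by $w(m+i+kn)=a_i+kn$ and observe that the normalisation $\sum_{i\in[n]}w(i)=\binom{n+1}{2}$ forces $m=\frac{1}{n}\sum_{i=1}^n(a_i-i)$, which is exactly the identity $\sum_{i\in[n]}w(m+i)=\sum_{i\in[n]}w(i)+nm$ that you isolate as the window-shift identity. The only difference is that you spell out the integrality of $m$ and the bijectivity of the periodic extension, which the paper leaves implicit.
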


\begin{proof}
If an affine permutation exists with the desired property,
it must be given by the map $w : \ZZ \to \ZZ$ 
with $w(m+i+jn) = a_i + jn$ for $i \in [n]$ and $j \in \ZZ$.
As this map is in $\tS_n$ if and only if 
$\sum_{i \in [n]} i  = \sum_{i \in [n]} w(i)  =  \sum_{i \in [n]} w(m+i) - nm=  \sum_{i \in [n]} a_i- nm $,
the result follows.
\end{proof}

If $a_1,a_2,\dots,a_N \in \ZZ$ represent all congruence classes modulo $n$,
and $i_1<i_2<\dots<i_n$ are the indices of the first representative of each class,
then we define $\lW a_1,a_2,\dots,a_N\rW = \lW a_{i_1},a_{i_2},\dots,a_{i_n}\rW  \in \tS_n$.
For example, if $n=3$ then $[1,0,1,3,8,4,2] = [ 1,0,8]$.

\begin{theorem}\label{alphaL-thm}
Let $z \in \tI_n$ and $m \in \ZZ$. Suppose 
$a_1 < a_2 <\dots <a_l
$
and
$ 
d_1 < d_2 < \dots < d_l
$ are the elements of $m+[n]$
with $a_i \leq z(a_i)  $ and $z(d_i) \leq d_i$. Define  $b_i = z(a_i)$ and $c_i = z(d_i)$.
Then 
\[\alpha_R(z) = \lW a_1,b_1,a_2,b_2,\dots,a_l,b_l\rW ^{-1}
\qquand
\alpha_L(z) = \lW c_1,d_1,c_2,d_2,\dots,c_l,d_l\rW ^{-1}.
\]
In particular, these formulas do not depend on the choice of $m$.
\end{theorem}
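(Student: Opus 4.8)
The plan is to prove the formula for $\alpha_R$; the formula for $\alpha_L$ then follows formally from the $*$-symmetry of the constructions. Combining Lemmas~\ref{*lambda-lem} and~\ref{*omega-lem} gives
\[\alpha_L(z) = g_L(\lambda_L(z)) = g_L\bigl(\lambda_R(z^*)^*\bigr) = g_R\bigl(\lambda_R(z^*)\bigr)^* = \alpha_R(z^*)^*,\]
so it is enough to understand how $*$ acts on windows. A short computation with Proposition-Definition~\ref{window-def} shows that $\lW e_1,\dots,e_N\rW^* = \lW \tau(e_N),\tau(e_{N-1}),\dots,\tau(e_1)\rW$ where $\tau(i) = n+1-i$; applying this to the $\alpha_R$-formula for $z^*$ and using that $j$ is a left endpoint (respectively, fixed point) of $z^*$ exactly when $\tau(j)$ is a right endpoint (respectively, fixed point) of $z$ turns that formula into precisely the asserted formula for $\alpha_L(z)$. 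The independence from $m$ is then automatic, since the left-hand sides do not depend on $m$ by Proposition-Definition~\ref{lambda-def}.

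Now fix $m$, write $\lambda_R(z) = (u,\psi)$, and set $W = \lW a_1,b_1,\dots,a_l,b_l\rW \in \tS_n$. The goal is to show $W^{-1} = g_R(\lambda_R(z)) = \alpha_R(z)$. By the uniqueness clause in Theorem-Definition~\ref{omega-thm}(a), it suffices to verify
\[\ell(W^{-1}) = \wt(\lambda_R(z)) \qquand \wt\bigl(\lambda_R(z)\,\pi_{W^{-1}}\bigr) = 0;\]
part~(b) of that result then also yields $W u W^{-1} = \omega_R(\lambda_R(z)) = z$, using Theorem~\ref{main-thm}. Intuitively, the content of the theorem is that the one-line window $(a_1,z(a_1),a_2,z(a_2),\dots)$ records the same element of $\tS_n$ that Theorem-Definition~\ref{omega-thm} extracts by iterating the operators $\pi_i$ on $\lambda_R(z)$, and the proof must make this precise.

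I would establish both displayed assertions at once by running the construction in the proof of Lemma~\ref{main-lem} with $\theta = \lambda_R(z) \in \cWmin_n$. That construction produces, from the right endpoints of $z$, an index sequence $I$ with $\phi^I = 0$ and $w^I = \omega_R(\lambda_R(z)) = z$, together with the element $\alpha^I \in \tS_n$; by Theorem-Definition~\ref{omega-thm} this $\alpha^I$ equals $g_R(\lambda_R(z))$, and $I$ is a reduced word for it, so $\ell(\alpha^I) = \wt(\lambda_R(z))$ and $\wt(\lambda_R(z)\pi_{\alpha^I}) = 0$ hold by construction. It remains to identify $\alpha^I$ with $W^{-1}$. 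For this I would track the trajectories $\alpha^{I,t}(x)$ of the left endpoints $a^{\mathrm{LE}}_1 < \dots < a^{\mathrm{LE}}_k$ of $z$ in $m+[n]$ and of their images $z(a^{\mathrm{LE}}_s)$, and show that at time $t = \lvert I \rvert$ these have settled into consecutive slots with $z(a^{\mathrm{LE}}_s)$ landing immediately after $a^{\mathrm{LE}}_s$, while the total number of moves equals $\sum_s\bigl(z(a^{\mathrm{LE}}_s) - a^{\mathrm{LE}}_s - q_s - 1\bigr) = \wt(\lambda_R(z))$, using the formula for $\psi$ in Proposition-Definition~\ref{lambda-def}. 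Reading off the final positions of all $n$ residue classes then reproduces the window $(a_1,b_1,\dots,a_l,b_l)$ — the fixed points contributing a repeated entry, deleted by the reduction convention of Proposition-Definition~\ref{window-def} — so that $\alpha^I = W^{-1}$.

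The main obstacle is exactly this identification: controlling the trajectories through the iterated $\pi_i$'s and expressing their final positions in terms of the nesting counts $p_j, q_j$ of Proposition-Definition~\ref{lambda-def}. This is the same kind of bookkeeping as the three-case analyses in the proofs of Lemma~\ref{main-lem} and Proposition-Definition~\ref{lambda-def}; the subtle points are that a right endpoint may ``wrap around'' the window $m+[n]$ (so $z(a^{\mathrm{LE}}_s)$ can exceed $m+n$) and that one must check $I$ is reduced, i.e., no trajectory is dragged past another twice. An alternative route avoiding the trajectory calculus is induction on $\ell(z)$ along $\prec_R$: the minimal elements, where $\ell(z) = \ell'(z)$, have $\alpha_R(z) = 1$ and identity window; and by the last assertion of Lemma~\ref{main-lem}, whenever $z$ covers $y$ in $\prec_R$ one has $\alpha_R(z) = \alpha_R(y)\,t_{ij}$, so one is reduced to the elementary (if case-laden) check that left-multiplying the window permutation of $y$ by $t_{ij}$ has the effect of lengthening the unique arc of $y$ that changes, which replaces one window entry and adjusts one other.
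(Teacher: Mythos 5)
Your reduction of the $\alpha_L$ formula to the $\alpha_R$ formula via $*$ is exactly what the paper does (it simply cites Lemma~\ref{*omega-lem}), and the ``alternative route'' you sketch at the end is, almost verbatim, the paper's actual proof: induct along $\prec_R$, starting from the minimal elements where $\ell(z)=\ell'(z)$ and $\alpha_R(z)=\lW 1,2,\dots,n\rW^{-1}=1$, and at each covering $z\prec_R z'=t_{ij}zt_{ij}$ invoke the last assertion of Lemma~\ref{main-lem} (transported through the order isomorphism $\lambda_R$) to get $\alpha_R(z')=\alpha_R(z)t_{ij}$, then verify by direct window bookkeeping that right-multiplying $\lW a_1,b_1,\dots,a_l,b_l\rW^{-1}$ by $t_{ij}$ yields the window of $z'$. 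The paper normalises $m=z(i)-1$ so that $(a_1,b_1)=(z(i),i)$ and splits into four cases according to whether $j$ is a right endpoint, a fixed point, or a left endpoint with $z(j)$ not congruent, respectively congruent, to $i$ modulo $n$. By contrast, your primary route --- rerunning the trajectory construction from the proof of Lemma~\ref{main-lem} on $\theta=\lambda_R(z)$ and matching the final positions against the nesting counts $p_j,q_j$ of Proposition-Definition~\ref{lambda-def} --- is left unexecuted, and you rightly flag it as the hard part; the covering-relation induction avoids it entirely and is the route I would insist on, since it never requires tracking individual trajectories. One small caveat on your symmetry step: the identity $\lW e_1,\dots,e_N\rW^*=\lW\tau(e_N),\dots,\tau(e_1)\rW$ fails for general windows with $N>n$ because the ``first representative of each congruence class'' convention is not preserved under reversal; it is harmless here only because in $\lW a_1,b_1,\dots,a_l,b_l\rW$ a class repeats solely as an immediate repetition $a_i=b_i$ at a fixed point, so first and last representatives coincide. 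You should state that restriction when you use the formula.
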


The number $l$ is necessarily  $n - \ell'(z)$.
Before the proof, we give an example.
\begin{example}
One has
$\alpha_R(1) = \alpha_L(1) = \lW 1,1,2,2,\dots,n,n\rW ^{-1} = 1$.
If $z =t_{1,8}t_{2,7} \in \tI_4$ then 
$\alpha_R(z) = \lW 1,8,2,7\rW ^{-1} =\lW 3,5,2,0\rW  $ and $ \alpha_L(z) = \lW -2,3,-3,4\rW ^{-1}=\lW 5,3,0,2\rW .$
\end{example}

\begin{proof}
By Lemma~\ref{*omega-lem}, we may 
just prove the assertions for $g_R(z)$.
Replacing $m$ by $m+1$ has no effect on the formula for $g_R(z)$ since 
$\lW a_2,b_2,\dots,a_l,b_l,a_1+n,b_1+n\rW  = \lW a_1,b_1,a_2,b_2,\dots,a_l,b_l\rW $,
and so we deduce that this formula is independent of the choice of $m$.
When $\ell(z) =\ell'(z)$,
we have $ \lW a_1,b_1,\dots,a_l,b_l\rW ^{-1} = \lW 1,2,\dots,n\rW ^{-1}=1 = g_R(z).$
For the general case, suppose $z' \in \tI_n$ covers $z$ in $\prec_R$, so that
$z' = t_{ij} z t_{ij} $ for some $i,j \in \ZZ$ with $z(i) < i$ and $j = \min\{ e \in \ZZ : i < e : z(i) < z(e)\}$.
Let $m = z(i)-1$, so that $a_1 = z(i)$ and $b_1 = i$,
and assume by induction that
$g_R(z) = \lW a_1,b_1,a_2,b_2,\dots,a_l,b_l\rW ^{-1}$. 
The numbers $i,i+1,\dots,j-1$ are necessarily all right endpoints of $z$,
and consequently $i<j<i+n$.
Define $\Delta = j-i \in [n-1]$.
The only way $\{i,z(i)\} + n\ZZ$ and $\{j,z(j)\} + n\ZZ$ can intersect is if $j<z(j) \equiv i \modu n)$.
We now compute the product $g_R(z) t_{ij}$.
There are four cases to consider:
\begin{itemize}
\item[(a)] Suppose $z(j) < j$.
Let $t \in \{2,3,\dots,l\}$ be the unique index with 
$a_t \equiv z(j) \modu n)$ and $b_t \equiv j \modu n)$.
Then $g_R(z) t_{ij} = \lW a_1,b_1+\Delta,a_2,b_2,\dots,a_{t-1},b_{t-1},a_t,b_t - \Delta,a_{t+1},b_{t+1},\dots\rW ^{-1}$.

\item[(b)] Suppose $z(j) = j$. 
Let $t \in \{2,3,\dots,l\}$ be the unique index with $a_t=b_t \equiv j \modu n)$.
Then $g_R(z) t_{ij} = \lW a_1,b_1+\Delta,a_2,b_2,\dots,a_{t-1},b_{t-1},a_t-\Delta,a_t-\Delta,a_{t+1},b_{t+1},\dots\rW ^{-1}$.

\item[(c)] Suppose $j < z(j) \not \equiv i \modu n)$. 
Let $t \in \{2,3,\dots,l\}$ be the unique index with $a_t \equiv j \modu n)$.
Then $g_R(z) t_{ij} = \lW a_1,b_1+\Delta,a_2,b_2,\dots,a_{t-1},b_{t-1},a_t-\Delta,b_t,a_{t+1},b_{t+1},\dots\rW ^{-1}$.

\item[(d)] Finally suppose $j < z(j)  \equiv i \modu n)$. Then $z(j) \equiv a_1 \modu n)$ and $j \equiv b_1 \modu n)$,
and
we have $g_R(z) t_{ij} = \lW a_1-\Delta,b_1+\Delta,a_2,b_2,\dots,a_l,b_l\rW ^{-1}$.

\end{itemize}
Let $a_1'<a_2'<\dots<a_l'$ be the numbers in $m+[n]$ with $a_i'\leq z'(a_i')$  and set $b_i' = z'(a_i')$.
In each case (a)-(d), our computations reduce to 
the identity $g_R(z) t_{ij} = \lW a_1',b_1',\dots,a_l',b_l'\rW ^{-1}$.
For example, in case (d) 
we have $a_i' = a_{i+1}$ and $b_i'=b_{i+1}$ for $i \in [l-1]$ 
while $a_l' = a_1 - \Delta  + n$ and $b_l' = a_1+\Delta+n$, so
$g_R(z) t_{ij} = \lW a_l'-n,b_l'-n,a_2',b_2',\dots,a'_{l-1},b'_{l-1}\rW ^{-1} = \lW a_1',b_1',\dots,a_l',b_l'\rW ^{-1}.$
Since $g_R(z') = g_R(z) t_{ij}$
by Lemma~\ref{main-lem},
the desired formula for $g_R(z)$ holds for all $z \in \tI_n$ by induction.
\end{proof}

\section{Demazure conjugation}\label{dem-sect}

Recall the definition of the {Demazure product} $\circ : \tS_n \times \tS_n \to \tS_n$ from the introduction.
The operation $(z,w) \mapsto w^{-1}\circ z \circ w$ for $z \in \tI_n$ and $w \in \tS_n$
defines another right action of the monoid $(\tS_n, \circ)$, now on  $\tI_n$.
We refer to this action as \emph{Demazure conjugation}.
If $z \in \tI_n$ and $i \in \ZZ$ then 
\be\label{dem-eq} s_i \circ z \circ s_i = \begin{cases} s_izs_i &\text{if }z(i)<z(i+1)\text{ and } zs_i \neq s_i z \\
zs_i &\text{if }z(i)<z(i+1) \text{ and }zs_i = s_i z \\
z &\text{if } z(i) > z(i+1)
\end{cases}
\ee
by \cite[Corollary 2.2]{HMP2}. If $z \in \tI_n$ and $z(i) < z(i+1)$, then $zs_i = s_i z$ if and only if $i$ and $i+1$ are both fixed points of $z$.
It follows by induction from \eqref{dem-eq} that the orbit of $1$ 
under Demazure conjugation is all of $\tI_n$, so
every $z \in \tI_n$ can be expressed as
$z = w^{-1}\circ w $ for some $w \in \tS_n$.

\begin{definition}\label{ca-def}
For $z \in \tI_n$ let
 $\cA(z)$ be the set of shortest elements $w \in \tS_n$ with $z=w^{-1}\circ w$.
 \end{definition}
 
 \begin{example}
 If $z = t_{0,5} = \lW -4,2,3,9\rW  \in \tI_4$ then $\cA(z) = \{ s_1s_2s_3s_4, s_2s_1s_3s_4, s_3s_2s_1s_4\}$.
 \end{example}

 The set $\cA(z)$ is nonempty for all $z \in \tI_n$, and we refer to its elements as the \emph{atoms} of $z$.
From Corollary~\ref{counts-cor}, we have  $\ellhat(z) = \frac{1}{2}(\ell(z)  + \ell'(z))$.
By \cite[Proposition 2.6]{MW} or \eqref{dem-eq}, we obtain the following:

 \begin{proposition}\label{ellhat-prop}
If $z \in \tI_n$ then $\ellhat(z)$ is the common value of $\ell(w)$ for $w \in \cA(z)$.
 \end{proposition}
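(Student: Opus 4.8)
The statement amounts to showing that $\min\{\ell(w) : w\in\tS_n,\ z=w^{-1}\circ w\}=\ellhat(z)$, since $\cA(z)$ is by definition the set of realizers of minimal length and is known to be nonempty. The plan is to bound this minimum below and above separately, the engine in both directions being the following description of how a single Demazure conjugation moves $\ellhat$: for $z\in\tI_n$ and $i\in\ZZ$,
\[
\ellhat(s_i\circ z\circ s_i)=\begin{cases}\ellhat(z) & \text{if }z(i)>z(i+1),\\[2pt]\ellhat(z)+1 & \text{if }z(i)<z(i+1).\end{cases}
\]
When $z(i)>z(i+1)$ this is immediate from \eqref{dem-eq}. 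When $z(i)<z(i+1)$ and $i,i+1$ are both fixed points, \eqref{dem-eq} gives $s_i\circ z\circ s_i=zs_i$, where $\ell(zs_i)=\ell(z)+1$ (as $s_i\notin\DesR(z)$ by Corollary~\ref{des-ell-cor}) and $\ell'(zs_i)=\ell'(z)+1$ by Lemma~\ref{ell'lem}, so $\ellhat$ rises by one. Otherwise $z$ does not commute with $s_i$ — the only $z$-stable pair $\{i,i+1\}$ excluded by $z(i)<z(i+1)$ is the $2$-cycle case $z(i)=i+1$ — so $s_i\circ z\circ s_i=s_izs_i$; here $(i,i+1)\notin\cC(z)$ gives $\ell'(s_izs_i)=\ell'(z)$ by Lemma~\ref{triv-lem}, and $\ell(s_izs_i)=\ell(z)+2$ because $\ell(zs_i)=\ell(z)+1$ while $s_i\in\DesL(zs_i)$ would force $s_i(z(i))>s_i(z(i+1))$ despite $z(i)<z(i+1)$; since $s_i$ reverses the relative order of two integers only when they form a pair $\{i+mn,i+1+mn\}$, and $n$-periodicity together with $z^2=1$ then forces $m=0$, this would make both $i,i+1$ fixed, contrary to assumption. (Lemma~\ref{+2lem} gives the same length jump in the subcase where neither $i$ nor $i+1$ is fixed.)

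For the lower bound, fix $w$ with $z=w^{-1}\circ w=1\cdot w$, where $\cdot$ denotes the Demazure conjugation action of $(\tS_n,\circ)$, and take a reduced expression $w=s_{i_1}s_{i_2}\cdots s_{i_m}$ with $m=\ell(w)$. Since reduced products agree with $\circ$-products, $w=s_{i_1}\circ\cdots\circ s_{i_m}$, and the right-action property gives $z=(\cdots((1\cdot s_{i_1})\cdot s_{i_2})\cdots)\cdot s_{i_m}$. Writing $z_t$ for the partial value (so $z_0=1$, $z_t=s_{i_t}\circ z_{t-1}\circ s_{i_t}$, $z_m=z$), the displayed formula yields $\ellhat(z_t)\le\ellhat(z_{t-1})+1$, hence $\ellhat(z)\le m=\ell(w)$; thus every realizer of $z$ has length at least $\ellhat(z)$.

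For the upper bound I induct on $\ellhat(z)$, using the lower bound. If $\ellhat(z)=0$ then $\ell(z)=\ell'(z)=0$, so $z=1$ and $w=1$ works. If $\ellhat(z)=k>0$ then $z$ has a $2$-cycle, and I claim there are $i$ and $z'\in\tI_n$ with $z=s_i\circ z'\circ s_i$ and $\ellhat(z')=k-1$. If $(i,i+1)\in\cC(z)$ for some $i$, take $z'=zs_i$: then $i,i+1$ are fixed points of $z'$, so $s_i\circ z'\circ s_i=z's_i=z$ by \eqref{dem-eq}, while $\ell(z')=\ell(z)-1$ and $\ell'(z')=\ell'(z)-1$ by Lemma~\ref{ell'lem}, giving $\ellhat(z')=k-1$. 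Otherwise $z$ has no $2$-cycle on consecutive integers; since the set of right endpoints of $z$ is a nonempty, proper, $n$-periodic subset of $\ZZ$, pick a right endpoint $q$ with $q-1$ not a right endpoint. Then $z(q)\le q-2$ and $z(q-1)\ge q-1$, so $z(q)<z(q-1)$, and $\{q-1,q\}$ is not $z$-stable ($q$ is not fixed and $(q-1,q)\notin\cC(z)$); set $z'=s_{q-1}zs_{q-1}$. As in the key formula's proof, $s_{q-1}$ cannot reverse the order of $z(q)<z(q-1)$ without making $(q-1,q)$ a $2$-cycle of $z$, so $z'(q-1)<z'(q)$, and $z'$ does not commute with $s_{q-1}$; hence $s_{q-1}\circ z'\circ s_{q-1}=s_{q-1}z's_{q-1}=z$, and applying the displayed formula to $z'$ gives $\ellhat(z)=\ellhat(z')+1$. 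In either case the induction hypothesis supplies $w'\in\cA(z')$ with $\ell(w')=k-1$, and then $w:=w'\circ s_i$ (respectively $w'\circ s_{q-1}$) satisfies $z=1\cdot w=w^{-1}\circ w$ and $\ell(w)\le\ell(w')+1=k$; combined with the lower bound, the minimal realizer length of $z$ is exactly $k=\ellhat(z)$.

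The main obstacle is the displayed formula, and within it the length computation: showing that whenever Demazure conjugation by $s_i$ genuinely changes $z$, the Coxeter length goes up by exactly $2$ while the absolute length is unchanged. The absolute-length half is handled cleanly by Lemmas~\ref{ell'lem} and \ref{triv-lem}, but the length half rests essentially on the remark that a simple transposition reverses the relative order of two integers only for a pair of the form $\{i+mn,i+1+mn\}$, after which periodicity of elements of $\tI_n$ forces that pair to be fixed; once this is in hand, the rest is bookkeeping with \eqref{dem-eq} and the right-action property.
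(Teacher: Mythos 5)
Your proof is correct, and it follows exactly the route the paper indicates: the paper proves this proposition by simply citing \cite[Proposition 2.6]{MW} ``or \eqref{dem-eq}'', and your argument is precisely the \eqref{dem-eq} route carried out in full — showing that each nontrivial Demazure conjugation step raises $\ellhat$ by exactly one, then running the resulting lower bound against an inductive construction of a realizer of length $\ellhat(z)$. The details you supply (in particular the observation that $s_i$ reverses the order of a pair only when it is $\{i+mn,i+1+mn\}$, which periodicity and $z^2=1$ then force to be a fixed pair) are sound and fill in what the paper delegates to the citation.
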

 

Results in \cite{HMP2}, building on work of Can, Joyce, and Wyser \cite{CJ,CJW}, show that the set of atoms of an involution in any finite symmetric group naturally has the structure of a bounded, graded poset.
Here, we prove that this phenomenon generalises to involutions in the infinite group $\tS_n$.

Let $\lessdot_\cA$ be the relation on $\tS_n$ with 
 $u \lessdot_\cA v$ if and only if $u < s_{i+1}u = s_{i}v > v$ for some $i \in \ZZ$.
This relation is empty when $n \in \{1,2\}$.
Given $w \in \tS_n$, write $w[j:k]$ for the sequence of values $w(j)w(j+1)\cdots w(k)$. 
We note two additional characterisations of $\lessdot_\cA$:

\begin{lemma}\label{231-lem}
Let $u,v \in \tS_n$. Then $u\lessdot_\cA v$ if and only if for some $a,b,c,i \in \ZZ$ with $a<b<c$
we have  $u^{-1}[i:i+2]=cab$, $v^{-1}[i:i+2]= bca$,
and $u^{-1}(j) = v^{-1}(j)$ for all  $j  \notin \{i,i+1,i+2\} + n\ZZ$.
\end{lemma}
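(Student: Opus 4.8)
Here is a plan for proving Lemma~\ref{231-lem}.

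The approach is to rephrase the left-multiplicative definition of $\lessdot_\cA$ in terms of the one-line notation of inverses. The key elementary fact is that for $g \in \tS_n$ and $j \in \ZZ$, the element $g^{-1}s_j$ is obtained from $g^{-1}$ by transposing the entries in positions $j$ and $j+1$ (and, by $n$-periodicity, the entries in every pair of positions congruent to $j$ and $j+1$ modulo $n$), leaving all other entries fixed; equivalently, $g \mapsto s_j g$ transposes positions $j$ and $j+1$ in $g^{-1}$. Combined with Corollary~\ref{des-ell-cor} (applied to $g^{-1}$, using $\DesL(g)=\DesR(g^{-1})$), this turns the whole statement into bookkeeping about three consecutive entries.

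For the forward implication, suppose $u \lessdot_\cA v$ and fix $i$ with $u < s_{i+1}u = s_i v > v$; set $w = s_{i+1}u = s_i v \in \tS_n$. Then $u^{-1} = w^{-1}s_{i+1}$ and $v^{-1} = w^{-1}s_i$, so $u^{-1}$ is $w^{-1}$ with positions $i+1,i+2$ swapped and $v^{-1}$ is $w^{-1}$ with positions $i,i+1$ swapped; in particular $u^{-1}(j)=v^{-1}(j)=w^{-1}(j)$ for $j\notin\{i,i+1,i+2\}+n\ZZ$. The hypotheses $\ell(w)=\ell(u)+1=\ell(v)+1$ mean $s_i,s_{i+1}\in\DesL(w)=\DesR(w^{-1})$, so by Corollary~\ref{des-ell-cor} we get $w^{-1}(i) > w^{-1}(i+1) > w^{-1}(i+2)$. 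Writing $c=w^{-1}(i)$, $b=w^{-1}(i+1)$, $a=w^{-1}(i+2)$ (so $a<b<c$), the two transpositions give $u^{-1}[i:i+2]=cab$ and $v^{-1}[i:i+2]=bca$, which is exactly the claimed condition. Conversely, assume $a<b<c$ and $i$ with $u^{-1}[i:i+2]=cab$, $v^{-1}[i:i+2]=bca$, and $u^{-1}(j)=v^{-1}(j)$ for $j\notin\{i,i+1,i+2\}+n\ZZ$, and put $w = s_{i+1}u\in\tS_n$. Then $w^{-1}=u^{-1}s_{i+1}$ has $w^{-1}[i:i+2]=cba$ and agrees with $u^{-1}$, hence with $v^{-1}$, off $\{i,i+1,i+2\}+n\ZZ$. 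Transposing positions $i,i+1$ turns $cba$ into $bca$ and changes nothing outside those windows, so $w^{-1}s_i=v^{-1}$, i.e.\ $s_i w = v$. Since $w^{-1}(i)=c>b=w^{-1}(i+1)>a=w^{-1}(i+2)$, both $s_i,s_{i+1}\in\DesL(w)$, so $\ell(u)=\ell(s_{i+1}w)=\ell(w)-1$ and $\ell(v)=\ell(s_iw)=\ell(w)-1$; hence $u<s_{i+1}u=w=s_iv>v$, i.e.\ $u\lessdot_\cA v$.

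There is no real obstacle once the translation to inverse one-line notation is in place; the only points needing a moment's care are that in the converse step $w=s_{i+1}u$ automatically lies in $\tS_n$ (so no separate check of $n$-periodicity or of the normalization $\sum_{j=1}^n w(j)=\binom{n+1}{2}$ is required), and the degenerate ranks: when $n\in\{1,2\}$ the positions $i$ and $i+2$ are congruent modulo $n$, forcing $w^{-1}(i+2)=w^{-1}(i)+2>w^{-1}(i)$ and so making the strict chain $w^{-1}(i)>w^{-1}(i+1)>w^{-1}(i+2)$ impossible, which recovers the already-noted emptiness of $\lessdot_\cA$ in those cases.
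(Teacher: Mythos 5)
Your proof is correct and is exactly the ``simple exercise from Corollary~\ref{des-ell-cor}'' that the paper leaves to the reader: translating left multiplication by $s_i$ into a swap of positions $i,i+1$ in the inverse one-line notation and reading off descents of $w=s_{i+1}u=s_iv$ via that corollary. The extra remarks on periodicity and the degenerate cases $n\in\{1,2\}$ are accurate and welcome, but the route is the same as the paper's intended one.
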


\begin{proof}
This follows as a simple exercise from Corollary~\ref{des-ell-cor}.
\end{proof}

\begin{lemma}\label{gp-lem}
Let $u,v \in \tS_n$. Then $u\lessdot_\cA v$ if and only if  
for some $i \in \ZZ$ and $w \in \tS_n$ 
 it holds that
$u=s_{i}s_{i+1}w$, $v= s_{i+1}s_i w$, and $\ell(w) +2= \ell(u)=\ell(v)$.
\end{lemma}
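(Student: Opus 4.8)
The plan is to reduce everything to the structure of the rank-two standard parabolic subgroup $W_J = \langle s_i, s_{i+1}\rangle$. We may assume $n \geq 3$, since for $n \in \{1,2\}$ the relation $\lessdot_\cA$ is empty; then $W_J \cong S_3$, with longest element $w_J = s_i s_{i+1} s_i = s_{i+1} s_i s_{i+1}$ of length $3$. The two standard Coxeter-theoretic inputs I will use are: (i) a product $s_{j_1}\cdots s_{j_m} w$ has length exactly $m + \ell(w)$ if and only if every left factor $s_{j_t}\cdots s_{j_m} w$ exceeds $s_{j_{t+1}}\cdots s_{j_m}w$ in length; and (ii) if $J \subseteq \DesL(x)$ for a finite standard parabolic $W_J$, then $x = w_J\, w$ for the minimal-length representative $w$ of the coset $W_J x$, with $\ell(x) = \ell(w_J) + \ell(w)$, so that $\ell(yw) = \ell(y) + \ell(w)$ for every $y \in W_J$. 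Both follow from the subword property together with the characterisation of $w_J$ as the unique element of $W_J$ whose left descent set contains every simple reflection of $W_J$.

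For the forward direction, suppose $u \lessdot_\cA v$, witnessed by $i \in \ZZ$ with $u < s_{i+1} u = s_i v > v$, and set $x = s_{i+1} u = s_i v$. Then $\ell(x) = \ell(u)+1 = \ell(v)+1$, and both $s_{i+1} x = u$ and $s_i x = v$ are length-decreasing, so $\{s_i, s_{i+1}\} \subseteq \DesL(x)$. Applying (ii) with $J = \{s_i, s_{i+1}\}$ gives $x = s_i s_{i+1} s_i\, w$ with $\ell(x) = 3 + \ell(w)$ and $\ell(yw) = \ell(y) + \ell(w)$ for all $y \in W_J$. Using the braid relation, $u = s_{i+1} x = (s_{i+1} s_i s_{i+1}) w = s_i s_{i+1} w$ and $v = s_i x = (s_i s_{i+1} s_i) w = s_{i+1} s_i w$, while $\ell(u) = \ell(v) = 2 + \ell(w) = \ell(w) + 2$, which is exactly the asserted form with this $i$ and $w$.

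For the converse, suppose $u = s_i s_{i+1} w$, $v = s_{i+1} s_i w$, and $\ell(w)+2 = \ell(u) = \ell(v)$. Input (i) applied to $u$ forces $\ell(s_{i+1} w) = \ell(w)+1$, hence $s_{i+1} \notin \DesL(w)$; applied to $v$ it forces $s_i \notin \DesL(w)$. Thus $w$ is the minimal-length representative of $W_J w$, so $\ell(yw) = \ell(y) + \ell(w)$ for all $y \in W_J$; in particular, setting $x = s_i s_{i+1} s_i w$, we get $\ell(x) = 3 + \ell(w) = \ell(u)+1 = \ell(v)+1$. By the braid relation, $s_{i+1} u = (s_{i+1} s_i s_{i+1}) w = x$ and $s_i v = (s_i s_{i+1} s_i) w = x$, so $u < s_{i+1} u = s_i v > v$, i.e.\ $u \lessdot_\cA v$.

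The only genuinely delicate point is the bookkeeping around the parabolic coset decomposition: verifying that $s_i$ and $s_{i+1}$ both being left descents of $x$ forces the $W_J$-part of $x$ to be the full longest element $w_J$ (so that precisely a length-$3$ factor is stripped off), and dually that neither being a left descent of $w$ places $w$ at the bottom of its $W_J$-coset. These are the standard facts ``$\DesL(x) \cap J = \DesL(x_J)$'' and ``$w_J$ is the unique element of $W_J$ with full left descent set'', and once they are invoked the remainder is just the braid relation $s_i s_{i+1} s_i = s_{i+1} s_i s_{i+1}$ together with length additivity. I therefore do not anticipate any serious obstacle beyond stating these inputs cleanly.
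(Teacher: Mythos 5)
Your proof is correct, but it takes a genuinely different route from the paper's. The paper deduces Lemma~\ref{gp-lem} from the preceding Lemma~\ref{231-lem}, i.e.\ from the one-line (string) characterisation of $u\lessdot_\cA v$ via the local pattern $cab\to bca$, together with Lemma~\ref{bruhat0-lem}; that argument lives inside the concrete permutation model of $\tS_n$. You instead argue abstractly through the rank-two standard parabolic $W_J=\langle s_i,s_{i+1}\rangle\cong S_3$: since $x=s_{i+1}u=s_iv$ has both $s_i$ and $s_{i+1}$ as left descents, its $W_J$-component in the coset decomposition must be the longest element $s_is_{i+1}s_i$, and the braid relation then yields $u=s_is_{i+1}w$ and $v=s_{i+1}s_iw$ with the stated lengths; the converse runs the same computation backwards from $\DesL(w)\cap\{s_i,s_{i+1}\}=\varnothing$. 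Both arguments are sound, and the standard parabolic facts you invoke ($\DesL(x)\cap J=\DesL(x_J)$, uniqueness of the longest element of a finite $W_J$) are exactly what is needed. What your route buys is generality: it uses nothing about affine permutations and works verbatim at any pair of simple generators with $m(s,s')=3$ in an arbitrary Coxeter system, which is in the spirit of the paper's conjecture for general $(W,S)$. What the paper's route buys is economy: Lemma~\ref{231-lem} is needed anyway for the later combinatorial arguments (e.g.\ Theorem~\ref{poset-thm}), so deriving Lemma~\ref{gp-lem} from it is essentially free. One caveat, which concerns the lemma as stated rather than your argument: for $n=2$ the subgroup $\langle s_1,s_2\rangle$ is infinite dihedral and the ``if'' direction of the equivalence actually fails (take $w=1$, $u=s_1s_2$, $v=s_2s_1$), so the statement must be read with $n\geq 3$, consistent with the paper's remark that $\lessdot_\cA$ is empty for $n\in\{1,2\}$; your opening reduction ``we may assume $n\geq 3$'' disposes of the forward direction for small $n$ but tacitly relies on this reading for the converse.
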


\begin{proof}
This is easy to derive from the preceding lemma given Lemma~\ref{bruhat0-lem}. 
\end{proof}

\begin{corollary}\label{equiv-cor}
Let $z \in \tI_n$ and $u,v \in \tS_n$ with $u \lessdot_\cA v$. Then $u \in \cA(z)$ if and only if $v \in \cA(z)$.
\end{corollary}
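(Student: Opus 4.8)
By Lemma~\ref{gp-lem} there are $i \in \ZZ$ and $w \in \tS_n$ with
$u = s_i s_{i+1} w$, $v = s_{i+1} s_i w$, and $\ell(w) + 2 = \ell(u) = \ell(v)$.
Because these factorisations are length-additive, they are also factorisations in the Demazure monoid, i.e.
$u = s_i \circ s_{i+1} \circ w$ and $v = s_{i+1} \circ s_i \circ w$.
Recall that $\cA(z)$ is the set of minimal-length $x \in \tS_n$ with $x^{-1} \circ x = z$. Since $\ell(u) = \ell(v)$, the corollary will follow immediately once we establish the single identity $u^{-1} \circ u = v^{-1} \circ v$: if this common element is $z'$, then for $z' \neq z$ neither $u$ nor $v$ lies in any $\cA(z)$, while for $z' = z$ minimality of $\ell(u)$ among preimages of $z$ holds iff minimality of $\ell(v)$ does. (One may equivalently phrase the conclusion through Proposition~\ref{ellhat-prop}: $u \in \cA(z)$ iff $u^{-1}\circ u = z$ and $\ell(u) = \ellhat(z)$.)

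For the identity, write the Demazure-conjugation action as $z \cdot x := x^{-1} \circ z \circ x$, a right action of $(\tS_n,\circ)$ on $\tI_n$, so that $x^{-1}\circ x = 1 \cdot x$. Applying the action to the factorisations above gives
\[
u^{-1}\circ u = 1 \cdot u = \bigl((1\cdot s_i)\cdot s_{i+1}\bigr)\cdot w,
\qquad
v^{-1}\circ v = 1 \cdot v = \bigl((1\cdot s_{i+1})\cdot s_i\bigr)\cdot w .
\]
Now I compute the bracketed elements using \eqref{dem-eq}. First $1 \cdot s_i = s_i \circ 1 \circ s_i = s_i$, since $i$ and $i+1$ are fixed points of the identity (the second branch of \eqref{dem-eq}); likewise $1 \cdot s_{i+1} = s_{i+1}$. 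Next, $(1\cdot s_i)\cdot s_{i+1} = s_{i+1}\circ s_i \circ s_{i+1}$; here $s_i(i+1) = i < i+2 = s_i(i+2)$ but $i+1$ is not a fixed point of $s_i$, so the first branch of \eqref{dem-eq} applies and this equals $s_{i+1}s_i s_{i+1}$. Symmetrically $(1\cdot s_{i+1})\cdot s_i = s_i s_{i+1} s_i$. By the braid relation $s_i s_{i+1} s_i = s_{i+1} s_i s_{i+1}$ these agree, hence $u^{-1}\circ u = v^{-1}\circ v$, which completes the argument.

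\textbf{Main obstacle.} There is essentially none: the only points needing care are (i) that the reduced factorisations supplied by Lemma~\ref{gp-lem} are genuine Demazure products (immediate from length-additivity) and (ii) correctly reading off the three branches of \eqref{dem-eq} in the small computations above. All the real content is the braid relation $s_i s_{i+1} s_i = s_{i+1} s_i s_{i+1}$, which is exactly what renders the relation $\lessdot_\cA$ invisible to Demazure conjugation.
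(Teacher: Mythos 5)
Your proof is correct and follows essentially the same route as the paper: both reduce to Lemma~\ref{gp-lem} and the single Demazure-monoid identity $s_{i+1}s_i \circ s_i s_{i+1} = s_{i+1}s_is_{i+1} = s_is_{i+1}s_i = s_is_{i+1}\circ s_{i+1}s_i$, which you verify via \eqref{dem-eq} and the paper verifies via associativity and idempotence of $\circ$. The extra framing through the right action $z\cdot x = x^{-1}\circ z\circ x$ is harmless and the bookkeeping about lengths and minimality is handled correctly.
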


\begin{proof}
This follows from Lemma~\ref{gp-lem} since 
$\circ$ is associative and $s_{i+1}s_i \circ s_is_{i+1} = 
s_is_{i+1}\circ s_{i+1}s_i$.
\end{proof}

Recall  the maps $\lambda_R,\lambda_L : \tI_n \to \cW_n$ and
the elements $\alpha_R(z), \alpha_L(z)$
from Definitions~\ref{omega-thm}
and \ref{alpha-def}.

\begin{definition}\label{alpha-min-def}
Given $z \in \tI_n$, write $ (w_R,\phi_R) = \lambda_R(z) $
and $(w_L,\phi_L) = \lambda_L(z) $ and
define
\[
\alpha_{\min}(z) = w_R \cdot \alpha_R(z)= \alpha_R(z) \cdot z
\qquand
\alpha_{\max}(z) = w_L \cdot \alpha_L(z) = \alpha_L(z)\cdot z.
\]
\end{definition}

\begin{proposition}\label{min-prop}
For each $z \in \tI_n$ both $\alpha_{\min}(z)$ and $\alpha_{\max}(z)$ belong to $\cA(z)$.
\end{proposition}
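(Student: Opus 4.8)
The plan is to show that $\alpha_{\min}(z)$ (and then $\alpha_{\max}(z)$) satisfies the two conditions defining an atom of $z$: that $\alpha_{\min}(z)^{-1}\circ\alpha_{\min}(z) = z$ and that $\ell(\alpha_{\min}(z)) = \ellhat(z)$. Since $\cA(z) \neq \varnothing$ and, by Proposition~\ref{ellhat-prop}, every element of $\cA(z)$ has length $\ellhat(z)$, the minimum of $\ell(w)$ over all $w \in \tS_n$ with $z = w^{-1}\circ w$ is exactly $\ellhat(z)$; so any $w$ realizing this equation with $\ell(w) = \ellhat(z)$ must lie in $\cA(z)$. The case of $\alpha_{\max}(z)$ will be reduced to that of $\alpha_{\min}$ by transporting along the automorphism $w \mapsto w^*$.

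First I would collect the needed identities. Put $\theta = \lambda_R(z) \in \cWmin_n$, let $w_R$ be its involution part, and set $h = \alpha_R(z) = g_R(\theta)$. By Theorem-Definition~\ref{omega-thm} we have $\ell(h) = \wt(\theta)$ and $\omega_R(\theta) = \theta\pi_h = h^{-1}w_R h$, while Theorem~\ref{main-thm} says $\omega_R$ and $\lambda_R$ are mutually inverse, so $h^{-1}w_R h = \omega_R(\lambda_R(z)) = z$. As $\omega_R$ preserves $\ell$ and $\ell'$ and $w_R$ is a product of commuting simple reflections (being the involution part of an element of $\cWmin_n$), we get $\ell(w_R) = \ell'(w_R) = \ell'(z)$ and
\[ \ell(z) = \ell(\theta) = \ell(w_R) + 2\wt(\theta) = \ell'(z) + 2\ell(h), \]
so $\ell(h) = \tfrac12(\ell(z) - \ell'(z))$ and $\ell(w_R) + \ell(h) = \ell'(z) + \tfrac12(\ell(z) - \ell'(z)) = \ellhat(z)$.

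The core step is a short Coxeter-group computation converting the Demazure products into ordinary products. From $z = h^{-1}w_R h$ (with $w_R^{-1} = w_R$) together with $\ell(z) = \ell(w_R) + 2\ell(h)$, subadditivity of $\ell$ gives $\ell(z) \le \ell(h^{-1}) + \ell(w_R h) \le \ell(h^{-1}) + \ell(w_R) + \ell(h) = \ell(z)$, so equality holds throughout; in particular $\ell(w_R h) = \ell(w_R) + \ell(h)$, and dually $\ell(h^{-1}w_R) = \ell((w_R h)^{-1}) = \ell(h^{-1}) + \ell(w_R)$. Using that $x \circ y = xy$ whenever $\ell(xy) = \ell(x) + \ell(y)$, that $w_R \circ w_R = w_R$ for a product of commuting simple reflections, and the associativity of $\circ$, it follows that $\alpha_{\min}(z) = w_R h = w_R \circ h$ has length $\ell(w_R) + \ell(h) = \ellhat(z)$, and
\[ \alpha_{\min}(z)^{-1}\circ\alpha_{\min}(z) = (h^{-1}w_R)\circ(w_R h) = h^{-1}\circ w_R\circ w_R\circ h = h^{-1}\circ w_R\circ h = h^{-1}w_R h = z, \]
where the last equality uses $\ell\bigl((h^{-1}w_R)h\bigr) = \ell(h^{-1}w_R) + \ell(h) = \ell(z)$. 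Hence $\alpha_{\min}(z) \in \cA(z)$ by the first paragraph.

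For $\alpha_{\max}(z)$, write $\lambda_R(z^*) = (w'_R,\phi')$. Lemma~\ref{*lambda-lem} gives $\lambda_L(z) = \lambda_R(z^*)^* = ((w'_R)^*,(\phi')^*)$, so the involution part of $\lambda_L(z)$ is $(w'_R)^*$, and Lemma~\ref{*omega-lem} gives $\alpha_L(z) = g_L(\lambda_L(z)) = g_L(\lambda_R(z^*)^*) = g_R(\lambda_R(z^*))^* = \alpha_R(z^*)^*$. Therefore, by Definition~\ref{alpha-min-def} applied to $z^*$,
\[ \alpha_{\max}(z) = (w'_R)^* \cdot \alpha_R(z^*)^* = \bigl(w'_R \cdot \alpha_R(z^*)\bigr)^* = \alpha_{\min}(z^*)^*. \]
Since $w \mapsto w^*$ is a length-preserving automorphism of $(\tS_n, \circ)$, one has $w \in \cA(y)$ if and only if $w^* \in \cA(y^*)$; taking $y = z^*$ and recalling $\alpha_{\min}(z^*) \in \cA(z^*)$ yields $\alpha_{\max}(z) \in \cA(z)$. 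I expect the only delicate point to be the bookkeeping that allows one to replace $\circ$ by ordinary multiplication; this hinges entirely on the identity $\ell(z) = \ell(w_R) + 2\ell(h)$, that is, on $\omega_R$ being length-preserving, and once that identity is in hand the remaining manipulations are formal.
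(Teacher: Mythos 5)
Your argument is correct and follows essentially the same route as the paper: establish $z = h^{-1}w_Rh$ with $\ell(z)=\ell(w_R)+2\ell(h)$ via Theorem-Definition~\ref{omega-thm} and Corollary~\ref{limit-cor}, use that $w_R$ is a product of commuting simple reflections to get $w_R = w_R^{-1}\circ w_R$, and then convert ordinary products to Demazure products using additivity of lengths. The only cosmetic difference is that you handle $\alpha_{\max}$ by transporting $\alpha_{\min}$ along the $*$-automorphism, where the paper simply invokes a symmetric argument; your explicit justification of $\ell(w_Rh)=\ell(w_R)+\ell(h)$ by subadditivity is a welcome detail the paper leaves implicit.
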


\begin{proof}
Let $g = \alpha_R(z)$ and $w=w_R$.
By Theorem-Definition~\ref{omega-thm}(b) and Corollary~\ref{limit-cor}
we have $\ell(z) =\ell\(g^{-1}  wg\) = \ell(w) + 2\ell(g)$,
so $z = g^{-1} w g = g^{-1} \circ w \circ g$. Since $\ell(w) =\ell'(w)$,  Lemma~\ref{commuting-product-lem}
implies
that $w=w^{-1}\circ w$,  and therefore $z = g^{-1} \circ w^{-1} \circ w \circ g = (wg)^{-1} \circ wg$.
As $\ell(wg) = \ell(w) + \ell(g) = \ellhat(z)$,
we conclude that $wg \in \cA(z)$. The argument that $w_L \cdot \alpha_L(z) \in \cA(z)$ is similar.
\end{proof}

The atoms $\alpha_{\min}(z)$ and $\alpha_{\max}(z)$ have this convenient formula:

\begin{corollary}\label{min-cor}
Let $z \in \tI_n$, $m \in \ZZ$, and $l = n-\ell'(z)$. 
Define $a_i,b_i,c_i,d_i \in \ZZ$ for $i \in [l]$
as in Theorem~\ref{alphaL-thm}. Then
$ \alpha_{\min}(z) = \lW  b_1,a_1,b_2,a_2,\dots,b_l,a_l\rW ^{-1}
$
and
$
\alpha_{\max}(z) = \lW d_1,c_1,d_2,c_2,\dots,d_l,c_l\rW ^{-1}.$
\end{corollary}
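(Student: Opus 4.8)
The plan is to deduce this directly from Theorem~\ref{alphaL-thm} by translating the bracket formulas for $\alpha_R(z)$ and $\alpha_L(z)$ through $z$. First I would recall from Definition~\ref{alpha-min-def} that $\alpha_{\min}(z) = \alpha_R(z)\cdot z$ and $\alpha_{\max}(z) = \alpha_L(z)\cdot z$; since $z$ is an involution, taking inverses gives
\[\alpha_{\min}(z)^{-1} = z\cdot \alpha_R(z)^{-1} \qquand \alpha_{\max}(z)^{-1} = z \cdot \alpha_L(z)^{-1},\]
and by Theorem~\ref{alphaL-thm} we have $\alpha_R(z)^{-1} = \lW a_1,b_1,a_2,b_2,\dots,a_l,b_l\rW$ and $\alpha_L(z)^{-1} = \lW c_1,d_1,c_2,d_2,\dots,c_l,d_l\rW$ with $l = n-\ell'(z)$.

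The one auxiliary fact I would need is that the bracket notation is compatible with left multiplication: if $g_1,g_2,\dots,g_N \in \ZZ$ represent every congruence class modulo $n$ and $y \in \tS_n$, then $y\cdot \lW g_1,g_2,\dots,g_N\rW = \lW y(g_1),y(g_2),\dots,y(g_N)\rW$. This follows at once from Proposition-Definition~\ref{window-def}: if $i_1<\dots<i_n$ are the indices of the first representatives of the distinct residue classes among $g_1,\dots,g_N$, then $\lW g_1,\dots,g_N\rW(m+j) = g_{i_j}$ for the appropriate $m$ and all $j\in[n]$, so $\bigl(y\cdot\lW g_1,\dots,g_N\rW\bigr)(m+j) = y(g_{i_j})$; and since $y(a)\equiv y(b)\modu n)$ if and only if $a\equiv b \modu n)$, the sequence $y(g_1),\dots,y(g_N)$ has its first representatives at exactly the same indices $i_1<\dots<i_n$, which is precisely the assertion.

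Applying this with $y=z$ then finishes the argument. Because $b_i = z(a_i)$ and $z^2 = 1$ we have $z(a_i)=b_i$ and $z(b_i)=a_i$, so
\[\alpha_{\min}(z)^{-1} = z\cdot\lW a_1,b_1,\dots,a_l,b_l\rW = \lW z(a_1),z(b_1),\dots,z(a_l),z(b_l)\rW = \lW b_1,a_1,b_2,a_2,\dots,b_l,a_l\rW,\]
and inverting yields the claimed formula for $\alpha_{\min}(z)$; the computation for $\alpha_{\max}(z)$ is identical, using $c_i = z(d_i)$. I do not anticipate a genuine obstacle here --- the only point that needs a moment of care is the bookkeeping when $z$ has a fixed point $a_i = b_i$, so that one residue class occurs twice among $a_1,b_1,\dots,a_l,b_l$; but $z$ sends such a coinciding pair to a coinciding pair, so the first-occurrence pattern is undisturbed and the equivariance fact applies verbatim.
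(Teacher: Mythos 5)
Your proposal is correct and follows essentially the same route as the paper: the paper's proof likewise writes $\alpha_{\min}(z)^{-1}=z\cdot\lW a_1,b_1,\dots,a_l,b_l\rW$ and asserts $z\lW a_1,b_1,\dots,a_l,b_l\rW = \lW b_1,a_1,\dots,b_l,a_l\rW$ ``by construction,'' which is exactly the equivariance fact $y\cdot\lW g_1,\dots,g_N\rW=\lW y(g_1),\dots,y(g_N)\rW$ that you verify. Your explicit check that $y$ preserves the first-occurrence indices of the residue classes (including at fixed points $a_i=b_i$) is a welcome elaboration of the step the paper leaves implicit.
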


\begin{proof}
 As $z\lW  a_1,b_1,\dots,a_l,b_l\rW  = \lW  b_1,a_1,\dots,b_l,a_l\rW $
and $z\lW  c_1,d_1,\dots,c_l,d_l\rW  = \lW  d_1,c_1,\dots,d_l,c_l\rW $
by construction,
the result follows from  Theorem~\ref{alphaL-thm}.
\end{proof}

\begin{example}\label{alpha-min-ex}
If $z = t_{1,8}t_{2,7} \in \tI_4$ then 
$\alpha_{\min}(z) = \lW 8,1,7,2\rW ^{-1} =\lW 4,6,1,-1\rW  $ and $ \alpha_{\max}(z) = \lW 3,-2,4,-3\rW ^{-1}=\lW 6,4,-1,1\rW .$
\end{example}

\begin{lemma}\label{321-lem}
Let $z \in \tI_n$ and $w \in \cA(z)$.
Then no $i \in \ZZ$  has 
$w^{-1}[i:i+2] = cba$ where $a<b<c$.
\end{lemma}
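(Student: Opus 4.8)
The plan is to argue by contradiction using the fact that the minimal length of an atom is $\ellhat(z) = \tfrac12(\ell(z)+\ell'(z))$, together with the characterization of atoms via Demazure conjugation in \eqref{dem-eq}. Suppose $w \in \cA(z)$ and $w^{-1}[i:i+2] = cba$ with $a < b < c$ for some $i \in \ZZ$. Set $u = w$ and let $u'$ be the permutation obtained from $w$ by replacing the pattern $cba$ in positions $i,i+1,i+2$ of $w^{-1}$ by the pattern $bca$ (equivalently, $u'^{-1} = s_i w^{-1}$ as permutations, i.e. $u' = w s_i$ in the sense that we left-multiply the inverse); more carefully, $u'^{-1}$ agrees with $w^{-1}$ outside $\{i,i+1,i+2\}+n\ZZ$ and has $u'^{-1}[i:i+2] = bca$. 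Then by Corollary~\ref{des-ell-cor}, $\ell(u') = \ell(w) - 1$, since we have removed exactly the inversion caused by the adjacent descent $c > b$ at positions $i,i+1$ (reading $w^{-1}$).

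The key step is then to show that $u'$ still lies in the orbit closure of $z$ under Demazure conjugation in the appropriate sense — precisely, that $z = (u')^{-1} \circ u'$ still holds — which would contradict minimality of $\ell(w) = \ellhat(z)$. To see this, I would use Lemma~\ref{gp-lem} and Corollary~\ref{equiv-cor}: the pattern $cba \to bca$ transformation (within positions of $w^{-1}$) is not directly one of the covering moves $\lessdot_\cA$, but I can factor it. Write $w = s_j s_{j+1} v$ or $w = s_{j+1} s_j v$ as needed: the point is that from $w$ with $w^{-1}[i:i+2] = cba$, one reaches $w^{-1}[i:i+2] = bca$ by a single right-multiplication of $w^{-1}$ by $s_i$ — call the result $w' = w s_i$ (abusing notation for the action on inverses). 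Then $w'^{-1}[i:i+2] = bca$. By Lemma~\ref{231-lem}, $w' \lessdot_\cA w''$ would relate $bca$ to the pattern obtained by the further move, but what we actually need is the cruder observation that going from $cba$ to $bca$ decreases length, and that $w'$ reaches $z$ under the Demazure conjugation orbit. I would establish the latter by the identity $s_i \circ z' \circ s_i = z'$ when $z'(i) > z'(i+1)$ from \eqref{dem-eq}: if $w \in \cA(z)$ then $z = w^{-1} \circ w$, and writing $w = w' s_i$ where $\ell(w) = \ell(w') + 1$, we get $z = (w' s_i)^{-1} \circ (w' s_i) = s_i \circ (w'^{-1} \circ w') \circ s_i$. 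Now one shows $(w'^{-1} \circ w')(i) > (w'^{-1} \circ w')(i+1)$ fails in exactly the wrong way unless the original pattern was not $cba$ — this is where the hypothesis $a<b<c$ (so that $cba$ is the \emph{decreasing} pattern, $s_i$ is a right descent of $w^{-1}$ and also, after the move, a right descent creating the clash) gets used. The cleanest route: since $s_i w^{-1} = w'^{-1}$ has length $\ell(w)-1$, and since $s_i \circ z'' \circ s_i = z''$ whenever $s_i \in \DesR((z'')^{-1})$-type conditions hold, we get $z = s_i \circ z'' \circ s_i = z''$ where $z'' = w'^{-1}\circ w'$, giving $w' \in \cA(z)$ with $\ell(w') < \ellhat(z)$, a contradiction.

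The main obstacle I anticipate is pinning down \emph{which} of the three cases of \eqref{dem-eq} applies to $z'' = w'^{-1} \circ w'$ at the index $i$, and verifying that it is indeed the ``$z'(i) > z'(i+1)$, so $s_i \circ z' \circ s_i = z'$'' case rather than one of the two cases where the Demazure conjugation genuinely changes the involution. Resolving this requires translating the one-line pattern condition $w^{-1}[i:i+2] = cba$ into a statement about $z = w^{-1}\circ w$ and the trajectory of the positions under the reduced word for $w$. Concretely, I would track how the values $a < b < c$ sit relative to the cycles of $z$: because $w$ is an atom, the fibers $w^{-1}(\{a\}), w^{-1}(\{b\}), w^{-1}(\{c\})$ correspond to endpoints of cycles of $z$, and the condition that the \emph{decreasing} triple $cba$ appears (as opposed to $bca$, the ``good'' atom pattern from Lemma~\ref{231-lem}) forces a descent in $z''$ at position $i$, which then makes the $s_i$-conjugation trivial. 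Once that case analysis is done, the length contradiction is immediate. The whole argument is short modulo this bookkeeping, and I expect it to occupy no more than a paragraph in final form; alternatively, one could phrase it entirely in terms of reduced words and Lemma~\ref{gp-lem} by observing $cba = s_i \cdot bca$ at the level of patterns and that $bca$ is reachable from some atom, but the \eqref{dem-eq}-based argument is more transparent.
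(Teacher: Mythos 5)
There is a genuine gap in the key step. Your reduction replaces $w$ by $w'$ with $(w')^{-1}=w^{-1}s_i$, so that the pattern $cba$ becomes $bca$ and $\ell(w')=\ell(w)-1$; you then need $z=(w')^{-1}\circ w'$. But the decomposition you actually have is $w=s_iw'$ with $\ell(w)=\ell(w')+1$, which yields $z=w^{-1}\circ w=(w')^{-1}\circ s_i\circ w'$ --- the Demazure conjugate of the involution $s_i$ by $w'$ --- and not $s_i\circ z''\circ s_i$ for $z''=(w')^{-1}\circ w'$. The identity $z=s_i\circ z''\circ s_i$ that you want to analyse via \eqref{dem-eq} comes from a factorisation $w=w''s_i$ by a \emph{right} descent of $w$, i.e.\ a descent in the one-line notation of $w$ itself; the hypothesis $w^{-1}[i:i+2]=cba$ instead gives right descents of $w^{-1}$, hence \emph{left} descents of $w$, so the three-case analysis of \eqref{dem-eq} you defer to ``bookkeeping'' is not applicable in the form you set up. The equality $(w')^{-1}\circ s_i\circ w'=(w')^{-1}\circ w'$ is in fact true in this situation, but only because of the full consecutive $321$ pattern (for a single descent it is false: take $w'=1$, where the two sides are $s_i$ and $1$), and establishing it is the entire content of the lemma.

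The paper's proof handles this with a short direct computation that you should adopt. The pattern forces $w=s_is_{i+1}s_iv$ with $\ell(w)=\ell(v)+3$, and since $(s_is_{i+1}s_i)\circ(s_is_{i+1}s_i)=s_is_{i+1}s_i=(s_{i+1}s_i)^{-1}\circ(s_{i+1}s_i)$ (each Demazure product equals the longest element of the parabolic subgroup generated by $s_i$ and $s_{i+1}$), one gets $z=w^{-1}\circ w=v^{-1}\circ(s_is_{i+1}s_i)\circ v=(s_{i+1}s_iv)^{-1}\circ(s_{i+1}s_iv)$ with $\ell(s_{i+1}s_iv)=\ell(v)+2<\ell(w)$, contradicting the minimality defining $\cA(z)$. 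If you wish to keep your single-swap formulation, observe that $w'=s_iw=s_{i+1}s_iv$ and verify $(w')^{-1}\circ s_i\circ w'=(w')^{-1}\circ w'$ by exactly this rank-two parabolic computation; there is no shorter route through the case analysis of \eqref{dem-eq}.
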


\begin{proof}
If $w^{-1}(i)>w^{-1}(i+1)>w^{-1}(i+2)$ for $i \in\ZZ$
then we could write $w = s_is_{i+1}s_iv$ for some $v \in \tS_n$ with $\ell(w) = \ell(v) + 3$. 
But then we would have $z = w^{-1} \circ w =  (s_is_{i+1}s_iv)^{-1} \circ (s_is_{i+1}s_i  v)
= (s_{i+1}s_iv)^{-1} \circ (s_{i+1} s_i v)$
despite $\ell(s_{i+1}s_iv) = \ell(v)+2 < \ell(w)$, contradicting the definition of $\cA(w)$.
\end{proof}

\begin{lemma}\label{extr-lem}
Let $z \in \tI_n$ and $v \in \cA(z)$.
\ben
\item[(a)]
If no $u \in \cA(z)$ exists with $u \lessdot_\cA v$ then $v = \alpha_{\min}(z)$.

\item[(b)]
If no $w \in \cA(z)$ exists with $v \lessdot_\cA w$ then $v = \alpha_{\max}(z)$.
\een
\end{lemma}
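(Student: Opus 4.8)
The plan is to prove (a) and deduce (b) from it. For the reduction, recall that $w\mapsto w^*$ is a length-preserving automorphism of $(\tS_n,\circ)$ with $S^*=S$, so it restricts to a bijection $\cA(z)\to\cA(z^*)$; moreover $\alpha_{\min}(z^*)=\alpha_{\max}(z)^*$ by combining Definitions~\ref{alpha-def} and~\ref{alpha-min-def} with Lemmas~\ref{*omega-lem} and~\ref{*lambda-lem}; and directly from the definition of $\lessdot_\cA$ one checks that $u\lessdot_\cA v$ if and only if $v^*\lessdot_\cA u^*$ (the automorphism sends the relevant generator index $i$ to $n-i$, which interchanges the roles of $u$ and $v$). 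Hence the hypothesis of (b) for $z$ and $v$ says exactly that $v^*\in\cA(z^*)$ has no $u$ with $u\lessdot_\cA v^*$, so part (a) applied to $z^*$ gives $v^*=\alpha_{\min}(z^*)=\alpha_{\max}(z)^*$, i.e.\ $v=\alpha_{\max}(z)$.

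For (a): by Lemma~\ref{231-lem}, $v$ has a $\lessdot_\cA$-predecessor in $\tS_n$ precisely when $v^{-1}$ has a window $v^{-1}[i:i+2]=bca$ with $a<b<c$, and by Corollary~\ref{equiv-cor} any such predecessor of $v\in\cA(z)$ already lies in $\cA(z)$; so the hypothesis on $v$ is equivalent to saying $v^{-1}$ has no window of the form $bca$. Together with Lemma~\ref{321-lem} (no window $cba$), this means that in every window $v^{-1}[i:i+2]$ the last entry is not the least of the three, equivalently that $v^{-1}$ has no two consecutive descents and that at each descent $p$ of $v^{-1}$ one has $v^{-1}(p-1)<v^{-1}(p+1)<v^{-1}(p)$.

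I would then induct on $\ellhat(z)=\ell(v)$ (Proposition~\ref{ellhat-prop}), the case $\ellhat(z)=0$ being $z=v=1=\alpha_{\min}(1)$. For the step, choose $s_k\in\DesR(v)$ and set $v'=vs_k$ and $z'=v'^{-1}\circ v'$. A routine argument shows $z=s_k\circ z'\circ s_k$, that $z'\neq z$ and $v'\in\cA(z')$ with $\ellhat(z')=\ellhat(z)-1$, and (via \eqref{dem-eq} applied to $z'$) that $z'(k)<z'(k+1)$, so that $z$ arises from $z'$ either by conjugation by $s_k$ (then $z=s_kz's_k$) or by adjoining the $2$-cycle $(k,k+1)$ (then $z=z's_k$). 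The crucial point is that $v'$ is again a minimal atom: passing from $v^{-1}$ to $(v')^{-1}=s_kv^{-1}$ only exchanges the periodically repeated values $k$ and $k+1$, and $s_k\in\DesR(v)$ means these occur in the order ``$k+1$ before $k$'' in $v^{-1}$; a short case analysis of how the two affected positions can sit inside a window $[i:i+2]$ then shows that any new $bca$-window of $(v')^{-1}$ would force an already-present $bca$- or $cba$-window of $v^{-1}$, contradicting the previous paragraph. Thus the inductive hypothesis yields $v'=\alpha_{\min}(z')$, so $v=\alpha_{\min}(z')\,s_k$, and it remains to identify this element with $\alpha_{\min}(z)$ using the explicit one-line formulas of Corollary~\ref{min-cor} for $\alpha_{\min}(z')$ and $\alpha_{\min}(z)$ and whichever of the relations $z'=s_kzs_k$, $z'=zs_k$ applies.

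I expect the main obstacle to be exactly this last identification. Since different minimal atoms can have different right-descent sets, one cannot invoke the inductive hypothesis symmetrically; instead one must verify from the window description in Corollary~\ref{min-cor} that every $s_k$ arising as a right descent of a minimal atom $v$ of $z$ is also a right descent of $\alpha_{\min}(z)$ and satisfies $\alpha_{\min}(z)\,s_k=\alpha_{\min}(z')$. This amounts to bookkeeping of how the endpoints $a_i,b_i$ and their interleaving shift when the cycle structure of $z$ is modified at $k,k+1$, under the constraint $z'(k)<z'(k+1)$; equivalently, one checks straight from Corollary~\ref{min-cor} that $\alpha_{\min}(z)$ itself is a minimal atom and that the assignment $v\mapsto\alpha_{\min}(s_k\circ z\circ s_k)\,s_k$ returns the same element for every admissible choice of $k$.
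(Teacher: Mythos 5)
Your reduction of (b) to (a) via the $*$-automorphism is correct (and slightly more explicit than the paper, which just says the proof of (b) is symmetric), and your opening move for (a) — combining Lemma~\ref{231-lem}, Corollary~\ref{equiv-cor} and Lemma~\ref{321-lem} to conclude that no window $v^{-1}[i:i+2]$ is a $bca$ or $cba$ pattern — is exactly how the paper's proof begins. But from that point you set up an induction on $\ell(v)$ whose inductive step terminates at $v = \alpha_{\min}(z')\,s_k$ and requires the identity $\alpha_{\min}(z')\,s_k = \alpha_{\min}(z)$, which you explicitly do not prove ("I expect the main obstacle to be exactly this last identification"). This is a genuine gap, not a routine verification: it amounts to a case analysis, via Corollary~\ref{min-cor}, of how the interleaved endpoint sequence $b_1,a_1,b_2,a_2,\dots$ transforms under $z \mapsto s_k \circ z \circ s_k$ in each of the cases $z = s_k z' s_k$ and $z = z' s_k$, and is comparable in difficulty to the lemma itself. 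The intermediate claim that $v' = vs_k$ again has no $\lessdot_\cA$-predecessor is also only sketched, and the translate positions $k+mn$, $k+1+mn$ make the "short case analysis" less short than advertised.

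The irony is that the induction is unnecessary. Once you know that every length-three window of $v^{-1}$ avoids the patterns $bca$ and $cba$, the descents of $v^{-1}$ are isolated and each descent top is preceded by something smaller than the descent bottom; hence the string representation of $v^{-1}$ parses directly into weakly descending pairs $b_i a_i$ (with $b_i = a_i$ at non-descent positions) satisfying $a_1 < a_2 < \cdots < a_l$, $a_i \leq b_i$, and $b_i \neq b_j$. By Corollary~\ref{min-cor} this says precisely that $v = \alpha_{\min}(y)$ for the involution $y \in \tI_n$ with $y(a_i) = b_i$; and since $\alpha_{\min}(y) \in \cA(y)$ while $v \in \cA(z)$, and an atom $w$ determines its involution as $w^{-1}\circ w$, we get $y = z$. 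This is the paper's argument, and it closes the proof in one step where your induction leaves it open.
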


\begin{proof}
Assume no $u \in \cA(z)$ exists with $u\lessdot_\cA v$.
Lemmas~\ref{231-lem}  and \ref{321-lem} then imply that
if $v^{-1}(i+1) > v^{-1}(i+2)$ for some $i \in \ZZ$ then $v^{-1}(i)<v^{-1}(i+2)$.
Therefore $v = \lW b_1,a_1,b_2,a_2,\dots,b_l,a_l\rW ^{-1}$
for some numbers with $a_1<a_2<\dots<a_l$, $a_i\leq b_i$ for all $i$, and
 $b_i \neq b_j$ for all $i\neq j$.
Thus $v=\alpha_{\min}(y)$ 
for the involution $y \in \tI_n$ with $y(a_i) =b_i$ for $i \in [l]$ by Corollary~\ref{min-cor}.
Since $\alpha_{\min}(y) \in \cA(y)$ we must have $y=z$.
This proves part (a).
The proof of part (b) is similar, proceeding from the observation if no
$w \in \cA(z)$ exists with $v \lessdot_\cA w$
then $v^{-1}(i) > v^{-1}(i+1)$ implies $ v^{-1}(i) < v^{-1}(i+2)$.
\end{proof}

Fix $z \in \tI_n$ and $w \in \tS_n$. Define two sets
\[\uL_z = \{ a \in \ZZ : a \leq z(a)\}
\quand \uR_z = \{ b \in \ZZ : z(b) < b\}.\]
Let $\inv_\cA(w;z)$ be the set of pairs $(p,q) \in \ZZ\times \ZZ$
with $w(p)>w(q)$ and either
$p<q$ and $\{ p, q\}  \subset \uL_z$,
or
$p>q$ and  $\{ p, q\} \subset\uR_z$.
 The set $ \inv_\cA(w;z)$ is 
closed under the relation generated by $(i,j) \sim(i+n,j+n)$. 
Define $\rank_\cA(w;z)$ as the number of equivalence classes under this relation  (which a priori could be infinite) in
the set difference  $\inv_\cA(w;z) \setminus \inv_{\cA}\(\alpha_{\min}(z);z\)$.

\begin{remark}
All of our arguments would go through if we redefined $\uL_z$ to be $\{a \in \ZZ : a<z(a)\}$
and $\uR_z$ to be $\{ b \in \ZZ  : z(b) \leq b\}$. It makes no difference which set contains the fixed points of $z$.
\end{remark}

\begin{example}
Suppose $z = t_{1,8} t_{2,7} \in \tI_4$ so  that 
$\alpha_{\min}(z) = \lW 4,6,1,-1\rW $ and  $w=\lW 6,4,-1,1\rW  \in \cA(z)$
by Example~\ref{alpha-min-ex}. Then $\uL_z = \{1,2\} + 4\ZZ$ and $\uR_z = \{3,4\} + 4\ZZ$,
so we have
\[ \inv_\cA(\alpha_{\min}(z);z) = \left\{ (i+mn,j+ln) : i,j \in \{3,4\} \text{ and }l<m\right\}\]
and
$ \inv_\cA(w;z) = \{ (1+mn, 2+mn) : m \in \ZZ\} \sqcup \{ (4+mn, 3+mn) : m \in \ZZ\} \sqcup\inv_\cA(\alpha_{\min}(z);z) .$
Therefore $\rank_\cA(w;z) = 2$.
\end{example}

  Finally, let $<_\cA$ be the transitive closure of $\lessdot_\cA$.
The following generalises \cite[Theorem 6.10]{HMP2}.

\begin{theorem}\label{poset-thm}
Let $z \in \tI_n$. Restricted to the set $\cA(z)$,
the relation $<_\cA$ is a bounded, graded partial order
with rank function $w\mapsto \rank_\cA(w ;z)$.
Furthemore, it holds that  
\be\label{a-form}\cA(z) = \left\{ w \in \tS_n : \alpha_{\min}(z) \leq_\cA w\right\} = \left\{ w \in \tS_n : w\leq_\cA \alpha_{\max}(z) \right\}.
\ee
\end{theorem}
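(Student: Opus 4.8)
The goal is to show that $<_\cA$ restricted to $\cA(z)$ is a bounded graded poset with rank function $\rank_\cA(\,\cdot\,;z)$, and that $\cA(z)$ coincides with the two described intervals. The backbone of the argument is Lemma~\ref{extr-lem}, which already identifies $\alpha_{\min}(z)$ and $\alpha_{\max}(z)$ as the unique elements of $\cA(z)$ that are, respectively, $\lessdot_\cA$-minimal and $\lessdot_\cA$-maximal; by Corollary~\ref{equiv-cor} every $\lessdot_\cA$-step stays inside $\cA(z)$. So the heart of the matter is to show that $\rank_\cA(\,\cdot\,;z)$ strictly increases along each $\lessdot_\cA$-cover by exactly $1$, is finite on $\cA(z)$, and is minimized precisely at $\alpha_{\min}(z)$. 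Once that is in place: finiteness of the rank plus the fact that any $v \neq \alpha_{\min}(z)$ in $\cA(z)$ admits some $u \lessdot_\cA v$ in $\cA(z)$ (contrapositive of Lemma~\ref{extr-lem}(a)) gives, by downward induction on the rank, a saturated $<_\cA$-chain from $\alpha_{\min}(z)$ up to any $w \in \cA(z)$; symmetrically an upward chain to $\alpha_{\max}(z)$. Antisymmetry of $<_\cA$ on $\cA(z)$ then follows because rank is strictly monotone along covers, so there are no cycles; gradedness follows because all saturated chains between two comparable elements have length equal to the rank difference.

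\textbf{First steps.} I would begin by analysing a single cover $u \lessdot_\cA v$. By Lemma~\ref{231-lem}, $u^{-1}$ and $v^{-1}$ differ only in positions $i,i+1,i+2$ (mod $n$), where $u^{-1}[i:i+2] = cab$ and $v^{-1}[i:i+2] = bca$ with $a<b<c$. I want to compute $\inv_\cA(v;z) \setminus \inv_\cA(u;z)$ and $\inv_\cA(u;z)\setminus\inv_\cA(v;z)$ directly from this local picture, keeping careful track of which of $a,b,c$ lie in $\uL_z$ versus $\uR_z$. The key point is that $a,b,c$ are the values $u^{-1}(i),u^{-1}(i+1),u^{-1}(i+2)$, and membership of a position $p$ in $\uL_z$ or $\uR_z$ depends only on $p$, not on the permutation. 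A short case analysis (on the relative positions of $a,b,c$ and whether $i,i+1,i+2$ are in $\uL_z$ or $\uR_z$, together with the constraint forced by $u,v \in \cA(z)$ and Lemma~\ref{321-lem}) should show that passing from $u$ to $v$ flips exactly one $n$-orbit of $\cA$-inversions, from a non-inversion to an inversion. Hence $\rank_\cA(v;z) = \rank_\cA(u;z)+1$; in particular the difference set is always finite, and since $\alpha_{\min}(z) \in \cA(z)$ and every element of $\cA(z)$ is reachable from it by $\lessdot_\cA$-steps, $\rank_\cA(w;z)$ is finite and nonnegative for all $w \in \cA(z)$, vanishing at $w = \alpha_{\min}(z)$.

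\textbf{Assembling the theorem.} With strict monotonicity of rank along covers established: (i) $<_\cA$ has no directed cycles in $\cA(z)$, so it is a genuine partial order there; (ii) starting from any $w \in \cA(z)$ and repeatedly applying Lemma~\ref{extr-lem}(a) produces a strictly rank-decreasing sequence of $\lessdot_\cA$-steps that must terminate, necessarily at $\alpha_{\min}(z)$, giving a saturated chain $\alpha_{\min}(z) = w_0 \lessdot_\cA w_1 \lessdot_\cA \cdots \lessdot_\cA w_r = w$ with $r = \rank_\cA(w;z)$; symmetrically Lemma~\ref{extr-lem}(b) gives a saturated chain up to $\alpha_{\max}(z)$. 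This proves $\alpha_{\min}(z) \leq_\cA w \leq_\cA \alpha_{\max}(z)$ for every $w \in \cA(z)$, so $<_\cA$ is bounded on $\cA(z)$, and gradedness with the stated rank function follows from the fact that \emph{every} saturated chain between comparable $u \leq_\cA w$ has length $\rank_\cA(w;z) - \rank_\cA(u;z)$. For the displayed identity \eqref{a-form}, the inclusions $\cA(z) \subseteq \{w : \alpha_{\min}(z) \leq_\cA w\}$ and $\cA(z) \subseteq \{w : w \leq_\cA \alpha_{\max}(z)\}$ are what we just proved; the reverse inclusions are immediate from Corollary~\ref{equiv-cor}, since $\alpha_{\min}(z),\alpha_{\max}(z) \in \cA(z)$ and membership in $\cA(z)$ is invariant under $\lessdot_\cA$ in both directions.

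\textbf{Main obstacle.} The one genuinely technical step is the local inversion count for a single cover $u \lessdot_\cA v$ — verifying that exactly one $n$-orbit of $\cA$-inversions is created and none destroyed. The subtlety is that the definition of $\inv_\cA$ reverses the order convention on $\uR_z$ (counting $p>q$ there), so the three positions $i,i+1,i+2$ can straddle $\uL_z$ and $\uR_z$ in several configurations, and one must check that the configurations actually realizable by atoms (constrained by Lemmas~\ref{231-lem} and \ref{321-lem}, and by the fact that $a<b<c$) all yield a net change of $+1$. I expect this to reduce to a handful of cases, each a direct check, but it is where the real content lies; everything else is formal poset bookkeeping built on Lemma~\ref{extr-lem} and Corollary~\ref{equiv-cor}.
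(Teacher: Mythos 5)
Your overall architecture matches the paper's: boundedness and the identity \eqref{a-form} via Lemma~\ref{extr-lem} and Corollary~\ref{equiv-cor}, and gradedness by showing that each cover $u\lessdot_\cA v$ inside $\cA(z)$ adds exactly one $n$-orbit to $\inv_\cA(\cdot\,;z)$ and removes none, propagated by induction from $\alpha_{\min}(z)$. However, there is a genuine gap in the step you yourself flag as the ``main obstacle.'' Writing $u^{-1}[i:i+2]=cab$, $v^{-1}[i:i+2]=bca$ with $a<b<c$, the pairs among $\{a,b,c\}$ that can enter or leave $\inv_\cA$ are: $(a,b)$ gained if $a,b\in\uL_z$; $(c,b)$ gained if $b,c\in\uR_z$; $(b,c)$ \emph{lost} if $b,c\in\uL_z$; and $(b,a)$ \emph{lost} if $a,b\in\uR_z$. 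The net change is $+1$ precisely when $a\in\uL_z$ and $c\in\uR_z$; for instance the configuration $a\in\uR_z$, $b\in\uL_z$, $c\in\uR_z$ gives net change $0$, and nothing in Lemma~\ref{231-lem}, Lemma~\ref{321-lem}, or the bare hypothesis $u,v\in\cA(z)$ rules it out locally. What forces $a\in\uL_z$ and $c\in\uR_z$ is the fact that $(a,c)\in\cC(z)$ whenever $c=u^{-1}(j)>u^{-1}(j+1)=a$ is a consecutive descent of an atom's inverse string --- but in the paper this is Corollary~\ref{course-cor}(a), whose proof is \emph{part of} the proof of Theorem~\ref{poset-thm}, so you cannot cite it without circularity.

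The fix is exactly what the paper does: strengthen the inductive hypothesis carried along the $\lessdot_\cA$-chain from $\alpha_{\min}(z)$ to include the invariant that every consecutive descent $c>a$ in $v^{-1}[j:j+1]$ satisfies $(a,c)\in\cC(z)$. The base case holds by the explicit formula of Corollary~\ref{min-cor}, and the inductive step (that the invariant survives a single $\lessdot_\cA$-move) requires its own small case analysis comparing $u^{-1}$ and $v^{-1}$ near the modified window, using Lemmas~\ref{231-lem} and \ref{321-lem} to exclude the bad cases. Without this extra invariant your ``short case analysis'' does not close, so as written the proposal does not establish that the rank increases by exactly one along covers --- which is the content on which antisymmetry, gradedness, and the rank formula all rest.
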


\begin{remark}
The situation described by this theorem has some formal similarities to Stembridge's results in \cite[\S4]{StemRed} about the top and bottom classes 
of a permutation in $S_{n+1}$.
\end{remark}

\begin{remark}
Let $\sim_\cA$ be the symmetric closure of $\leq_\cA$. 
The theorem implies that each set $\cA(z)$ for $z \in \tI_n$ is an equivalence class under 
$\sim_\cA$. This weaker statement is equivalent to the main results of \cite{HH,M} specialised to type $\tilde A$;
see \cite[Corollary 1.6]{M} or \cite[Theorem 1.2]{HH}.
\end{remark}

\begin{proof}
Fix $v \in \cA(z)$ and 
let $\rank(v) = \rank_\cA\(v;z\)$.
Since $\cA(z)$ is a  subset of the finite set of elements $w \in \tS_n$ with $\ell(w) = \ellhat(z)$, 
 Lemma~\ref{extr-lem} implies that $\alpha_{\min}(z) \leq_\cA v \leq_\cA \alpha_{\max}(z)$.
Equation \eqref{a-form} therefore follows from Corollary~\ref{equiv-cor}, though it remains to show that $<_\cA$ is a partial order.

To prove this, 
we argue (i) that $\rank(v)$ is finite, (ii) that $\inv_\cA(\alpha_{\min}(z))\subset \inv_\cA(v)$, 
and (iii) that $(a,c) \in \cC(z)$ whenever $c=u^{-1}(j)> u^{-1}(j+1)=a$ for some $j \in \ZZ$.
If $v = \alpha_{\min}(z)$ then these properties hold by Corollary~\ref{min-cor}.
Otherwise, there exists an atom $u \in \cA(z)$ with $u \lessdot_\cA v$ by 
Lemma~\ref{extr-lem}.
We may assume by induction that properties (i)-(iii) hold for $u$.
By Lemma~\ref{231-lem}, we know that
$u^{-1}[i:i+2] = cab$ and $v^{-1}[i:i+2] = bca$ where $a<b<c$ for some $i \in \ZZ$.
Property (iii) implies that $(a,c) \in \cC(z)$, so 
$\inv_\cA(v;z)$ is either 
\[\inv_\cA(u;z)  \sqcup \{ (a+mn, b+mn) : m \in \ZZ\}
\quord
\inv_\cA(u;z)  \sqcup \{ (c+mn, b+mn) : m \in \ZZ\}
\]
according to whether $b\leq z(b)$ or $z(b)<b$.
We conclude from (i) and (ii) that 
$\rank(v)=\rank(u)+1<\infty$ and $\inv_\cA(\alpha_{\min}(z))\subset \inv_\cA(v)$.
Finally suppose $v^{-1}(j) > v^{-1}(j+1)$ for some $j \in \ZZ$.
If $u^{-1}[j:j+1]=v^{-1}[j:j+1]$ then 
$(v^{-1}(j+1),v^{-1}(j)) \in \cC(z)$
 by
property (iii) for $u$.
Otherwise,  Lemma~\ref{231-lem} implies that 
for some integers  $a<b<c$ we have one of the following:
\begin{itemize}
\item   $u^{-1}[j+1:j+3]= cab$ and 
$v^{-1}[j+1:j+3] = bca$ and $u^{-1}(j)=v^{-1}(j)$.

\item
$u^{-1}[j-2:j] = cab$ and 
$v^{-1}[j-2:j] = bca$ and $u^{-1}(j+1)=v^{-1}(j+1)$.

\item $u^{-1}[j-1:j+1] = cab$ and $v^{-1}[j-1:j+1]= bca$.
\end{itemize}
The first two cases cannot occur since they contradict Lemma~\ref{321-lem},
while in the third case we have $(v^{-1}(j+1),v^{-1}(j)) = (a,c) = (u^{-1}(j),u^{-1}(j-1)) \in \cC(z)$
by induction.
We conclude that the desired properties hold for all $v \in \cA(z)$.

Since all atoms have properties (i)-(iii), we can repeat the argument in the preceding paragraph to deduce that 
 $\rank(v) = \rank(u)+1<\infty$
whenever $u,v \in \cA(z)$ have $u \lessdot_\cA v$.
This 
 shows precisely that $<_\cA$ restricted to $\cA(z)$ is a graded partial order
with rank function $w\mapsto \rank(w)$.
\end{proof}

\begin{corollary}\label{course-cor}
Let $z \in \tI_n$ and $w \in \cA(z)$. 
\ben
\item[(a)] If $b=w^{-1}(i) > w^{-1}(i+1)=a$ for some $i \in \ZZ$ then $(a,b) \in \cC(z)$.
\item[(b)] It holds that $\inv_\cA(\alpha_{\min}(z);z) \subset \inv_\cA (w;z) \subset \inv_\cA(\alpha_{\max}(z);z)$.
\item[(c)] If $(a,b) \in \cC(z)$ then $w(b) < w(a)$.
\een
\end{corollary}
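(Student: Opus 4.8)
The plan is to read (a) and (b) straight off the proof of Theorem~\ref{poset-thm}, and to prove (c) by induction on the rank $\rank_\cA(w;z)$ in the graded poset $(\cA(z),<_\cA)$ with base case $\alpha_{\min}(z)$. For (a), I would simply note that this is ``property (iii)'' that the proof of Theorem~\ref{poset-thm} establishes for every element of $\cA(z)$, so nothing new is needed. For (b), the inclusion $\inv_\cA(\alpha_{\min}(z);z)\subseteq\inv_\cA(w;z)$ is ``property (ii)'' from that same proof; for the other inclusion I would use the fact, also proved there, that if $u\lessdot_\cA v$ with $u,v\in\cA(z)$ then $\inv_\cA(v;z)$ equals $\inv_\cA(u;z)$ together with exactly one further equivalence class, so that $\inv_\cA(u;z)\subseteq\inv_\cA(v;z)$. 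Since $w\leq_\cA\alpha_{\max}(z)$ (Theorem~\ref{poset-thm}), iterating this along a saturated chain from $w$ up to $\alpha_{\max}(z)$, all of whose terms lie in $\cA(z)$, yields $\inv_\cA(w;z)\subseteq\inv_\cA(\alpha_{\max}(z);z)$.

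The real work is (c). First I would observe that ``$w(b)<w(a)$ for all $(a,b)\in\cC(z)$'' says precisely $\cC(z)\subseteq\inv(w)$ when a cycle $(a,b)\in\cC(z)$ with $a<b$ is regarded as an inversion pair. Since $\cA(z)=\{w:\alpha_{\min}(z)\leq_\cA w\}$ and $(\cA(z),<_\cA)$ is graded, it suffices to verify this for $w=\alpha_{\min}(z)$ and then to show it persists across one covering step $u\lessdot_\cA v$ with $u,v\in\cA(z)$. The base case is immediate from Corollary~\ref{min-cor}: writing $\alpha_{\min}(z)^{-1}=\lW b_1,a_1,\dots,b_l,a_l\rW$, a cycle $(a,b)\in\cC(z)$ with $a<b$ equals some $(a_j,b_j)$ with $a_j<b_j$, and since $b_j$ occupies the position just before $a_j$ in $\alpha_{\min}(z)^{-1}$ we get $\alpha_{\min}(z)(b)<\alpha_{\min}(z)(a)$.

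For the inductive step I would take $u\lessdot_\cA v$ in $\cA(z)$ and apply Lemma~\ref{231-lem}: there are $i\in\ZZ$ and integers $a^*<b^*<c^*$ with $u^{-1}[i:i+2]=c^*a^*b^*$, $v^{-1}[i:i+2]=b^*c^*a^*$, and $u^{-1}(k)=v^{-1}(k)$ for $k\notin\{i,i+1,i+2\}+n\ZZ$; moreover $(a^*,c^*)\in\cC(z)$ by part (a) applied to $u$. Passing from $u^{-1}$ to $v^{-1}$ only permutes, inside each block of positions $\{i+mn,i+1+mn,i+2+mn\}$, the three values located there, so the relative order by position of two values changes only if both lie in one such block --- that is, only for a pair $\{a^*+mn,b^*+mn\}$, $\{b^*+mn,c^*+mn\}$, or $\{a^*+mn,c^*+mn\}$ with $m\in\ZZ$ --- and a check of the three positions shows $\{a^*+mn,c^*+mn\}$ is never actually reordered. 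Now fix $(a,b)\in\cC(z)$ with $a<b$ and assume $u(b)<u(a)$. The possibility $\{a,b\}=\{a^*+mn,b^*+mn\}$ is excluded, since it would force $a=a^*+mn$, $b=b^*+mn$, hence $u(a)=i+1+mn<i+2+mn=u(b)$, against the hypothesis; the possibility $\{a,b\}=\{b^*+mn,c^*+mn\}$ is excluded, since it would give $z(b^*)=c^*=z(a^*)$ and so $a^*=b^*$. Therefore the relative order of $a$ and $b$ is unchanged, $v(b)<v(a)$, and the induction closes.

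The hardest point will be the inductive step of (c): determining exactly which pairs of values can swap positions under a $\lessdot_\cA$-move and then eliminating the two dangerous configurations --- one using the inductive hypothesis, the other using injectivity of $z$. Carrying everything in the one-line notation of $w^{-1}$ should keep the bookkeeping clean, because a covering move merely reshuffles three consecutive entries together with their $n$-periodic translates. Some routine attention to the periodic indexing is also needed when verifying the base case from Corollary~\ref{min-cor}.
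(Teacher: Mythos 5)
Your proposal is correct and follows essentially the same route as the paper: parts (a) and (b) are read off from properties (ii)--(iii) established in the proof of Theorem~\ref{poset-thm} (together with the one-class-at-a-time growth of $\inv_\cA$ along covers for the second inclusion in (b)), and part (c) is proved by induction up $(\cA(z),<_\cA)$ from the base case $\alpha_{\min}(z)$ via Corollary~\ref{min-cor}. Your inductive step for (c) just makes explicit the paper's one-line claim that part (a) propagates property (c) across a cover $u\lessdot_\cA v$, and your case analysis of which value pairs can be reordered is accurate.
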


\begin{proof}
Parts (a) and (b) were shown in the proof of Theorem~\ref{poset-thm}.
It follows from part (a) that if part (c) holds for $w$ then the same property holds for any $w ' \in \tS_n$
with $w \lessdot_\cA w'$. Since  (c)  is true for $w = \alpha_{\min}(z)$,
we conclude from Theorem~\ref{poset-thm} that this property holds for all atoms of $z$.
\end{proof}

\begin{figure}[h]
{\tiny\begin{center}
\begin{tikzpicture}
  \node (min) at (0,-10) {$\lW 4, 6, 8, 7, -1, -3\rW $};
     \node (i1) at  (-2,-9) {$\lW 4, 7, 8, 5, 0, -3\rW $};  \node (i2) at (2, -9) {$\lW 5, 6, 8, 7, -3, -2\rW $};
      \node (h1) at  (-4,-8) {$\lW 4, 8, 6, 5, 1, -3\rW $};  \node (h2) at (0, -8) {$\lW 5, 7, 8, 3, 0, -2\rW $}; \node (h3) at (4,- 8) {$\lW 6, 4, 8, 7, -3, -1\rW $};
  \node (g1) at  (-4,-7) {$\lW 5, 8, 6, 3, 1, -2\rW $};  \node (g2) at (0, -7) {$\lW 6, 7, 8, 3, -2, -1\rW $}; \node (g3) at (4, -7) {$\lW 7, 4, 8, 5, -3, 0\rW $};
     \node (f1) at  (-6,-6) {$\lW 5, 9, 6, 1, 2, -2\rW $};  \node (f2) at (-2,-6) {$\lW 6, 8, 4, 3, 1, -1\rW $}; \node (f3) at (2, -6) {$\lW 7, 5, 8, 3, -2, 0\rW $}; \node (f4) at (6, -6) {$\lW 8, 4, 6, 5, -3, 1\rW $};
   \node (e1) at  (-4,-5) {$\lW 6, 9, 4, 1, 2, -1\rW $};  \node (e2) at (0, -5) {$\lW 7, 8, 4, 3, -1, 0\rW $}; \node (e3) at (4, -5) {$\lW 8, 5, 6, 3, -2, 1\rW $};
   \node (d1) at  (-6,-4) {$\lW 6, 10, 2, 1, 3, -1\rW $};  \node (d2) at (-2,-4) {$\lW 7, 9, 4, -1, 2, 0\rW $}; \node (d3) at (2, -4) {$\lW 8, 6, 4, 3, -1, 1\rW $}; \node (d4) at (6, -4) {$\lW 9, 5, 6, 1, -2, 2\rW $};
    \node (c1) at  (-4,-3) {$\lW 7, 10, 2, -1, 3, 0\rW $};  \node (c2) at (0,- 3) {$\lW 8, 9, 4, -1, 0, 1\rW $}; \node (c3) at (4, -3) {$\lW 9, 6, 4, 1, -1, 2\rW $};
  \node (b1) at  (-4,-2) {$\lW 8, 10, 0, -1, 3, 1\rW $};  \node (b2) at (0, -2) {$\lW 9, 7, 4, -1, 0, 2\rW $}; \node (b3) at (4, -2) {$\lW 10, 6, 2, 1, -1, 3\rW $};
   \node (a1) at  (-2,-1) {$\lW 9, 10, 0, -1, 1, 2\rW $};  \node (a2) at (2, -1) {$\lW 10, 7, 2, -1, 0, 3\rW $};
  \node (max) at (0,0) {$\lW 10, 8, 0, -1, 1, 3\rW $};
    \draw  [->]
  (min) edge (i1)
  (min) edge (i2)
  (i1) edge (h1)
(i1) edge (h2)
(i2) edge (h3)
(h1) edge (g1)
(h2) edge (g1)
(h2) edge (g2)
(h3) edge (g3)
(g1) edge (f1)
(g1) edge (f2)
(g2) edge (f3)
(g3) edge (f3)
(g3) edge (f4)
(f1) edge (e1)
(f2) edge (e1)
(f2) edge (e2)
(f3) edge (e3)
(f4) edge (e3)
(e1) edge (d1)
(e1) edge (d2)
(e2) edge (d3)
(e3) edge (d3)
(e3) edge (d4)
(d1) edge (c1)
(d2) edge (c1)
(d2) edge (c2)
(d3) edge (c3)
(d4) edge (c3)
(c1) edge (b1)
(c2) edge (b2)
(c3) edge (b2)
(c3) edge (b3)
(b1) edge (a1)
(b2) edge (a2)
(b3) edge (a2)
  (a1) edge (max)
  (a2) edge (max)
;
\end{tikzpicture}\end{center}}
\caption{Hasse diagram of   $(\cA(z),<_\cA)$ for $z= t_{1,12}t_{2,11} t_{3,4} \in \tI_6$}
\label{poset-fig}
\end{figure}

Figure~\ref{poset-fig} shows an example of $(\cA(z),<_\cA)$. 
The lattice structure evident in this picture appears to be typical; we have used a computer to check the following conjecture
for $z \in \tI_n$ in the 333,307 cases when $0 < \ellhat(z)n \leq 100$. As a graded lattice, $(\cA(z),<_\cA)$ is
not necessarily Eulerian, distributive, or semi-modular, but
 its M\"obius function seems to always take values in $\{-1,0,1\}$.

\begin{conjecture}
The graded poset $(\cA(z), <_\cA)$ is a lattice for all $n$ and $z \in \tI_n$.
\end{conjecture}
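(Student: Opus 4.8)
The plan is as follows. Since $(\cA(z),<_\cA)$ is finite and, by Theorem~\ref{poset-thm}, has a maximum element $\alpha_{\max}(z)$, it suffices to prove that any two atoms $u,v\in\cA(z)$ have a greatest lower bound: in a finite poset with a top element, the existence of all pairwise meets forces the existence of all joins (the join of $u$ and $v$ is the meet of the set of their common upper bounds), and hence gives the lattice property. So the whole problem reduces to constructing $u\wedge v$ for arbitrary atoms $u,v$ of $z$.

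I would first set up a combinatorial model of the order via inversion sets. Let $B$ be the Boolean lattice of subsets of the finite set $\cE$ of $\sim$-equivalence classes of pairs lying in $\inv_\cA(\alpha_{\max}(z);z)\setminus\inv_\cA(\alpha_{\min}(z);z)$, and consider the map $\iota\colon\cA(z)\to B$ sending $w$ to the set of classes of $\inv_\cA(w;z)\setminus\inv_\cA(\alpha_{\min}(z);z)$. By Corollary~\ref{course-cor} and the proof of Theorem~\ref{poset-thm}, $\iota$ is well defined, rank-preserving, order-preserving, and each covering step $u\lessdot_\cA v$ enlarges $\iota$ by exactly one class. I would then prove that $\iota$ is injective (one reconstructs the one-line form of $w$ from $\inv_\cA(w;z)$ using Corollary~\ref{course-cor}) and order-reflecting, so that $u\leq_\cA v\iff\iota(u)\subseteq\iota(v)$; order-reflection is proved by induction on $|\iota(v)\setminus\iota(u)|$, using Lemma~\ref{extr-lem} to produce a covering move and Lemmas~\ref{231-lem} and~\ref{321-lem} to check that it can be chosen so as to preserve the inclusion. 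This identifies $(\cA(z),<_\cA)$ with the induced subposet $\cS=\iota(\cA(z))$ of $B$.

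The heart of the matter is then to show that $\cS$ is closed under intersection inside $B$: once that is known, given $u,v\in\cA(z)$ the atom $w$ with $\iota(w)=\iota(u)\cap\iota(v)$ is a common lower bound by order-reflection and the unique maximal such, so $w=u\wedge v$. I would attack the closure statement by describing $\cS$ intrinsically. Reading Lemma~\ref{231-lem} backwards, a covering move records how two arcs of $w^{-1}$ (the arcs being the $2$-cycles of $z$, with fixed points of $z$ treated as degenerate arcs) switch their interleaving; thus $\iota(w)$ encodes, for each pair of arcs, a choice among a small number of relative configurations, and membership of a subset $T\in B$ in $\cS$ should be equivalent to a local consistency condition on these pairwise choices extracted from Lemmas~\ref{231-lem} and~\ref{321-lem}. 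Forming $u\wedge v$ would then amount to taking, pair of arcs by pair of arcs, the configuration common to $u$ and $v$, and the conjecture is precisely the assertion that such a family of local choices is always globally realizable by a single element of $\tS_n$.

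The main obstacle is exactly this global realizability. In the finite symmetric group the analogous patching can be controlled by an acyclicity argument, but the cyclic index set of $\tS_n$ makes it conceivable in principle that the combined data of two atoms harbours a cyclic obstruction with no common refinement; ruling this out for every $z$ seems to require a genuinely new ingredient — either an explicit sorting algorithm which, given $u$ and $v$, outputs a common lower bound whose inversion set is exactly $\inv_\cA(u;z)\cap\inv_\cA(v;z)$, or a realization of $(\cA(z),<_\cA)$ as a lattice congruence or interval contraction of a known lattice. The covering moves $u=s_is_{i+1}w\lessdot_\cA v=s_{i+1}s_iw$ of Lemma~\ref{gp-lem} are ``polygon'' moves of the type appearing in the lattice-congruence constructions of Bj\"orner--Edelman--Ziegler and in Reading's Cambrian lattices, and the observed fact that the M\"obius function of $(\cA(z),<_\cA)$ takes values only in $\{-1,0,1\}$ is consistent with such a description; but making either picture precise uniformly in $z\in\tI_n$ is where the argument currently breaks down, which is why the statement is left as a conjecture.
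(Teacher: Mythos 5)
This statement is a \emph{conjecture} in the paper: no proof is given, only the remark that it has been verified by computer for all $z \in \tI_n$ with $0 < \ellhat(z)n \leq 100$. So there is no argument of the author's to compare yours against, and your proposal must stand or fall on its own. It does not stand: as you yourself acknowledge in the final paragraph, the argument ``currently breaks down'' at the decisive step, so what you have written is a reduction and a research plan, not a proof.

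Concretely, the gap is that the closure of $\iota(\cA(z))$ under intersection in the Boolean lattice $B$ \emph{is} the conjecture, essentially verbatim: granting your claims that $\iota$ is injective and order-reflecting, ``meets exist'' and ``$\iota(u)\cap\iota(v)$ is realized by an atom'' are the same assertion, so the heart of the problem has been relabelled rather than resolved. Moreover, two of the intermediate claims you lean on are themselves nontrivial and only sketched. Injectivity of $\iota$ requires reconstructing $w$ from the restricted inversion data $\inv_\cA(w;z)$, which records relative orders only within $\uL_z$ and within $\uR_z$; one must argue that the constraints of Theorem~\ref{excluded-thm} pin down the interleaving of the two blocks, and this is not immediate. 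Order-reflection ($\iota(u)\subseteq\iota(v)\Rightarrow u\leq_\cA v$) would identify $(\cA(z),<_\cA)$ with an induced subposet of $B$ under containment, a strong structural statement that the paper establishes nowhere, even in the finite case; your inductive sketch via Lemma~\ref{extr-lem} needs to produce a covering move \emph{below} $v$ that stays \emph{above} $u$ in the containment order, and it is exactly this kind of ``compatible move'' that the cyclic structure of $\tS_n$ makes delicate. That said, your framing is sensible and consistent with Theorem~\ref{poset-thm} and Corollary~\ref{course-cor} (rank equals cardinality of the inversion-class set, covers add one class), and your diagnosis of where the difficulty lies --- global realizability of locally consistent pairwise configurations --- is an accurate description of why the statement remains open.
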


A permutation $w \in \tS_n$ is \emph{321-avoiding} if no integers $a<b<c$
satisfy $w(a) > w(b) > w(c)$. 
An element $w \in \tS_n$ is \emph{fully commutative}
if we cannot write $w =u  s_i s_{i+1} s_i  v$
for any $u,v \in \tS_n$ and $i \in \ZZ$ with $\ell(w) = \ell(u) + \ell(v) + 3$.
The following extends \cite[Corollary 6.11]{HMP2} to affine type $A$.

\begin{corollary}
Let $z \in \tI_n$. The following are equivalent:
(a) $|\cA(z)|=1$,
(b) $\alpha_{\min}(z) =\alpha_{\max}(z)$,
(c) $\alpha_R(z) = \alpha_L(z)$,
(d) $z$ is 321-avoiding,
and
(e) $z$ is fully commutative.
\end{corollary}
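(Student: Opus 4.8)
The plan is to prove the cycle of implications (a) $\Rightarrow$ (b) $\Rightarrow$ (c) $\Rightarrow$ (d) $\Rightarrow$ (e) $\Rightarrow$ (a), drawing on the machinery already developed. The equivalence (a) $\Leftrightarrow$ (b) is the easiest step: by Proposition~\ref{min-prop} both $\alpha_{\min}(z)$ and $\alpha_{\max}(z)$ lie in $\cA(z)$, so if $|\cA(z)|=1$ they must coincide; conversely Theorem~\ref{poset-thm} gives $\cA(z) = \{w : \alpha_{\min}(z)\leq_\cA w \leq_\cA \alpha_{\max}(z)\}$, so if the two extremal atoms agree then $\cA(z)$ is a singleton. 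For (b) $\Leftrightarrow$ (c), I would use Corollary~\ref{min-cor}, which expresses $\alpha_{\min}(z)$ and $\alpha_{\max}(z)$ in window notation as $\lW b_1,a_1,\dots,b_l,a_l\rW^{-1}$ and $\lW d_1,c_1,\dots,d_l,c_l\rW^{-1}$, together with Definition~\ref{alpha-min-def} (which writes $\alpha_{\min}(z) = \alpha_R(z)\cdot z$ and $\alpha_{\max}(z) = \alpha_L(z)\cdot z$). Since right multiplication by the fixed involution $z$ is a bijection $\tS_n\to\tS_n$, we get $\alpha_{\min}(z) = \alpha_{\max}(z) \iff \alpha_R(z)\cdot z = \alpha_L(z)\cdot z \iff \alpha_R(z) = \alpha_L(z)$.

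Next, for (b)/(c) $\Leftrightarrow$ (d), I would argue via the explicit formulas of Corollary~\ref{min-cor} and Theorem~\ref{alphaL-thm}. Write $\alpha_{\min}(z) = \lW b_1,a_1,\dots,b_l,a_l\rW^{-1}$ with $a_1<\dots<a_l$, $a_i\leq b_i$, and the $b_i$ distinct (these constraints come from the fact that the $a_i$ are the left endpoints and fixed points of $z$ in a window and $b_i = z(a_i)$). One checks directly that such a window is $321$-avoiding when read as a one-line sequence precisely when the interleaved sequence $b_1,a_1,b_2,a_2,\dots$ has no descending subsequence of length three; the same analysis applied to $\alpha_{\max}(z) = \lW d_1,c_1,\dots,d_l,c_l\rW^{-1}$ (where now $c_i\leq d_i$ and $c_1<\dots<c_l$) shows these two permutations coincide exactly when $z$ has no ``nesting'' pair of cycles, which is equivalent to $z$ being $321$-avoiding as a bijection $\ZZ\to\ZZ$. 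A cleaner route for (d) $\Rightarrow$ (b): if $z$ is $321$-avoiding then $z$ is non-nesting, so $\lambda_R(z) = \lambda_L(z)$ (both produce the same non-nesting matching), and unwinding the definitions of $\alpha_{\min}$ and $\alpha_{\max}$ through Theorem~\ref{alphaL-thm} forces them to agree. For the converse direction I would contrapositively observe that if $z$ has a nesting pair of cycles $(a,d),(b,c)$ with $a<b<c<d$, then Corollary~\ref{min-cor} lets one exhibit an explicit atom $u \lessdot_\cA \alpha_{\min}(z)$'s successor that differs from $\alpha_{\min}(z)$, so $|\cA(z)|>1$.

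The implications (d) $\Leftrightarrow$ (e) are the standard equivalence between $321$-avoiding and fully commutative elements in affine type $A$; this is due to Billey–Jockusch–Stanley and Fan–Stembridge and can be cited, or quickly deduced from Lemma~\ref{321-lem} and Lemma~\ref{231-lem} once one knows (a) holds. Finally (e) $\Rightarrow$ (a) is essentially Lemma~\ref{321-lem} combined with Corollary~\ref{equiv-cor}: if $w \in \cA(z)$ is fully commutative then no $\lessdot_\cA$-relation touches $w$ (a $\lessdot_\cA$ step would, by Lemma~\ref{231-lem}, produce a $cab \leftrightarrow bca$ pattern, and iterating or combining with Lemma~\ref{321-lem} contradicts full commutativity of the whole atom set), so by Lemma~\ref{extr-lem} we get $w = \alpha_{\min}(z) = \alpha_{\max}(z)$ and $\cA(z)$ is a singleton. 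I expect the main obstacle to be the careful bookkeeping in the (d) $\Leftrightarrow$ (b) step: translating ``$z$ non-nesting'' into the precise combinatorics of the interleaved windows $\lW b_1,a_1,\dots\rW$ versus $\lW d_1,c_1,\dots\rW$ and verifying they are literally equal (not just equal up to the $\lessdot_\cA$-relation) requires tracking how nesting cycles shift the $p_j, q_j$ counters in Proposition-Definition~\ref{lambda-def}; everything else is a short deduction from results already in hand.
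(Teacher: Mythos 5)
Your architecture is essentially the paper's: (a), (b), (c) are tied together via Proposition~\ref{min-prop} and Theorem~\ref{poset-thm}, (d) $\Leftrightarrow$ (e) is cited, the loop is closed through full commutativity (the paper cites \cite[Proposition 7.12]{HMP2} for (e) $\Rightarrow$ (a)), and the passage to (d) goes through the window formulas of Corollary~\ref{min-cor}. But there is a genuine gap in the step connecting (b)/(c) with (d), and it occurs in both directions for the same reason: you identify ``$z$ is not 321-avoiding'' with ``$z$ has a nesting pair of 2-cycles $(a,d),(b,c)$ with $a<b<c<d$.'' For an involution this identification is false. Failure of 321-avoidance is equivalent to the existence of $a<a'\leq z(a')<z(a)$, and the case $a'=z(a')$ --- a \emph{fixed point} lying strictly inside an arc --- is not a nesting pair of cycles. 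Concretely, $z=t_{1,3}\in\tI_3$ is non-nesting in the sense of Section~\ref{admissibility-sect} (it has a single $2$-cycle up to translation), yet it is not 321-avoiding, one computes $\lambda_R(z)=(s_1,\psi)\neq (s_2,\chi)=\lambda_L(z)$, and $\alpha_{\min}(z)=\lW 3,1,2\rW^{-1}\neq\lW 2,1,3\rW^{-1}=\alpha_{\max}(z)$. This falsifies your intermediate claim ``$z$ non-nesting $\Rightarrow\lambda_R(z)=\lambda_L(z)$,'' and, more importantly, your contrapositive argument for (c) $\Rightarrow$ (d) simply does not see such involutions, so that implication is not established. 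The repair is exactly what the paper does: start from $a<a'\leq z(a')<z(a)$ and read off from Corollary~\ref{min-cor} that the interleaved windows $\lW b_1,a_1,\dots,b_l,a_l\rW$ and $\lW d_1,c_1,\dots,d_l,c_l\rW$ must differ, the fixed-point case included (there $a'$ occurs as some $a_i=b_i$ on one side and some $c_j=d_j$ on the other, in incompatible positions relative to $a$ and $z(a)$).

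A secondary issue: your sketch of (e) $\Rightarrow$ (a) (``iterating or combining with Lemma~\ref{321-lem} contradicts full commutativity of the whole atom set'') is not checkable as written, since the hypothesis concerns $z$ while Lemmas~\ref{231-lem} and \ref{321-lem} concern patterns in the atoms. To complete it one must convert a cover $u\lessdot_\cA v$ inside $\cA(z)$ into a configuration $a<a'\leq z(a')<z(a)$ in $z$ itself, using Corollary~\ref{course-cor}(a) together with Theorem~\ref{excluded-thm}; this amounts to proving (d) $\Rightarrow$ (a) directly, which is a legitimate (and citation-free) alternative to the paper's appeal to \cite{HMP2}, but it needs to be carried out.
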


\begin{remark}
Biagioli, Jouhet, and Nadeau \cite[Proposition 3.3]{BJN} have derived a length generating function for 
the involutions in $\tS_n$ with these equivalent properties.
\end{remark}

\begin{proof}
The equivalence of (a), (b), and (c) is clear  
from
Theorem~\ref{poset-thm}.
The equivalence of (d) and (e) is well-known; see the results of Green \cite[Theorem 2.7]{Green},
Lam \cite[Proposition 35]{Lam}, or Fan and Stembridge \cite{FS}.
If  $z$ is fully commutative then it follows from \cite[Proposition 7.12]{HMP2} that $|\cA(z)|=1$; this can
also be shown by a direct argument.
Conversely, if $z$ is not 321-avoiding, then there must exist $a,a' \in \ZZ$ with $a < a' \leq z(a') < z(a)$,
in which case the formulas in Corollary~\ref{min-cor} show that $\alpha_{\min}(z) \neq \alpha_{\max}(z)$.
Thus (e) $\Rightarrow$ (a) and (b) $\Rightarrow$ (d), which completes the proof.
\end{proof}

\section{Cycle removal process}\label{cycle-sect}

A \emph{doubly infinite sequence} $\ttS=(\dots a_{-2} a_{-1}a_0a_1a_2\dots)$
is an orbit of a map $f : \ZZ \to X$ under the action of $\ZZ$ by translation. 
If for some $m \in\ZZ$ we have $f(m+i) = a_i$ for all $i \in \ZZ$,
then we call 
 $\ttS$
the \emph{string representation} of $f$.
We sometimes use the term \emph{string} as a shorthand for doubly infinite sequence.
By Proposition-Definition~\ref{window-def}, no two elements of $\tS_n$ have the same string representation.

%

%
%
%

For the remainder of this section,
we fix the following notation.
Let $y \in \tI_n$ and $w \in \tS_n$ with $w^{-1} \in \cA(y)$,
and write $\ttS$ for the string representation of $w$.
We construct a sequence of strings $\ttS_0, \ttS_1,\dots,\ttS_p$
and pairs $(a_1,b_1),(a_2,b_2),\dots,(a_p,b_p) \in \ZZ\times \ZZ$ 
by the following algorithm. 

\begin{definition}[Cycle removal process]\label{cyc-def}
Start with $\ttS_0 = \ttS$.
Then, for each $i=0,1,2,\dots$, define $(a_{i+1},b_{i+1})$ to be any pair of 
integers with $a_{i+1}<b_{i+1}$ such that $b_{i+1}a_{i+1}$ appears as a consecutive subsequence
of $\ttS_i$. 
 If no such pair exists, so that $\ttS_i$ is an increasing sequence, then the process terminates with $p=i$.
Otherwise, we form $\ttS_{i+1}$ by removing all numbers congruent to $a_{i+1}$ or $b_{i+1}$
modulo $n$ from $\ttS_i$ and continue.
\end{definition}

We refer to this procedure as the \emph{cycle removal process} for $w$.
The process must terminate in a finite number of steps since each iteration removes two congruence classes of 
integers modulo $n$ from the current string.
By construction  the last string $\ttS_p$ is a strictly increasing sequence. 

\begin{example}
Suppose $y = t_{1,8}t_{2,7} \in \tI_4$ and $w = \lW 3,4,-3,-2\rW  \in  \cA(y)^{-1}$, so that
\ben
\item[] $\ttS =( \dots,-5,-4,-11,-10,-1,0,-7,-6,3, 4, -3, -2, 7, 8,1,2,11,12,5,6,\dots)$.
\een
For our first cycle, choose $(a_1,b_1) = (1,8)$. Then
\ben
\item[] $\ttS_1 =( \dots,-5,-10,-1,-6,3, -2, 7, 2,11,6,\dots)$.
\een
For our second cycle, choose $(a_2,b_2) = (2,7)$. Removing all numbers congruent to 2 or 7 modulo 4 from $\ttS_1$
leaves the empty string $\ttS_2 = \emptyset$, so our process terminates with $p=2$.
\end{example}

\begin{theorem}\label{cycle-process-thm}
No matter how the cycle removal process for $w$ is carried out, 
$ \ell'(y)=p$ and
 $ \cC(y) = \{ (a_i+mn,b_i+mn) : i \in [p],\ m \in \ZZ\}$,
 and $\ttS_p$ is the increasing sequence of fixed points of $y$.
\end{theorem}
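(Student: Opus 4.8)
The plan is to analyze a single step of the cycle removal process and show it mirrors exactly one step of the Demazure-conjugation description of atoms. Recall that $w^{-1} \in \cA(y)$ means $y = w \circ w^{-1}$ and $\ell(w) = \ellhat(y) = \tfrac12(\ell(y)+\ell'(y))$. By Lemma~\ref{321-lem} applied to $w^{-1} \in \cA(y)$, the string $\ttS$ of $w$ has no consecutive decreasing triple $cba$ with $a<b<c$; equivalently, whenever $b_1 a_1$ appears consecutively in $\ttS = \ttS_0$ with $a_1 < b_1$, the entries immediately surrounding this pair are positioned so that no $321$-pattern is formed. This is the input that makes the process well-defined and controlled: every descent of $w$ sits inside a ``$2$-cycle-like'' configuration.

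First I would set up the key lemma: if $b_1 a_1$ appears consecutively in $\ttS$ at positions, say, $w^{-1}(b_1) = e$ and $w^{-1}(a_1) = e+1$ (with indices read modulo $n\ZZ$ in the string), then $(a_1, b_1) \in \cC(y)$, the string $\ttS_1$ obtained by deleting all entries congruent to $a_1$ or $b_1$ mod $n$ is the string of an element $w_1 \in \tS_{n}$ (after a harmless reindexing — strictly, $w_1$ lives in a smaller affine symmetric group, or one works with the ``restricted'' string on the remaining $n-2$ residue classes), and $w_1^{-1}$ is an atom for the involution $y_1$ obtained from $y$ by removing the cycle $(a_1,b_1)$ and all its translates. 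To prove $(a_1,b_1) \in \cC(y)$: by Corollary~\ref{course-cor}(a), since $b_1 = (w^{-1})^{-1}(e)\dots$ — more directly, apply Corollary~\ref{course-cor}(a) to the atom $w^{-1}$ of $y$ with the descent at position $e$, which gives exactly $(a_1,b_1) \in \cC(y)$. For the atom-preservation claim, I would use the Demazure-product characterization: writing $w^{-1} = u \cdot s_e s_{e+1}\cdots$ appropriately, the configuration $b_1 a_1$ in consecutive positions lets one factor out the transposition $t_{a_1 b_1}$ (conjugated suitably) and check via \eqref{dem-eq} and associativity of $\circ$ that the residual permutation is still a minimal-length element with $w_1 \circ w_1^{-1} = y_1$, using that $\ell'(y_1) = \ell'(y) - 1$ and that lengths drop by the right amount. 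The verification that $\ell(w_1) = \ellhat(y_1)$ is a bookkeeping computation using Proposition~\ref{ellhat-prop} and the fact that deleting two residue classes from the one-line notation removes precisely the inversions accounted for by that cycle.

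With the key lemma in hand, the theorem follows by induction on $\ell'(y)$. If $\ell'(y) = 0$ then $\ell(w) = \tfrac12\ell(y)$; but also $y$ has no $2$-cycles, so $y$ is minimal in $\prec_R$ and — since $w^{-1}$ is an atom of an involution with $\ell(y) = \ell'(y)$ forcing $\ell(y) = 0$ by Lemma~\ref{commuting-product-lem} applied carefully — actually $\ell'(y) = 0$ means $\ellhat(y) = \tfrac12\ell(y)$, and one checks $w = 1$ and $\ttS$ is already increasing (it is the identity string $(\dots, i-1, i, i+1, \dots)$), so $p = 0$ and $\ttS_p$ lists all integers, which are exactly the fixed points of $y = 1$; wait — more carefully, $\ell'(y)=0$ need not force $y=1$, so instead: if $\ttS = \ttS_0$ is already increasing then $w$ has no descents, so $w^{-1}$ has no descents either, hence $\cA(y) \ni w^{-1}$ with $\ell(w^{-1})$... in fact $w$ increasing among each window forces $w$ a translation, and combined with the atom condition gives $\ell'(y) = 0$ and the fixed-point claim directly. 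For the inductive step, any valid first choice $(a_1,b_1)$ produces by the key lemma an involution $y_1$ with $\cC(y_1) = \cC(y) \setminus \{(a_1+mn, b_1+mn)\}$ and $w_1^{-1} \in \cA(y_1)$, and the remaining strings $\ttS_1, \dots, \ttS_p$ together with the pairs $(a_2,b_2),\dots,(a_p,b_p)$ constitute a valid run of the cycle removal process for $w_1$; by induction $p - 1 = \ell'(y_1) = \ell'(y) - 1$, the collected pairs give $\cC(y_1)$, hence all together give $\cC(y)$, and $\ttS_p = \ttS_{1,p-1}$ is the increasing sequence of fixed points of $y_1$, which coincides with the fixed points of $y$ (deleting a genuine $2$-cycle's residues changes no fixed point). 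The ``no matter how the process is carried out'' clause is automatic because the induction makes no use of which valid pair is chosen at each step.

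The main obstacle I anticipate is the bookkeeping in the key lemma — specifically, making rigorous the passage between ``the string $\ttS$ with two residue classes deleted'' and an honest element of an affine symmetric group of smaller rank, and tracking through Proposition-Definition~\ref{window-def} how descents, lengths, and the Demazure product behave under this deletion. One must check that deleting the pair really does remove exactly one reflection's worth of inversions from $w$ (equivalently $\ell(w_1) = \ell(w) - (\text{number of entries between } b_1 \text{ and } a_1 \text{ in the window, plus offsets})$), and that \eqref{dem-eq} applies cleanly, i.e. that after removing the cycle $(a_1,b_1)$ the two points $a_1, b_1$ are not needed again. The $321$-avoidance of $w^{-1}$ (Lemma~\ref{321-lem}) is what prevents the deleted entries from being ``entangled'' with later descents, so that should be invoked carefully at exactly this point. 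Everything else is a clean induction once this local step is established.
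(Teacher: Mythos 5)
Your overall architecture --- induct on $\ell'(y)$ by showing that each consecutive descent pair $b_1a_1$ of $\ttS$ gives a cycle $(a_1,b_1)\in\cC(y)$ (this part is fine: it is exactly Corollary~\ref{course-cor}(a) applied to $w^{-1}$) and that deleting the two residue classes turns an atom of $y$ into an atom of the restricted involution --- is viable, but the second half of your ``key lemma'' is a genuine gap, not bookkeeping. The statement you need is essentially the implication (a)\,$\Rightarrow$\,(b) of Corollary~\ref{std-cor} for the $y$-invariant set $E$ complementary to the residues of $a_1,b_1$; in the paper that corollary is deduced from Theorem~\ref{excluded-thm}, which is itself deduced from Theorem~\ref{cycle-process-thm}, so you cannot appeal to it and must prove it independently. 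Your proposed route --- ``factor out $t_{a_1b_1}$ and check via \eqref{dem-eq} and associativity of $\circ$'' --- does not engage with the actual difficulty: the Demazure product and reduced words of $\tS_n$ do not restrict in any simple way to a sub-collection of residue classes (a letter $s_{i_j}$ in a reduced word for $w^{-1}$ that swaps a tracked residue with an untracked one corresponds to \emph{no} move in the restricted group, and one must show the surviving letters still form a reduced word of the right length $\ellhat([y]_E)$ and still satisfy the length-increasing condition at every Demazure-conjugation step). None of this is supplied, and it is the mathematical heart of your argument. A repair is possible within your framework: by Theorem~\ref{poset-thm} every atom is reachable from $\alpha_{\min}(y)$ by $\lessdot_\cA$ moves; each such move replaces a consecutive $cab$ by $bca$ with $(a,c)\in\cC(y)$, so after deleting a union of $y$-orbits each move descends either to a $\lessdot_\cA$ move or to an equality, and $[\alpha_{\min}(y)]_E=\alpha_{\min}([y]_E)$ by the explicit formula of Corollary~\ref{min-cor}; this yields $[w^{-1}]_E\in\cA([y]_E)$. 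But that is a different argument from the one you sketched.

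For contrast, the paper never leaves $\tS_n$ and needs no restriction lemma. For each removed pair $(a_j,b_j)$ and each translate $(a,b)$, it observes that the entries of $\ttS$ lying between $b$ and $a$ are exactly the representatives $g_i,h_i$ of full, earlier-removed cycles; using the equivalence $\sim$ generated by $e_2e_3e_1\mapsto e_3e_1e_2$ (which preserves $\cA(y)$ by Corollary~\ref{equiv-cor}) together with Lemma~\ref{321-lem}, it shuffles these cycles out of the way, proving along the way that $g_i<a<b<h_i$, and so produces an atom of $y$ whose string contains $ba$ consecutively; Corollary~\ref{course-cor}(a) then gives $(a,b)\in\cC(y)$ directly, and Corollary~\ref{course-cor}(c) handles the fixed-point claim. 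Your deletion strategy, if the restriction lemma were proved, would buy a cleaner induction and essentially establish Corollary~\ref{std-cor} along the way; the paper's route is longer to state but uses only tools already available at this point. Minor further points: your base case is correct but muddled --- $\ell'(y)=0$ does force $y=1$, since an involution with $\cC(y)=\varnothing$ has no $2$-cycles, and an increasing string forces $w=1$.
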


\begin{proof}
Fix $j \in [p]$ and suppose $a,b \in \ZZ$ are such that $a-a_j = b-b_j \in n \ZZ$, so that $a<b$.
Then $ba$ must be a consecutive subsequence of $\ttS_{j-1}$,
so there are integers $c_1,c_2,\dots,c_m$ such that $bc_1c_2\cdots c_ma$
is a consecutive subsequence of $\ttS =\ttS_0$. The number of such integers must satisfy $m \leq n-2$,
since if $m=n-1$ then we would have $a=b+n \not < b$, while if $m \geq n$ then $c_n =b+n$ would appear between $b$ and $a$ in $\ttS_{j-1}$. It follows that $a,b,c_1,c_2,\dots,c_m$ belong to distinct congruences classes modulo $n$.

Since none of the numbers between $b$ and $a$ in $\ttS$  are present in $\ttS_{j-1}$,
there is a subset $I \subset [j-1]$
and integers $g_i,h_i \in \ZZ$ with $g_i-a_i = h_i-b_i \in n\ZZ$ for each $i \in I$
such that $\{c_1,c_2,\dots,c_m\} = \{ g_i : i \in I \} \cup \{ h_i : i \in I\}$. Note, as such, that $m=2|I|$.
Let $\sim$  be the equivalence relation on finite integer sequences generated by setting
 $\alpha \sim \beta$ whenever it is possible to obtain $\beta$ from $\alpha$
by replacing a consecutive subsequence of the form $e_2e_3e_1$ by $e_3e_1e_2$ where $e_1<e_2<e_3$.
We claim that $b c_1 c_2\cdots c_ma \sim h_{i_1}g_{i_1}h_{i_2}g_{i_2}\cdots h_{i_k}g_{i_k}ba$
where $i_1,i_2,\dots,i_k$ are the elements of $I$ in some order.
Applying this property inductively with $(a,b)$ replaced by the pairs $(g_i,h_i)$, we deduce that  at least 
$bc_1c_2\cdots c_ma \sim b h_{i_1}g_{i_1}h_{i_2}g_{i_2}\cdots h_{i_k}g_{i_k}a$.
Corollary~\ref{equiv-cor} and Lemma~\ref{321-lem} imply
that no sequence equivalent to $bc_1c_2\cdots c_ma$ under $\sim$
 contains a consecutive subsequence $e_3e_2e_1$ with $e_1<e_2<e_3$.
From this property, it is easy to see that 
$ b h_{i_1}g_{i_1}h_{i_2}g_{i_2}\cdots h_{i_k}g_{i_k}a$
is equivalent under $\sim$ to a sequence of the same form 
with  $g_{i_k} = \max \{ g_i : i \in I\}$, as well as to a (possibly different) sequence of the same form with $h_{i_1} = \min\{ h_i : i \in I\}$.
But then it follows that 
\be\label{g-eq}
g_i<a<b<h_i\quad\text{for all $i \in I$}
\ee
 so   
$b h_{i_1}g_{i_1} h_{i_2}g_{i_2}\cdots h_{i_k}g_{i_k}a \sim h_{i_1}g_{i_1} h_{i_2}g_{i_2}\cdots h_{i_k}g_{i_k}ba$ as desired.
We conclude by 
Corollary~\ref{equiv-cor} 
that there exists $v \in \cA(y)^{-1}$ whose string representation contains $ba$ as a consecutive subsequence,
so $(a,b) \in \cC(y)$  
by Corollary~\ref{course-cor}(a).
Hence  $(a_j,b_j) \in \cC(y)$ for each $j \in [p]$.
 Corollary~\ref{course-cor}(c) implies that each element of the increasing sequence  $\ttS_p$ is a fixed point of $y$,
so the theorem follows.
\end{proof}

Given $a,b,c,\dots$, we say that a string ``has the form $\dash a \dash b \dash c \dash \dots \dash$''
as a shorthand for  the statement that the string contains $(a,b,c,\dots)$ as a not necessarily consecutive subsequence.
Maintain the notation of Definition~\ref{cyc-def} and Theorem~\ref{cycle-process-thm} in the following corollaries.

\begin{corollary}\label{3412-cor}
If $\ttS$ has the form $\dash b'\dash b\dash a \dash a'\dash$
where $(a,b),(a',b') \in \cC(y)$, then $a<a'<b'<b$.
\end{corollary}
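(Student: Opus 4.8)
The plan is to examine the cycle removal process, locate the two steps at which the $y$-cycles containing $(a,b)$ and $(a',b')$ are removed, and then invoke equation~\eqref{g-eq} from the proof of Theorem~\ref{cycle-process-thm}.

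First I would clear away degenerate cases. Since $w$ is a bijection of $\ZZ$, every integer occurs exactly once in $\ttS$, so the hypothesis that $\ttS$ has the form $\dash b'\dash b\dash a\dash a'\dash$ means precisely that $w^{-1}(b')<w^{-1}(b)<w^{-1}(a)<w^{-1}(a')$. Using $a<b$, $a'<b'$, $y(a)=b$, $y(a')=b'$ and $y^2=1$, a short check rules out all coincidences among $a,b,a',b'$; moreover $(a,b)$ and $(a',b')$ lie in distinct cycles of $y$, for if $(a',b')=(a+mn,b+mn)$ then $w^{-1}(b)+mn=w^{-1}(b')<w^{-1}(b)$ would force $m<0$ whereas $w^{-1}(a)<w^{-1}(a')=w^{-1}(a)+mn$ would force $m>0$. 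By Theorem~\ref{cycle-process-thm} there are therefore distinct indices $j_1,j_2\in[p]$ with $(a,b)$ a shift of $(a_{j_1},b_{j_1})$ and $(a',b')$ a shift of $(a_{j_2},b_{j_2})$.

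The crucial point is to show $j_1<j_2$. For this I would use that each $\ttS_i$ inherits an $n$-periodicity from $\ttS_0=\ttS$: the positions of $\ttS_0$ surviving after $i$ steps form an $n$-periodic subset of $\ZZ$ with exactly $n-2i$ elements per window of length $n$, so after reindexing, translating every position of $\ttS_i$ by $n-2i$ adds $n$ to every entry. Hence if one shift of the descent $b_{j_2}a_{j_2}$ occurs as a consecutive subsequence of $\ttS_{j_2-1}$, so does every shift; in particular $b'a'$ does, since $(a',b')\in\cC(y)$ is a shift of $(a_{j_2},b_{j_2})$ (this is exactly the point already argued in the proof of Theorem~\ref{cycle-process-thm}). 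Therefore every integer lying strictly between $b'$ and $a'$ in $\ttS_0$ was deleted at one of the steps $1,\dots,j_2-1$; by the displayed order of $\ttS$ this includes $a$ and $b$, so the cycle of $y$ containing $(a,b)$ is removed before step $j_2$, and as it is a different cycle from that of $(a',b')$ we get $j_1<j_2$.

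Finally I would run the argument in the proof of Theorem~\ref{cycle-process-thm} with $j=j_2$ and with $(a',b')$ playing the role of the chosen shift of $(a_j,b_j)$. Writing $c_1,\dots,c_m$ for the integers strictly between $b'$ and $a'$ in $\ttS_0$, that argument shows the $c_t$ lie in pairwise distinct congruence classes mod $n$ and that $\{c_1,\dots,c_m\}=\bigcup_{i\in I}\{g_i,h_i\}$ for some $I\subseteq[j_2-1]$ with $(g_i,h_i)\equiv(a_i,b_i)\pmod n$, while \eqref{g-eq} gives $g_i<a'<b'<h_i$ for all $i\in I$. Since $a$ and $b$ occur among the $c_t$ and lie respectively in the congruence classes of $a_{j_1}$ and $b_{j_1}$ — classes disjoint from those of every cycle of $y$ other than the one containing $(a,b)$ — we must have $j_1\in I$ with $\{g_{j_1},h_{j_1}\}=\{a,b\}$, and then $g_{j_1}<h_{j_1}$ together with $a<b$ forces $g_{j_1}=a$ and $h_{j_1}=b$. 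Substituting into \eqref{g-eq} yields $a<a'<b'<b$. The main obstacle is the crucial point $j_1<j_2$: one must verify carefully that the deletions carried out by the process really do preserve the relevant $n$-periodicity of the strings, so that exposing one shift of the descent $b'a'$ forces $b'a'$ itself to be exposed at the same step.
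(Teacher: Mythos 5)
Your argument is correct and follows the paper's own route: the paper proves this corollary simply by observing that it is a restatement of property \eqref{g-eq} from the proof of Theorem~\ref{cycle-process-thm}, and what you have done is carry out that identification explicitly --- checking that $(a,b)$ and $(a',b')$ lie in distinct cycle classes, that the $n$-periodicity of the strings $\ttS_i$ forces $b'a'$ to become a consecutive descent at the step where its class is removed (so the class of $(a,b)$ is removed earlier), and that $(a,b)$ must then coincide with one of the pairs $(g_i,h_i)$ to which \eqref{g-eq} applies. The details you supply, in particular the periodicity of the intermediate strings, are exactly the facts the paper's Theorem~\ref{cycle-process-thm} proof already asserts and uses, so no new idea is needed and none is missing.
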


\begin{proof}
This is equivalent to  property \eqref{g-eq} shown in the proof of Theorem~\ref{cycle-process-thm}.
\end{proof}

\begin{corollary}\label{321cycle-cor}
None of $\ttS_0,\ttS_1,\dots,\ttS_p$ contains
a consecutive subsequence $cba$ where $a<b<c$.
\end{corollary}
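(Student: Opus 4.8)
The plan is to deduce this directly from the uniqueness half of Theorem~\ref{cycle-process-thm}, which guarantees that the set of pairs produced by the cycle removal process for $w$ equals $\cC(y)$ (as a union of residue translates), no matter which legal choices are made along the way. Arguing by contradiction, suppose some string $\ttS_i$ occurring in a run of the process contains a consecutive subsequence $cba$ with $a<b<c$. (This includes the possibility $i=0$, so no separate base case is needed.)

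I would now extend this partial run to completion in two different ways, branching at step $i+1$. Since $ba$ occurs consecutively in $\ttS_i$ and $a<b$, the pair $(a_{i+1},b_{i+1})=(a,b)$ is a legal choice at step $i+1$; finishing the run arbitrarily and invoking Theorem~\ref{cycle-process-thm} forces $(a,b)\in\cC(y)$, that is, $y(a)=b$. Since $cb$ also occurs consecutively in $\ttS_i$ and $b<c$, the pair $(a_{i+1},b_{i+1})=(b,c)$ is an equally legal choice at step $i+1$; the same argument applied to this run forces $(b,c)\in\cC(y)$, that is, $y(b)=c$.

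Because $y$ is an involution, $y(a)=b$ implies $y(b)=a$, so $a=c$, contradicting $a<b<c$; hence no $\ttS_i$ contains a consecutive descending triple. I do not expect any serious obstacle: the only point requiring attention is that $(a,b)$ and $(b,c)$ are indeed admissible removals at step $i+1$, which is immediate because $ba$ and $cb$ are both consecutive subsequences of $\ttS_i$ of the required ``larger-then-smaller'' shape. (One could instead give a longer argument by lifting the triple $cba$ to a window in $\ttS_0$ and sorting out the deleted cycles with $\sim$-moves as in the proof of Theorem~\ref{cycle-process-thm}, but the branching argument above is much shorter.)
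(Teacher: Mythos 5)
Your proof is correct and is essentially the paper's own argument: both deduce from Theorem~\ref{cycle-process-thm} that a consecutive descent $cba$ would force $(a,b)$ and $(b,c)$ into $\cC(y)$, which is impossible for an involution. You simply spell out the branching of the process at step $i+1$ that the paper leaves implicit.
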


\begin{proof}
If any string $\ttS_i$ had a consecutive subsequence of the form $cba$ where $a<b<c$,
then Theorem~\ref{cycle-process-thm} would imply that both $(a,b)$ and $(b,c)$ belong to $\cC(y)$, which is impossible.
\end{proof}

We can use the preceding results to give a more explicit set of conditions characterising the elements of $\cA(y)$.
Restricted to involutions in the finite symmetric group $S_n \subset \tS_n$,
the following theorem  is equivalent to a result of Can, Joyce, and Wyser \cite[Theorem 2.5]{CJW}.

\begin{theorem}\label{excluded-thm}
Let $y \in \tI_n$ and let $\ttS$ be the string representation of  an affine permutation $w \in \tS_n$.
Then $w^{-1} \in \cA(y)$ if and only if the following properties hold:
\ben
\item  If  $X,Y \in \ZZ$ are fixed points of $y$  with $X<Y$ then $\ttS$ has the form $\dash X \dash Y\dash$.

\item If $(a,b) \in \cC(y)$ then $\ttS$ has the form $\dash b \dash a \dash$.

\item If $X \in \ZZ$ is a fixed point of $y$ and $(a,b) \in \cC(y)$ then:
\ben
\item $\ttS$ does not have the form $\dash b \dash X\dash a \dash$.

\item If $X<a<b$ then $\ttS$ has the form $\dash X \dash b \dash a \dash$.

\item If $a<b<X$ then $\ttS$ has the form $ \dash b \dash a \dash X \dash$.
\een

\item If $(a,b),(a',b') \in \cC(y)$ and $a<a'<b'<b$ then
$\ttS$ has one of the forms
\[\dash b \dash a\dash b'\dash a' \dash
\qquord \dash b' \dash b\dash a\dash a'\dash \qquord \dash b'\dash a' \dash b \dash a\dash
\]
while if $a<a'$ and $b<b'$ then $\ttS$ has the form  $\dash b \dash a \dash b' \dash a' \dash$.

\een
\end{theorem}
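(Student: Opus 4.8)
The plan is to prove both directions of Theorem~\ref{excluded-thm} by reduction to the results already established about the cycle removal process and the poset $(\cA(y),<_\cA)$. For the forward direction, assume $w^{-1} \in \cA(y)$. Conditions (1) and (2) are immediate: by Theorem~\ref{cycle-process-thm} the fixed points of $y$ form the increasing final string $\ttS_p$, hence appear in increasing order in $\ttS$, and each $(a,b) \in \cC(y)$ appears as a (non-consecutive) subsequence $\dash b \dash a \dash$ because at the stage of the process where its congruence class is removed, $ba$ is consecutive in $\ttS_{i}$, so $b$ precedes $a$ in every earlier string, in particular in $\ttS=\ttS_0$. Condition (3)(a) follows from Corollary~\ref{321cycle-cor}: if $\ttS$ had the form $\dash b \dash X \dash a \dash$ with $X$ a fixed point and $(a,b)\in\cC(y)$, then in the string obtained just before removing the class of $X$ (which is never removed), the letters $b$, $X$, $a$ would still appear in that relative order, and since by that stage all letters strictly between them have been removed one obtains, after the removal steps that delete the intervening classes, a consecutive decreasing triple or contradicts Corollary~\ref{3412-cor}; the cleanest argument is to observe that applying $\lessdot_\cA$-moves (Corollary~\ref{equiv-cor}, Lemma~\ref{231-lem}) one could produce an atom whose string contains $cba$ decreasing, contradicting Lemma~\ref{321-lem}. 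Conditions (3)(b), (3)(c), and (4) are all instances of Corollary~\ref{3412-cor} and the ordering constraints it encodes, together with condition (2) applied to each cycle: one simply checks that the allowed ``forms'' listed are exactly those consistent with $b$ preceding $a$, $b'$ preceding $a'$, the nesting/crossing pattern of $\cC(y)$, and the absence of a decreasing triple among $\{a,a',b,b'\}$ (or $\{X,a,b\}$) in any $\sim$-equivalent string.

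For the converse, suppose $w$ satisfies (1)--(4); I want to show $w^{-1}\in\cA(y)$. The strategy is to run the cycle removal process on $w$ and show (i) that it never gets stuck in a way inconsistent with $y$, i.e., that the pairs $(a_i,b_i)$ extracted all lie in $\cC(y)$ and exhaust $\cC(y)$, and the terminal string is the increasing sequence of fixed points of $y$; and (ii) that therefore $w^{-1}$ is an atom of the involution whose cycle set and fixed point set it recovers, which must be $y$. Step (i): whenever $ba$ is consecutive in some $\ttS_i$, I claim $(a,b)\in\cC(y)$. This is where conditions (2)--(4) do the work: by condition (2), for each cycle $(a',b')\in\cC(y)$ the letter $b'$ precedes $a'$ in $\ttS$, and conditions (3)(a) and (4) force that, after deleting the classes of all cycles $(a_1,b_1),\dots,(a_i,b_i)$ already removed (which by induction lie in $\cC(y)$) and never deleting fixed-point classes, no fixed point and no ``wrong'' pair can end up adjacent in decreasing order; only genuine cycles of $y$ survive as consecutive descents. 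Concretely, if $ba$ is a descent in $\ttS_i$ but $(a,b)\notin\cC(y)$, I trace $a,b$ back to $\ttS$: there is a decreasing pair in $\ttS$ with only letters from already-removed cycle classes in between, and conditions (3)(b,c) and (4) applied to those intervening cycles together with condition (1)/(3)(a) for any fixed point yield a contradiction with the listed allowed forms. Then $\ell'(y)=p$ and the terminal increasing string consists precisely of the fixed points of $y$ by condition (1).

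Step (ii): once I know the cycle removal process for $w$ reads off exactly $\cC(y)$ and the fixed points of $y$, I need $w^{-1}\in\cA(y)$. For this I invoke the characterisation of atoms implicit in Corollary~\ref{min-cor} and Lemma~\ref{extr-lem}: using $\lessdot_\cA$-moves (which by Corollary~\ref{equiv-cor} preserve membership in $\cA$ of any fixed involution, and by Lemma~\ref{321-lem} cannot create a decreasing triple since conditions guarantee none is forced), I push $w^{-1}$ down to $\alpha_{\min}$ of the involution determined by its cycle-removal data. Since that data is $(\cC(y),\mathrm{Fix}(y))$, the bottom element is $\alpha_{\min}(y)\in\cA(y)$, and walking back up the $\lessdot_\cA$-chain keeps us inside $\cA(y)$ by Corollary~\ref{equiv-cor}; hence $w^{-1}\in\cA(y)$. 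The main obstacle is Step (i) of the converse: verifying that conditions (1)--(4), which constrain only the relative order of small tuples of letters, really do prevent a spurious descent from appearing at any stage of the removal process. This requires a careful case analysis of which letters can lie between a descent pair after several removal steps, and checking that each configuration is excluded by one of the listed forbidden/forced patterns — essentially reproving, in the presence of these hypotheses, the $\sim$-invariance argument from the proof of Theorem~\ref{cycle-process-thm} but now run in reverse.
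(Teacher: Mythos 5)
Your overall strategy coincides with the paper's: the forward direction is read off the cycle removal process (Theorem~\ref{cycle-process-thm} and Corollaries~\ref{course-cor}, \ref{3412-cor}, \ref{321cycle-cor}), and the converse connects $w^{-1}$ to $\alpha_{\min}(y)$ by $\lessdot_\cA$-moves and then applies Corollary~\ref{equiv-cor}. The forward half is essentially right, apart from a slip in 3(a): a configuration $\dash b\dash X\dash a\dash$ with, say, $X<a<b$ produces no decreasing triple, so your ``force a $cba$ pattern'' variant does not cover all cases; the clean argument is simply that $ba$ must become consecutive in some intermediate string $\ttS_i$, while the fixed point $X$ is never deleted and deletion preserves relative order. (Also, 3(b), 3(c) and the second half of 4 really need the inversion-set sandwiching of Corollary~\ref{course-cor}(b), not just Corollary~\ref{3412-cor}.)

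The genuine gap is in the converse. Your Step (i) is not a certificate of atomhood: the paper proves no converse to Theorem~\ref{cycle-process-thm}, and knowing that the cycle removal process extracts exactly $\cC(y)$ and terminates in the fixed points of $y$ does not by itself yield $\ell(w)=\ellhat(y)$ or $w^{-1}\circ w=y$. For instance, for $y=t_{1,4}t_{2,3}$ the string $\cdots 4\,3\,2\,1\cdots$ of $y$ itself admits a run of the process extracting $(2,3)$ and then $(1,4)$, yet $y\notin\cA(y)$; so individual runs recovering $\cC(y)$ prove nothing, and even the ``every run'' version is an unproved statement that would need its own argument. Consequently all of the logical weight falls on Step (ii), namely on actually exhibiting a $\sim_\cA$-chain from $w^{-1}$ to $\alpha_{\min}(y)$ using only conditions (1)--(4) --- and this is precisely what you declare to be ``the main obstacle'' and leave as an unexecuted case analysis. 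Note too that Lemma~\ref{321-lem} and Lemma~\ref{extr-lem} cannot be invoked along the way, since both presuppose membership in $\cA(y)$, which is what is being proved; the absence of decreasing triples in the intermediate strings must be derived directly from (1)--(4). The paper carries out exactly this construction in two stages (first reach, via $\sim_\cA$, a string in which $ba$ is consecutive for every $(a,b)\in\cC(y)$, then sort to the string of $\alpha_{\min}(y)^{-1}$ and identify it via Corollary~\ref{min-cor}); until that case analysis is supplied, your ``if'' direction is incomplete.
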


\begin{proof}
First suppose $w^{-1} \in \cA(y)$.
Property 1 holds by Theorem~\ref{cycle-process-thm}, and property 2 follows by Corollary~\ref{course-cor}(c).
Property 3(a) holds since $ba$ must occur as a consecutive subsequence of some intermediate string $\ttS_i$
in the cycle removal process for $w$.
Following this observation, one can deduce
properties 3(b) and 3(c) from Corollary~\ref{course-cor}(b).
By similar reasoning,  if $(a,b),(a',b') \in \cC(y)$ then 
$\ttS$ cannot have the form $\dash b \dash b' \dash a \dash a'\dash$,
so
property 4 follows from  Corollary~\ref{course-cor}(b) and Corollary~\ref{3412-cor}.

Conversely, suppose $\ttS$ has the given properties.
Then $b$ appears to the left of $a$ in $\ttS$ whenever $(a,b) \in \cC(y)$,
and in this case no fixed points of $y$ appear between $b$ and $a$ in $\ttS$.
Moreover, if $(a,b),(a',b') \in \cC(y)$ and either $a$ or $b$
appears between $b'$ and $a'$ in $\ttS$, 
then $\ttS$ has the form $\dash b'\dash b\dash a \dash a' \dash$ and $a<a'<b'<b$.
Let $\sim_\cA$ be the symmetric closure of $\leq_\cA$,
from Theorem~\ref{poset-thm}.
By an inductive argument similar to the one 
in the proof of Theorem~\ref{cycle-process-thm},
we deduce from the preceding observations that 
$w^{-1} \sim_\cA (w')^{-1}$
for a permutation $w' \in \tS_n$
whose string representation $\ttS'$ also satisfies the given conditions
and  has that property that $ba$ appears as a consecutive subsequence for each $(a,b) \in \cC(y)$.
Property 1 implies that all fixed points of $y$ appear in order in $\ttS'$.
If $cab$ is a consecutive subsequence of $\ttS'$ where $(a,c) \in \cC(y)$ and $b=y(b) $,
then property 3 implies that $a<b$.
If $bca$ is a consecutive subsequence of $\ttS'$ where $(a,c) \in \cC(y)$ and $b=y(b) $,
then property 3 implies that either $b<a$ or $a<b<c$.
Finally, if $(a,b),(a',b') \in \cC(y)$ where $a<a'$, then $\ttS'$
may contain $bab'a'$ or $b'baa'$ or $b'a'ba$ as a consecutive subsequence,
and in the second two cases, property 4 implies that $a<a'<b'<b$.
Given these observations, it is straightforward to show that $(w')^{-1} \sim_\cA (w'')^{-1}$
for a permutation $w'' \in \tS_n$ whose string representation $\ttS''$
contains $ba$ as a consequence subsequence for each $(a,b) \in \cC(y)$,
and has the property that $a$ appears to the left of $a'$ 
whenever $a<a'$ and $a\leq y(a)$ and $a'\leq y(a')$.
By Corollary~\ref{min-cor}, the unique such permutation is $w'' = \alpha_{\min}(y)^{-1}$, so 
 by Theorem~\ref{poset-thm} we have $w^{-1} \in \cA(y)$.
\end{proof}

Fix a subset $E\subset [n]$ of size $m$. 
Let $\phi_E : [m] \to E$ and $\psi_E : E \to [m]$ be order-preserving bijections.
If $w \in S_n$ is a permutation in a finite symmetric group,
then its \emph{standardisation}  is the permutation $[w]_E = \psi_{w(E)} \circ w \circ \phi_E \in S_m$.
If $w^2=1$ and $w(E)=E$, then $([w]_E)^2=1$.

 The Demazure product $\circ$ on $\tS_n$ restricts to an associative product $S_n\times S_n \to S_n$
 and each involution $y \in I_n = \tI_n \cap S_n$ has $\cA(y) \subset S_n$.
Can, Joyce, and Wyser's description of $\cA(y)$ for $y \in I_n$ in \cite{CJW} 
implies that $w \in S_n$ belongs to $\cA(y)$ if and only if $[w]_E \in \cA([y]_E)$
for all subsets $E \subset [n]$ which are invariant under $y$ and contain at most two $y$-orbits; cf. \cite[Corollary 3.19]{HMP3}.
This ``local''  criterion for membership in $\cA(y)$ was an important tool in the proofs of the main results in \cite{HMP3}.

\begin{example}
The atoms of the reverse permutations in $S_2$, $S_3$, and $S_4$ are given by
$\cA(21) = \{ 21\}$,  $\cA(321) = \{ 312, 231 \}$, and $\cA(4321) = \{ 4213, 3412, 2431\}$.
If $y = n\cdots 321 \in I_n$ and $E\subset [n]$ is $y$-invariant with at most two orbits,
then $[y]_E$ is either $21$, $321$, or $4321$,
and one can check that requiring $[w]_E \in \cA([y]_E)$ 
imposes precisely the sort of conditions we saw in Theorem~\ref{excluded-thm}.
\end{example}

This result can be extended to the affine case, provided we 
give the right definition of the standardisation of an affine permutation.
Fix a subset $E \subset \ZZ$ with $|(E+n\ZZ) \cap [n]| = m$, 
and 
define $\tphi_{E,n}$ as the unique order-preserving
$\ZZ \to E +n\ZZ $ with $\tphi_{E,n}([m]) \subset[n]$.

\begin{lemma} \label{affine-std-lem}
If $ w\in \tS_n$ then there is a unique order-preserving bijection $\tpsi_{E,w} : w(E) + n\ZZ \to \ZZ$
such that $\tpsi_{E,w} \circ w \circ \tphi_{E,n} \in \tS_m$. 
If $w \in \tI_n$ and $w(E)=E$
then $\tphi_{E,n}$ and $\tpsi_{E,w}$ are inverses.
\end{lemma}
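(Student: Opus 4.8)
The statement has two parts. For the first part, the plan is to exhibit $\tpsi_{E,w}$ explicitly. The set $w(E) + n\ZZ$ consists of $m$ congruence classes modulo $n$, and since $w \in \tS_n$ satisfies $w(i+n) = w(i)+n$, the set $w(E)+n\ZZ$ is invariant under translation by $n$. There is therefore a unique order-preserving bijection from $w(E)+n\ZZ$ onto $\ZZ$ that sends the smallest element of $(w(E)+n\ZZ) \cap (m_0 + [n])$ — for a suitable shift $m_0$ — into $[m]$; in fact order-preserving bijections between two such ``$n$-periodic'' subsets of $\ZZ$ of density $m$ (respectively $1$ after rescaling) are determined up to an overall integer translation, so there is a unique one landing the relevant block of $m$ consecutive terms into $[m]$. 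I would phrase this via Proposition-Definition~\ref{window-def}: the composite $g = \tpsi \circ w \circ \tphi_{E,n}$, for \emph{any} order-preserving bijection $\tpsi : w(E)+n\ZZ \to \ZZ$, is automatically a bijection $\ZZ \to \ZZ$ commuting with $i \mapsto i+n$ (each of the three factors does, using that $\tphi_{E,n}$ is order-preserving between $n$-periodic sets hence $\tphi_{E,n}(i+m) = \tphi_{E,n}(i)+n$, and similarly for $\tpsi$). So $g$ lies in the affine symmetric group of rank $m$ up to the normalization $g(1)+\dots+g(m) = \binom{m+1}{2}$, and this single linear condition is achieved by exactly one choice of the translation ambiguity in $\tpsi$. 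That pins down $\tpsi_{E,w}$ uniquely and proves existence and uniqueness.

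For the second part, assume $w \in \tI_n$ and $w(E) = E$. Then $w(E)+n\ZZ = E+n\ZZ$, so $\tphi_{E,n}$ and $\tpsi_{E,w}$ are both order-preserving bijections between $\ZZ$ and $E+n\ZZ$ (in opposite directions). The composite $\tpsi_{E,w} \circ \tphi_{E,n}$ is then an order-preserving bijection $\ZZ \to \ZZ$, hence is translation by some fixed $c \in \ZZ$; I must show $c = 0$. Equivalently, writing $u = [w]_E := \tpsi_{E,w}\circ w\circ \tphi_{E,n} \in \tS_m$, I want $\tpsi_{E,w} = \tphi_{E,n}^{-1}$, i.e. $u = \tphi_{E,n}^{-1}\circ w \circ \tphi_{E,n}$. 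Since $w^2 = 1$ and conjugation preserves this, $\tphi_{E,n}^{-1}\circ w\circ\tphi_{E,n}$ is an involution in $\tS_m$; in particular it, like $u$, is its own inverse. But by construction $u = \tpsi_{E,w}\circ w\circ\tphi_{E,n}$ and, using $w = w^{-1}$ and $E = w(E)$, one computes $u^{-1} = \tphi_{E,n}^{-1}\circ w\circ \tpsi_{E,w}^{-1} = \tphi_{E,n}^{-1}\circ w \circ \tphi_{E,n}\circ(\tphi_{E,n}^{-1}\circ\tpsi_{E,w}^{-1})$; writing $\tpsi_{E,w}\circ\tphi_{E,n}$ as translation by $c$, this says $u^{-1}$ equals $u$ precomposed and postcomposed by translations by $\mp c$, forcing (upon comparing with $u = u^{-1}$, valid as $u$ is an involution) the translation amount $c$ to be $0$. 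Concretely: $u$ and $\tphi_{E,n}^{-1} w \tphi_{E,n}$ are two elements of $\tS_m$ differing by $\tpsi_{E,w}\circ\tphi_{E,n}$, an element of $\tS_m$ of the form "translation by $c$", which lies in $\tS_m$ only when $c=0$ (translations by nonzero $c$ violate $\sum_{i=1}^m g(i) = \binom{m+1}{2}$ unless $c = 0$). Hence $\tpsi_{E,w}\circ\tphi_{E,n} = \id$, i.e. they are inverses.

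**Main obstacle.** The only real subtlety is bookkeeping around the normalization condition $g(1)+\dots+g(m) = \binom{m+1}{2}$: I must be careful that "order-preserving bijection of $n$-periodic subsets" is determined only up to an integer shift, and that exactly one shift meets the affine-symmetric-group normalization — this is precisely Proposition-Definition~\ref{window-def} applied in rank $m$, so I would cite it rather than reprove it. Everything else is a direct check that the three building-block maps commute with $i\mapsto i+n$ (suitably rescaled) and that composites of order-preserving bijections are order-preserving. I expect the write-up to be short, with the uniqueness in part one and the shift-is-zero argument in part two carrying all the content.
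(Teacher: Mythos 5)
Your argument for the first assertion is sound and is essentially the paper's: the composite is a bijection of $\ZZ$ commuting with $i\mapsto i+m$ (not $i\mapsto i+n$, a slip you flag yourself), so its window entries represent the distinct residues modulo $m$, and exactly one translate of $\tpsi$ achieves the normalisation $g(1)+\cdots+g(m)=\binom{m+1}{2}$.

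The second assertion, however, has a genuine gap, and it sits exactly where the involution hypothesis must enter. Both of your routes to $c=0$ rest on an unestablished claim. The statement that $\tphi_{E,n}^{-1}\circ w\circ\tphi_{E,n}$ is ``an involution \emph{in} $\tS_m$'' does not follow from ``$w^2=1$ and conjugation preserves this'': conjugation yields an involutive bijection of $\ZZ$ commuting with $i\mapsto i+m$, but membership in $\tS_m$ additionally requires $\sum_{i=1}^m g(i)=\binom{m+1}{2}$, which unwinds to $\sum_e w(e)=\sum_e e$ with $e$ ranging over the representatives of $E+n\ZZ$ in $[n]$. That identity fails for general $w$ preserving the congruence classes of $E$ (take $n=2$, $E=\{1\}$, $w=\lW 3,0\rW$, where the conjugate sends $1\mapsto 2$ and so is not in $\tS_1$), and it holds for involutions precisely because their $2$-cycles pair representatives $i,j$ with $w(i)+w(j)=i+j$ --- which is the entire content of the paper's hint for this part, and which your write-up never invokes. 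Your other closing step, ``comparing with $u=u^{-1}$, valid as $u$ is an involution,'' is circular: that $[w]_E$ squares to the identity is the corollary the paper derives \emph{from} this lemma, and $u=\tpsi_{E,w}\circ w\circ\tphi_{E,n}$ is not known to be an involution until $c=0$ is proved. The identity $u^{-1}=T^{-1}\circ u\circ T^{-1}$ that you do derive correctly can be closed without either claim: $u$ and hence $u^{-1}$ lie in $\tS_m$, and summing $u^{-1}(k)=u(k-c)-c$ over $k\in[m]$ gives $\binom{m+1}{2}=\binom{m+1}{2}-2mc$, so $c=0$. With that one repair your part-two argument becomes a correct alternative to the paper's.
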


\begin{proof}
Let $w \in \tS_n$.
The first assertion follows on checking that the images of $1,2,\dots,m$ under $w\circ \tphi_{E,n}$
represent the distinct congruences classes in $w(E) +n\ZZ$ modulo $n$.  
For the second assertion,
one uses the fact that if $w$ is an involution, then for each $i \in [n]$ with $w(i)\neq i$, there is a unique $j \in [n]$
with $w(i) \equiv j \modu n)$ and $w(i)+w(j) = i+j$.
The details are left 
as an exercise.
%
%
\end{proof}

Given $w \in \tS_n$ and $E \subset \ZZ$ with  $|(E+n\ZZ) \cap [n]| = m$, define 
$[w]_{E,n} = \tpsi_{E,w}  \circ w \circ \tphi_{E,n} \in \tS_m.$
We refer to $[w]_{E,n}$ as the \emph{(affine) standardisation} of $w$.
One has $[w]_{E, n} = [w]_{E+mn,n} $ for all $m \in \ZZ$.
When $n$ is clear from context, we write $[w]_E$ instead of $[w]_{E,n}$. This is consistent with our earlier notation since
if $E \subset [n]$ and $w \in S_n\subset \tS_n$,
 then $\tphi_{E,n}|_{[m]} = \phi_E$ and $\tpsi_{E,w} |_E = \psi_{w(E)}$.

\begin{corollary} If $E \subset \ZZ$, 
 $y \in \tI_n$, and $y(E)=E$, then $[y]_E \in \tI_n$.
\end{corollary}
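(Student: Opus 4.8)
The plan is to deduce this directly from Lemma~\ref{affine-std-lem}. Recall that by definition $[y]_E = \tpsi_{E,y} \circ y \circ \tphi_{E,n} \in \tS_m$, where $m = |(E+n\ZZ)\cap[n]|$; in particular $[y]_E$ already lies in an affine symmetric group, so it suffices to verify that it squares to the identity. Since $y \in \tS_n$ satisfies $y(i+n) = y(i)+n$ for all $i$, we have $y(E+n\ZZ) = y(E)+n\ZZ$, and the hypothesis $y(E)=E$ then gives $y(E+n\ZZ) = E+n\ZZ$. Thus $y$ restricts to a bijection of $E+n\ZZ$ onto itself.

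Next I would invoke the second assertion of Lemma~\ref{affine-std-lem}: because $y \in \tI_n$ and $y(E)=E$, the order-preserving bijections $\tphi_{E,n} : \ZZ \to E+n\ZZ$ and $\tpsi_{E,y} : y(E)+n\ZZ = E+n\ZZ \to \ZZ$ are mutually inverse. Hence, viewing both sides as maps $\ZZ \to \ZZ$, we may write $[y]_E = \tphi_{E,n}^{-1} \circ y \circ \tphi_{E,n}$, where this composition makes sense precisely because $y\circ\tphi_{E,n}$ takes values in $E+n\ZZ$, the domain of $\tphi_{E,n}^{-1} = \tpsi_{E,y}$.

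Finally I would compute
\[
([y]_E)^2 = \tphi_{E,n}^{-1} \circ y \circ \tphi_{E,n} \circ \tphi_{E,n}^{-1} \circ y \circ \tphi_{E,n}
= \tphi_{E,n}^{-1} \circ y^2 \circ \tphi_{E,n} = \tphi_{E,n}^{-1} \circ \tphi_{E,n} = \id,
\]
using $\tphi_{E,n}\circ\tphi_{E,n}^{-1} = \id_{E+n\ZZ}$, the fact that $y$ preserves $E+n\ZZ$, and $y^2 = 1$. Therefore $[y]_E$ is an involution, completing the proof. The only point requiring care is the bookkeeping of domains and codomains in the compositions — i.e.\ checking that $y$ carries $E+n\ZZ$ into itself so that $\tpsi_{E,y}$ can be applied after $y\circ\tphi_{E,n}$ — and this is exactly what the periodicity of $y$ together with $y(E)=E$ supplies; no genuine obstacle arises.
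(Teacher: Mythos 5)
Your proposal is correct and follows essentially the same route as the paper: both invoke the second assertion of Lemma~\ref{affine-std-lem} to conclude that $\tphi_{E,n}$ and $\tpsi_{E,y}$ are mutually inverse, and then square $[y]_E = \tpsi_{E,y}\circ y\circ\tphi_{E,n}$ using $y^2=1$. The extra bookkeeping you include about $y$ preserving $E+n\ZZ$ is a harmless elaboration of what the paper leaves implicit.
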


\begin{proof}
By Lemma~\ref{affine-std-lem}, both $ \tpsi_{E,y} \circ \tphi_{E,n} $ and $\tphi_{E,n} \circ \tpsi_{E,y}$
are identity maps so   $([y]_E)^2=1$.
\end{proof}

\begin{example}
Standardisation has a simple interpretation in terms of winding diagrams.
If  $E=y(E)\subset \ZZ$ then the winding diagram of $[y]_E$ is formed from that of $y \in \tI_n$
by erasing the vertices in $[n] \setminus (E+n\ZZ)$ along with their incident edges, and then replacing the numbers that remain by $1,2,\dots,m=|(E+n\ZZ) \cap [n]|$ in order.
For example,
 if $n=8$ and  $E = \{2,4,6,7,8\}$, then  
 \[
\begin{tikzpicture}[baseline=0,scale=0.3,label/.style={postaction={ decorate,transform shape,decoration={ markings, mark=at position .5 with \node #1;}}}]
{
\draw[fill,lightgray] (0,0) circle (4.0);
\node at (2.44929359829e-16, 4.0) {$_\bullet$};
\node at (1.83697019872e-16, 3.0) {$_1$};
\node at (2.82842712475, 2.82842712475) {$_\bullet$};
\node at (2.12132034356, 2.12132034356) {$_2$};
\node at (4.0, 0.0) {$_\bullet$};
\node at (3.0, 0.0) {$_3$};
\node at (2.82842712475, -2.82842712475) {$_\bullet$};
\node at (2.12132034356, -2.12132034356) {$_4$};
\node at (2.44929359829e-16, -4.0) {$_\bullet$};
\node at (1.83697019872e-16, -3.0) {$_5$};
\node at (-2.82842712475, -2.82842712475) {$_\bullet$};
\node at (-2.12132034356, -2.12132034356) {$_6$};
\node at (-4.0, -4.89858719659e-16) {$_\bullet$};
\node at (-3.0, -3.67394039744e-16) {$_7$};
\node at (-2.82842712475, 2.82842712475) {$_\bullet$};
\node at (-2.12132034356, 2.12132034356) {$_8$};
\draw [-,>=latex,domain=0:100,samples=100] plot ({(4.0 + 2.0 * sin(180 * (0.5 + asin(-0.9 + 1.8 * (\x / 100)) / asin(0.9) / 2))) * cos(90 - (0.0 + \x * 0.9))}, {(4.0 + 2.0 * sin(180 * (0.5 + asin(-0.9 + 1.8 * (\x / 100)) / asin(0.9) / 2))) * sin(90 - (0.0 + \x * 0.9))});
\draw [-,>=latex,domain=0:100,samples=100] plot ({(4.0 + 4.0 * sin(180 * (0.5 + asin(-0.9 + 1.8 * (\x / 100)) / asin(0.9) / 2))) * cos(90 - (45.0 + \x * 4.5))}, {(4.0 + 4.0 * sin(180 * (0.5 + asin(-0.9 + 1.8 * (\x / 100)) / asin(0.9) / 2))) * sin(90 - (45.0 + \x * 4.5))});
\draw [-,>=latex,domain=0:100,samples=100] plot ({(4.0 + 2.0 * sin(180 * (0.5 + asin(-0.9 + 1.8 * (\x / 100)) / asin(0.9) / 2))) * cos(90 - (225.0 + \x * 0.9))}, {(4.0 + 2.0 * sin(180 * (0.5 + asin(-0.9 + 1.8 * (\x / 100)) / asin(0.9) / 2))) * sin(90 - (225.0 + \x * 0.9))});
}
\end{tikzpicture}
\qquand
\begin{tikzpicture}[baseline=0,scale=0.3,label/.style={postaction={ decorate,transform shape,decoration={ markings, mark=at position .5 with \node #1;}}}]
{
\draw[fill,lightgray] (0,0) circle (4.0);
\node at (2.44929359829e-16, 4.0) {$_\bullet$};
\node at (1.83697019872e-16, 3.0) {$_1$};
\node at (3.80422606518, 1.2360679775) {$_\bullet$};
\node at (2.85316954889, 0.927050983125) {$_2$};
\node at (2.35114100917, -3.2360679775) {$_\bullet$};
\node at (1.76335575688, -2.42705098312) {$_3$};
\node at (-2.35114100917, -3.2360679775) {$_\bullet$};
\node at (-1.76335575688, -2.42705098312) {$_4$};
\node at (-3.80422606518, 1.2360679775) {$_\bullet$};
\node at (-2.85316954889, 0.927050983125) {$_5$};
\draw [-,>=latex,domain=0:100,samples=100] plot ({(4.0 + 4.0 * sin(180 * (0.5 + asin(-0.9 + 1.8 * (\x / 100)) / asin(0.9) / 2))) * cos(90 - (0.0 + \x * 4.32))}, {(4.0 + 4.0 * sin(180 * (0.5 + asin(-0.9 + 1.8 * (\x / 100)) / asin(0.9) / 2))) * sin(90 - (0.0 + \x * 4.32))});
\draw [-,>=latex,domain=0:100,samples=100] plot ({(4.0 + 2.0 * sin(180 * (0.5 + asin(-0.9 + 1.8 * (\x / 100)) / asin(0.9) / 2))) * cos(90 - (144.0 + \x * 1.44))}, {(4.0 + 2.0 * sin(180 * (0.5 + asin(-0.9 + 1.8 * (\x / 100)) / asin(0.9) / 2))) * sin(90 - (144.0 + \x * 1.44))});
}
\end{tikzpicture}
\]
represent $y =t_{1,3} t_{2,12} t_{6,8} \in \tI_8$ and $[y]_E  = t_{1,7} t_{3,5} \in \tI_5$, respectively.
\end{example}

The following is a corollary of Theorem~\ref{excluded-thm} via the preceding lemmas.

\begin{corollary}\label{std-cor}
Let $y \in \tI_n$, $w \in \tS_n$, and $X = \{1,y(1),\dots,n,y(n)\}$. The following are equivalent:
\ben
\item[(a)] $w \in \cA(y)$.
\item[(b)] $[w]_E \in \cA([y]_E)$
for each subset $E\subset X$ with $y(E)=E$.
\item[(c)] $[w]_E \in \cA([y]_E)$
for each subset $E\subset X$ with $y(E)=E$ and containing at most two $y$-orbits.
\een
\end{corollary}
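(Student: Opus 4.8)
The plan is to read the corollary off from the pattern characterisation of $\cA(y)$ in Theorem~\ref{excluded-thm}, exploiting the fact that each of the conditions 1--4 listed there constrains the string representation only through the behaviour of at most two $y$-orbit families. Since $(b)$ trivially implies $(c)$, it is enough to establish the implications $(a)\Rightarrow(b)$ and $(c)\Rightarrow(a)$.

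The first ingredient is a dictionary between (affine) standardisation and string representations. Using Lemma~\ref{affine-std-lem}, the discussion surrounding it, and the identity $[w]_{E,n}=[w]_{E+mn,n}$, I would check that for any $E\subseteq\ZZ$ with $y(E)=E$ the string representation of $([w]_E)^{-1}$ is obtained from the string representation of $w^{-1}$ by passing to the sub-pattern carried by the residue classes modulo $n$ that occur in $E$ (deleting the remaining entries and relabelling the survivors order-preservingly by $\ZZ$). Under this operation the fixed points, the $2$-cycles, and the nesting/crossing relations of $[y]_E$ are exactly the images of those of $y$ lying inside $E$, and the relative order of any surviving entries is unchanged; so, applying Theorem~\ref{excluded-thm} in rank $m=|(E+n\ZZ)\cap[n]|$, the assertion $[w]_E\in\cA([y]_E)$ is equivalent to that sub-pattern satisfying conditions 1--4 relative to $[y]_E$.

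Granting this dictionary, $(a)\Rightarrow(b)$ is immediate: if $w\in\cA(y)$, then Theorem~\ref{excluded-thm} furnishes conditions 1--4 for the string representation of $w^{-1}$ and $y$, and every instance of 1--4 for $[y]_E$ and the sub-pattern is the image of an instance of the same condition for $y$ and $w^{-1}$; hence the sub-pattern inherits all of 1--4, giving $[w]_E\in\cA([y]_E)$ for every $y$-invariant $E$, and in particular for $E\subseteq X$. For $(c)\Rightarrow(a)$ I would verify conditions 1--4 for the string representation of $w^{-1}$ relative to $y$, one orbit-family or pair of orbit-families at a time. For each such condition, choose inside each relevant orbit-family the representative orbit whose base point lies in $[n]$; that orbit is contained in $X=\{1,y(1),\dots,n,y(n)\}$, so the union $E\subseteq X$ of the (at most two) chosen representatives is a $y$-invariant subset of $X$ with at most two $y$-orbits. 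By hypothesis $[w]_E\in\cA([y]_E)$, and by the dictionary the corresponding sub-pattern satisfies conditions 1--4 for $[y]_E$; since this sub-pattern is itself $n$-periodic and records the entire mutual arrangement of the one or two chosen families across all relative $n$-shifts, those finitely many conditions for $[y]_E$ force all of the (a priori infinitely many) instances of conditions 1--4 for $y$ associated with these families. Ranging over all family-pairs then yields conditions 1--4 for $w^{-1}$ and $y$, whence $w\in\cA(y)$ by Theorem~\ref{excluded-thm}.

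I expect the main obstacle to be making the final step precise: showing that the standardisation $[w]_E$ at a two-family window $E\subseteq X$ genuinely witnesses \emph{every} $n$-translated instance of conditions 1--4 involving those families, not merely the one in which both representative orbits sit inside $X$. This reduces to a careful comparison of the sub-pattern on the residue classes of $E$ with the full string representation of $w^{-1}$, using the $n$-periodicity of both and the shift-invariance of $[\,\cdot\,]_E$; once that is settled, what remains is the routine finite verification of the conditions of Theorem~\ref{excluded-thm} against the finitely many possibilities for $[y]_E$ when $|E|\le 4$, together with the bookkeeping needed to pass between $w$ and $w^{-1}$.
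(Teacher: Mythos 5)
Your proposal is correct and follows essentially the same route as the paper: both rest on the observation that the string representation of $([w]_E)^{-1}$ is the order-preserving relabelling of the subsequence of the string of $w^{-1}$ supported on $E+n\ZZ$, together with the fact that each condition in Theorem~\ref{excluded-thm} only constrains the relative order of entries drawn from at most two cycle families of $y$. The periodicity point you flag as the ``main obstacle'' is real but resolves exactly as you indicate, and the paper passes over it in a single sentence.
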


\begin{proof}
If $E\subset \ZZ$ has $y(E)=E$, then  $\cC([y]_E)$ consists of the pairs $(i,j) \in \ZZ\times \ZZ$
such that $(a,b) \in \cC(y)$ for $a=\tphi_{E,n}(i)$ and $b = \tphi_{E,n}(j)$.
If $w \in \tS_n$, then applying
$\tphi_{E,n}$ to each term in the
string representation of $([w]_E)^{-1}$  gives the same thing as removing all numbers not in $E + n\ZZ$ from
the string representation of $w^{-1}$. 
Since $\tphi_{E,n}$ is order-preserving, it follows from Theorem~\ref{excluded-thm} that (a) $\Rightarrow$ (b) $\Rightarrow$ (c).
The conditions in Theorem~\ref{excluded-thm} each 
apply to the relative ordering, within the string representation of $w^{-1}$, of  numbers from a set of at most two cycles of $y$,
so (c) $\Rightarrow$ (a).
\end{proof}

\section{Covering transformations}\label{ct-sect}

The sets $\cA(z)$ for $z \in \tI_n$
are closely related to the restriction of the Bruhat order $<$ on $\tS_n$
to $\tI_n$. 
If $z \in \tI_n$ then
$\ellhat(z) = \frac{1}{2}(\ell(z) + \ell'(z)) = \ell(w)$ for all $w \in \cA(z)$, where $\ell'$ 
is given by Definition~\ref{abs-len-def}.
 The following statements derive from results of Hultman \cite{H1,H2}; see \cite[\S3.1]{HMP3}.

 \begin{proposition} \label{graded-prop}
 The poset $(\tI_n,<)$ is graded with rank function $\ellhat$.
 \end{proposition}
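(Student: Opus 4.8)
The plan is to derive this from the general theory of twisted involutions in Coxeter groups. I would begin with the soft reduction: $(\tS_n,<)$ has finite intervals and $1\in\tI_n$ is a minimum with $\ellhat(1)=0$, so the statement is equivalent to the single assertion that every covering relation $y\lessdot z$ in the subposet of $(\tS_n,<)$ induced on $\tI_n$ satisfies $\ellhat(z)=\ellhat(y)+1$. Indeed, granting this, any maximal chain $y=x_0\lessdot x_1\lessdot\cdots\lessdot x_m=z$ between comparable involutions telescopes to $m=\ellhat(z)-\ellhat(y)$, so all maximal chains have the same length and $\ellhat$ is a rank function.

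The quickest way to obtain the cover statement is to quote Hultman's theorem \cite{H1,H2} (cf.\ \cite[\S3.1]{HMP3}): for any Coxeter system and $S$-preserving automorphism, the Bruhat order restricted to the set of twisted involutions is graded, with rank of $w$ equal to $\tfrac12\bigl(\ell(w)+a(w)\bigr)$, where $a$ denotes absolute (reflection) length. Specializing to $W=\tS_n$ with the trivial automorphism recovers exactly $(\tI_n,<)$, and Hultman's identification of $a|_{\tI_n}$ with the function $\ell'$ of Definition~\ref{abs-len-def} (recorded in the excerpt right after that definition) shows the rank function is $\tfrac12(\ell+\ell')=\ellhat$.

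For a more self-contained proof in type $\tilde A$ one would instead use an explicit classification of covering transformations --- the affine analogue of Incitti's theorem, and the content of Section~\ref{ct-sect} --- and check that each cover $y\lessdot z$ in $(\tI_n,<)$ is of one of two kinds: either a conjugation $z=t_{ij}yt_{ij}$ with $\ell(z)=\ell(y)+2$, in which case $\ell'(z)=\ell'(y)$ since conjugation preserves absolute length (the remark after Lemma~\ref{triv-lem}), or the adjunction of a fixed-point pair $z=ys_i$ with $i,i+1$ fixed by $y$ and $\ell(z)=\ell(y)+1$, in which case $\ell'(z)=\ell'(y)+1$ by Lemma~\ref{ell'lem}; either way $\ell(z)+\ell'(z)=\ell(y)+\ell'(y)+2$, i.e.\ $\ellhat(z)=\ellhat(y)+1$. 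The genuinely hard input is not this arithmetic but the classification itself, together with the guarantee that comparable involutions are always joined by a saturated chain of such moves --- the ``subword property'' for twisted involutions, which is precisely Hultman's theorem and is what the cycle removal process and Corollary~\ref{course-cor} in Section~\ref{ct-sect} are built to establish. Everything else reduces to the length lemmas (Lemmas~\ref{+2lem}, \ref{ell'lem}, and \ref{triv-lem}) already in hand.
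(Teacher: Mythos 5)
Your first two paragraphs coincide with what the paper actually does: Proposition~\ref{graded-prop} is stated with no argument beyond the remark that it ``derives from results of Hultman \cite{H1,H2}; see \cite[\S3.1]{HMP3},'' so the reduction to covers followed by the appeal to Hultman's gradedness theorem for twisted involutions, with rank function $\tfrac12(\ell+\ell^a)$ and the identification of $\ell^a|_{\tI_n}$ with $\ell'$ recorded after Definition~\ref{abs-len-def}, is exactly the intended proof.

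The ``more self-contained'' route in your last paragraph, however, rests on a false classification of the covers of $(\tI_n,<)$. It is not true that every cover $y\lessdot_I z$ is either a conjugation $z=t_{ij}yt_{ij}$ with $\ell(z)=\ell(y)+2$ or an adjunction $z=ys_i$ with $i,i+1$ fixed. Take $n=4$, $y=s_1s_3=\lW 2,1,4,3\rW$ and $z=t_{1,4}=\lW 4,2,3,1\rW$. Then $y<z$ (e.g.\ $s_1s_3$ is a subword of the reduced word $s_1s_2s_3s_2s_1$ of $z$), every element below $z$ lies in $S_4$, and none of the involutions of intermediate length ($3214$, $1432$, $3412$) lies in the open interval, so $y\lessdot_I z$. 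Yet $\ell'(y)=2$ while $\ell'(z)=1$, so $z$ cannot be conjugate to $y$ (conjugation preserves $\ell'$ by the remark after Lemma~\ref{triv-lem}), and $\ell(z)-\ell(y)=3$, so $z\neq ys_i$. This is precisely the covering transformation $z=\tau^4_{1,4}(y)$ in the case $i<y(i)<y(j)<j$ of Definition~\ref{tau-def}, where two $2$-cycles are destroyed and one is created; Table~\ref{tau-table} contains several such families. Relatedly, Hultman's subword property does not assert that saturated chains in $(\tI_n,<)$ are chains of moves $y\mapsto s\circ y\circ s$: the unique such move out of $\lW 2,1,4,3\rW$ lands at $\lW 3,4,1,2\rW$, which is incomparable with $z$. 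What it asserts is that $y\leq z$ if and only if some $\ellhat$-expression for $y$ embeds as a subword of one for $z$, and gradedness is extracted from that statement rather than from a cover-by-cover census. So the citation route is the one to keep; making the argument self-contained genuinely requires the covering-transformation analysis of Section~\ref{ct-sect}, not just Lemmas~\ref{+2lem}, \ref{ell'lem}, and \ref{triv-lem}.
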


\begin{lemma}
\label{cover-lem}
If $y,z \in \tI_n$ and $w \in \cA(z)$. Then $y \leq z$ if and only if some $v\in \cA(y)$
has $v \leq w$.
\end{lemma}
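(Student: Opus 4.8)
The statement to prove is: for $y,z \in \tI_n$ and $w \in \cA(z)$, we have $y \leq z$ in Bruhat order if and only if some $v \in \cA(y)$ satisfies $v \leq w$. The plan is to reduce this to the known connection between $\cA$ and Bruhat order coming from Hultman's work (Propositions~\ref{graded-prop}, \ref{ellhat-prop}), together with the characterization of atoms via Demazure conjugation. The key structural fact I would invoke is that for any $z \in \tI_n$, the set $\cA(z)$ is precisely the set of minimal-length elements $w$ with $w^{-1} \circ w = z$, and that $\ell(w) = \ellhat(z)$ for all such $w$. I would also use the fact, standard in this circle of ideas (cf.\ \cite{H1,H2,RichSpring}), that $y \leq z$ in $\tI_n$ if and only if $\cA(y)$ ``lies below'' $\cA(z)$ in the sense that $\bigcup_{v \in \cA(y)} \{u : u \leq v\}$ meets $\bigcup_{w' \in \cA(z)}\{u : u \leq w'\}$ appropriately; the precise tool is the ``subword property'' of the Demazure product.

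For the forward direction, I would argue as follows. Suppose $y \leq z$. Fix the given $w \in \cA(z)$, so $w^{-1} \circ w = z$ and $\ell(w) = \ellhat(z)$. Since $y \leq z$ and Bruhat order is characterized by subwords, pick a reduced word for $w$ and use it to produce an expression (not necessarily reduced) for an element realizing $y$ under Demazure conjugation; concretely, one wants to show $\ell\big( (w')^{-1} \circ w'\big) \leq \ell(z)$ forces the existence of a subword $v$ of $w$ with $v^{-1}\circ v = y$ of length $\ellhat(y)$, hence $v \in \cA(y)$ and $v \leq w$. The cleanest route is probably to induct on $\ell(z) - \ell(y) = 2(\ellhat(z) - \ellhat(y))$ using the covering-relation description: if $y \lessdot z'$ is a cover in $(\tI_n, <)$ with $y \le z' \le z$, reduce to the case where $z$ covers $y$, and then use Lemma~\ref{conj-cover-lem} (or the covering-transformation analysis of Section~\ref{ct-sect}) to exhibit explicitly an atom of $y$ obtained from $w$ by deleting or modifying one generator so that it stays Bruhat-below $w$. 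For the converse, suppose $v \in \cA(y)$ with $v \leq w$. Then $\ell(v) = \ellhat(y)$ and $v^{-1} \circ v = y$. Since $v \le w$, there is a reduced word for $w$ containing a reduced word for $v$ as a subword; applying the monoid homomorphism $g \mapsto (g \text{ acting by Demazure conjugation on } 1)$ — more precisely using associativity of $\circ$ and the fact that deleting letters can only decrease or preserve lengths in the $0$-Hecke monoid — one gets that $y = v^{-1}\circ v$ and $z = w^{-1} \circ w$ are related by $y \le z$; the formal statement needed is that if $v$ is a subword of $w$ then $v^{-1} \circ v \le w^{-1} \circ w$ in $(\tI_n, <)$, which follows from the analogous subword-monotonicity of $\circ$ itself and the fact that Demazure conjugation $z \mapsto s \circ z \circ s$ is Bruhat-order-monotone on $\tI_n$ (this last point is exactly the content of \eqref{dem-eq} combined with Proposition~\ref{graded-prop}).

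I expect the main obstacle to be the forward direction, specifically making rigorous the passage ``$y \le z$ implies some atom of $y$ is a subword of the given atom $w$ of $z$,'' because a priori the atom $w$ of $z$ is just \emph{one} choice and we must land below \emph{it}, not merely below some atom of $z$. The safe way to handle this is induction on $\ellhat(z) - \ellhat(y)$ through a single covering step $y' \lessdot z$ with $y \le y'$: by the covering-transformation classification developed earlier in Section~\ref{ct-sect} (culminating in the results around Theorem~\ref{poset-thm} and Lemma~\ref{conj-cover-lem}), every atom $w$ of $z$ admits an atom $w'$ of $y'$ with $w' \lessdot w$ in Bruhat order; then apply the inductive hypothesis to $y \le y'$ and $w' \in \cA(y')$. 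The base case $y = z$ is trivial. This reduces everything to the single-cover statement, whose proof is essentially the explicit generator-deletion analysis already prepared by Lemmas~\ref{321-lem}, \ref{extr-lem}, and Corollary~\ref{course-cor}. The converse should be routine given subword-monotonicity of $\circ$ and $\eqref{dem-eq}$.
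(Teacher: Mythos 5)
The paper does not prove this lemma at all: it is quoted from Hultman's work \cite{H1,H2} via \cite[\S 3.1]{HMP3}, where it is deduced from the subword property for reduced $\uS$-expressions of twisted involutions. Your converse direction is essentially sound: if $v\leq w$ then a reduced word for $v$ sits inside one for $w$, the doubled word for $v$ is a subword of the doubled word for $w$, and the standard fact that $\{u : u\leq \mathrm{Dem}(\mathbf b)\}$ is the set of products of reduced subwords of $\mathbf b$ gives $y=v^{-1}\circ v\leq w^{-1}\circ w=z$.

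The forward direction, however, has a genuine gap at exactly the step you flag as the main obstacle. Your reduction to a single cover $y'\lessdot_I z$ is fine (the poset is graded by Proposition~\ref{graded-prop}), but the claim that \emph{every} atom $w$ of $z$ admits an atom $w'$ of $y'$ with $w'\lessdot w$ is not established by Lemma~\ref{321-lem}, Lemma~\ref{extr-lem}, Lemma~\ref{conj-cover-lem}, or Corollary~\ref{course-cor}, and you cannot borrow it from Section~\ref{ct-sect}: the corollary there that relates covers in $(\tI_n,<)$ to the maps $\tau^n_{ij}$ is itself deduced \emph{from} Lemma~\ref{cover-lem}, so the argument would be circular. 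Theorem~\ref{last-thm} points in the wrong direction for your purposes --- it starts from an atom $v$ of $y$ with $v\lessdot vt\in\cA(z)$ and identifies $z$; it does not produce, from an arbitrary $w\in\cA(z)$, some atom of $y$ lying below $w$. Nor does the naive subword argument close the gap: from $y\leq z=\mathrm{Dem}(s_{i_k},\dots,s_{i_1},s_{i_1},\dots,s_{i_k})$ one only gets $y$ as the product of \emph{some} reduced subword of the doubled word, whereas an atom of $y$ below $w$ requires a \emph{symmetric} such subword. Producing that symmetric subword is precisely Hultman's lifting/subword property for twisted involutions, which is the actual content of the lemma and must either be cited or reproved from scratch.
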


In the usual Bruhat order on $\tS_n$ 
all covering relations have the form $w \lessdot wt$ where $t$ is a reflection.
Describing the covering relations in the subposet 
$(\tI_n,<)$ is a more delicate problem, but one we can attack using Lemma~\ref{cover-lem}
and  the results of the previous two sections.

Incitti \cite{Incitti1,Incitti2} solves the analogue of this problem for
 involutions in the finite Weyl groups of type $A$, $B$, and $D$,
by associating to each reflection a \emph{covering transformation} to play the role of right multiplication.
In this section, we define our own covering transformations $\tau^n_{ij} : \tI_n \to \tI_n$.
Restricted to involutions in   $S_n \subset \tS_n$, these operators will coincide with the 
maps $\tau_{ij}$ in \cite{HMP3}, which are themselves just a different notation for
the maps $\ct_{ij}$ in \cite{Incitti1}.

 \begin{definition}
 Let $\sim_{\text{graph}}$ be the equivalence relation on vertex-coloured graphs with integer vertices
 in which $\cG \sim_{\text{graph}} \cH$ if and only there exists a graph isomorphism $\cG \to \cH$
 which is an order-preserving bijection.
 \end{definition}
 
 \begin{definition}\label{cG-def}
  Fix $y \in \tI_n$ and $i,j \in \ZZ$ with $i < j \not \equiv i \modu n)$.
Define $\cG_{ij}(y)$ as the graph with vertex set $ \{i,j,y(i),y(j)\}$ and
edge  set
$\{ \{ i, y(i)\}, \{j, y(j)\} \} \setminus \{ \{i\}, \{j\}\}$,
in which the vertices $i$ and $j$ are coloured white and 
all other vertices are coloured black.
Let $k$ be the number of vertices in $\cG_{ij}(y)$ and define
 $\cD_{ij}(y)$ as 
the
unique 
vertex-coloured graph on $[k]$  with $\cD_{ij}(y) \sim_{\text{graph}} \cG_{ij}(y)$.
\end{definition}

 We use these definitions to simplify our notation.
There are only 20 possibilities for $\cD_{ij}(y)$, which we draw by arranging the vertices $1,2,\dots,k$  in order from left to right,
using $\circ$ for the white vertices and $\bullet $ for the black vertices.

\begin{example}
If $y,z \in \tI_n$ are such that $y(i) < j =y(j) < i$ and $i < z(j) < j < z(i)$ then 
\[ \cD_{ij}(y) = \diagramcBA \qquand \cD_{ij}(z) = \diagramDcBa. \]
\end{example}

\begin{definition}\label{tau-def}
Fix $y \in \tI_n$ and $i,j \in \ZZ$  with $i < j \not \equiv i \modu n)$. 
Set $t_{ii} = t_{jj} = 1$ and define
 \[ 
 (\circ, \circ) = t_{ij},\quad
 (\circ,\bullet) = t_{i,y(j)},\quad
 (\bullet,\circ) = t_{y(i),j},
\quad
 \overline y = \begin{cases} y \cdot  t_{i,y(i)}  &\text{if $i \equiv y(j) \modu n)$}
 \\
 y \cdot t_{i,y(i)} \cdot t_{j,y(j)} &\text{if $i \not \equiv y(j) \modu n)$.}
 \end{cases}
 \]
Using this notation, let $\tau^n_{ij}(y) \in \tI_n$ be given as follows:
\[\tau^n_{ij}(y) =\begin{cases}
 (\circ, \circ)\cdot y\cdot (\circ, \circ) & \text{if $\cD_{ij}(y) $ is $
 \diagramACb$, $\diagrambAC$, $\diagrambADc$, $\diagramCDab$ or $\diagramcdAB$}
\\
 (\circ,\bullet)\cdot y\cdot (\circ,\bullet) &\text{if $\cD_{ij}(y)$ is $\diagramAcB$}
\\
(\bullet,\circ)\cdot y\cdot (\bullet,\circ)&\text{if $\cD_{ij}(y)$ is $\diagramBaC$}
 \\
(\circ,\bullet)\cdot y\cdot (\circ,\bullet)  & \text{if $\cD_{ij}(y) $ is $\diagramCdaB$ and $i \not \equiv y(j) \modu n)$} 
 \\
(\circ, \circ)\cdot \overline y  & \text{if $\cD_{ij}(y) $ is $\diagramCdaB$ and $i \equiv y(j) \modu n)$}
 \\
(\circ, \circ)\cdot \overline y&\text{if $\cD_{ij}(y)$ is $\diagramBadC$}
\\
 (\circ,\bullet) \cdot \overline y&\text{if $\cD_{ij}(y)$ is $\diagramBaDc$}
\\
(\bullet,\circ)  \cdot \overline y&\text{if $\cD_{ij}(y)$ is $\diagrambAdC$}
 \\
(\circ,\circ)\cdot  y &\text{if $\cD_{ij}(y)$ is $\diagramAB$}
\\
y&\text{otherwise}.
\end{cases}
\]
\end{definition}

\begin{remark}
There is a lot to unpack here. We include a few remarks about our notation:
\ben
\item[(a)] Let $i'$ be the number adjacent to $i$ in $\cG_{ij}(y)$, if one exists, and define $j'$ similarly.
Then $(\circ,\circ)$ transposes $i$ and $j$ (as well as $i+mn$ and $j+mn$ for all $m \in \ZZ$),
while $(\circ,\bullet)$ transposes $i$ and $j'$
and
$(\bullet,\circ)$ transposes $i'$ and $j$.
The unique order-preserving graph isomorphism $\cG_{ij}(y) \to \cD_{ij}(y)$ maps
$i$ and $j$ to vertices labeled $\circ$ and $i'$ and $j'$ to vertices labeled $\bullet$.

\item[(b)]
If $i\equiv y(j) \modu n)$ then $t_{i,y(i)} = t_{j,y(j)}$, so
 $\overline y$ is the unique element of $\tS_n$ which fixes each element of $\{i,j,y(i),y(j)\} + n\ZZ$
and which agrees with $y$ at all integers outside this set.

\item[(c)] 
If $i \not \equiv y(j) \modu n)$ then both $\cG_{ij}(y)$ and $\cG_{ij}(\tau^n_{ij}(y))$ have vertex set
$V = \{i,j,y(i),y(j)\}$. In this case, given $V$,
 the value of  $\tau^n_{ij}(y)$
is uniquely determined by the graphs
$\cD_{ij}(y)$ and $\cD_{ij}(\tau^n_{ij}(y))$.
If $i \equiv y(j) \modu n)$ then $j \equiv y(i) \modu n)$ also holds,
 and  $\cD_{ij}(y)$ must be
 \[ \diagramcDAb
 \quord
 \diagramCdaB
 \quord 
 \diagrambADc
 \quord
 \diagramBadC.
 \]
When this occurs
 $\cG_{ij}(y)$ and $\cG_{ij}(\tau^n_{ij}(y))$ may have different vertex sets; see Table~\ref{tau-table}.

\een
\end{remark}

\def\tableskip{
\\[-8pt] & 
\\  
\hline &
\\[-8pt]
}

\def\tablebegin{
\hline &
\\[-8pt]
}

\def\tableend{
\\[-8pt] &\\
\hline 
}

\begin{table}[h]
\[
\barr{| l | l |}
\tablebegin
\cD_{ij}(y)\text{ for $y \in \tI_n$ and $i<j \not \equiv i \modu n)$} & \cD_{ij}(\tau^n_{ij}(y))
\tableskip
\diagramAB & \diagramBA  \\
\diagramBadC \quad\text{if $i\equiv y(j)\modu n)$}  & \diagramBA  \\
\diagramCdaB \quad\text{if $i\equiv y(j)\modu n)$}  & \diagramBA 
\tableskip
\diagramBaC & \diagramCbA \\
\diagrambAC & \diagramcBA \\
\diagramAcB & \diagramCbA \\
\diagramACb & \diagramCBa 
\tableskip
\diagrambADc & \diagramcDAb 
\tableskip
\diagramBadC \quad\text{if $i\not\equiv y(j)\modu n)$} & \diagramDbcA \\
\diagramBaDc & \diagramDbCa \\
\diagrambAdC & \diagramdBcA
\tableskip
\diagramCdaB \quad\text{if $i \not \equiv y(j) \modu n)$}& \diagramDcbA \\
\diagramCDab & \diagramDCba \\
\diagramcdAB & \diagramdcBA 
\tableend
\earr
\]
\caption{Possible values for $\cD_{ij}(y)$ and $ \cD_{ij}(\tau^n_{ij}(y))$. 
If $\cD_{ij}(y)$ does not match any of the listed cases,
then $\tau^n_{ij}(y) = y$.
Only in the second two cases do $\cD_{ij}(y)$ and $ \cD_{ij}(\tau^n_{ij}(y))$ have different vertex sets.
}
\label{tau-table}
\end{table}

\begin{example}
For $z = t_{1,3}t_{5,7} \in \tI_7$ we have
\[
\tau^7_{1,4}\(
\begin{tikzpicture}[baseline=0,scale=0.2,label/.style={postaction={ decorate,transform shape,decoration={ markings, mark=at position .5 with \node #1;}}}]
{
\draw[fill,lightgray] (0,0) circle (4.0);
\node at (2.44929359829e-16, 4.0) {$_\bullet$};
\node at (1.71450551881e-16, 2.8) {$_1$};
\node at (3.12732592987, 2.49395920743) {$_\bullet$};
\node at (2.18912815091, 1.7457714452) {$_2$};
\node at (3.89971164873, -0.890083735825) {$_\bullet$};
\node at (2.72979815411, -0.623058615078) {$_3$};
\node at (1.73553495647, -3.60387547161) {$_\bullet$};
\node at (1.21487446953, -2.52271283013) {$_4$};
\node at (-1.73553495647, -3.60387547161) {$_\bullet$};
\node at (-1.21487446953, -2.52271283013) {$_5$};
\node at (-3.89971164873, -0.890083735825) {$_\bullet$};
\node at (-2.72979815411, -0.623058615078) {$_6$};
\node at (-3.12732592987, 2.49395920743) {$_\bullet$};
\node at (-2.18912815091, 1.7457714452) {$_7$};
\draw [-,>=latex,domain=0:100,samples=100] plot ({(4.0 + 2.0 * sin(180 * (0.5 + asin(-0.9 + 1.8 * (\x / 100)) / asin(0.9) / 2))) * cos(90 - (0.0 + \x * 1.0285714285714287))}, {(4.0 + 2.0 * sin(180 * (0.5 + asin(-0.9 + 1.8 * (\x / 100)) / asin(0.9) / 2))) * sin(90 - (0.0 + \x * 1.0285714285714287))});
\draw [-,>=latex,domain=0:100,samples=100] plot ({(4.0 + 2.0 * sin(180 * (0.5 + asin(-0.9 + 1.8 * (\x / 100)) / asin(0.9) / 2))) * cos(90 - (205.71428571428572 + \x * 1.0285714285714282))}, {(4.0 + 2.0 * sin(180 * (0.5 + asin(-0.9 + 1.8 * (\x / 100)) / asin(0.9) / 2))) * sin(90 - (205.71428571428572 + \x * 1.0285714285714282))});
}
\end{tikzpicture}
\) =
\begin{tikzpicture}[baseline=0,scale=0.2,label/.style={postaction={ decorate,transform shape,decoration={ markings, mark=at position .5 with \node #1;}}}]
{
\draw[fill,lightgray] (0,0) circle (4.0);
\node at (2.44929359829e-16, 4.0) {$_\bullet$};
\node at (1.71450551881e-16, 2.8) {$_1$};
\node at (3.12732592987, 2.49395920743) {$_\bullet$};
\node at (2.18912815091, 1.7457714452) {$_2$};
\node at (3.89971164873, -0.890083735825) {$_\bullet$};
\node at (2.72979815411, -0.623058615078) {$_3$};
\node at (1.73553495647, -3.60387547161) {$_\bullet$};
\node at (1.21487446953, -2.52271283013) {$_4$};
\node at (-1.73553495647, -3.60387547161) {$_\bullet$};
\node at (-1.21487446953, -2.52271283013) {$_5$};
\node at (-3.89971164873, -0.890083735825) {$_\bullet$};
\node at (-2.72979815411, -0.623058615078) {$_6$};
\node at (-3.12732592987, 2.49395920743) {$_\bullet$};
\node at (-2.18912815091, 1.7457714452) {$_7$};
\draw [-,>=latex,domain=0:100,samples=100] plot ({(4.0 + 4.0 * sin(180 * (0.5 + asin(-0.9 + 1.8 * (\x / 100)) / asin(0.9) / 2))) * cos(90 - (0.0 + \x * 1.5428571428571427))}, {(4.0 + 4.0 * sin(180 * (0.5 + asin(-0.9 + 1.8 * (\x / 100)) / asin(0.9) / 2))) * sin(90 - (0.0 + \x * 1.5428571428571427))});
\draw [-,>=latex,domain=0:100,samples=100] plot ({(4.0 + 2.0 * sin(180 * (0.5 + asin(-0.9 + 1.8 * (\x / 100)) / asin(0.9) / 2))) * cos(90 - (205.71428571428572 + \x * 1.0285714285714282))}, {(4.0 + 2.0 * sin(180 * (0.5 + asin(-0.9 + 1.8 * (\x / 100)) / asin(0.9) / 2))) * sin(90 - (205.71428571428572 + \x * 1.0285714285714282))});
}
\end{tikzpicture}
\quand
\tau^7_{1,14}\(
\begin{tikzpicture}[baseline=0,scale=0.2,label/.style={postaction={ decorate,transform shape,decoration={ markings, mark=at position .5 with \node #1;}}}]
{
\draw[fill,lightgray] (0,0) circle (4.0);
\node at (2.44929359829e-16, 4.0) {$_\bullet$};
\node at (1.71450551881e-16, 2.8) {$_1$};
\node at (3.12732592987, 2.49395920743) {$_\bullet$};
\node at (2.18912815091, 1.7457714452) {$_2$};
\node at (3.89971164873, -0.890083735825) {$_\bullet$};
\node at (2.72979815411, -0.623058615078) {$_3$};
\node at (1.73553495647, -3.60387547161) {$_\bullet$};
\node at (1.21487446953, -2.52271283013) {$_4$};
\node at (-1.73553495647, -3.60387547161) {$_\bullet$};
\node at (-1.21487446953, -2.52271283013) {$_5$};
\node at (-3.89971164873, -0.890083735825) {$_\bullet$};
\node at (-2.72979815411, -0.623058615078) {$_6$};
\node at (-3.12732592987, 2.49395920743) {$_\bullet$};
\node at (-2.18912815091, 1.7457714452) {$_7$};
\draw [-,>=latex,domain=0:100,samples=100] plot ({(4.0 + 2.0 * sin(180 * (0.5 + asin(-0.9 + 1.8 * (\x / 100)) / asin(0.9) / 2))) * cos(90 - (0.0 + \x * 1.0285714285714287))}, {(4.0 + 2.0 * sin(180 * (0.5 + asin(-0.9 + 1.8 * (\x / 100)) / asin(0.9) / 2))) * sin(90 - (0.0 + \x * 1.0285714285714287))});
\draw [-,>=latex,domain=0:100,samples=100] plot ({(4.0 + 2.0 * sin(180 * (0.5 + asin(-0.9 + 1.8 * (\x / 100)) / asin(0.9) / 2))) * cos(90 - (205.71428571428572 + \x * 1.0285714285714282))}, {(4.0 + 2.0 * sin(180 * (0.5 + asin(-0.9 + 1.8 * (\x / 100)) / asin(0.9) / 2))) * sin(90 - (205.71428571428572 + \x * 1.0285714285714282))});
}
\end{tikzpicture}
\) =
\begin{tikzpicture}[baseline=0,scale=0.2,label/.style={postaction={ decorate,transform shape,decoration={ markings, mark=at position .5 with \node #1;}}}]
{
\draw[fill,lightgray] (0,0) circle (4.0);
\node at (2.44929359829e-16, 4.0) {$_\bullet$};
\node at (1.71450551881e-16, 2.8) {$_1$};
\node at (3.12732592987, 2.49395920743) {$_\bullet$};
\node at (2.18912815091, 1.7457714452) {$_2$};
\node at (3.89971164873, -0.890083735825) {$_\bullet$};
\node at (2.72979815411, -0.623058615078) {$_3$};
\node at (1.73553495647, -3.60387547161) {$_\bullet$};
\node at (1.21487446953, -2.52271283013) {$_4$};
\node at (-1.73553495647, -3.60387547161) {$_\bullet$};
\node at (-1.21487446953, -2.52271283013) {$_5$};
\node at (-3.89971164873, -0.890083735825) {$_\bullet$};
\node at (-2.72979815411, -0.623058615078) {$_6$};
\node at (-3.12732592987, 2.49395920743) {$_\bullet$};
\node at (-2.18912815091, 1.7457714452) {$_7$};
\draw [-,>=latex,domain=0:100,samples=100] plot ({(4.0 + 2.0 * sin(180 * (0.5 + asin(-0.9 + 1.8 * (\x / 100)) / asin(0.9) / 2))) * cos(90 - (0.0 + \x * 6.685714285714286))}, {(4.0 + 2.0 * sin(180 * (0.5 + asin(-0.9 + 1.8 * (\x / 100)) / asin(0.9) / 2))) * sin(90 - (0.0 + \x * 6.685714285714286))});
}
\end{tikzpicture}.
\]
\end{example}

\begin{lemma}\label{tau-1-lem}
Let $y \in \tI_n$ and $i,j \in \ZZ$ with $i <j \not \equiv i \modu n)$. Then $y \leq \tau^n_{ij}(y)$.
\end{lemma}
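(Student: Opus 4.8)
The plan is to prove $y\le\tau^n_{ij}(y)$ by running through the definition of $\tau^n_{ij}$ (Definition~\ref{tau-def}) case by case according to the shape of $\cD_{ij}(y)$, and in each nontrivial case producing a short, strictly length-increasing chain from $y$ to $\tau^n_{ij}(y)$ in $(\tS_n,<)$. The basic tool is the standard fact that for a reflection $t$ one has $w<wt$ if and only if $\ell(w)<\ell(wt)$, and (using invariance of the Bruhat order under inversion) $w<tw$ if and only if $\ell(w)<\ell(tw)$; so if $\ell(y)<\ell(yt)<\ell(tyt)$ then $y<yt<tyt$ by transitivity of $\le$. All the length increments are computed from the string representation of $y$ by means of Corollary~\ref{des-ell-cor} and Lemma~\ref{bruhat0-lem}, and they are essentially local, since $\tau^n_{ij}(y)$ and $y$ agree off the finitely many residue classes in $\{i,j,y(i),y(j)\}+n\ZZ$.

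The trivial cases are $\cD_{ij}(y)$ not among the listed diagrams, where $\tau^n_{ij}(y)=y$, and $\cD_{ij}(y)$ equal to $\diagramAB$, where $i$ and $j$ are fixed points of $y$, so $\tau^n_{ij}(y)=t_{ij}y=yt_{ij}$ and Lemma~\ref{bruhat0-lem} applied to the pair $(i,j)$ gives $\ell(yt_{ij})>\ell(y)$. Next are the conjugation cases, where $\tau^n_{ij}(y)=t_{ab}\,y\,t_{ab}$ for an appropriate reflection $t_{ab}$; this occurs when $\cD_{ij}(y)$ is $\diagramACb$, $\diagrambAC$, $\diagrambADc$, $\diagramCDab$, $\diagramcdAB$, $\diagramAcB$, $\diagramBaC$, or $\diagramCdaB$ with $i\not\equiv y(j)\pmod{n}$. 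Reading off the relevant diagram, one checks in each instance that $y(a)<y(b)$ and that $(t_{ab}y)(a)<(t_{ab}y)(b)$; two applications of Lemma~\ref{bruhat0-lem} then give $\ell(y)<\ell(yt_{ab})<\ell(t_{ab}yt_{ab})$, hence $y<yt_{ab}<\tau^n_{ij}(y)$. Alternatively, in these configurations one of $a,b$ is a left or right endpoint of $y$, so Lemma~\ref{leftright-bruhat-lem} (or Lemma~\ref{conj-cover-lem} for the nested and crossing shapes) delivers the chain $\ell(y)<\ell(yt_{ab})=\ell(t_{ab}y)<\ell(t_{ab}yt_{ab})$ directly.

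The remaining cases are the cycle-deletion cases: $\cD_{ij}(y)$ equal to $\diagramBadC$, $\diagramBaDc$, $\diagrambAdC$, or $\diagramCdaB$ with $i\equiv y(j)\pmod{n}$. Here $\tau^n_{ij}(y)$ is a reflection times $\overline y$, where $\overline y$ is obtained from $y$ by turning one or two $2$-cycles into fixed points (the remark following Definition~\ref{tau-def}). In this family $\tau^n_{ij}(y)$ is not a conjugate of $y$, and the two-step argument above breaks down, since right-multiplying $y$ by a cycle's transposition decreases length. The plan here is to compare $y$ and $\tau^n_{ij}(y)$ directly on the at most four affected residue classes: for example when $\cD_{ij}(y)$ is $\diagramBadC$ one writes $p=y(i)$ and $q=y(j)$, so that $i<p<q<j$, the involution $y$ has the cycles $(i,p)$ and $(q,j)$, and $\tau^n_{ij}(y)$ has the single cycle $(i,j)$ with $p$ and $q$ fixed, and one verifies the Bruhat-order inequalities between the two string representations directly. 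The delicate subcase is the wraparound one, $i\equiv y(j)\pmod{n}$, recorded in rows two and three of Table~\ref{tau-table}, where $\cG_{ij}(y)$ and $\cG_{ij}(\tau^n_{ij}(y))$ have different vertex sets and a single transposition $t_{i,y(i)}$ simultaneously modifies two periodic occurrences of one cycle; here one must argue throughout with the periodicity $y(x+n)=y(x)+n$. I expect this last family to be the one place in the proof needing more than routine verification; everything else is bookkeeping built on Lemma~\ref{bruhat0-lem} and Lemmas~\ref{leftright-bruhat-lem} and \ref{conj-cover-lem}.
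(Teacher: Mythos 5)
Your first two paragraphs are fine and match the paper: the identity/fixed-point case and all the conjugation cases ($\tau^n_{ij}(y)\in\{y,\,ty,\,tyt\}$) are handled exactly as in the paper, via Lemma~\ref{bruhat0-lem} and Lemma~\ref{leftright-bruhat-lem}. You have also correctly isolated the genuinely nontrivial family, namely the cases where $\tau^n_{ij}(y)$ is a reflection times $\overline y$ and is not conjugate to $y$.

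However, for that family your proposal stops at a plan rather than a proof: ``one verifies the Bruhat-order inequalities between the two string representations directly'' is not an argument, since Bruhat comparability of two affine permutations cannot be read off by inspecting their strings, and you explicitly leave the wraparound subcase ($i\equiv y(j)\modu n)$) unresolved. The missing idea is to factor $\tau^n_{ij}(y)$ as $y$ times an explicit product of reflections, each multiplication increasing length, and then use the lifting characterisation of $\leq$. Concretely, when the four points $a<b<c<d$ of $\{i,j,y(i),y(j)\}$ lie in distinct residue classes and $y$ has the cycles $(a,b)$ and $(c,d)$, one checks that $y\,t_{bc}=t_{ad}\,y$ and that
\[
y \;<\; y\cdot t_{bc} \;=\; t_{ad}\cdot y \;<\; t_{ad}\cdot y\cdot t_{ab} \;<\; t_{ad}\cdot y\cdot t_{ab}\cdot t_{cd} \;=\; \tau^n_{ij}(y),
\]
each step being length-increasing by Lemma~\ref{bruhat0-lem}. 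In the wraparound cases ($\cD_{ij}(y)$ equal to $\diagramCdaB$ or $\diagramBadC$ with $i\equiv y(j)\modu n)$), one instead chooses a translate $j'\equiv j\modu n)$ with $i<j'<y(j)$ (respectively $j'=y(i)$) and verifies $y<y\cdot t_{j'y(j)}<y\cdot t_{j'y(j)}\cdot t_{ij'}=\tau^n_{ij}(y)$; the periodicity issue you worried about is absorbed into the choice of $j'$. Without some such chain, the hardest third of the lemma is not established.
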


\begin{proof}
Maintain the notation of Definition~\ref{tau-def}.
In the cases when $\tau^n_{ij}(y) \in \{y, ty, tyt\}$ for some
$t \in \{ (\circ,\circ), (\circ,\bullet), (\bullet,\circ)\}$,
the relation $y \leq \tau^n_{ij}(y)$ follows from Lemmas~\ref{bruhat0-lem} and
\ref{leftright-bruhat-lem}.
If $\cD_{ij}(y) $ is $\diagramCdaB$ and $i \equiv y(j) \modu n)$, then 
one can check using Lemma~\ref{bruhat0-lem}
 that for any integer $j'$ with $i<j' < y(j)$ and $j' \equiv j \modu n)$,
we have $y < y \cdot t_{j'y(j)}  <  y\cdot t_{j'y(j)}  \cdot t_{ij'}=\tau^n_{ij}(y)$.
If $\cD_{ij}(y)$ is $\diagramBadC$ and $i \equiv y(j) \modu n)$,
then the same statement holds with $j' = y(i)$.
In the remaining cases,
the set $\{i,j,y(i),y(j)\}$ has four elements  $a<b<c<d $ which represent distinct congruence classes modulo $n$, and
we have
$y < y \cdot t_{bc} = t_{ad} \cdot y < t_{ad} \cdot y \cdot t_{ab} < t_{ad} \cdot  y \cdot t_{ab} \cdot t_{cd} = \tau^n_{ij}(y)$.
\end{proof}

The following result is useful for determining when $\ellhat( \tau^n_{ij}(y)) = \ellhat(y) + 1$.

\begin{proposition}
Suppose $y \in \tI_n$ and $z = \tau^n_{ij}(y)\neq y$ for some integers $i<j \not \equiv i \modu n)$.
\ben
\item[(a)] Suppose $y(i) \leq i$ or $j\leq y(j)$. Let $\Delta = j -i$.
\ben
\item[i.] If $i \not \equiv y(j) \modu n)$ then $\ellhat(z) =\ellhat(y)+1$ if and only if $\ell(yt_{ij}) = \ell(y)+1$.

\item[ii.] If $i \equiv y(j) \modu n)$ then $\ellhat(z) =\ellhat(y)+1$ if and only if no $e \in \ZZ$ satisfies 
\[i < e <j\text{ and }y(i)- \Delta < y(e) < y(j) + \Delta.\]
\een

\item[(b)] Suppose $i<y(i) <y(j) <j\equiv y(i) \modu n)$.
Then $\ellhat(z) =\ellhat(y)+1$ if and only if 
$y(j) = i+n$
and
no $e \in \ZZ$ satisfies either pair of conditions
\[
\text{$ j-n < e < i +n$ and $ i < y(e) < j$}
 \quad\text{or}\quad
\text{$i < e < j-n$ and $ i-n < y(e) < j$}.
\]

\item[(c)] Suppose $i<y(j) <y(i) <j\equiv y(i) \modu n)$.
Then $\ellhat(z) =\ellhat(y)+1$ if and only if $y(j) = i+n$ and
no $e \in \ZZ$ satisfies either pair of conditions
\[
\text{$j-2n < e < i+n$ and $ i-n < y(e) < j$}
\quord
\text{$i < e < j-2n$ and $i -2n< y(e) < j$}.
\]
\een
\end{proposition}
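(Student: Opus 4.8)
The plan is to reduce everything to the identity $\ellhat=\tfrac12(\ell+\ell')$ together with the grading of $(\tI_n,<)$ by $\ellhat$ (Proposition~\ref{graded-prop}) and the inequality $y\leq z:=\tau^n_{ij}(y)$ from Lemma~\ref{tau-1-lem}. Since $y\neq z$, the rank function gives $\ellhat(z)\geq\ellhat(y)+1$, so
\[
\ellhat(z)=\ellhat(y)+1\iff\bigl(\ell(z)-\ell(y)\bigr)+\bigl(\ell'(z)-\ell'(y)\bigr)=2.
\]
It therefore suffices, in each configuration $\cD_{ij}(y)$ compatible with the hypotheses of (a), (b), or (c), to compute $\ell'(z)-\ell'(y)$ exactly and then to determine precisely when $\ell(z)-\ell(y)$ equals the complementary value $2-(\ell'(z)-\ell'(y))$. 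I would organise the argument as a (finite) walk through the relevant rows of Table~\ref{tau-table}.

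The absolute-length change is the easy part. Whenever $z$ is obtained from $y$ by conjugation by a reflection — which, by Definition~\ref{tau-def}, is every case except the two-fixed-point configuration giving $z=t_{ij}y$ and the four configurations for which Definition~\ref{tau-def} introduces $\overline y$ — we have $\ell'(z)=\ell'(y)$ by the conjugation-invariance of $\ell'$ recorded after Lemma~\ref{triv-lem}, so the criterion becomes $\ell(z)=\ell(y)+2$. In the two-fixed-point case $\ell'(z)=\ell'(y)+1$, and in the $\overline y$ cases one reads off from Table~\ref{tau-table} how the (one or two) cycles of $y$ meeting $\{i,j,y(i),y(j)\}$ are rearranged into cycles of $z$ and computes $\ell'(z)-\ell'(y)$ directly from Definition~\ref{abs-len-def}; here one must track congruence classes carefully, because in parts (b) and (c) the hypothesis $j\equiv y(i)\modu n)$ forces $y(j)\equiv i\modu n)$, so the two $y$-cycles involved are translates of one another and lie in a single $\ZZ$-orbit. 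A short computation then gives $\ell'(z)-\ell'(y)=-1$ in the genuinely wrapped situations of (b) and (c), so there the target becomes $\ell(z)=\ell(y)+3$.

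For the length change I would use the explicit Bruhat chains already produced in the proof of Lemma~\ref{tau-1-lem}: in the conjugation cases $z$ is reached from $y$ by a two-step chain $y< t\hskip1pt y< t\hskip1pt y\hskip1pt t=z$ with $t$ the appropriate reflection (equivalently $y<y\hskip1pt t<t\hskip1pt y\hskip1pt t$), and in the $\overline y$ cases by $y< y\hskip1pt t_{j'y(j)}< y\hskip1pt t_{j'y(j)}\hskip1pt t_{ij'}=z$ with $j'$ as in that proof. For each step I would apply Lemma~\ref{bruhat0-lem} to decide whether it raises $\ell$ by exactly $1$ or by more: the clause ``no $e$ with $w(i)<w(e)<w(j)$'' there, after substituting the intermediate permutation of the chain and unwinding the periodicity $y(e+n)=y(e)+n$, becomes exactly the interval conditions in the statement. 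When $y(i)\leq i$ or $j\leq y(j)$ (part (a)), the interior indices $i+1,\dots,j-1$ are all of one endpoint type, so Lemmas~\ref{+2lem} and \ref{conj-cover-lem} apply cleanly; combining the two chain-step conditions and noting that the first is implied by the second (their $e$-ranges are nested) collapses part (a)(i) to the single condition $\ell(yt_{ij})=\ell(y)+1$ and produces in (a)(ii) the constraint that no $e\in(i,j)$ has $y(i)-\Delta<y(e)<y(j)+\Delta$, since there $z(i)=y(j)+\Delta$ and $z(j)=y(i)-\Delta$. For (b) and (c) one first shows $y(j)=i+n$ is necessary — if $y(j)=i+mn$ with $m\geq2$ then the intermediate integers $y(i)+n,\dots$ (or interposed fixed points) make one of the two chain steps a non-cover, and correspondingly $\cD_{ij}(z)$ would fail to match the value demanded by Table~\ref{tau-table} — and then, under $y(j)=i+n$, the displayed pairs of conditions are precisely the ``no $e$'' obstructions for the two chain steps once the shifts by $n$ (part (b), a nesting configuration) or $2n$ (part (c), a crossing configuration) coming from the wrap have been accounted for.

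The main obstacle is exactly this bookkeeping in cases (b) and (c): because the two $y$-cycles are translates, $\overline y$ and the reflections in the chain move integers across the ``$+n$ boundary,'' and one must split the candidate witnesses $e$ into those lying in $(j-2n,i+n)$ and those in $(i,j-2n)$ (respectively $(j-n,i+n)$ and $(i,j-n)$), verifying that these two subfamilies control the two chain steps separately. Establishing $y(j)=i+n$ as a necessary condition, and checking that together with the $e$-conditions it is sufficient, is the delicate point; the rest of the proof is a routine, if lengthy, enumeration of the rows of Table~\ref{tau-table} under the three hypotheses.
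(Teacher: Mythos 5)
Your overall strategy coincides with the paper's: write $\ellhat=\tfrac12(\ell+\ell')$, compute the change in $\ell'$ configuration by configuration, and then use the Bruhat chains from the proof of Lemma~\ref{tau-1-lem} together with Lemma~\ref{bruhat0-lem} to decide when $\ell$ increases by the complementary amount. Your outline of part (a) is consistent with the paper's proof.

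There is, however, a genuine error in your treatment of parts (b) and (c), namely the claim that $\ell'(z)-\ell'(y)=-1$ there. You correctly note that $j\equiv y(i)\modu n)$ forces $y(j)\equiv i\modu n)$, so the two visible $2$-cycles $(i,y(i))$ and $(y(j),j)$ of $y$ are translates of one another and contribute only \emph{one} equivalence class to $\cC(y)$, hence only $1$ to $\ell'(y)$. Passing to $\overline y=y\cdot t_{i,y(i)}$ deletes that single class, but the final multiplication by $t_{ij}$ creates a new class $(i,j)$ occupying the same two congruence classes modulo $n$, so in fact $\ell'(z)=\ell'(y)$ and the correct target is $\ell(z)=\ell(y)+2$ --- exactly as the paper records via $\ellhat(z)-\ellhat(y)=\tfrac12(\ell(z)-\ell(y))$ in these cases. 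Your target $\ell(z)=\ell(y)+3$ is impossible on parity grounds (here $z$ is $y$ times two reflections, so $\ell(z)-\ell(y)$ is even), and with your bookkeeping the argument would conclude that $\ellhat(z)=\ellhat(y)+1$ never occurs in cases (b) and (c); this is false, e.g.\ for $n=2$, $y=t_{1,4}$, $(i,j)=(1,6)$ one gets $z=t_{1,6}$ with $\ell(y)=3$, $\ell(z)=5$, $\ell'(y)=\ell'(z)=1$, so $\ellhat(z)=\ellhat(y)+1$. (The drop $\ell'(z)-\ell'(y)=-1$ with target $\ell(z)=\ell(y)+3$ does occur, but in the \emph{unwrapped} $\overline y$ configurations of part (a), where the four points lie in four distinct congruence classes and the chain has three steps, not two.) Once this count is corrected, the remainder of your plan --- requiring both steps of the chain $y\lessdot y\,t_{i''j'}\lessdot y\,t_{i''j'}t_{ii''}=z$ to be Bruhat covers and translating the two cover criteria of Lemma~\ref{bruhat0-lem} into the two displayed families of inequalities --- is the paper's argument.
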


\begin{remark}
If $ z =\tau^n_{kl}(y)\neq y$ for any integers $k,l$, then it is always possible to find
some other  integers $i,j$
such that $z =\tau^n_{ij}(y) = \tau^n_{kl}(y)$ and the hypotheses of (a), (b), or (c) hold.
\end{remark}

\begin{proof}
Maintain the notation of Definition~\ref{tau-def}.
First assume $y(i) \leq i$ or $j\leq y(j)$.
If $i$ and $j$ are both fixed points of $y$, then $z = yt_{ij}$ and $\ellhat(z) -\ellhat(y) = \frac{1}{2}(1+\ell(yt_{ij})-\ell(y))$.
If instead
\[\cD_{ij}(y) \in \left\{  \diagramACb,\ \diagrambAC,\ \diagrambADc,\ \diagramCDab,\ \diagramcdAB\right\},\]
then $z = t_{ij} y t_{ij}$ and $\ellhat(z) - \ellhat(y) = \frac{1}{2}(\ell(z)-\ell(y))$,
and it is straightforward,
using Lemma~\ref{leftright-bruhat-lem}, to check
that $\ell(z) - \ell(y)=2$ if and only if the conditions in part (a) hold.
If $i = y(i) < y(j) < j$
then \[z = t_{ij'} y t_{ij'} = \tau^n_{ij}(y) = \tau^n_{ij'}(y)\] for $j' = y(j)$,
and 
$\ell(yt_{ij})  =\ell(yt_{ij'})$. Our claim that $\ellhat(z) =\ellhat(y)+1$ if and only if $\ell(yt_{ij}) = \ell(y)+1$
therefore follows from the previous case with $j$ replaced by $j'$.
If $i < y(i) < y(j) = j$ then the same conclusion follows likewise.
Finally, if $i <i' < j< j'$ for $i' = y(i)$ and $j' = y(j)$, 
then \[y < y \cdot t_{i'j} < y \cdot t_{i'j} \cdot t_{ii'} < y \cdot t_{i'j} \cdot t_{ii'} \cdot t_{jj'} = z\]
and $\ellhat(z) -\ellhat(y) = \frac{1}{2}(\ell(z) -\ell(y)-1)$,
and in this case it is again an exercise to check that 
$\ell(z) = \ell(y)+3$ if and only if $\ell(yt_{ij}) = \ell(y)+1$.
In the symmetric case when $i' <i < j'< j$, we reach the same conclusion by a similar argument. 
This sketch suffices to prove part (a).

The proofs of (b) and (c) are similar. Assume we are in either  case.
 Define $i'=y(i)$ and $j' = y(j)$. In case (b) let $i''=i'$ and in case (c) let
 $i''$ be the smallest
 integer greater than $i$ with $i''\equiv j \modu n)$.
 As noted in the proof of Lemma~\ref{tau-1-lem},
 we then 
have $y < y \cdot t_{i''j'}  <  y\cdot t_{i''j'}  \cdot t_{ii''}=z$
and $\ellhat(z) - \ellhat(z) = \frac{1}{2}(\ell(z) - \ell(y))$.
We can only have $y \lessdot y \cdot t_{i''j'}$ if $j' = i+n$
and no integer $e$ satisfies $i'' < e < i + n$ and $y(i'') < y(e) < j$.
This translates to the first set of conditions in (b) and (c).
One checks that the second pair of conditions in each case is equivalent to
$y \cdot t_{i''j'} \lessdot y\cdot t_{i''j'}  \cdot t_{ii''}$.
\end{proof}
 
 Let $w \in \tS_n$ be an affine permutation.
 Refining the terminology of  Theorem~\ref{excluded-thm} slightly,
 we say that ``$w$ has the form $\dash a_1a_2\cdots a_k \dash b_1b_2\cdots  b_l\dash c_1c_2\cdots c_m \dash \dots \dash$''
if the string representation of $w$
 contains $a_1a_2\cdots a_k$ and $b_1b_2\cdots b_l$ and $c_1c_2\cdots c_m$ and so forth as consecutive subsequences,
 with every $a_i$ appearing to left of every $b_j$, every $b_i$ appearing to the left of every $c_j$, and so on.
The following statement is a constructive version of Theorem~\ref{bruh-cov-thm} from the introduction:

\begin{theorem}\label{last-thm}
Suppose $y,z \in \tI_n$ and $i,j \in \ZZ$
are such that $i<j\not\equiv i \modu n)$. If $w \in \cA(y)$
 and $w \lessdot wt_{ij} \in \cA(z)$ then $z = \tau^n_{ij}(y)$.
\end{theorem}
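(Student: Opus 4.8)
The plan is to reduce the statement to a finite, case-by-case verification using the combinatorial machinery built up in Sections~\ref{cycle-sect} and \ref{ct-sect}, together with the string (one-line) characterisation of atoms in Theorem~\ref{excluded-thm}. First I would fix $w \in \cA(y)$ with $w \lessdot wt_{ij} \in \cA(z)$, and write $\ttS$ for the string representation of $w^{-1}$; since $w \lessdot wt_{ij}$, Corollary~\ref{des-ell-cor} and Lemma~\ref{bruhat0-lem} tell us that $\ttS$ has the form $\dash a \, b \dash$ for some $a < b$ occupying positions $i, i+1$ (up to the $n$-periodic shift), where $a = w^{-1}(i)$, $b = w^{-1}(i+1)$, and there is no $e$ with $i < e < i+1$... more precisely no value strictly between $a$ and $b$ appearing between them — i.e. $ab$ is \emph{consecutive} in $\ttS$. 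Passing to $w t_{ij}$ swaps these two entries, so the string of $(wt_{ij})^{-1}$ is obtained from $\ttS$ by replacing the consecutive block $ab$ by $ba$.

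Next I would apply Corollary~\ref{course-cor}(a) to $wt_{ij} \in \cA(z)$: since $b a$ is consecutive in the string of $(wt_{ij})^{-1}$ with $a < b$, we get $(a,b) \in \cC(z)$. Symmetrically, because $w \in \cA(y)$ and $ab$ is \emph{increasing} consecutive in $\ttS$, either $a = y(b)$ does not hold and $a,b$ lie in... here I would use Corollary~\ref{course-cor}(c) contrapositively: if $(a,b)$ were a cycle of $y$ then $w(b) < w(a)$, contradicting that $ab$ appears in increasing order; hence $(a,b) \notin \cC(y)$. The key structural claim is then that $z$ and $y$ agree away from the two $y$-orbits (equivalently $(wt_{ij})^{-1}$-orbits meeting $\{a,b\}$) involved, and that the local change $y \rightsquigarrow z$ is exactly the one prescribed by the table defining $\tau^n_{ij}$. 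To pin this down I would identify $i' = y^{-1}(\text{partner of } i)$ and $j'$ analogously — concretely, the positions in $\ttS$ holding $y(a)$ and $y(b)$ — and observe that the relative order of the at most four values $\{a, b, y(a), y(b)\}$ in $\ttS$, constrained by properties 1--4 of Theorem~\ref{excluded-thm} applied to $w^{-1} \in \cA(y)$, leaves only finitely many configurations; this is precisely the list enumerated by $\cD_{ij}(y)$.

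Concretely, the argument becomes: for each of the possible graphs $\cD_{ij}(y)$ (the twenty diagrams classified before Definition~\ref{tau-def}), use Theorem~\ref{excluded-thm} to list which cyclic orders of $\{a,b,y(a),y(b)\}$ can occur in $\ttS$, then swap the block $ab \to ba$ and read off, again via Theorem~\ref{excluded-thm}, which involution $z'$ has $wt_{ij} \in \cA(z')$; checking that $z' = \tau^n_{ij}(y)$ in each row reduces to comparing against Table~\ref{tau-table}. The cases split naturally according to whether $i \equiv y(j) \modu n)$ (where the vertex set $\{i,j,y(i),y(j)\}$ can change and $\overline y$ enters, corresponding to the diagrams $\diagramcDAb$, $\diagramCdaB$, $\diagrambADc$, $\diagramBadC$) versus the generic case; in the generic case the four values are distinct congruence classes $a<b<c<d$ and one simply tracks how a single adjacent transposition in the string permutes the two arcs. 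I would also need the auxiliary observation that $w \lessdot wt_{ij}$ forces $ab$ to be \emph{consecutive}, not merely in increasing order — this is where Lemma~\ref{bruhat0-lem} is essential — since otherwise an intermediate value could create an additional inversion.

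The main obstacle I anticipate is the bookkeeping in the branches where $i \equiv y(j) \modu n)$: there the involution $\overline y$ fixes all of $\{i,j,y(i),y(j)\}+n\ZZ$, the arc of $y$ through $i$ and $j$ is ``cut,'' and the vertex sets of $\cG_{ij}(y)$ and $\cG_{ij}(\tau^n_{ij}(y))$ genuinely differ, so one cannot argue purely by a local order comparison and must instead verify directly from Theorem~\ref{excluded-thm} (properties 3 and 4, the ones governing a fixed point versus an arc, and two arcs) that the string of $(wt_{ij})^{-1}$ satisfies the atom conditions for $\tau^n_{ij}(y)$ and for no other involution. Once the generic case is dispatched — which is essentially the finite type $A$ computation of \cite{Incitti1,HMP3} transported verbatim via the standardisation Corollary~\ref{std-cor} — the affine-specific wrap-around cases are the only real content, and I would handle them by an explicit but short examination of each of the four relevant diagrams, using Corollary~\ref{course-cor} and Corollary~\ref{3412-cor} to rule out the spurious alternatives.
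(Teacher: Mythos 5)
Your reduction collapses at its first step. For a general reflection $t_{ij}$ with $j\neq i+1$, the covering relation $w\lessdot wt_{ij}$ does \emph{not} make the two transposed entries consecutive in the string: Lemma~\ref{bruhat0-lem} only excludes values lying in the open interval between them from the intermediate positions, and entries outside that interval may, and typically do, sit between them (already $w=\lW 1,3,2\rW \lessdot wt_{13}=\lW 2,3,1\rW$ shows this, with the entry $3$ separating the transposed entries $1$ and $2$). Your write-up compounds this by conflating $j$ with $i+1$ and positions with values: passing from $w\in\cA(y)$ to $wt_{ij}$ transposes the two occurrences (per period) of the \emph{values} $i$ and $j$ in the string of $w^{-1}$, which sit at positions $w(i)$ and $w(j)$, not at adjacent positions, and not at positions $i,i+1$. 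Because of this, your invocation of Corollary~\ref{course-cor}(a) to conclude $(a,b)\in\cC(z)$ is unjustified, and that conclusion is in fact false in general: for instance when $i=y(i)$ and $i<j<y(j)$ one has $\tau^n_{ij}(y)=t_{ij}yt_{ij}$, whose cycle through $i$ is $(i,y(j))$ while $j$ becomes a fixed point, so the transposed pair is not a cycle of $z$.

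The deeper gap follows from the same point: once the two transposed values are not adjacent, the problem is no longer local to $\{i,j,y(i),y(j)\}$, because every entry lying between $i$ and $j$ in the string belongs to some other cycle or fixed point of $y$, and one must control how all of these interact with the modified cycles before $z$ can be identified. This is precisely what the paper's cycle removal process supplies: the actual proof of Theorem~\ref{last-thm} determines $z$ by running Definition~\ref{cyc-def} on $t_{ij}w$ in parallel with $w$ and invoking Theorem~\ref{cycle-process-thm}, using Corollaries~\ref{3412-cor} and~\ref{321cycle-cor} to rule out the interleavings that would derail the comparison (for example, to arrange that the affected cycles are removed last). Your proposal has no mechanism for \emph{determining} $z$ from the string of $(wt_{ij})^{-1}$: Theorem~\ref{excluded-thm} is a verification criterion for a given target involution, and verifying it for the candidate $\tau^n_{ij}(y)$ against all the other cycles and fixed points is exactly the global bookkeeping you defer to ``an explicit but short examination.'' The organisation by $\cD_{ij}(y)$ and the separate treatment of the case $i\equiv y(j)\modu n)$ are the right instincts, but without repairing the consecutivity claim and supplying the cycle-removal comparison the argument does not go through.
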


\begin{proof}
Let $t = t_{ij}$ and suppose $w \in \tS_n$ is such that 
$w^{-1} \in \cA(y)$,  $\ell(tw)=\ell(w)+1$, and $(tw)^{-1} \in \cA(z)$, so that $\ellhat(z) =\ellhat(y)+1$.
It suffices to show  that $z = \tau^n_{ij}(y)$.
Our strategy will be to compare the cycle removal processes for $w$ and $tw$, which differ in predictable ways,
and then invoke Theorem~\ref{cycle-process-thm}.
Note, from Lemma~\ref{bruhat0-lem}, that $\ell(tw)=\ell(w)+1$ if and only if $i$ appears to the left of $j$ in the string representation of $w$
and no integer $e \in \ZZ$ with $i<e<j$ appears between $i$ and $j$ in this string.

Define  $t\gamma = (t(a),t(b))$ for 
$\gamma = (a,b)\in \ZZ\times \ZZ$,
so that   if $\gamma \in \cC(y)$ then $t\gamma \in \cC(tyt)$ if and only if $t(a) < t(b)$.
Throughout, we let $p = \ell'(y)$ and suppose the cycle removal process for $w$ outputs the sequence 
$\gamma_1,\gamma_2,\dots,\gamma_p \in \cC(y)$. 
First assume $i,j \in \{a,b,a',b'\}$ where $(a,b),(a',b') \in \cC(y)$. 
The following useful observation then holds:
\begin{lemma}\label{*-lem}
If $t\gamma_1, t\gamma_2,\dots,t\gamma_p$ all belong to $\cC(tyt)$ then 
 the cycle removal process for $tw$ can  be carried out 
 to have these cycles as output, and it follows from Theorem~\ref{cycle-process-thm} that
  $z=tyt$.
\end{lemma}

Continuing the proof of the theorem,
suppose
 $w$ has the form
\[\dash b\dash a \dash b' \dash a' \dash \qquad\text{where $a \not \equiv a' \modu n)$ and $b\not \equiv b'\modu n)$}.\]
One of the following cases must then occur:
 \begin{itemize}
 
\item Suppose $i=a < a' = j$, so that  $tw$ has the form
$\dash b\dash a' \dash b' \dash a \dash$ and $a<b'$.
If $a'<b$ then it follows from Lemma~\ref{*-lem} that 
 $z=tyt$, so 
 $a<a'<b<b'$ as otherwise Lemma~\ref{bruhat0-lem} implies that $z=tyt<y$, contradicting Lemma~\ref{cover-lem}.
In this case we therefore have  $ \tau^n_{ij}(y)=tyt=z$.

Assume instead that  $b<a'$ so that $a<b<a'<b'$.
We claim that one may assume  that $\gamma_p = (a,b)$,
i.e., that $ba$ is
the last subsequence removed in the cycle removal process for $w$.
The only way this can fail is if some intermediate string $\ttS_k$ in the cycle removal process has the form
$\dash YbaX\dash$ for a cycle $(X,Y) \in \cC(y)$, which by Corollary~\ref{3412-cor} must have $a<X<Y<b<a'$.
But if this occurs then 
 $w$ must have 
the form 
\[\dash Y \dash b \dash a \dash X \dash b' \dash a'\dash
\qquord \dash Y \dash b\dash a\dash b' \dash a'\dash X\dash.
\]
which contradicts either $\ell(tw) = \ell(w) +1$ or Theorem~\ref{excluded-thm}.
Our claim therefore holds.
By similar reasoning, we deduce that no fixed point of $y$ less than $a'$ 
(respectively, greater than $b$) can appear to the right of $a$ 
(respectively, to the left of $b$) in the string representation of $w$.
It follows that the 
cycle removal process for $tw$ can be carried out to have 
output $t\gamma_1,t\gamma_2,\dots,t\gamma_{p-1}$,
which by
Theorem~\ref{cycle-process-thm} implies that $z = t_{ab'} \cdot y \cdot t_{ab} \cdot t_{a'b'}= \tau^n_{ij}(y)$.

\item
If $i=b<b'<j$, then the situation is symmetric to the previous case, 
and we deduce that $z=\tau^n_{ij}(y)$ by similar arguments.

\item If $i=b<a'=j$, so that $a<b<a'<b'$, then   Lemma~\ref{*-lem} implies that 
 $z = tyt = \tau^n_{ij}(y)$.

\item
Finally suppose $i=a<b'=j$,
so that    $tw$ has the form
$\dash b\dash b' \dash a \dash a' \dash$.
By Theorem~\ref{excluded-thm}, we cannot have both $a'<a$ and $b'<b$,
and the cases when $a<a'<b'<b$ or $a'<a<b<b'$  each lead to contradiction.
For example, suppose $a<a'<b'<b$. 
By considering what happens if we try to remove the 
same consecutive subsequences during the cycle removal process for $tw$ as we remove for $w$,
we deduce from Theorem~\ref{cycle-process-thm} that $(b',b) \in \cC(z)$ and that either $a$ is a fixed point of $z$
or $(a,c) \in \cC(z)$ for an integer $c<b'$. 
Both cases contradict Theorem~\ref{excluded-thm} since $a$ appears to the right of $b'$ in $tw$.
If $a'<a<b<b'$ then we obtain a similar contradiction by symmetric arguments.

Instead suppose both $a<a'$ and $b<b'$.
We claim that one may assume that some intermediate string $\ttS_k$ in the cycle removal process for $w$
contains $bab'a'$ as a consecutive subsequence.
To check this, first note that no fixed points of $y$
can appear in the string representation of $w$ between $a$ and $b'$
by
Theorem~\ref{excluded-thm} since  $\ell(tw)= \ell(w)+1$.
By Corollary~\ref{3412-cor}, 
the only way our claim can fail is if 
 $w$ has the form
\[ 
\dash Y \dash b \dash a \dash X \dash b' \dash a' \dash 
\qquord
 \dash b \dash a \dash Y'\dash b' \dash a' \dash X' \dash
\]
for some $(X,Y) \in \cC(y)$ with $a<X<Y<b$ or some $(X',Y') \in \cC(y)$ with $a'<X'<Y'<b'$.
Both cases would contradict the fact that $\ell(tw) = \ell(w)+1$, so our claim holds.

If $a'<b$, then it follows from our claim that 
we may construct the cycle removal processes for $w$ and $tw$ to be identical, except 
that at steps $k$ and $k+1$ the process for $w$ removes the subsequences $ba$ and then $b'a'$,
while the process for $tw$ removes $b'a$ and then $ba'$. 
In this case we deduce by Theorem~\ref{cycle-process-thm} that 
$z= t_{aa'} \cdot y \cdot t_{aa'} = t_{i,y(j)} \cdot y\cdot t_{i,y(j)} = \tau^n_{ij}(y)$.

Instead suppose $b<a'$. We then make the further claim that the index $k$ can chosen to be $p-1$, i.e., 
so that the subsequences $ba$ and $b'a'$
are the last ones removed in the cycle removal process for $w$.
This can only fail if an intermediate string in the cycle removal process for $w$ 
has the form $\dash Ybab'a'X\dash$ for some $(X,Y) \in \cC(y)$, but this never occurs 
since Corollary~\ref{3412-cor} would imply that $Y<b<a'<X$, contradicting $X<Y$.
We can therefore assume $(a_{p-1},b_{p-1})=(a,b)$ and $(a_p,b_p) = (a',b')$.
Under this hypothesis, it follows using Corollary~\ref{321cycle-cor}
that the cycle removal process for $tw$ can be carried out to have 
output $\gamma_1,\gamma_2,\dots,\gamma_{p-2},(a,b')$,
whence by Theorem~\ref{cycle-process-thm}
we  have $z = t_{ab'} \cdot y \cdot t_{ab} \cdot t_{a'b'} = \tau^n_{ij}(z)$.
 \end{itemize}
 This concludes the most complicated portion of our analysis.
For the next case, continue to let $(a,b),(a',b') \in \cC(y)$ 
and assume $i,j \in \{a,b,a',b'\}$, but now suppose $w$ has the form 
\[\dash b\dash a \dash b' \dash a' \dash \qquad\text{where $a \equiv a' \modu n)$ and $b\equiv b'\modu n)$.}\]
One of the following must then occur:
\begin{itemize}

\item Suppose $i = a<b'=j$. 
Since $\ell(tw)=\ell(w)+1$, Theorem~\ref{excluded-thm} 
implies that no fixed points of $y$ appear between $a$ and $b'$ in the string representation of $w$.
We may assume that $\gamma_p= (a,b)$, since if some string $\ttS_k$
in the cycle removal process for $w$ had the form $\dash YbaX \dash Y'b'a'X'\dash $ for $(X,Y),(X',Y')  \in \cC(y)$,
then Corollary~\ref{3412-cor} would imply that $a<X<Y<b < b'$, contradicting $\ell(tw) = \ell(w)+1$.
Hence the cycle removal process for $tw$ can be carried out to have 
output $\gamma_1,\gamma_2,\dots,\gamma_{p-1},(a,b')$,
so by
Theorem~\ref{cycle-process-thm} we have $z = t_{ab'} \cdot y \cdot t_{ab} = \tau^n_{ij}(y)$.

\item If $i =b< a' = j$, then it follows from Lemma~\ref{*-lem} that $z = t_{ba'} \cdot y \cdot t_{ba'} = \tau^n_{ij}(y)$.
\end{itemize}
Still with $(a,b),(a',b') \in \cC(y)$ and $i,j \in \{a,b,a',b'\}$, finally suppose $w$ has the form 
\[\dash b\dash b' \dash a' \dash a \dash\]
so that $a \not\equiv a' \modu n)$ and $b\not\equiv b'\modu n)$.
Since $w$ has the form $\dash i \dash j \dash $,
  Theorem~\ref{excluded-thm} implies that  $a'<a<b<b'$ and either $i=a'<a=j$ or $i=b<b'=j$.
  It follows by Lemma~\ref{*-lem} that $z=tyt$ so $(a,b'),(a',b) \in \cC(z)$.
This contradicts Theorem~\ref{excluded-thm}, however, since 
$tw$ has the form $
 \dash b \dash b' \dash a \dash a' \dash
$
or
$
 \dash b' \dash b \dash a' \dash a \dash
 $.
 Hence this last case cannot  occur. 

By Theorem~\ref{excluded-thm},
the preceding discussion exhausts 
the possibilities for $i$ and $j$ when these numbers belong to distinct 2-cycles of $y$.
We next consider the case when $i,j \in \{a,b,e\}$ where $e \in \ZZ$ is fixed point of $y$ and $(a,b) \in \cC(y)$.
Suppose this happens and 
 $w$ has the form
\[ \dash e \dash b\dash a \dash .\]
Since $i$ must appear to the left of $j$, one of the following must occur:
\begin{itemize}
\item Suppose $i=e<a=j<b$ so that $tw$ has the form $\dash a \dash b \dash e \dash$.
Since $\ell(tw) = \ell(w)+1$,
it follows by Theorem~\ref{excluded-thm} that no fixed points of $y$ 
less than $a$ can appear between to the right of $e$ in the string representation of $w$, 
and that likewise no fixed points greater than $a$ can appear to the left of $e$.
Hence, by Theorem~\ref{cycle-process-thm} the cycle removal process for $tw$ can be carried out to have 
output $t\gamma_1,t\gamma_2,\dots,t\gamma_p$ 
 so $z=tyt = \tau^n_{ij}(y)$.

\item Suppose $a<i=e<b=j$ so that $tw$ has the form $\dash b \dash e \dash a \dash$.
We argue that this leads to contradiction.
By considering what happens if we try to remove the same 
consecutive subsequences during the cycle removal process for $tw$ as we remove for $w$,
we deduce from Theorem~\ref{cycle-process-thm} that
 $(a,e) \in \cC(z)$
 and
 that either $b$ is a fixed point of $z$ or $(e',b) \in \cC(z)$ for an integer $e'>e$.
 Both cases contradict
 Theorem~\ref{excluded-thm} since $b$ appears to the left of $e$ in $tw$. 

\item Finally suppose $i=e<a<b=j$ so that $tw$ again has the form $\dash b \dash e \dash a \dash$.
Since $\ell(tw) = \ell(w)+1$, it follows by 
Theorems~\ref{cycle-process-thm} and \ref{excluded-thm} that no fixed points of $y$ 
 appear between $e$ and $a$ in the string representation of $w$.
 Moreover, by Theorem~\ref{excluded-thm}, all fixed points of $y$ 
 appearing in $w$ to the right of $a$ are bounded below by $a$, and 
 all fixed points of $y$ appearing to the left of $e$ are bounded above by $e$.
We can assume $\gamma_p=(a,b)$, since otherwise 
some intermediate string $\ttS_k$ in the cycle removal process of $w$ must have the form $\dash e \dash Y ba X \dash$
for a cycle $(X,Y) \in \cC(y)$, in which case Corollary~\ref{3412-cor} would imply that $e<a<X<Y<b$,
contradicting $\ell(tw) = \ell(w)+1$. 
Hence the penultimate string $\ttS_{p-1}$ contains $eba$ as a consecutive subsequence,
so the cycle removal process for $tw$ can be carried out to have 
output $\gamma_1,\gamma_2,\dots,\gamma_{p-1},(e,b)$.
By Theorem~\ref{cycle-process-thm}, we therefore have $z = t_{ea} \cdot y \cdot t_{ea} = \tau^n_{ij}(y)$.

\end{itemize}
If $w$ has the form $\dash b\dash a \dash e \dash $ then $z = \tau^n_{ij}(y)$ follows by a
symmetric argument.
The only remaining possibility is that $y(i)=i<j=y(j)$. 
Since   no fixed points can occur between $i$ and $j$ in the string representation of $w$,  
 the cycle removal process for $tw$ can be carried out to have 
output $\gamma_1,\gamma_2,\dots,\gamma_p,(j,i)$, and hence  $z = ty = \tau^n_{ij}(y)$.
We conclude that $z =\tau^n_{ij}(y)$ in all cases.
\end{proof}

Write $y \lessdot_I z$ for $y,z \in \tI_n$
if $z$ covers $y$ in    the Bruhat order of $\tS_n$ restricted to $\tI_n$, i.e.,
if $\{ w \in \tI_n : y \leq w < z\} = \{y\}$.
 If $y,z \in \tI_n$ then $y\lessdot z$ implies that $y \lessdot_I z$,
but not vice versa.

\begin{corollary}
If $y, z \in \tI_n$ then $y\lessdot_I z$
if and only if $\ellhat(z) = \ellhat(y)+1$ and
$z = \tau^n_{ij}(y)$ for some $i,j$.
\end{corollary}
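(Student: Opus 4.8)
The plan is to prove both directions using the machinery already established. For the ``if'' direction, suppose $\ellhat(z) = \ellhat(y)+1$ and $z = \tau^n_{ij}(y)$ for some $i,j \in \ZZ$ with $i < j \not\equiv i \modu n)$. By Lemma~\ref{tau-1-lem} we have $y \leq z$, and since $\ellhat$ is the rank function of $(\tI_n, <)$ by Proposition~\ref{graded-prop}, the condition $\ellhat(z) = \ellhat(y)+1$ forces $y \lessdot_I z$ immediately: any $w \in \tI_n$ with $y \leq w < z$ would have $\ellhat(y) \leq \ellhat(w) < \ellhat(z) = \ellhat(y)+1$, hence $\ellhat(w) = \ellhat(y)$, which together with $y \leq w$ and the grading gives $w = y$.

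For the ``only if'' direction, assume $y \lessdot_I z$. First I would note that $\ellhat(z) = \ellhat(y)+1$ is forced by Proposition~\ref{graded-prop}, since $\lessdot_I$ is a covering relation in a poset graded by $\ellhat$. The heart of the matter is producing the integers $i,j$ with $z = \tau^n_{ij}(y)$. The strategy is to pass through atoms: pick any $w \in \cA(z)$. By Lemma~\ref{cover-lem}, since $y < z$, there is some $v \in \cA(y)$ with $v \leq w$; because $\ell(w) = \ellhat(z) = \ellhat(y)+1 = \ell(v)+1$, this forces $v \lessdot w$ in the Bruhat order of $\tS_n$, so $w = vt$ for a reflection $t = t_{ij}$ with $\ell(w) = \ell(v)+1$. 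Now $v \in \cA(y)$, $v \lessdot vt_{ij} = w \in \cA(z)$, so Theorem~\ref{last-thm} applies directly and yields $z = \tau^n_{ij}(y)$. This completes the argument.

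The main subtlety---though it is really just bookkeeping---is verifying that the $v$ obtained from Lemma~\ref{cover-lem} genuinely satisfies $v \lessdot w$ and not merely $v < w$; this is where the grading of $(\tI_n,<)$ by $\ellhat$ (Proposition~\ref{graded-prop}) combined with Proposition~\ref{ellhat-prop} (all atoms of $z$ have length $\ellhat(z)$) does the work, since $\ell(w) - \ell(v) = \ellhat(z) - \ellhat(y) = 1$. Once that is in hand, Theorem~\ref{last-thm} is exactly the tool needed, and there is no further obstacle: the entire ``only if'' direction is essentially a two-line deduction from Lemma~\ref{cover-lem} and Theorem~\ref{last-thm}. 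I do not expect any genuinely hard step here, as the corollary is a packaging of Theorem~\ref{last-thm} together with the Bruhat-order characterization of atoms.

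\begin{proof}
By Proposition~\ref{graded-prop}, the poset $(\tI_n,<)$ is graded with rank function $\ellhat$, so a covering relation $y \lessdot_I z$ is equivalent to the conjunction of $y<z$ and $\ellhat(z) = \ellhat(y)+1$.

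Suppose first that $\ellhat(z) = \ellhat(y)+1$ and $z = \tau^n_{ij}(y)$ for some $i,j \in \ZZ$ with $i<j \not\equiv i \modu n)$. Then $y \leq z$ by Lemma~\ref{tau-1-lem}, and $y \neq z$ since $\ellhat(y) \neq \ellhat(z)$. If $w \in \tI_n$ satisfies $y \leq w < z$, then $\ellhat(y) \leq \ellhat(w) < \ellhat(z) = \ellhat(y)+1$ forces $\ellhat(w) = \ellhat(y)$, and since $(\tI_n,<)$ is graded with $y \leq w$ this gives $w = y$. Hence $y \lessdot_I z$.

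Conversely, suppose $y \lessdot_I z$. Then $\ellhat(z) = \ellhat(y)+1$ as noted above. Choose any $w \in \cA(z)$. By Lemma~\ref{cover-lem}, since $y < z$, there exists $v \in \cA(y)$ with $v \leq w$. Proposition~\ref{ellhat-prop} gives $\ell(v) = \ellhat(y)$ and $\ell(w) = \ellhat(z) = \ellhat(y)+1 = \ell(v)+1$, so $v \lessdot w$ in the Bruhat order on $\tS_n$; thus $w = v t_{ij}$ for some integers $i<j \not\equiv i \modu n)$ with $\ell(w) = \ell(v)+1$. Now $v \in \cA(y)$ and $v \lessdot v t_{ij} = w \in \cA(z)$, so Theorem~\ref{last-thm} yields $z = \tau^n_{ij}(y)$. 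This completes the proof.
\end{proof}
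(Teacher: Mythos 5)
Your proof is correct and follows essentially the same route as the paper: Lemma~\ref{tau-1-lem} plus the grading of $(\tI_n,<)$ by $\ellhat$ for the ``if'' direction, and Lemma~\ref{cover-lem} followed by Theorem~\ref{last-thm} for the converse. The only difference is that you spell out the length bookkeeping showing $v \lessdot w$, which the paper leaves implicit.
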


\begin{proof}
If $\ellhat(z) = \ellhat(y)+1$ and
$z = \tau^n_{ij}(y)$ then Lemma~\ref{tau-1-lem} implies that $y\lessdot_I z$.
Suppose conversely that $y \lessdot_I z$. 
By Proposition~\ref{graded-prop}, we then have $y<z$ and $\ellhat(z) = \ellhat(y)+1$,
so by Lemma~\ref{cover-lem} there exists $v \in \cA(y)$ and $w \in \cA(z)$ with $v \lessdot w$,
so by Theorem~\ref{last-thm} we have $z = \tau^n_{ij}(y)$ for some $i,j$.
\end{proof}

\newpage
\appendix
\section{Index of symbols}
\label{not-sect}

The table below provides definitions and references for all frequently occurring symbols.

\def\skip{\\[-4pt]}

\begin{center}
\begin{tabular}{l | l r}
\text{Symbol} & \text{Meaning} & 
 \\
 \hline
  $(W,S)$ &An arbitrary Coxeter system &  \\
 $\circ$ & The Demazure product $W \times W \to W$ & \S\ref{intro-sect} \\
 $\DesR(w)$, $\DesL(w)$ & The left and right descent sets of $w \in W$ & \S\ref{prelim-sect} \\
 $\cA(z)$ & The set of  $w \in \tS_n$ of minimal length with $z = w^{-1}\circ w$ & Def.~\ref{ca-def}  \\
 $\ellhat(z)$ & The common length of each $w \in \cA(z)$ & Prop.~\ref{ellhat-prop} \\
   \\
  $\tS_n$ & The affine symmetric group of rank $n$ & \S\ref{intro-sect} \\
   $S_n$ & The group of permutations of $[n]=\{1,2,\dots,n\}$ & \S\ref{intro-sect} \\
  $I_n$, $\tI_n$ & The sets of involutions in $S_n$ and $\tS_n$ & \S\ref{intro-sect}  \\
    $\cW_n$ & The set of weighted involutions in $\tS_n$ & Def.~\ref{weighted-def} \\
    $\ell'(z)$ & The number of edges in the winding diagram of $z \in \tI_n$ & Def.~\ref{abs-len-def} \\
  $\cWmin_n$ & The subset of $\theta=(w,\phi)\in \cW_n$ with $\ell'(w)=\ell(w)$ & Def.~\ref{cWmin-def} \\
  \\
  $s_i$ & A simple reflection in $\tS_n$ & \S\ref{intro-sect} \\
  $t_{ij}$ & A reflection in $\tS_n$ & \S\ref{prelim-sect} \\
  $w\mapsto w^*$ & The automorphism of $\tS_n$ with $s_i \mapsto s_{n-i}$ & Lem.~\ref{*lem} \\
  $\lW a_1,a_2,\dots,a_n\rW$ & The window notation for an affine permutation & Prop.-Def.~\ref{window-def} \\
  $\cC(z)$ & The set of pairs $\{ (a,b) \in \ZZ\times \ZZ  : a<b=z(a)\}$ & \S\ref{prelim-sect}  \\
  $\pi_i$  & A certain left or right operator on $\cW_n$ & Def.~\ref{pi-def} \\
  \\
  $g_L(\theta)$, $g_R(\theta)$ & Certain affine permutations associated to $\theta \in \cW_n$ & Thm.~\ref{omega-thm} \\
  $\omega_L(\theta)$, $\omega_R(\theta)$ & Certain affine involutions associated to $\theta \in \cW_n$ & Thm.~\ref{omega-thm} \\
  $\lambda_L$, $\lambda_R$ & Maps $\tI_n \to \cW_n$ inverse to $\omega_L$ and $\omega_R$ & Prop.-Def.~\ref{lambda-def} \\
  $\alpha_R$, $\alpha_L$ & The maps $\tI_n \to \tS_n$ given by  $g_R\circ \lambda_R$ and  $ g_L\circ \lambda_L$ & Def.~\ref{alpha-def} \\
  $\alpha_{\min}(z)$, $\alpha_{\max}(z)$ & The affine permutations $\alpha_R(z)z$ and $\alpha_L(z)z$ & Def.~\ref{alpha-min-def} \\
  \\
    $[w]_E$ & The standardisation of an (affine) permutation & \S\ref{cycle-sect} \\
  $\cG_{ij}(z)$,   $\cD_{ij}(z)$ & Certain graphs associated to $z \in \tI_n$ & Def.~\ref{cG-def} \\
  $\tau^n_{ij}$ & Certain maps $\tI_n \to \tI_n$ & Def.~\ref{tau-def} \\
  \\
   $\leq$ & The Bruhat order on a Coxeter group (usually $\tS_n$)   \\
  $\prec$ & A partial order on $\cWmin_n$ & \S\ref{order-sect} \\
  $\prec_R$, $\prec_L$ & Certain graded suborders of Bruhat order on $\tI_n$  & Prop.~\ref{prec-prop} \\
   $\lessdot_\cA$ & A certain covering relation on $\tS_n$ & Lem.~\ref{231-lem} \\
  $<_\cA$ & The transitive closure of $\lessdot_\cA$ &  \\
  $\sim_\cA$ & The symmetric closure of $\leq_\cA$ &  \\
  $\lessdot_I$ & The covering relation of Bruhat order restricted to $\tI_n$  &
\end{tabular}
\end{center}


\begin{thebibliography}{99}


\bibitem{BJN} R. Biagioli, F. Jouhet, and P. Nadeau, 
Combinatorics of fully commutative involutions in classical Coxeter groups,
\emph{Discrete Math.} \textbf{338} (2015), 2242--2259.

\bibitem{BeHo} G. E. Bergum and V. E. Hoggatt, Jr., 
Irreducibility of Lucas and generalized Lucas polynomials,
\emph{Fibonacci Quart.} \textbf{12} (1974), 95--100.

\bibitem{BB} A. Bj\"orner and F. Brenti, \emph{Combinatorics of Coxeter groups}, Graduate Texts in Mathematics 231, Springer, New York, 2005.

\bibitem{Brion98} M. Brion, The behaviour at infinity of the Bruhat decomposition, \emph{Comment. Math. Helv.} \textbf{73}(1) (1998), 137--174.

\bibitem{CJ} M. B. Can and M. Joyce, Weak Order on Complete Quadrics, \emph{Trans. Amer. Math. Soc.} \textbf{365} (2013), no. 12, 6269--6282.

\bibitem{CJW} M. B. Can, M. Joyce, and B. Wyser, Chains in Weak Order Posets Associated to
Involutions, \emph{J. Combin. Theory Ser. A} \textbf{137} (2016), 207--225.



\bibitem{FS} C. K. Fan and J. R. Stembridge, Nilpotent Orbits and Commutative Elements, \emph{J. Algebra} \textbf{196} (1997) 490--498.



\bibitem{Green} R. M. Green, On 321-Avoiding Permutations in Affine Weyl Groups, 
\emph{J. Algebr. Combin.} \textbf{15} (2002), 241--252.

\bibitem{HMP1} Z. Hamaker, E. Marberg, and B. Pawlowski, Involution words: counting problems and
connections to Schubert calculus for symmetric orbit closures, 
 \emph{J. Combin. Theory Ser. A}, to appear.

\bibitem{HMP2} Z. Hamaker, E. Marberg, and B. Pawlowski, Involution words II: braid relations and
atomic structures, \emph{J. Algebr. Comb.} \textbf{45} (2017), 701-743.

\bibitem{HMP3} Z. Hamaker, E. Marberg, and B. Pawlowski,
Transition formulas for involution Schubert polynomials, 
\emph{Selecta Math.}, to appear.

\bibitem{HMP4} Z. Hamaker, E. Marberg, and B. Pawlowski, Schur $P$-positivity and involution Stanley symmetric functions, 
\emph{IMRN} (2017), rnx274.

\bibitem{HMP5} Z. Hamaker, E. Marberg, and B. Pawlowski, Fixed-point-free involutions and Schur $P$-positivity, 
preprint (2017), {\tt arXiv:1706.06665}.

\bibitem{HH} M. Hansson and A. Hultman, A word property for twisted involutions in Coxeter groups, preprint (2017), {\tt arXiv:1704.08329}.

\bibitem{HuZhang1} J. Hu and J. Zhang, On involutions in symmetric groups and a conjecture of Lusztig, \emph{Adv. Math.} \textbf{287} (2016), 1--30.



\bibitem{H1} A. Hultman, Fixed points of involutive automorphisms of the Bruhat order,
\emph{Adv. Math.} \textbf{195} (2005), 283--296.

\bibitem{H2} A. Hultman, The combinatorics of twisted involutions in Coxeter groups,
\emph{Trans. Amer. Math. Soc.} \textbf{359} (2007), 2787--2798.

\bibitem{Humphreys} J. E. Humphreys, \emph{Reflection groups and Coxeter groups}, Cambridge University
Press, Cambridge, 1990.

\bibitem{Incitti1} F. Incitti, The Bruhat Order on the Involutions of the Symmetric Group,
\emph{J. Algebr. Combin.} \textbf{20} (2004), 243--261.

\bibitem{Incitti2} F. Incitti, Bruhat order on the involutions of classical Weyl groups, \emph{Adv. Appl. Math.} \textbf{37} (2006), 68--111.


\bibitem{Lam} T. Lam, Affine Stanley symmetric functions, \emph{Amer. J. Math.} \textbf{128} (2006), no. 6, 1553--1586.

\bibitem{LamShim} T. Lam and M. Shimozono, A Little bijection for affine Stanley symmetric
functions, \emph{Seminaire Lotharingien de Combinatoire} \textbf{54A} (2006), B54Ai.




\bibitem{LV1} G. Lusztig, A bar operator for involutions in a Coxeter group, \emph{Bull. Inst. Math. Acad. Sinica (N.S.)} \textbf{7} (2012), 355--404.

\bibitem{LV2} G. Lusztig and D. A. Vogan, Hecke algebras and involutions in Weyl groups, \emph{Bull. Inst. Math. Acad. Sinica (N.S.)} \textbf{7} (2012), 323-354.


\bibitem{M} E. Marberg, Braid relations for involution words in affine Coxeter groups, preprint (2017), {\tt arXiv:1703.10437}.


\bibitem{MW} E. Marberg and G. White, 
Variations of the Poincar\'e series for affine Weyl groups and $q$-analogues of Chebyshev polynomials,
\emph{Adv. Appl. Math.} \textbf{82} (2017), 129--154.


\bibitem{RichSpring} R. W. Richardson and T. A. Springer, The Bruhat order on symmetric varieties, \emph{Geom. Dedicata} \textbf{35} (1990), 389--436. 

\bibitem{RichSpring2} R. W. Richardson and T. A. Springer, Complements to: The Bruhat order on symmetric varieties, \emph{Geom. Dedicata} \textbf{49} (1994), 231--238. 


\bibitem{OEIS} N. J. A. Sloane, editor (2003), The On-Line Encyclopedia of Integer Sequences, published electronically at 
\url{http://oeis.org/}.


\bibitem{StemRed} J. R. Stembridge, Some combinatorial aspects of reduced words in finite Coxeter groups, \emph{Trans. Amer. Math. Soc.} \textbf{349} (1997), 1285--1332.

\bibitem{Wyser} B. J. Wyser, $K$-orbit closures on $G/B$ as universal degeneracy loci for flagged vector bundles with symmetric or skew-symmetric bilinear form, \emph{Transform. Groups} \textbf{18} (2013), 557--594.

\bibitem{WY} B. J. Wyser and A. Yong, Polynomials for symmetric orbit closures in the flag variety, \emph{Transform. Groups} \textbf{22}(1) (2017), 267--290.

\end{thebibliography}
\end{document}